\author{Yu Qiu and Jon Woolf}
\title{Contractible stability spaces and faithful braid group actions}
\newtheorem{theoremalpha}{Theorem}
\newtheorem{theorem}{Theorem}[section]
\newtheorem*{theorem*}{Theorem}
\newtheorem{proposition}[theorem]{Proposition}
\newtheorem{corollary}[theorem]{Corollary}
\newtheorem{lemma}[theorem]{Lemma}
\theoremstyle{definition}
\newtheorem{definition}[theorem]{Definition}
\newtheorem{example}[theorem]{Example}
\newtheorem{remark}[theorem]{Remark}
\newtheorem{construction}[theorem]{Construction}
\newcommand{\defn}[1]{\emph{#1}}
\newcommand{\ie}{i.e.\ }
\newcommand{\cf}{cf.\ }
\newcommand{\N}{\mathbb{N}}
\newcommand{\Z}{\mathbb{Z}}
\newcommand{\R}{\mathbb{R}}
\newcommand{\C}{\mathbb{C}}
\newcommand{\U}{\mathbb{H}}
\renewcommand{\P}{\mathbb{P}}
\newcommand{\h}{\mathfrak{h}}
\newcommand{\g}{\mathfrak{g}}
\newcommand{\hreg}{\h^{\text{reg}}}
\newcommand{\Aut}{\operatorname{Aut}}
\DeclareMathOperator{\KGdim}{KGdim}
\DeclareMathOperator{\im}{Im}
\DeclareMathOperator{\re}{Re}
\DeclareMathOperator{\rk}{rank}
\newcommand{\cat}[1]{\mathcal{#1}}
\newcommand{\mor}[2]{\Mor{}{#1}{#2}}
\newcommand{\Mor}[3]{\mathrm{Hom}_{#1}\!\left(#2,#3\right)}
\newcommand{\GrMor}[4]{\mathrm{Hom}_{#1}^{#2}\!\left(#3,#4\right)}
\newcommand{\Cone}{\operatorname{Cone}}
\newcommand{\ab}{\mathrm{Ab}}
\newcommand{\epic}{\twoheadrightarrow}
\newcommand{\ts}[1]{\cat{#1}}
\newcommand{\aisle}[2]{\ts{#1}^{\leq #2}}
\newcommand{\coaisle}[2]{\ts{#1}^{\geq #2}}
\newcommand{\heart}[1]{\ts{#1}^{0}}
\newcommand{\lt}[2]{L_{#2}{\ts{#1}}}
\newcommand{\rt}[2]{R_{#2}{\ts{#1}}}
\newcommand{\slicing}{\cat{P}}
\newcommand{\charge}{Z}
\newcommand{\Stab}{\operatorname{Stab}}
\newcommand{\stab}[1]{\Stab\!\left(\cat{#1}\right)}
\newcommand{\stabo}[1]{\Stab^\circ\!\left(\cat{#1}\right)}
\newcommand{\stabalg}[1]{\Stab_\mathrm{alg}\!\left(\cat{#1}\right)}
\newcommand{\stabalgo}[1]{\Stab^\circ_\mathrm{alg}\!\left(\cat{#1}\right)}
\newcommand{\stabalgt}[1]{\Stab^\theta_\mathrm{alg}\!\left(\cat{#1}\right)}
\newcommand{\PT}[1]{\mathrm{T}(\cat{#1})} 
\newcommand{\tilt}[1]{\mathrm{Tilt}(\cat{#1})} 
\newcommand{\tilto}[1]{\mathrm{Tilt}^\circ(\cat{#1})} 
\newcommand{\tiltalg}[1]{\mathrm{Tilt}_\mathrm{alg}(\cat{#1})} 
\newcommand{\tiltalgo}[1]{\mathrm{Tilt}^\circ_\mathrm{alg}(\cat{#1})} 
\newcommand{\PI}[1]{\mathrm{Int}(\cat{#1})} 
\newcommand{\tsleq}{\subseteq} 
\newcommand{\tileq}{\leq} 
\newcommand{\atileq}{\preccurlyeq} 
\newcommand{\its}[2]{\left[ {#1}, {#2}\right]_\tsleq} 
\newcommand{\itilt}[2]{\left[  {#1}, {#2}\right]_\tileq} 
\newcommand{\ialg}[2]{\left[  {#1}, {#2}\right]_\atileq} 
\newcommand{\CoveringPoset}{\mathcal{P}}
\newcommand{\Br}{\operatorname{Br}}
\newcommand{\braidQ}{\Br\left(Q\right)}
\newcommand{\GQ}[1]{\Gamma_{\!#1}Q}
\newcommand{\catGQ}[1]{\cat{D}\left( \Gamma_{\!#1}Q\right)}
\newcommand{\tiltGQ}[1]{\mathrm{Tilt}\!\left( \Gamma_{\!#1}Q\right)}
\newcommand{\tiltGQo}[1]{\mathcal{I}_{\Gamma_{\!#1}Q}}
\newcommand{\tiltQ}{\mathrm{Tilt}^\circ\!\left( Q\right)}
\newcommand{\tiltQo}[1]{\mathcal{I}_{Q}}
\newcommand{\stabGQ}[1]{\Stab\!\left( \Gamma_{\!#1}Q\right)}
\newcommand{\braidGQ}[1]{\operatorname{Br}\left( \Gamma_{\!#1}Q\right)}
\newcommand{\braidsurj}{\Phi} 
\newcommand{\GQzero}{\ts{D}_{\Gamma Q}} 
\newcommand{\Qzero}{\ts{D}_{Q}} 
\newcommand{\LI}[1]{\mathcal{L}^{#1}} 
\newcommand{\cluster}[2]{\cat{C}_{#1}\left(#2\right)} 
\newcommand{\mutation}[2]{\cat{CM}_{#1}\left(#2\right)} 
\newcommand{\shift}[1]{\operatorname{\Sigma}_{#1}}   
\def\Ext{\operatorname{Ext}}
\def\Irr{\operatorname{Irr}}
\def\Add{\operatorname{Add}}
\def\Ind{\operatorname{Ind}}
\begin{document}
\begin{abstract}
We prove that any `finite-type' component of a stability space of a triangulated category is contractible. The motivating example of such a component is the stability space of the Calabi--Yau-$N$ category $\catGQ{N}$ associated to an ADE Dynkin quiver. In addition to showing that this is contractible we prove that the braid group $\braidQ$ acts freely upon it by spherical twists, in particular that the spherical twist group $\braidGQ{N}$ is isomorphic to $\braidQ$. This generalises Brav--Thomas' result for the $N=2$ case. Other classes of triangulated categories with finite-type components in their stability spaces include locally-finite triangulated categories with finite rank Grothendieck group and discrete derived categories of finite global dimension.

    \vskip .3cm
    {\parindent =0pt
    \it Key words:} Stability conditions, Calabi--Yau categories, spherical twists, braid groups
\end{abstract}
\maketitle
\tableofcontents\addtocontents{toc}{\setcounter{tocdepth}{1}}

\section{Introduction}
\label{introduction}
\subsection{Stability conditions}
Spaces of stability conditions on a triangulated category were introduced in \cite{MR2373143}, inspired by the work of Michael Douglas on stability of D-branes in string theory. The construction associates a  space $\stab{C}$ of stability conditions to each triangulated category $\cat{C}$. A stability condition $\sigma \in \stab{C}$ consists of a \defn{slicing} --- for each $\varphi\in \R$ an abelian subcategory  $\slicing_\sigma(\varphi)$ of \defn{semistable objects of phase $\varphi$}  such that each object of $\cat{C}$ can be expressed as an iterated extension of semistable objects --- and a \defn{central charge} $\charge \colon K\cat{C} \to \C$ mapping the Grothendieck group $K\cat{C}$ linearly to $\C$. The slicing and charge obey a short list of axioms.
The miracle is that the space $\stab{C}$ of stability conditions is a complex manifold, locally modelled on a linear subspace of $\mor{K\cat{C}}{\C}$ \cite[Theorem 1.2]{MR2373143}. It carries commuting actions of $\C$, acting by rotating phases and rescaling masses, and of the automorphism group $\Aut(\cat{C})$.

Whilst a number of examples of spaces of stability conditions are known, it is in general difficult to compute $\stab{C}$. It is widely believed that spaces of stability conditions are contractible, and this has been verified in certain examples. We give the first proof of contractibility for certain general classes of triangulated categories satisfying (strong) finiteness conditions.

Our strategy is to identify general conditions under which there are no `complicated' stability conditions. One measure of the complexity of a stability condition $\sigma$ is the phase distribution, \ie the set $\{ \varphi \in \R \mid \slicing_\sigma(\varphi) \neq 0 \}$ of phases for which there is a non-zero semistable object. A good heuristic is that a stability condition with a dense phase distribution is complicated, whereas one with a discrete phase distribution is much less so --- see \cite{MR3289326} for a precise illustration of this principle.

Another measure of complexity is provided by the properties of the heart of the stability condition $\sigma$. This is  the full extension-closed subcategory $\slicing_\sigma(0,1]$ generated by the semistable objects with phases in the interval $(0,1]$. It is the heart of a bounded t-structure on $\cat{C}$ and so in particular is an abelian category. From this perspective the `simplest' stability conditions are those whose heart is Artinian and Noetherian with finitely many isomorphism class of simple objects; we call these \defn{algebraic} stability conditions.

These two measures of complexity are related: if there is at least one algebraic stability condition then the union $\C \cdot \stabalg{\cat{C}}$ of orbits of algebraic stability conditions under the $\C$-action is the set of stability conditions whose phase distribution is not dense.

We show that the subset $\stabalg{\cat{C}}$ is stratified by real submanifolds, each consisting of stability conditions for which the heart is fixed and a given subset of its simple objects have integral phases. Each of these strata is contractible, so the topology of $\stabalg{\cat{C}}$ is governed by the combinatorics of adjacencies of strata. It is well-known that as one moves in $\stab{C}$ the associated heart  changes by Happel--Reiten--Smal\o\ tilts. The combinatorics of tilting is encoded in the poset $\tilt{\cat{C}}$ of t-structures on $\cat{C}$ with relation $\ts{D} \leq \ts{E} \iff$ there is a finite sequence of (left) tilts from $\ts{D}$ to $\ts{E}$. Components of this poset are in bijection with components of $\stabalg{\cat{C}}$. Corollary~\ref{poset-iso} describes the precise relationship between $\tilt{\cat{C}}$ and the stratification of $\stabalg{\cat{C}}$. Using this connection we obtain our main theorem:
\begin{theoremalpha}[Lemma~\ref{finite-type cpts correspondence} and Theorem~\ref{stab contractible}]
Suppose each algebraic t-structure in some component of $\tilt{C}$ has only finitely many tilts, all of which are algebraic. Then the corresponding component of $\stabalg{C}$ is actually a component of $\stab{C}$, and moreover is contractible.
\end{theoremalpha}
We say that a component satisfying the conditions of the theorem has {\em finite-type}. The phase distribution of any stability condition in a finite-type component is discrete. It seems plausible that the converse is true, \ie that any component of $\stab{\cat{C}}$ consisting entirely of stability conditions with discrete phase distribution is a a finite-type component, but we have not been able to prove this. There are several interesting classes of examples of finite-type components. We show that if $\cat{C}$ is
\begin{itemize}
\item a locally-finite triangulated category with finite rank Grothendieck group (\cite{MR2955969}, see Section~\ref{locally-finite categories}), then any component of $\stab{C}$ is of finite-type;
\item a discrete derived category of finite global dimension (see Section~\ref{sec:DDC}), then $\stab{C}$ consists of a single finite-type component;
\item  the bounded derived category $\catGQ{N}$ of finite-dimensional representations of the Calabi--Yau-$N$ Ginzburg algebra of a Dynkin quiver $Q$, for any $N\geq 2$, then the space of stability conditions has finite-type.
\end{itemize}
The bounded derived category $\cat{D}(Q)$ of a Dynkin quiver $Q$ is both locally-finite and discrete, and the first two classes can be seen as different ways to generalise from these basic examples. Perhaps surprisingly, until now the space of stability conditions on $\cat{D}(Q)$ was only known to be contractible for $Q$ of type $A_1$ or $A_2$, although it was known by \cite{MR3281136} that it was simply-connected.

Similarly, for discrete derived categories contractibility was known before only for the simplest case, which was treated in \cite{MR2739061}. The description of the stratification of $\stab{D}$ for $\cat{D}$ a discrete derived category, from which contractibility follows, was obtained independently, and simultaneously with our results, in \cite{pauk2}. They use an alternative algebraic interpretation of the combinatorics of the stratification in terms of silting subcategories and silting mutation.

The third class of examples has been the most intensively studied. The space of stability conditions $\stabGQ{N}$ has been identified as a complex space in various cases, in each of which it is known to be contractible. The connectedness of $\stabGQ{N}$ is proven by \cite{AMY} recently for the Dynkin case.
For $N=2$ and $Q$ a quiver of type $A$ it was first studied in \cite{MR2230573}, where the stability space was shown to be the universal cover of a configuration space of points in the complex plane. Using different methods  \cite{MR2549952} identified  $\stabGQ{2}$ for any Dynkin quiver $Q$ as a covering space using a geometric description in terms of Kleinian singularities. Later \cite{MR2854121}, see also \cite{MR3281136}, showed that it was the {\em universal} cover in all these cases. When the underlying Dynkin diagram of $Q$ is $A_n$, \cite{Akishi} shows that  $\stabGQ{N}$ is the universal cover of the space of degree $n+1$ polynomials $p_n(z)$ with simple zeros.  The central charges are constructed as periods of the quadratic differential $p_n(z)^{N-2}dz^{\otimes 2}$ on $\P^1$, using the technique of \cite{MR3349833}. The $N=3$ case of this result was obtained previously in \cite{sutherland}. The $A_2$ case for arbitrary $N$, including $N=\infty$ which corresponds to stability conditions on $\cat{D}(A_2)$, was treated in \cite{ttq} using different methods. Besides, \cite{ishii2010} showed that $\stabGQ{2}$ is connected, and also that the stability space of the affine counterpart is connected and simply-connected. Our methods do not apply to this latter case. Finally, \cite{MR3498923} proved the contractibility of the principal component of
$\stabGQ{3}$ for any affine $A$ type quivers.

Although there are  several interesting classes of examples, the finiteness condition required for our theorem is strong. For instance it is not satisfied by tame representation type quivers such as the Kronecker quiver. Different methods will probably be required in these cases, because the stratification of the space of algebraic stability conditions fails to be locally-finite and closure-finite, and so is much harder to understand and utilise. Examples of alternative methods for proving the contractibility of the space of stability conditions on $\cat{D}(Q)$ can be found in  \cite{MR2219846} for the case of the Kronecker quiver, and  \cite{dk} for the case of the acyclic triangular quiver.

\subsection{Representations of braid groups}
\label{braid group reps}

One can associate a braid group $\braidQ$ to an acyclic quiver $Q$ --- it is defined by having a generator for each vertex, with a braid relation $aba=bab$ between generators whenever the corresponding vertices are connected by an arrow, and a commuting relation $ab=ba$ whenever they are not. For example, the braid group of the $A_n$ quiver is the standard braid group on $n+1$ strands.

This braid group acts on $\catGQ{N}$ by spherical twists. The image of $\braidQ$ in the group of automorphisms is the Seidel--Thomas braid group $\braidGQ{N}$. Its properties are closely connected to the topology of $\stabGQ{N}$, in particular $\stabGQ{N}$ is simply-connected whenever the Seidel--Thomas braid action on it is faithful.

The Seidel--Thomas braid group originated in the study of Kontsevich's homological mirror symmetry. On the symplectic side, Khovanov--Seidel \cite{MR1862802} showed that when $Q$ has type $A$ the category $\catGQ{N}$ can be realised as a subcategory of the derived Fukaya category of the Milnor fibre of a simple singularity of type $A$. Here $\braidQ$ acts as (higher) Dehn twists along Lagrangian spheres, and they proved this actions is faithful. On the algebraic geometry side, Seidel--Thomas \cite{MR1831820} studied the mirror counterpart of \cite{MR1862802}; here $\catGQ{N}$ can be realised as a subcategory of the bounded derived category of coherent sheaves of the mirror variety.

The proofs of faithfulness of the braid group action by Khovanov--Seidel--Thomas (\cite{MR1862802,MR1831820}) depend on the existence of a faithful geometric representation of the braid group in the mapping class group of a surface. Such faithful actions are known to exist by Birman--Hilden \cite{MR0325959} when $Q$ has type $A$, and by Perron--Vannier \cite{MR1411346} when $Q$ has type $D$. Surprisingly, Wajnryb \cite{MR1719815} showed that there is no such faithful geometric representation of the braid group of type $E$, so this method of proof cannot be generalised to all Dynkin quivers. A different approach, relying on the \emph{Garside structure} on the braid group $\braidQ$, was used by Brav--H.Thomas \cite{MR2854121} to prove that the braid group action on $\catGQ{2}$ is faithful for all Dynkin quivers $Q$. The $N=2$ case is the simplest because $\braidQ$ acts transitively on the tilting poset $\tiltGQ{N}$; this is not so for $N\geq3$. Nevertheless, we are able to `bootstrap' from the $N=2$ case to prove:
\begin{theoremalpha}[Corollaries~\ref{CYN contractibility},~\ref{braid action free 1}, and~\ref{braid action free 2}]
For any Dynkin quiver $Q$ and any $N\geq 2$ the action of $\braidQ$ on $\catGQ{N}$ is faithful, and the induced action on $\stabGQ{N}$ is free. Moreover, $\stabGQ{N}$ is contractible and the finite-dimensional complex manifold  $\stabGQ{N}/\braidQ$ is a model for the classifying space of $\braidQ$.
\end{theoremalpha}

\subsection*{Acknowledgments}
We would like to thank Alastair King for interesting and helpful discussion of this material. Nathan Broomhead, David Pauksztello, and David Ploog were kind enough to share an early version of their preprint \cite{pauk2}. They were also very helpful in explaining the translation between their approach via silting subcategories and the one in this paper via algebraic t-structures.
The second author would also like to thank, sadly posthumously, Michael Butler for his interest in this work, and his guidance on matters algebraic. He is much missed.

\section{Preliminaries}
\label{preliminaries}
Throughout the paper, ${k}$ is a fixed (not necessarily algebraically-closed) field.
The Grothendieck group of an abelian, or triangulated, category $\cat{C}$ is denoted by $K\cat{C}$.

 The bounded derived category of the path algebra $kQ$ of a quiver $Q$ is denoted $\cat{D}(Q)$ and the bounded derived category  of finite-dimensional representations of the Calabi--Yau-$N$ Ginzburg algebra of a Dynkin quiver $Q$, for $N\geq 2$, is denoted $\catGQ{N}$. The bounded derived category of coherent sheaves on a variety $X$ over $k$ is denoted $\cat{D}(X)$. The spaces of locally-finite stability conditions on these triangulated categories are denoted by $\Stab(Q)$, by $\Stab(\Gamma_NQ)$ and by $\Stab(X)$ respectively.

\subsection{Posets}
\label{posets}
Let $P$ be a poset. We denote the closed interval
\[
\{ r\in P \ \colon\  p\leq r\leq q\}
\]
 by $[p,q]$, and similarly use the notation $(-\infty,p]$ and $[p,\infty)$ for bounded above and below intervals. A poset is \defn{bounded} if it has both a minimal and a maximal element. A \defn{chain}  of length $k$ in a poset $P$ is a sequence $p_0 < \cdots < p_k$ of elements. One says $q$ \defn{covers} $p$ if $p<q$ and there does not exist $r\in P$ with $p < r < q$. A chain $p_0< \cdots < p_k$ is said to be \defn{unrefinable} if $p_i$ covers $p_{i-1}$ for each $i=1,\ldots,k$. A \defn{maximal} chain is an unrefinable chain in which $p_0$ is a minimal element and $p_k$ a maximal one. A poset is \defn{pure} if all maximal chains have the same length; the common length is then called the \defn{length} of the poset.

A poset determines a simplicial set whose $k$-simplices are the non-strict chains $p_0 \leq \cdots \leq p_k$ in $P$. The \defn{classifying space} $BP$ of $P$ is the geometric realisation of this simplicial set. If we view $P$ as a category with objects the elements and a (unique) morphism $p \to q$ whenever $p\leq q$, the above simplicial set is the \defn{nerve}, and $BP$ is the classifying space of the category in the usual sense, see \cite[\S 2]{quillen}.

Elements $p$ and $q$ are said to be in the same \defn{component} of $P$ if there is a sequence of elements $p=p_0, p_1,\ldots,p_k=q$ such that either $p_i \leq p_{i+1}$ or $p_i \geq p_{i+1}$ for each $i=0,\ldots,k-1$; equivalently if the $0$-simplices corresponding  to $p$ and $q$ are in the same component of the classifying space $BP$.

The classifying space is a rather crude invariant of $P$. For example, there is a homeomorphism $BP \cong BP^\text{op}$, and if each finite set of elements has an upper bound (or a lower bound) then the classifying space $BP$ is contractible by \cite[Corollary 2]{quillen} since $P$, considered as a category,  is filtered.

\subsection{t-structures}
\label{poset of t-structures}

We fix some notation. Let $\cat{C}$ be an additive category. We write $c\in \cat{C}$ to mean $c$ is an object of $\cat{C}$. We will use the term \defn{subcategory} to mean strict, full subcategory. When $S$ is a subcategory we write $\cat{S}^\perp$ for the subcategory on the objects
\[
\{ c \in \cat{C} \ : \ \Mor{\cat{C}}{s}{c} = 0 \ \forall s \in \cat{S} \}
\]
and similarly ${}^\perp\cat{S}$ for $\{ c \in \cat{C} \ : \ \Mor{\cat{C}}{c}{s} = 0 \ \forall s \in \cat{S} \}$. When $\cat{A}$ and $\cat{B}$ are subcategories of $\cat{C}$ we write $\cat{A}\cap \cat{B}$ for the subcategory on objects which lie in both $\cat{A}$ and $\cat{B}$.

Suppose $\cat{C}$ is triangulated, with shift functor $[1]$. Exact triangles in $\cat{C}$ will be denoted either by $a\to b\to c \to a[1]$ or by a diagram
\begin{center}
\begin{tikzcd}
a \ar{rr} && b \ar{dl} \\
& c \ar[dashed]{ul} &
\end{tikzcd}
\end{center}
where the dotted arrow denotes a map $c \to a[1]$. We will always assume that $\cat{C}$ is essentially small so that isomorphism classes of objects form a set. Given sets $E_i$ of objects for $i\in I$  let $\langle E_i\ |\ i\in I \rangle$ denote the ext-closed subcategory generated by objects isomorphic to an element  in some $E_i$. We will use the same notation when the $E_i$ are subcategories of $\cat{C}$.

\begin{definition}
A \defn{t-structure} on a triangulated category $\cat{C}$ is an ordered pair $\ts{D} = (\aisle{D}{0},\coaisle{D}{1})$ of subcategories, satisfying:
\begin{enumerate}
\item $\aisle{D}{0}[1] \tsleq \aisle{D}{0}$ and $\coaisle{D}{1}[-1] \tsleq \coaisle{D}{1}$;
\item $\Mor{\cat{C}}{d}{d'}=0$ whenever $d\in \aisle{D}{0}$ and $d'\in\coaisle{D}{1}$;
\item for any $c\in \cat{C}$ there is an exact triangle $d \to c \to d' \to d[1]$ with $d \in \aisle{D}{0}$ and $d'\in\coaisle{D}{1}$.
\end{enumerate}
We write $\aisle{D}{n}$ to denote the shift $\aisle{D}{0}[-n]$, and so on. The subcategory $\aisle{D}{0}$ is called  the \defn{aisle} and $\coaisle{D}{0}$ the \defn{co-aisle} of the t-structure. The intersection $\heart{D}=\coaisle{D}{0} \cap \aisle{D}{0}$ of the aisle and co-aisle is an abelian category  known as the \defn{heart} of the t-structure --- see \cite[Th\'eor\`eme 1.3.6]{bbd} or \cite[\S 10.1]{ks}.
\end{definition}
The exact triangle $d\to c \to d' \to d[1]$ is unique up to isomorphism. The first term determines a right adjoint to the inclusion $\aisle{D}{0} \hookrightarrow \cat{C}$ and the last term a left adjoint to the inclusion $\coaisle{D}{1} \hookrightarrow \cat{C}$.

A t-structure $\ts{D}$ is \defn{bounded} if any object of $\cat{C}$ lies in $\coaisle{D}{-n} \cap \aisle{D}{n}$ for some $n\in \N$.
\begin{quote}
{\em Henceforth, we will assume that all t-structures are bounded.}
\end{quote}
This has three important consequences. Firstly, a bounded t-structure is completely determined by its heart; the aisle is recovered as
\[\cat{D}^{\leq0}=
\langle \heart{D}, \cat{D}^{-1} , \cat{D}^{-2} , \ldots \rangle.
\]
Secondly, the inclusion $\heart{D} \hookrightarrow \cat{C}$ induces an isomorphism $K \heart{D} \cong K\cat{C}$ of Grothendieck groups. Thirdly, if $\heart{D} \tsleq \heart{E}$ are hearts of bounded t-structures then $\ts{D}=\ts{E}$.

Under our assumption that $\cat{C}$ is essentially small, there is a {\em set} of t-structures on $\cat{C}$ (because t-structures correspond to aisles, and the latter are uniquely specified by certain subsets of the set of isomorphism classes of objects). In contrast, \cite{MR2652219} shows that t-structures on the derived category of all abelian groups (not necessarily finitely-generated) form a proper class.
\begin{definition}
Let $\PT{C}$ be the poset of bounded t-structures on $\cat{C}$, ordered by inclusion of the aisles. Abusing notation write  $\ts{D} \tsleq \ts{E}$ to mean $\aisle{D}{0} \tsleq \aisle{E}{0}$.
\end{definition}
There is a natural action of $\Z$ on $\PT{C}$ given by shifting: we write $\ts{D}[n]$ for the t-structure $(\aisle{D}{-n},\coaisle{D}{-n+1})$. Note that $\ts{D}[1] \tsleq \ts{D}$, and not vice versa.

\subsection{Torsion structures and tilting}
\label{tilting}

The notion of torsion structure, also known as a torsion/torsion-free pair, is an abelian analogue of that of t-structure; the notions are related by the process of tilting.

\begin{definition}
A \defn{torsion structure} on an abelian category $\cat{A}$ is an ordered pair $\ts{T} = (\aisle{T}{0},\coaisle{T}{1})$ of subcategories satisfying
\begin{enumerate}
\item $\Mor{\cat{A}}{t}{t'} = 0$ whenever $t\in \aisle{T}{0}$ and $t'\in\coaisle{T}{1}$;
\item for any $a\in \cat{A}$ there is a short exact sequence $0\to t \to a \to t' \to 0$ with $t \in \aisle{T}{0}$ and $t'\in\coaisle{T}{1}$.
\end{enumerate}
The subcategory $\aisle{T}{0}$ is given by the \defn{torsion theory} of $\ts{T}$, and $\coaisle{T}{1}$ by the \defn{torsion-free theory}; the motivating example is the subcategories of torsion and torsion-free abelian groups.
\end{definition}

The short exact sequence $0\to t\to a \to t' \to 0$ is unique up to isomorphism. The first term determines a right adjoint to the inclusion $\aisle{T}{0} \hookrightarrow \cat{A}$ and the last term a left adjoint to the inclusion $\coaisle{T}{1} \hookrightarrow \cat{A}$. It follows that $\aisle{T}{0}$ is closed under factors, extensions and coproducts and that $\coaisle{T}{1}$ is closed under subobjects, extensions and products. Torsion structures in $\cat{A}$, ordered by inclusion of their torsion theories, form a poset. It is bounded, with minimal element $(0,\cat{A})$ and maximal element $(\cat{A},0)$.
\begin{proposition}[{\cite[Proposition 2.1]{Happel:1996uq}, \cite[Theorem 3.1]{MR2327478}}]
\label{HRS}
Let  $\ts{D}$ be a t-structure on  a triangulated category $\cat{C}$.  Then there is a canonical isomorphism between the poset of torsion structures in the heart $\heart{D}$ and the interval $\its{\ts{D}}{\ts{D}[-1]}$ in $\PT{C}$ consisting of t-structures $\ts{E}$ with $\ts{D} \tsleq \ts{E} \tsleq \ts{D}[-1]$.
\end{proposition}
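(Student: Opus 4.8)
The plan is to produce explicit, mutually inverse, order-preserving maps between the poset of torsion structures in $\heart{D}$ and the interval $\its{\ts{D}}{\ts{D}[-1]}$, working throughout with the cohomology functors $H^n_\ts{D}\colon\cat{C}\to\heart{D}$ of the bounded $t$--structure $\ts{D}$. A useful preliminary reformulation: since $\coaisle{D}{1}=(\aisle{D}{0})^\perp$, the condition $\ts{D}\tsleq\ts{E}\tsleq\ts{D}[-1]$ is equivalent to the nested aisle inclusions $\aisle{D}{0}\subset\aisle{E}{0}\subset\aisle{D}{1}$, and dually to $\coaisle{D}{1}\subset\coaisle{E}{0}\subset\coaisle{D}{0}$. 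Three consequences get used repeatedly: $\aisle{E}{0}[1]\subset\aisle{D}{0}$; $\heart{D}[-1]\subset\coaisle{D}{1}\subset\coaisle{E}{0}$; and $\heart{E}\subset\aisle{D}{1}\cap\coaisle{D}{0}$, so that $H^n_\ts{D}$ vanishes on $\heart{E}$ for $n\notin\{0,1\}$.

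The forward map is Happel--Reiten--Smal\o\ tilting: to a torsion structure $\ts{T}=(\cat{T},\cat{F})$ in $\heart{D}$ associate the pair
\[
\aisle{E}{0}=\{c\in\aisle{D}{1}:H^1_\ts{D}(c)\in\cat{T}\},\qquad
\coaisle{E}{1}=\{c\in\coaisle{D}{1}:H^1_\ts{D}(c)\in\cat{F}\}.
\]
That this is a bounded $t$--structure $\ts{E}$, with heart $\langle\cat{F},\cat{T}[-1]\rangle$, is \cite[Proposition 2.1]{Happel:1996uq} (see also \cite[Theorem 3.1]{MR2327478}). From the cohomological description one reads off $\aisle{D}{0}\subset\aisle{E}{0}\subset\aisle{D}{1}$, so $\ts{E}\in\its{\ts{D}}{\ts{D}[-1]}$ — with $\cat{T}=0$ and $\cat{T}=\heart{D}$ recovering the endpoints $\ts{D}$ and $\ts{D}[-1]$ — and enlarging $\cat{T}$ enlarges $\aisle{E}{0}$, so the assignment is order-preserving.

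The reverse map sends $\ts{E}\in\its{\ts{D}}{\ts{D}[-1]}$ to $(\cat{T},\cat{F})=(\heart{D}\cap\heart{E}[1],\ \heart{D}\cap\heart{E})$. To see this is a torsion structure in $\heart{D}$: for $t\in\cat{T}$ and $f\in\cat{F}$ one has $t[-1]\in\heart{E}\subset\aisle{E}{0}$ and $f[-1]\in\heart{E}[-1]\subset\coaisle{E}{1}$, so $\Mor{\heart{D}}{t}{f}=\Mor{\cat{C}}{t[-1]}{f[-1]}=0$; and for $a\in\heart{D}$ the $\ts{E}$--truncation triangle $a'\to a\to a''\to a'[1]$, with $a'\in\aisle{E}{0}[1]$ and $a''\in\coaisle{E}{0}$, has both $a'$ and $a''$ back in $\heart{D}$ (here the inclusions $\aisle{E}{0}[1]\subset\aisle{D}{0}$ and $\heart{D}[-1]\subset\coaisle{D}{1}$, together with the closure properties of aisles and co-aisles, are exactly what is needed), so the triangle is a short exact sequence $0\to a'\to a\to a''\to 0$ in $\heart{D}$ with $a'\in\cat{T}$ and $a''\in\cat{F}$. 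This map is order-preserving too: if $\ts{E}_1\tsleq\ts{E}_2$ then any $t\in\heart{D}\cap\heart{E_1}[1]$ has $t[-1]\in\aisle{E_1}{0}\subset\aisle{E_2}{0}$ and $t[-1]\in\heart{D}[-1]\subset\coaisle{E_2}{0}$, hence $t[-1]\in\heart{E_2}$.

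Finally, the two composites are the identity, which I would check by chasing $\ts{D}$--cohomology: if $\ts{E}$ has heart $\langle\cat{F},\cat{T}[-1]\rangle$ then $\heart{D}\cap\heart{E}=\cat{F}$ and $\heart{D}\cap\heart{E}[1]=\cat{T}$ by inspection; conversely any $x\in\heart{E}$ has $\ts{D}$--cohomology only in degrees $0,1$, and its $\ts{D}$--truncation triangle $H^0_\ts{D}(x)\to x\to H^1_\ts{D}(x)[-1]\to H^0_\ts{D}(x)[1]$ exhibits $x$ as an extension with terms in $\cat{F}$ and $\cat{T}[-1]$, so $\heart{E}=\langle\cat{F},\cat{T}[-1]\rangle$ and $\ts{E}$ is recovered, bounded $t$--structures being determined by their hearts. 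I expect the one genuinely delicate point to be the verification that the reverse map lands in torsion structures — that the $\ts{E}$--truncation of an object of $\heart{D}$ keeps both terms inside $\heart{D}$ and that these fall into $\cat{T}$ and $\cat{F}$ — which is precisely where the position of $\ts{E}$ strictly between $\ts{D}$ and $\ts{D}[-1]$ enters; everything else reduces to the cited HRS theorem and routine manipulation of shifts.
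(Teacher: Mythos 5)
Your proof is correct, but note that the paper never proves this proposition at all: it is quoted from Happel--Reiten--Smal\o\ and Beligiannis--Reiten, so there is no internal argument to compare against, and what you have written is essentially the standard proof from those sources --- HRS tilting for the forward map, the pair $(\heart{D}\cap\heart{E}[1],\,\heart{D}\cap\heart{E})$ for the reverse map, and cohomological bookkeeping for the two composites. The one place you compress more than your own standards elsewhere in the argument is the second composite: for $x\in\heart{E}$ it is not quite ``by inspection'' that $H^0_\ts{D}(x)\in\cat{F}$ and $H^1_\ts{D}(x)\in\cat{T}$, as opposed to merely lying in $\heart{D}$; this needs the same extension-closure argument you used to show the reverse map is well defined. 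Concretely, rotating the $\ts{D}$--truncation triangle of $x$ exhibits $H^0_\ts{D}(x)$ as an extension of $x\in\coaisle{E}{0}$ by $H^1_\ts{D}(x)[-2]\in\coaisle{D}{2}\subset\coaisle{E}{0}$, so $H^0_\ts{D}(x)\in\coaisle{E}{0}$, and it already lies in $\aisle{D}{0}\subset\aisle{E}{0}$, hence in $\heart{E}$, i.e.\ in $\cat{F}$; dually $H^1_\ts{D}(x)[-1]$ is an extension of $H^0_\ts{D}(x)[1]\in\aisle{D}{0}\subset\aisle{E}{0}$ by $x\in\aisle{E}{0}$, hence lies in $\aisle{E}{0}$, and it lies in $\heart{D}[-1]\subset\coaisle{D}{1}\subset\coaisle{E}{0}$, so $H^1_\ts{D}(x)\in\cat{T}$. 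With that spelled out you get $\heart{E}\subset\langle\cat{F},\cat{T}[-1]\rangle$, and the fact recorded in \S\ref{poset of t-structures} that nested hearts of bounded $t$--structures coincide finishes the identification $\ts{E}=\lt{D}{\cat{T}}$; the rest of your argument (order-preservation in both directions and the first composite) is complete as written.
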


Let $\ts{D}$ be a t-structure on a triangulated category $\cat{C}$. It follows from Proposition~\ref{HRS} that a torsion structure $\ts{T}$ in the heart $\heart{D}$ determines a new t-structure
\[
\lt{D}{\cat{T}} = \left( \langle \aisle{D}{0}, \aisle{T}{1} \rangle , \langle \coaisle{T}{2}, \coaisle{D}{2} \rangle \right)
\]
called the \defn{left tilt} of $\ts{D}$ at $\ts{T}$, where by definition $\aisle{T}{k} = \aisle{T}{0}[-k]$ and similarly $\coaisle{T}{k} = \coaisle{T}{1}[1-k]$. The heart of the left tilt is $\langle \aisle{T}{1}, \coaisle{T}{1}\rangle$ and $\ts{D} \tsleq \lt{D}{\cat{T}} \tsleq \ts{D}[-1]$. The shifted t-structure $\rt{D}{\cat{T}} = \lt{D}{\cat{T}}[1]$ is called the \defn{right tilt} of $\ts{D}$ at $\ts{T}$. It has heart $\langle \aisle{T}{0}, \coaisle{T}{0}\rangle$ and $\ts{D}[1] \tsleq \rt{D}{\cat{T}} \tsleq \ts{D}$. Left and right tilting are inverse to one another: $\left( \coaisle{T}{1}, \aisle{T}{1}\right)$ is a torsion structure on the heart of $\lt{D}{\cat{T}}$, and right tilting with respect to this we recover the original t-structure. Similarly, $\left( \coaisle{T}{0}, \aisle{T}{0}\right)$ is a torsion structure on the heart of $\rt{D}{\cat{T}}$, and left tilting with respect to this we return to $\ts{D}$. Since there is a correspondence between bounded t-structures and their hearts we will, where convenient, speak of the left or right tilt of a heart.

\begin{definition}

Let the \defn{tilting poset} $\tilt{C}$ be the poset of t-structures with $\ts{D} \tileq \ts{E}$ if and only if there is a finite sequence of left tilts from $\ts{D}$ to $\ts{E}$.
\end{definition}
\begin{remark}
\label{tilting poset rmks}
An easy induction shows that if $\ts{D} \tileq \ts{E}$ then $\ts{D} \tsleq \ts{E} \tsleq \ts{D}[-k]$ for some $k\in \N$.

It follows that the identity on elements is a map of posets $\tilt{C} \to \PT{C}$. By Proposition~\ref{HRS}, if $\ts{D} \tsleq \ts{E} \tsleq \ts{D}[-1]$ then $\ts{D} \tileq \ts{E} \iff \ts{D} \tsleq \ts{E}$, so that the map induces an isomorphism $\itilt{\ts{D}}{\ts{D}[-1]} \cong \its{\ts{D}}{\ts{D}[-1]}$.
\end{remark}

\begin{lemma}
\label{tilting bound}
Suppose $\ts{D}$ and $\ts{E}$ are in the same component of $\tilt{C}$. Then $\ts{F} \tileq \ts{D},\ts{E} \tileq \ts{G}$ for some $\ts{F},\ts{G}$. (We do not claim that $\ts{F}$ and $\ts{G}$ are the infimum and supremum, simply that lower and upper bounds exist.)
\end{lemma}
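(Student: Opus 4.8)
The plan is to exploit the shift automorphisms of $\tilt{C}$. First I would observe that, for each $m\in\Z$, the operation $\ts{D}\mapsto\ts{D}[m]$ is an automorphism of $\tilt{C}$: a $t$--structure $\ts{E}$ is a left tilt of $\ts{D}$ precisely when $\ts{D}\tsleq\ts{E}\tsleq\ts{D}[-1]$, and $\tsleq$ is shift-invariant, so shifting carries a finite sequence of left tilts to another such sequence. I would also note that $\ts{D}\tileq\ts{D}[-1]$: since $\ts{D}\tsleq\ts{D}[-1]$ (a shift of $\ts{D}[1]\tsleq\ts{D}$), Remark~\ref{tilting poset rmks} upgrades this to $\ts{D}\tileq\ts{D}[-1]$. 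Iterating and combining with the shift automorphisms then gives $\ts{D}[m]\tileq\ts{D}\tileq\ts{D}[-m]$ for every $m\geq0$.

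The key step, which I expect to be the main obstacle, is the following: \emph{if $\ts{A}\tileq\ts{B}$ then $\ts{B}[r]\tileq\ts{A}$ for some $r\geq0$}. To prove it I would fix a sequence $\ts{A}=\ts{A}_0,\ts{A}_1,\dots,\ts{A}_r=\ts{B}$ of single left tilts realising $\ts{A}\tileq\ts{B}$, so that $\ts{A}_i\tsleq\ts{A}_{i+1}\tsleq\ts{A}_i[-1]$ for each $i$. Shifting by $[1]$ gives $\ts{A}_{i+1}[1]\tsleq\ts{A}_i\tsleq\ts{A}_{i+1}$, and since $\ts{A}_{i+1}[1][-1]=\ts{A}_{i+1}$, Remark~\ref{tilting poset rmks} yields $\ts{A}_{i+1}[1]\tileq\ts{A}_i$. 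I would then induct on $j$ to prove $\ts{A}_r[j]\tileq\ts{A}_{r-j}$: the case $j=1$ is what was just shown, and from $\ts{A}_r[j]\tileq\ts{A}_{r-j}$ one applies the shift automorphism $[1]$ to get $\ts{A}_r[j+1]\tileq\ts{A}_{r-j}[1]$ and composes with $\ts{A}_{r-j}[1]\tileq\ts{A}_{r-j-1}$. Taking $j=r$ gives $\ts{B}[r]\tileq\ts{A}$. The delicate point here is keeping track of how the ``width-one'' criterion of Remark~\ref{tilting poset rmks} interacts with the iterated shifts along the chain of tilts; everything else is formal manipulation of the shift action and transitivity of $\tileq$.

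Finally I would deduce the lemma. By induction on the length $k$ of a zigzag $\ts{D}=\ts{D}_0,\dots,\ts{D}_k=\ts{E}$ of $\tilt{C}$-comparable elements, I claim $\ts{D}[n]\tileq\ts{E}$ for some $n\geq0$; the case $k=0$ is trivial, and for the step one picks $n'$ with $\ts{D}[n']\tileq\ts{D}_{k-1}$ and either concludes by transitivity (if $\ts{D}_{k-1}\tileq\ts{D}_k$) or uses the key step to get $r$ with $\ts{D}_{k-1}[r]\tileq\ts{D}_k$ and shifts $\ts{D}[n']\tileq\ts{D}_{k-1}$ by $[r]$ (if $\ts{D}_k\tileq\ts{D}_{k-1}$), obtaining $\ts{D}[n'+r]\tileq\ts{D}_k$. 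Now given $\ts{D}$ and $\ts{E}$ in the same component, choose $n,m\geq0$ with $\ts{D}[n]\tileq\ts{E}$ and (by symmetry) $\ts{E}[m]\tileq\ts{D}$. Then $\ts{F}:=\ts{D}[n]$ satisfies $\ts{F}\tileq\ts{D}$ and $\ts{F}\tileq\ts{E}$, and $\ts{G}:=\ts{D}[-m]$ satisfies $\ts{D}\tileq\ts{G}$ and, after shifting $\ts{E}[m]\tileq\ts{D}$ by $[-m]$, also $\ts{E}\tileq\ts{G}$. Both $\ts{F}$ and $\ts{G}$ are $\tilt{C}$-comparable to $\ts{D}$, hence lie in its component, so they are the desired lower and upper bounds.
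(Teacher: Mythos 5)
Your proof is correct, but it takes a genuinely different route from the paper. The paper's proof is a local rearrangement of the connecting zigzag: if $\ts{D}$ and $\ts{E}$ are (single) left tilts of a common $\ts{H}$, then both are right tilts of $\ts{H}[-1]$, so every ``valley'' in a zigzag of single tilts can be replaced by a ``peak''; iterating converts the zigzag into a sequence of left tilts followed by right tilts (or vice versa), and the turning point of the rearranged path is the desired bound. You instead exploit that each shift $[m]$ is an automorphism of $\tilt{C}$ together with Remark~\ref{tilting poset rmks}, proving the key intermediate claim that $\ts{A}\tileq\ts{B}$ forces $\ts{B}[r]\tileq\ts{A}$ for some $r\geq 0$ (your verification of $\ts{A}_{i+1}[1]\tileq\ts{A}_i$ via the width-one interval is exactly right, and is the same basic fact the paper uses, just iterated along the chain rather than used to swap valleys), and then an induction along the zigzag produces explicit bounds $\ts{F}=\ts{D}[n]$ and $\ts{G}=\ts{D}[-m]$. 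Each approach buys something: yours is self-contained, gives bounds that are shifts of $\ts{D}$ itself, and adapts immediately to the algebraic poset (Lemma~\ref{alg tilting bound}) since all the intermediate $t$--structures it produces are shifts of members of the original chain; the paper's rearrangement, on the other hand, keeps track of the actual reordered path of tilts, which is what gets reused in the proof of Lemma~\ref{lattice} to manufacture suprema and infima, something the shift argument alone does not provide.
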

\begin{proof}
If $\ts{D}$ and $\ts{E}$ are left tilts of some t-structure $\ts{H}$ then they are right tilts of $\ts{H}[-1]$, and vice versa. It follows that we can replace an arbitrary sequence of left and right tilts connecting $\ts{D}$ with $\ts{E}$ by a sequence of left tilts followed by a sequence of right tilts, or vice versa.
\end{proof}

\subsection{Algebraic t-structures}
\label{algebraic t-str}

We say an abelian category is \defn{algebraic} if it is a length category with finitely many isomorphism classes of simple objects. To spell this out, this means it is both Artinian and Noetherian so that every object has a finite composition series. By the Jordan-H\"older theorem, the graded object associated to such a composition series is unique up to isomorphism.
For instance, the module category $\operatorname{mod}A$ of a finite-dimensional algebra $A$ is algebraic.

The classes of the simple objects in an algebraic abelian category form a basis for the Grothendieck group, which is isomorphic to $\Z^n$, where $n$ is the number of such classes. A t-structure $\ts{D}$ is \defn{algebraic} if its heart $\heart{D}$ is. If $\cat{C}$ admits an algebraic t-structure then the heart of any other t-structure on $\cat{C}$ which is a length category must also have exactly $n$ isomorphism classes of simple objects, and therefore must be algebraic, since the two hearts have isomorphic Grothendieck groups.

Let  the \defn{algebraic tilting poset} $\tiltalg{C}$ be  the poset  consisting of the algebraic t-structures, with $\ts{D} \atileq \ts{E}$ when $\ts{E}$ is obtained from $\ts{D}$ by a finite sequence of left tilts, via  algebraic t-structures. Clearly
\[
\ts{D} \atileq \ts{E} \ \Rightarrow\ \ts{D} \tileq \ts{E} \ \Rightarrow\ \ts{D} \tsleq \ts{E},
\]
and there is an injective map of posets $\tiltalg{C} \to \tilt{C}$.
\begin{remark}
\label{BPP1}
There is an alternative algebraic description of $\tiltalg{C}$ when $\cat{C} = \cat{D}(A)$ is the bounded derived category of a finite-dimensional algebra $A$, of finite global dimension, over an algebraically-closed field. By \cite[Lemma 4.1]{pauk2} the poset $\mathbb{P}_1(\cat{C})$ of silting subcategories in $\cat{C}$ is the sub-poset of $\PT{C}^\text{op}$ consisting of the algebraic t-structures, and under this identification silting mutation in $\mathbb{P}_1(\cat{C})$ corresponds to (admissible) tilting in $\PT{C}^\text{op}$. Moreover, it follows from \cite[\S2.6]{MR2927802} that the partial order in $\mathbb{P}_1(\cat{C})$ is generated by silting mutation, so that $\ts{D} \tsleq \ts{E}\iff \ts{D} \atileq \ts{E}$ for algebraic $\ts{D}$ and $\ts{E}$. Hence $\tiltalg{C} \cong \mathbb{P}_1(\cat{C})^\text{op}$.

If $A$ does not have finite global dimension, then a similar result holds but we must replace the poset of silting subcategories in $\cat{C}$, with the analogous poset in the bounded homotopy category of finitely-generated projective modules.
\end{remark}
\begin{lemma}
\label{alg implies comparable}
Suppose $\ts{D}$ and $\ts{E}$ are t-structures and that $\ts{E}$ is algebraic. Then $\ts{E} \tsleq \ts{D}[-d]$ for some $d\in \N$.
\end{lemma}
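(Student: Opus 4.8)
The plan is to reduce the required inclusion of aisles to a finiteness statement about the simple objects of $\heart{E}$, and then to exploit the boundedness of $\ts{D}$.

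Since $\ts{E}$ is algebraic, $\heart{E}$ is a length category with finitely many simple objects $S_1,\dots,S_n$. Because $\ts{E}$ is bounded, $\aisle{E}{0}=\langle \heart{E}[k]\ \colon\ k\geq 0\rangle$, and since every object of the length category $\heart{E}$ has a finite composition series with subquotients among the $S_i$, in fact $\aisle{E}{0}=\langle S_i[k]\ \colon\ 1\leq i\leq n,\ k\geq 0\rangle$. So it suffices to find a single $d\in\N$ with $S_i[k]\in\aisle{D}{d}$ for all $i$ and all $k\geq 0$, because $\aisle{D}{d}$ is ext-closed and hence will then contain all of $\aisle{E}{0}$.

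First I would invoke the boundedness of $\ts{D}$: for each $i$ there is $m_i\in\N$ with $S_i\in\aisle{D}{m_i}$. Put $d=\max_i m_i$. Since the aisles satisfy $\aisle{D}{m}\subset\aisle{D}{m'}$ for $m\leq m'$, we get $S_i\in\aisle{D}{d}$ for every $i$. Next, $\aisle{D}{d}=\aisle{D}{0}[-d]$ is closed under $[1]$ (as $\aisle{D}{0}[1]\subset\aisle{D}{0}$), so $S_i[k]\in\aisle{D}{d}$ for all $k\geq 0$. Being ext-closed, $\aisle{D}{d}$ then contains $\langle S_i[k]\ \colon\ i,k\geq 0\rangle=\aisle{E}{0}$. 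This is precisely the inclusion of aisles $\aisle{E}{0}\subset\aisle{D}{0}[-d]$ that defines $\ts{E}\tsleq\ts{D}[-d]$.

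I do not expect a genuine obstacle here; the only points requiring care are the shift conventions — that $\aisle{D}{m}$ grows with $m$ and that $\ts{D}[-d]$ has aisle $\aisle{D}{d}$ — and keeping straight which boundedness hypothesis is used where: boundedness of $\ts{E}$ to generate $\aisle{E}{0}$ from shifts of its heart, and boundedness of $\ts{D}$ to place each simple of $\heart{E}$ in some $\aisle{D}{m_i}$.
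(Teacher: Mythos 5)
Your proof is correct and follows essentially the same route as the paper's: place each of the finitely many simple objects of $\heart{E}$ in some $\aisle{D}{k}$ using boundedness of $\ts{D}$, take the maximum $d$, and conclude $\aisle{E}{0}\subset\aisle{D}{d}$ by shift- and extension-closure of the aisle. Your write-up just spells out the generation of $\aisle{E}{0}$ by shifts of simples a little more explicitly than the paper does.
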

\begin{proof}
Since $\ts{D}$ is bounded each simple object $s$ of the heart $\heart{E}$ is in $\aisle{D}{k_s}$ for some $k_s\in \Z$. Then $\heart{E} \tsleq \aisle{D}{d}$ for $d = \max_s \{k_s\}$ --- the maximum exists since there are finitely many simple objects in $\heart{E}$ --- and this implies $\ts{D} \tsleq \ts{D}[-d]$.
\end{proof}
\begin{remark}
It follows that $B\PT{C}$ is contractible whenever $\cat{C}$ admits an algebraic t-structure. To see this let $T_N(\cat{C})$ for $N\in \N$ be the sub-poset on $\{\ts{D} \mid \ts{E}[N] \tsleq \ts{D}\}$.  Note that $BT_N(\cat{C})$ is the cone on the vertex corresponding to $\ts{E}[N]$, hence is contractible. The above lemma implies that $B\PT{C}$ is the colimit of the diagram
\[
BT_0(\cat{C})\hookrightarrow BT_1(\cat{C})\hookrightarrow BT_2(\cat{C}) \hookrightarrow \cdots
\]
of contractible spaces. Hence it is also contractible.
\end{remark}

\begin{lemma}
\label{alg tilting bound}
Suppose $\ts{D}$ and $\ts{E}$ are in the same component of $\tiltalg{C}$. Then $\ts{F} \atileq \ts{D},\ts{E} \atileq \ts{G}$ for some $\ts{F},\ts{G}$ in that component.
\end{lemma}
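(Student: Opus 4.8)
The plan is to produce the required bounds $\ts{F}$ and $\ts{G}$ as \emph{shifts} of $\ts{D}$ itself. Two facts make this possible, and they are exactly what distinguishes the algebraic setting from a naive imitation of Lemma \ref{tilting bound}: \emph{(a)} every shift $\ts{D}[n]$ of an algebraic $t$--structure is again algebraic, since its heart $\heart{D}[n]$ is equivalent as an abelian category to $\heart{D}$; and \emph{(b)} $\ts{D}[-1]$ is the top element of the interval $\its{\ts{D}}{\ts{D}[-1]}$ and hence, by Proposition \ref{HRS}, is a left tilt of $\ts{D}$, so, being algebraic by \emph{(a)}, it satisfies $\ts{D}\atileq\ts{D}[-1]$. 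Iterating \emph{(b)} gives $\ts{D}\atileq\ts{D}[-n]$ and $\ts{D}[n]\atileq\ts{D}$ for all $n\in\N$, and, since shifting carries a left tilt between algebraic $t$--structures to a left tilt between algebraic $t$--structures, the relation $\atileq$ is equivariant for the shift action.

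The key lemma I would establish is that if $\ts{A}\atileq\ts{B}$ then $\ts{B}\atileq\ts{A}[-N]$ for some $N\in\N$ (and dually, if $\ts{B}\atileq\ts{A}$ then $\ts{A}[N]\atileq\ts{B}$ for some $N\in\N$, which follows by shifting the first statement by $N$). For a \emph{single} algebraic left tilt $\ts{A}\to\ts{B}$ one has $\ts{A}\tsleq\ts{B}\tsleq\ts{A}[-1]\tsleq\ts{B}[-1]$, so $\ts{A}[-1]$ lies in $\its{\ts{B}}{\ts{B}[-1]}$ and is therefore, by Proposition \ref{HRS}, a left tilt of $\ts{B}$, necessarily between algebraic $t$--structures by \emph{(a)}; hence $\ts{B}\atileq\ts{A}[-1]$. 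The general statement then follows by induction on the length $p$ of an algebraic left-tilting path $\ts{A}=\ts{A}_0\to\cdots\to\ts{A}_p=\ts{B}$: if $\ts{A}_{p-1}\atileq\ts{A}_0[-N']$ holds by induction, then, using shift-equivariance of $\atileq$, $\ts{A}_p\atileq\ts{A}_{p-1}[-1]\atileq\ts{A}_0[-N'-1]$.

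With the key lemma in hand, I would show, by induction on the length $k$ of a path $\ts{D}=\ts{Z}_0,\ldots,\ts{Z}_k=\ts{E}$ witnessing that $\ts{D}$ and $\ts{E}$ lie in the same component of $\tiltalg{C}$, that there is an integer $a\leq 0$ with $\ts{E}\atileq\ts{D}[a]$ and an integer $b\geq 0$ with $\ts{D}[b]\atileq\ts{E}$. At the last step $\ts{Z}_{k-1}\to\ts{Z}_k$, one of the two comparabilities is immediate from transitivity and the inductive hypothesis, while the other is handled by the key lemma (in its first or its dual form) together with a suitable shift. Taking $\ts{G}=\ts{D}[a]$ and $\ts{F}=\ts{D}[b]$ and invoking the consequences of \emph{(b)} in the first paragraph ($\ts{D}\atileq\ts{D}[a]$ as $a\leq 0$, and $\ts{D}[b]\atileq\ts{D}$ as $b\geq 0$) then yields $\ts{F}\atileq\ts{D}$, $\ts{F}\atileq\ts{E}$, $\ts{D}\atileq\ts{G}$ and $\ts{E}\atileq\ts{G}$; both $\ts{F}$ and $\ts{G}$ are algebraic by \emph{(a)} and lie in the same component as $\ts{D}$ since $\ts{F}\atileq\ts{D}\atileq\ts{G}$.

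The only genuine obstacle is point \emph{(b)}: recognising that, although a general Happel--Reiten--Smal\o\ tilt of an algebraic heart need not be algebraic, the shift $\ts{D}[-1]$ always is, and is always a single left tilt of $\ts{D}$. This is precisely what allows the "flip a valley to a peak" idea underlying Lemma \ref{tilting bound} to be carried out entirely within $\tiltalg{C}$; everything else is routine bookkeeping with shifts.
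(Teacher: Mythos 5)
Your proof is correct. The paper disposes of this lemma in one line: it is proved "exactly as Lemma \ref{tilting bound}", i.e.\ one takes the zigzag of tilts joining $\ts{D}$ to $\ts{E}$ and repeatedly flips each valley into a peak --- if two $t$--structures are left tilts of $\ts{H}$ then they are right tilts of $\ts{H}[-1]$ --- until the zigzag becomes a sequence of left tilts followed by right tilts (giving $\ts{G}$) or vice versa (giving $\ts{F}$), noting that every $t$--structure produced is algebraic because shifts of algebraic $t$--structures are algebraic. Your route uses the same single-step mechanism (for a left tilt $\ts{A}\to\ts{B}$, Proposition \ref{HRS} makes $\ts{A}[-1]$ a left tilt of $\ts{B}$, and algebraicity survives the shift), but organises it differently: you first record $\ts{D}\atileq\ts{D}[-1]$ and the comparison lemma that $\ts{A}\atileq\ts{B}$ forces $\ts{B}\atileq\ts{A}[-N]$, and then induct along the connecting path to produce the bounds as explicit shifts $\ts{F}=\ts{D}[b]$, $\ts{G}=\ts{D}[a]$ of the starting $t$--structure. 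What this buys you is that you avoid the rearrangement-and-termination bookkeeping of the paper's argument and obtain a slightly sharper conclusion --- the bounds may be taken to be shifts of $\ts{D}$, which upgrades Lemma \ref{alg implies comparable} from $\tsleq$ to $\atileq$ within a component, and does so without the lattice machinery the paper later invokes for a similar purpose in Lemma \ref{finite intervals}. What the paper's version buys is brevity and a construction (converting a zigzag into left tilts followed by right tilts, or vice versa) that is reused verbatim in the proof of Lemma \ref{lattice}, where one needs to track suprema and infima rather than just some upper and lower bound.
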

\begin{proof}
This is proved in exactly the same way as Lemma~\ref{tilting bound}; note that all t-structures encountered in the construction will be algebraic.
\end{proof}
 It is not clear that the poset $\PT{C}$ of t-structures is always a lattice ---  see \cite{MR3135695} for an example in which the naive meet (i.e. intersection) of t-structures is not itself a t-structure, and also \cite{MR3092711} ---  and we do not claim that the lower and upper bounds of the previous lemma are infima or suprema. We do however have the following weaker result.
\begin{lemma}
\label{local lattice}
Suppose $\ts{D}$ is algebraic (in fact it suffices for its heart to be a length category). Then for each $\ts{D} \tsleq \ts{E}, \ts{F} \tsleq \ts{D}[-1]$ there is a supremum $\ts{E} \vee \ts{F}$ and an infimum $\ts{E} \wedge \ts{F}$ in $\PT{C}$.
\end{lemma}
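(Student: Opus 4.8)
The plan is to reduce everything to the interval $\its{\ts{D}}{\ts{D}[-1]}$, where the situation is governed by torsion structures in the length category $\heart{D}$, and then to verify that the bounds obtained there are genuine bounds in all of $\PT{C}$. By Proposition~\ref{HRS} the interval $\its{\ts{D}}{\ts{D}[-1]}$ is isomorphic, as a poset, to the poset of torsion structures in $\heart{D}$ ordered by inclusion of torsion theories; let $\mathcal{T}_E$ and $\mathcal{T}_F$ be the torsion theories corresponding to $\ts{E}$ and $\ts{F}$, so that $\ts{E}$ and $\ts{F}$ are the left tilts of $\ts{D}$ at these torsion structures.

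First I would record that, because $\heart{D}$ is a length category, a full subcategory of $\heart{D}$ is a torsion theory precisely when it is closed under quotients and extensions: for such a subcategory, the largest subobject of any object lying in it exists by the noetherian property, and the quotient by it is automatically right-orthogonal. Consequently $\mathcal{T}_E\cap\mathcal{T}_F$ is a torsion theory, and so is $\mathrm{Filt}(\mathcal{T}_E\cup\mathcal{T}_F)$, the subcategory of objects admitting a finite filtration with subquotients in $\mathcal{T}_E\cup\mathcal{T}_F$ --- indeed $\mathcal{T}_E\cup\mathcal{T}_F$ is closed under quotients, and one checks directly that the filtration-closure of a quotient-closed subcategory is again closed under quotients (and trivially under extensions). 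Let $\ts{G}_\wedge$ and $\ts{G}_\vee$ be the left tilts of $\ts{D}$ at the torsion structures with torsion theories $\mathcal{T}_E\cap\mathcal{T}_F$ and $\mathrm{Filt}(\mathcal{T}_E\cup\mathcal{T}_F)$; both lie in $\its{\ts{D}}{\ts{D}[-1]}$, and within that interval they are respectively the infimum and supremum of $\ts{E}$ and $\ts{F}$.

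The crux is to identify the aisles. Writing $H^i_{\ts{D}}$ for the cohomology functors of $\ts{D}$, the Happel--Reiten--Smal\o\ description of left tilts gives $\aisle{E}{0}=\{X\in\cat{C}: H^i_{\ts{D}}(X)=0 \text{ for } i\ge 2,\ H^1_{\ts{D}}(X)\in\mathcal{T}_E\}$, and likewise for $\ts{F}$, $\ts{G}_\wedge$ and $\ts{G}_\vee$. From this $\aisle{G_\wedge}{0}=\aisle{E}{0}\cap\aisle{F}{0}$ is immediate. I also claim $\aisle{G_\vee}{0}=\langle\aisle{E}{0},\aisle{F}{0}\rangle$, the ext-closed subcategory generated by the two aisles: the inclusion $\supseteq$ holds because $\aisle{G_\vee}{0}$ is an aisle, hence ext-closed, and contains both $\aisle{E}{0}$ and $\aisle{F}{0}$; for $\subseteq$, given $X\in\aisle{G_\vee}{0}$ the truncation triangle $\tau^{\le 0}_{\ts{D}}X\to X\to H^1_{\ts{D}}(X)[-1]\to(\tau^{\le 0}_{\ts{D}}X)[1]$ exhibits $X$ as an extension of $\tau^{\le 0}_{\ts{D}}X\in\aisle{D}{0}\subset\aisle{E}{0}$ by $H^1_{\ts{D}}(X)[-1]$, and shifting a finite filtration of $H^1_{\ts{D}}(X)$ with subquotients in $\mathcal{T}_E\cup\mathcal{T}_F$ presents $H^1_{\ts{D}}(X)[-1]$ as an iterated extension of objects in $\mathcal{T}_E[-1]\cup\mathcal{T}_F[-1]\subset\aisle{E}{0}\cup\aisle{F}{0}$, so $X\in\langle\aisle{E}{0},\aisle{F}{0}\rangle$.

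Finally, these identifications give the universal properties in $\PT{C}$. From $\aisle{G_\wedge}{0}\subset\aisle{E}{0},\aisle{F}{0}$ we get $\ts{G}_\wedge\tsleq\ts{E},\ts{F}$; and if $\ts{H}\in\PT{C}$ satisfies $\ts{H}\tsleq\ts{E}$ and $\ts{H}\tsleq\ts{F}$ then $\aisle{H}{0}\subset\aisle{E}{0}\cap\aisle{F}{0}=\aisle{G_\wedge}{0}$, so $\ts{H}\tsleq\ts{G}_\wedge$; hence $\ts{E}\wedge\ts{F}=\ts{G}_\wedge$. Dually $\ts{E},\ts{F}\tsleq\ts{G}_\vee$, and if $\ts{E},\ts{F}\tsleq\ts{H}$ then $\aisle{H}{0}$ is ext-closed and contains $\aisle{E}{0}\cup\aisle{F}{0}$, hence contains $\langle\aisle{E}{0},\aisle{F}{0}\rangle=\aisle{G_\vee}{0}$, so $\ts{G}_\vee\tsleq\ts{H}$; hence $\ts{E}\vee\ts{F}=\ts{G}_\vee$. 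The step I expect to be the main obstacle is the identification $\aisle{G_\vee}{0}=\langle\aisle{E}{0},\aisle{F}{0}\rangle$ together with the closely entangled fact that $\mathrm{Filt}(\mathcal{T}_E\cup\mathcal{T}_F)$ is a torsion theory; this is exactly where the length hypothesis on $\heart{D}$ is used, via the existence of finite composition series and the noetherian property.
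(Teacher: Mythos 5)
Your proof is correct, and its skeleton is the same as the paper's: both reduce via Proposition \ref{HRS} to torsion structures on the length heart $\heart{D}$, take the intersection of the two torsion theories for the infimum and the extension/filtration closure of their union for the supremum, and then observe that the resulting tilts are bounds in all of $\PT{C}$ (which, as you note, is immediate once the aisles are identified as $\aisle{E}{0}\cap\aisle{F}{0}$ and $\langle\aisle{E}{0},\aisle{F}{0}\rangle$). Where you genuinely diverge is in the key verification that the filtration closure of $\aisle{T}{0}_\ts{E}\cup\aisle{T}{0}_\ts{F}$ is a torsion theory. The paper proves this by explicitly constructing the torsion decomposition of an arbitrary $d\in\heart{D}$: it alternately applies the $\ts{T}_\ts{E}$- and $\ts{T}_\ts{F}$-decompositions to the successive quotients and uses the length hypothesis to show the process stabilises, at which point the remaining quotient lies in $\coaisle{T}{1}_\ts{E}\cap\coaisle{T}{1}_\ts{F}$. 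You instead invoke the general criterion that in a noetherian abelian category a full subcategory closed under quotients and extensions is automatically a torsion class, after checking that the filtration closure of a quotient-closed class is again quotient-closed. Both routes use precisely the noetherian half of the length hypothesis; yours is more modular, treats the infimum and the universal properties explicitly rather than by ``similarly'' and ``clearly'', and works verbatim under the weaker noetherian assumption, while the paper's is self-contained. One small point you should spell out: the existence of a \emph{largest} (not merely maximal) torsion subobject requires the observation that the sum of two subobjects lying in the class is again in the class (it is a quotient of their direct sum, which is an extension), so that maximal subobjects in the class are unique; this is standard but is exactly what your appeal to the noetherian property is leaning on.
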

\begin{proof}
We construct only the supremum $\ts{E} \vee \ts{F}$, the infimum is constructed similarly. We claim that $\langle \aisle{E}{0} ,\aisle{F}{0} \rangle$ is the aisle of a bounded t-structure; it is clear that this t-structure must then be the supremum in $\PT{C}$.

Since $\ts{D} \tsleq \ts{E}, \ts{F} \tsleq \ts{D}[-1]$ we may work with the corresponding torsion structures $\ts{T}_\ts{E}$ and $\ts{T}_\ts{F}$ on $\heart{D}$, and show that $ \aisle{T}{0} =\langle \aisle{T}{0}_\ts{E} , \aisle{T}{0}_\ts{F} \rangle$ is a torsion theory, with associated torsion-free theory $\coaisle{T}{1} = \coaisle{T}{1}_\ts{E}\cap \coaisle{T}{1}_\ts{F}$. Certainly $\Mor{\cat{C}}{t}{t'} = 0$ whenever $t\in \aisle{T}{0}$ and $t'\in\coaisle{T}{1}$, so it suffices to show that any $d\in \heart{D}$ sits in a short exact sequence $0 \to t \to d \to t' \to 0$ with $t\in \aisle{T}{0}$ and $t'\in\coaisle{T}{1}$. We do this in stages, beginning with the short exact sequence
\[
0 \to e_0 \to d \to e_0' \to 0
\]
with $e_0\in \aisle{T}{0}_\ts{E}$ and $e_0'\in\coaisle{T}{1}_\ts{E}$. Combining this with the short exact sequence $0 \to f_0 \to e_0' \to f_0' \to 0$ with $f_0\in \aisle{T}{0}_\ts{F}$ and $f_0'\in\coaisle{T}{1}_\ts{F}$ we obtain a second short exact sequence
\[
0 \to t \to d \to f_0' \to 0
\]
where $t$ is an extension of $e_0$ and $f_0$, and hence is in $\aisle{T}{0}$. Repeat this process, at each stage using the expression of the third term as an extension via alternately the torsion structures $\ts{T}_\ts{E}$ and $\ts{T}_\ts{F}$. This yields successive short exact sequences, each with middle term $d$ and first term in $ \aisle{T}{0}$, and such that the third term is a quotient of the third term of the previous sequence. Since $\heart{D}$ is a length category this process must stabilise. It does so when the third term has no subobject in either $\aisle{T}{0}_\ts{E}$ or $\aisle{T}{0}_\ts{F}$, \ie when the third term is in $\coaisle{T}{1}_\ts{E}\cap \coaisle{T}{1}_\ts{F} =\coaisle{T}{1}$. This exhibits the required short exact sequence and completes the proof.
\end{proof}

In general, this cannot be used inductively to show that the components of $\tiltalg{C}$ are lattices, since  $\ts{E} \wedge \ts{F}$ and $\ts{E} \vee \ts{F}$ might not be algebraic. For the remainder of this section we impose an assumption that guarantees that they are: let $\tilto{C}=\tiltalgo{C}$ be a component of the tilting poset consisting entirely of algebraic t-structures, equivalently a component of $\tiltalg{C}$ closed under all tilts.

\begin{lemma}
\label{lattice}
The component $\tilto{C}$ is a lattice. Infima and suprema in $\tilto{C}$ are also infima and suprema in $\PT{C}$.
\end{lemma}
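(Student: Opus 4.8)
The plan is to build binary suprema and infima of $\tilto{C}$ inside a single interval $\its{\ts{D}}{\ts{D}[-k]}$ with $\ts{D}\in\tilto{C}$, by induction on $k$, peeling off one cohomological layer at a time and using Lemma \ref{local lattice} for the bottom layer.

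First I would fix $\ts{E},\ts{F}\in\tilto{C}$ and apply Lemma \ref{alg tilting bound} to get $\ts{D}\in\tilto{C}$ with $\ts{D}\atileq\ts{E},\ts{F}$, so that by Remark \ref{tilting poset rmks} there is $k\in\N$ with $\ts{D}\tsleq\ts{E},\ts{F}\tsleq\ts{D}[-k]$. It then suffices to produce $\ts{E}\vee\ts{F}$ and $\ts{E}\wedge\ts{F}$ in $\PT{C}$, show they lie in $\its{\ts{D}}{\ts{D}[-k]}$, and show they lie in $\tilto{C}$; this last point makes them automatically the supremum and infimum in $\tilto{C}$ as well, which is the whole lemma. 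The standing hypothesis enters twice: $\ts{D}\in\tilto{C}$ is algebraic, so $\heart{D}$ is a length category and Lemma \ref{local lattice} applies with base $\ts{D}$; and $\tilto{C}$ is closed under all tilts, so every $t$--structure produced along the way — an iterated tilt of $\ts{D}$ — is again algebraic.

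The inductive claim is: for $\ts{D}\in\tilto{C}$ and any $\ts{E},\ts{F}$ with $\ts{D}\tsleq\ts{E},\ts{F}\tsleq\ts{D}[-k]$, the supremum and infimum exist in $\PT{C}$, lie in $\its{\ts{D}}{\ts{D}[-k]}$, and lie in $\tilto{C}$. The case $k\le1$ is Lemma \ref{local lattice} (combined with Proposition \ref{HRS} and Remark \ref{tilting poset rmks}, which identify $\its{\ts{D}}{\ts{D}[-1]}$ with the poset of torsion structures on $\heart{D}$ and with the set of tilts of $\ts{D}$). For the step I would form the \emph{first-layer truncation} $\ts{E}_1$ of $\ts{E}$: the subcategory $\cat{T}_{\ts{E}}=\{\,d\in\heart{D}\mid d[-1]\in\aisle{E}{0}\,\}$ is closed under quotients and extensions, hence — $\heart{D}$ being a length category — is the torsion theory of a torsion structure; its left tilt $\ts{E}_1=\lt{D}{\cat{T}_{\ts{E}}}$ has aisle exactly $\aisle{E}{0}\cap\aisle{D}{1}$ (one containment is immediate, the other uses that aisles are closed under cones), and therefore $\ts{D}\tsleq\ts{E}_1\tsleq\ts{E}\tsleq\ts{E}_1[-(k-1)]$, the last relation because aisles are $[1]$--stable and $\aisle{E}{0}\subseteq\aisle{D}{k}$. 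Defining $\ts{F}_1$ dually, I set $\ts{G}_1=\ts{E}_1\vee\ts{F}_1$ (the join in the unit interval $\its{\ts{D}}{\ts{D}[-1]}$, which exists by Lemma \ref{local lattice} and lies in $\tilto{C}$), then $\ts{E}'=\ts{E}\vee\ts{G}_1$ and $\ts{F}'=\ts{F}\vee\ts{G}_1$, which exist by the induction hypothesis in the length--$(k-1)$ intervals $\its{\ts{E}_1}{\ts{E}_1[-(k-1)]}$ and $\its{\ts{F}_1}{\ts{F}_1[-(k-1)]}$. One checks that $\ts{E}',\ts{F}'\in\its{\ts{G}_1}{\ts{G}_1[-(k-1)]}$, applies the induction hypothesis there to get $\ts{G}=\ts{E}'\vee\ts{F}'$, and observes that the upper bounds of $\{\ts{E},\ts{F}\}$ in $\PT{C}$ coincide with those of $\{\ts{E}',\ts{F}'\}$ (using that $\ts{G}_1,\ts{E}',\ts{F}'$ are genuine $\PT{C}$--suprema), so that $\ts{G}=\ts{E}\vee\ts{F}$; finally $\ts{G}\in\its{\ts{G}_1}{\ts{G}_1[-(k-1)]}\subseteq\its{\ts{D}}{\ts{D}[-k]}$ and $\ts{G}\in\tilto{C}$. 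The infimum is handled by the dual argument (peeling off the top layer, or passing to $\cat{C}^{\mathrm{op}}$).

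The main obstacle is the inductive step, and within it the layer bookkeeping that forces the recursion to descend: identifying the truncation's aisle as $\aisle{E}{0}\cap\aisle{D}{1}$, and from this extracting the interval memberships $\ts{E}\in\its{\ts{E}_1}{\ts{E}_1[-(k-1)]}$ and $\ts{E}'\in\its{\ts{G}_1}{\ts{G}_1[-(k-1)]}$. Once these are established the remainder is formal: a $t$--structure that is a supremum in $\PT{C}$ and lies in $\tilto{C}$ is a fortiori the supremum in $\tilto{C}$, and the closure of $\tilto{C}$ under all tilts is exactly what keeps every intermediate $t$--structure algebraic, so that Lemma \ref{local lattice} and the torsion-theory argument remain applicable at every level of the induction.
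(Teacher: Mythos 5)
Your construction of the binary suprema and infima in $\PT{C}$ is correct, and it takes a genuinely different route from the paper: the paper takes a zig-zag of left and right tilts joining $\ts{E}$ to $\ts{F}$ inside the component and rewrites it, using Lemma \ref{local lattice} at each exchange, into a left-then-right sequence while keeping every intermediate $t$--structure below a given upper bound; you instead induct on the length $k$ of the interval $\its{\ts{D}}{\ts{D}[-k]}$, peeling off one cohomological layer at a time. Your key computation --- that $\aisle{E}{0}\cap\aisle{D}{1}$ is the aisle of the left tilt of $\ts{D}$ at the torsion class $\{d\in\heart{D}\ \colon\ d[-1]\in\aisle{E}{0}\}$ --- checks out, as does the interval bookkeeping, and closure of $\tilto{C}$ under all tilts does keep every $t$--structure you construct algebraic and in $\tilto{C}$.

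However, your final step is not formal, and as written it is a genuine gap. The order on $\tilto{C}$ is the tilting order $\atileq$ (equivalently $\tileq$), not the restriction of $\tsleq$: the inclusion $\tiltalg{C}\hookrightarrow\PT{C}$ is order-preserving, but nothing you have proved shows it reflects the order on the pairs you need. Knowing that $\ts{G}:=\ts{E}\vee\ts{F}$ is the $\tsleq$-supremum and lies in $\tilto{C}$ gives $\ts{E},\ts{F}\tsleq\ts{G}$ and $\ts{G}\tsleq\ts{U}$ for every $\atileq$-upper bound $\ts{U}\in\tilto{C}$, but the lemma demands $\ts{E},\ts{F}\atileq\ts{G}$ and $\ts{G}\atileq\ts{U}$, i.e.\ chains of tilts through algebraic hearts realising these inclusions; this is exactly the point to which the paper devotes the last paragraph of its proof (via $\ts{E}\vee\ts{F}\atileq(\ts{E}\vee\ts{F})\vee\ts{U}=\ts{U}$, which works because its construction produces the $\atileq$-relations to the supremum en route). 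Fortunately your own device closes the gap: if $\ts{E}\in\tilto{C}$ and $\ts{G}$ is algebraic with $\aisle{E}{0}\subset\aisle{G}{0}$, then $\ts{G}\tsleq\ts{E}[-m]$ for some $m$ by Lemma \ref{alg implies comparable}, and iterating your first-layer truncation with moving base (the $i$-th $t$--structure $\ts{H}_i$ having aisle $\aisle{G}{0}\cap\aisle{E}{i}$, each a left tilt of the previous by the same torsion-class argument with $\ts{H}_{i-1}$ in place of $\ts{D}$) yields a chain $\ts{E}=\ts{H}_0,\ts{H}_1,\ldots,\ts{H}_m=\ts{G}$ of successive tilts lying in $\tilto{C}$, whence $\ts{E}\atileq\ts{G}$. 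Once this order-reflection statement is proved, your concluding sentence does become correct; without it, the reduction "supremum in $\PT{C}$ lying in $\tilto{C}$, hence supremum in $\tilto{C}$" is unjustified.
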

\begin{proof}
Suppose $\ts{E},\ts{F} \in \tilto{C}$.  As in Lemma~\ref{tilting bound} we can replace an arbitrary sequence of left and right tilts connecting $\ts{E}$ with $\ts{F}$ by one consisting of a sequence of left tilts followed by a sequence of right tilts, or vice versa, but now using the infima and suprema of Lemma~\ref{local lattice} at each stage of the process. We can do this since $\tilto{C}$ consists entirely of algebraic t-structures, and therefore these infima and suprema are algebraic. Thus $\ts{E}$ and $\ts{F}$ have upper and lower bounds in $\tilto{C}$.

We now construct the infimum and supremum. First, convert the sequence of tilts from $\ts{E}$ to $\ts{F}$ into one of right followed by left tilts by the above process. Then if $\ts{E}, \ts{F} \tsleq \ts{G}$ the same is true for each t-structure along the new sequence. Now convert this new sequence to one of left tilts followed by right tilts, again by the above process. Inductively applying Lemma~\ref{local lattice} shows that each t-structure in the resulting sequence is still bounded above in $\PT{C}$ by $\ts{G}$. In particular the t-structure $\ts{H}$ reached after the final left tilt, and before the first right tilt, satisfies $\ts{E},\ts{F} \atileq \ts{H} \tsleq \ts{G}$. It follows that $\ts{H}\in \tilto{C}$ is the supremum $\ts{E} \vee \ts{F}$ of $\ts{E}$ and $\ts{F}$ in $\PT{C}$.

To complete the proof we need to show that $\ts{E} \vee \ts{F} \atileq \ts{G}$ whenever $\ts{G}$ is in $\tilto{C}$ and $\ts{E}, \ts{F} \atileq \ts{G}$. This follows since $\ts{E} \vee \ts{F} \atileq (\ts{E} \vee \ts{F})\vee \ts{G} = \ts{G}$.

The argument for the infimum is similar.
\end{proof}

\begin{lemma}
\label{finite intervals}
The following are equivalent:
\begin{enumerate}
\item Intervals of the form $\ialg{\ts{D}}{\ts{D}[-1]}$ in $\tilto{C}$ are finite.
\item All closed bounded intervals in $\tilto{C}$ are finite.
\end{enumerate}
\end{lemma}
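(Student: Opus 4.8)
The implication (2) $\Rightarrow$ (1) is immediate: $\ts{D}[-1]$ is a (single) left tilt of $\ts{D}$, namely the left tilt at the maximal torsion structure of $\heart{D}$, so $\ts{D}\atileq\ts{D}[-1]$ and $\ialg{\ts{D}}{\ts{D}[-1]}$ is a closed bounded interval in $\tilto{C}$, hence finite by (2). All the content is in the converse, and my plan is to reduce an arbitrary closed bounded interval to one of the form $\ialg{\ts{D}}{\ts{D}[-k]}$ and then induct on $k$.

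Before starting I would record two facts about $\tilto{C}$. First, by Lemma \ref{lattice} it is a lattice whose infima and suprema are computed in $\PT{C}$, and the proof of that lemma exhibits $\ts{E},\ts{F}\atileq\ts{E}\vee\ts{F}$; consequently, whenever $\ts{D}\tsleq\ts{E}$ with $\ts{D},\ts{E}\in\tilto{C}$ we have $\ts{E}=\ts{D}\vee\ts{E}$ and hence $\ts{D}\atileq\ts{E}$, so the relations $\atileq$ and $\tsleq$ coincide on $\tilto{C}$ and a closed bounded interval $\ialg{\ts{D}}{\ts{E}}$ there is just $\{\ts{F}\in\tilto{C}\mid\ts{D}\tsleq\ts{F}\tsleq\ts{E}\}$. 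Second, since $\ts{D}[-1]$ is always a left tilt of $\ts{D}$ and $\tilto{C}$ is closed under tilts, the shift $[-1]$ restricts to a poset automorphism of $\tilto{C}$; in particular it preserves $\tilto{C}$ and commutes with $\wedge$ and $\vee$.

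Now let $\ialg{\ts{D}}{\ts{E}}$ be a closed bounded interval. By Remark \ref{tilting poset rmks} (together with $\atileq\,\Rightarrow\,\tileq$) there is $k\in\N$ with $\ts{D}\tsleq\ts{E}\tsleq\ts{D}[-k]$, so $\ialg{\ts{D}}{\ts{E}}\subseteq\ialg{\ts{D}}{\ts{D}[-k]}$ and it suffices to show the latter is finite. I argue by induction on $k$, the case $k=0$ being trivial. For the inductive step, consider the assignment
\[
\ialg{\ts{D}}{\ts{D}[-k]}\longrightarrow\ialg{\ts{D}}{\ts{D}[-(k-1)]},\qquad\ts{F}\longmapsto\ts{F}':=\ts{F}\wedge\ts{D}[-(k-1)].
\]
It is well defined because $\ts{D}\tsleq\ts{F}'\tsleq\ts{D}[-(k-1)]$, and its target is finite by the inductive hypothesis. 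The crucial point is that every fibre is finite: if $\ts{F}$ lies over $\ts{F}'$ then, using $\ts{F}\tsleq\ts{F}[-1]$ (a general property of t-structures) and $\ts{F}\tsleq\ts{D}[-k]$, the t-structure $\ts{F}$ is a lower bound of $\{\ts{F}[-1],\ts{D}[-k]\}$ in $\tilto{C}$, whence
\[
\ts{F}'\tsleq\ts{F}\tsleq\ts{F}[-1]\wedge\ts{D}[-k]=\bigl(\ts{F}\wedge\ts{D}[-(k-1)]\bigr)[-1]=\ts{F}'[-1],
\]
the first equality because $[-1]$ commutes with $\wedge$. So the fibre over $\ts{F}'$ is contained in $\ialg{\ts{F}'}{\ts{F}'[-1]}$, which is finite by hypothesis (1) applied to $\ts{F}'\in\tilto{C}$. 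Summing over the finitely many possible $\ts{F}'$ shows $\ialg{\ts{D}}{\ts{D}[-k]}$ is finite, completing the induction.

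I do not expect a real obstacle. The only points needing some care are the two preliminary facts — that $\atileq$ agrees with $\tsleq$ on $\tilto{C}$ and that the shift commutes with the lattice operations — but both follow quickly from Lemma \ref{lattice} and the assumption that $\tilto{C}$ is closed under tilts; and one should note that all the infima invoked exist because $\ts{F}$, $\ts{D}[-(k-1)]$, $\ts{F}[-1]$ and $\ts{D}[-k]$ all lie in the lattice $\tilto{C}$.
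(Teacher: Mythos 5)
Your proof is correct, and while it follows the same broad strategy as the paper --- reduce an arbitrary closed bounded interval to one of the form $\ialg{\ts{D}}{\ts{D}[-k]}$ and induct on $k$ using the lattice structure of Lemma \ref{lattice} --- the inductive step is carried out differently. The paper takes a chain of left tilts $\ts{D}=\ts{D}_0\to\cdots\to\ts{D}_r=\ts{D}[-k]$ through a given element and pushes it forward by iterated suprema $\ts{D}_i'=\ts{D}_i\vee\ts{D}_{i-1}'$ (with $\ts{D}_1'=\ts{D}[-1]$), concluding that every element of $\ialg{\ts{D}}{\ts{D}[-k]}$ is a right tilt of an element of $\ialg{\ts{D}[-1]}{\ts{D}[-k]}$, which is finite by induction and each of whose elements has only finitely many right tilts by hypothesis (1). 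You instead project downwards by the single meet $\ts{F}\mapsto\ts{F}\wedge\ts{D}[-(k-1)]$ into $\ialg{\ts{D}}{\ts{D}[-(k-1)]}$ and bound each fibre by an interval $\ialg{\ts{F}'}{\ts{F}'[-1]}$, again finite by (1); the two steps are essentially dual (join with $\ts{D}[-1]$ versus meet with $\ts{D}[-(k-1)]$). Your two preliminary observations are indeed valid consequences of the statement of Lemma \ref{lattice} and the closure of $\tilto{C}$ under tilts: since suprema in $\tilto{C}$ are suprema in $\PT{C}$, $\ts{D}\tsleq\ts{E}$ with both in $\tilto{C}$ forces $\ts{D}\vee\ts{E}=\ts{E}$ and hence $\ts{D}\atileq\ts{E}$, so the two orders coincide there, and the shift, being an order automorphism of $\tilto{C}$, commutes with meets. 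What your route buys is the avoidance of the paper's diagram construction --- in particular the slightly delicate verification that the final arrow $\ts{D}_{r-1}'\to\ts{D}_r$ is a tilt --- at the cost of making explicit the coincidence of $\atileq$ and $\tsleq$ on $\tilto{C}$, which the paper uses only implicitly (for instance in the step $\ts{E}\atileq\ts{E}\vee\ts{D}[-d]=\ts{D}[-d]$).
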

\begin{proof}
Assume that intervals of the form $\ialg{\ts{D}}{\ts{D}[-1]}$ in $\tiltalg{C}$ are finite. Given $\ts{D} \atileq \ts{E}$ in $\tilto{C}$ recall that $\ts{E} \tsleq \ts{D}[-d]$ for some $d\in \N$ by Lemma~\ref{alg implies comparable}, so that
\[
\ts{D} \atileq \ts{E} \atileq \ts{E} \vee \ts{D}[-d] =\ts{D}[-d].
\]
Hence it suffices to show that intervals of the form $\ialg{\ts{D}}{\ts{D}[-d]}$ are finite. We prove this by induction on $d$. The case $d=1$ is true by assumption. Suppose it is true for $d<k$.  In diagrams it  will be convenient to use the notation $\ts{E} \rightsquigarrow \ts{F}$ to mean $\ts{F}$ is a left tilt of $\ts{E}$.

By definition of $\tiltalg{C}$ any element of the interval $\ialg{\ts{D}}{\ts{D}[-k]}$ sits in a chain of tilts $\ts{D} = \ts{D}_0 \rightsquigarrow \ts{D}_1 \rightsquigarrow \cdots \rightsquigarrow \ts{D}_r = \ts{D}[-k]$ via algebraic t-structures. This can be extended to a diagram
\begin{center}
\begin{tikzcd}
\ts{D} = \ts{D}_0 \arrow[rightsquigarrow]{r}\arrow[rightsquigarrow]{dr} & \ts{D}_1 \arrow[rightsquigarrow]{r} \arrow[rightsquigarrow]{d} &\ts{D}_2 \arrow[rightsquigarrow]{r} \arrow[rightsquigarrow]{d} & \cdots \arrow[rightsquigarrow]{r} & \ts{D}_{r-1} \arrow[rightsquigarrow]{r}\arrow[rightsquigarrow]{d} & \ts{D}_r =\ts{D}[-k] \\
& \ts{D}_1' \arrow[rightsquigarrow]{r} & \ts{D}_2' \arrow[rightsquigarrow]{r} & \cdots \arrow[rightsquigarrow]{r} &  \ts{D}_{r-1}' \arrow[rightsquigarrow]{ur}
\end{tikzcd}
\end{center}
of algebraic t-structures and tilts, where $\ts{D}_1'=\ts{D}[-1]$, so that $\ts{D}_1 \rightsquigarrow \ts{D}_1'$ as shown, and $\ts{D}_i' = \ts{D}_i \vee \ts{D}_{i-1}'$ is constructed inductively. The only point that requires elaboration is the existence of the tilt $\ts{D}_{r-1}' \rightsquigarrow \ts{D}_r$. First note that $\ts{D}_1', \ts{D}_2 \atileq \ts{D}_r$ so that $\ts{D}_2' = \ts{D}_2 \vee \ts{D}_{1}'\atileq \ts{D}_r$ too. By induction  $\ts{D}_{r-1}' \atileq \ts{D}_r$. Since
\[
\ts{D}_r[1] \atileq \ts{D}_{r-1} \atileq \ts{D}_{r-1}' \atileq \ts{D}_r
\]
$\ts{D}_r$ is a left tilt of $\ts{D}_{r-1}'$  by Proposition~\ref{HRS}.

The existence of the above diagram shows that each element of the interval $\ialg{\ts{D}}{\ts{D}[-k]}$ is a right tilt of some element of the interval $\ialg{\ts{D}[-1]}{\ts{D}[-k]}$. By induction the latter has only finitely many elements, and by assumption each of these has only finitely many right tilts. This  establishes the first implication. The converse is obvious.
\end{proof}

\subsection{Simple tilts}
\label{simple tilts}

Suppose $\ts{D}$ is an algebraic t-structure. Then each simple object $s\in \heart{D}$  determines two torsion structures on the heart, namely $(\langle s \rangle , \langle s \rangle^\perp)$ and $({}^\perp\langle s \rangle, \langle s \rangle)$. These are respectively minimal and maximal non-trivial torsion structures in $\heart{D}$. We say the left tilt at the former, and the right tilt at the latter, are \defn{simple}. We use the abbreviated notation $\lt{D}{s}$ and $\rt{D}{s}$ respectively for these tilts.

More generally we have the following notions. A torsion structure $\ts{T}$ is \defn{hereditary} if $t\in\aisle{T}{0}$ implies all subobjects of $t$ are in $\aisle{T}{0}$. It is \defn{co-hereditary} if $t\in \coaisle{T}{1}$ implies all quotients of $t$ are in $\coaisle{T}{1}$. It follows that the aisle of a hereditary torsion, dually the coaisle of a cohereditary torsion structure, are Serre subcategories. When $\ts{T}$ is a torsion structure on an algebraic abelian category then the hereditary torsion structures are those of the form $(S,S^\perp)$ where the torsion theory $S= \langle s_1,\ldots,s_k \rangle$ is generated by a subset of the simple objects. Dually, the co-hereditary torsion structures are those of the form $({}^\perp S,S)$. We use the abbreviated notation $\lt{D}{S}$  for the left tilt at $(S,S^\perp)$ and $\rt{D}{S}$ for the right tilt at $({}^\perp S,S)$. Note that, in the notation of the previous section, $\lt{D}{S}\wedge \lt{D}{S'} = \lt{D}{S \cap S'}$ and $\lt{D}{S} \vee \lt{D}{S'}  = \lt{D}{S \cup S'}$.

In general a tilt, even a simple tilt, of an algebraic t-structure need not be algebraic. However, if the heart is \defn{rigid}, \ie the simple objects have no self-extensions,  then
\cite[Proposition 5.4]{king-qiu} shows that the tilted t-structure is also algebraic. We will see later in Lemma~\ref{finite-type implies simple tilt length} that the same holds if the heart has only finitely many isomorphism classes of indecomposable objects.

\subsection{Stability conditions}
\label{stability conditions}

Let $\cat{C}$ be a triangulated category and $K\cat{C}$ be its Grothendieck group. A \defn{stability condition} $(\charge,\slicing)$ on $\cat{C}$ \cite[Definition 1.1]{MR2373143} consists of a group homomorphism $\charge \colon K\cat{C} \to \C$ and full additive subcategories $\slicing(\varphi)$ of $\cat{C}$ for each $\varphi\in \R$ satisfying
\begin{enumerate}
\item if $c\in \slicing(\varphi)$ then $\charge(c) = m(c)\exp(i\pi \varphi)$ where $m(c) \in \R_{>0}$;
\item $\slicing(\varphi+1) = \slicing(\varphi)[1]$ for each $\varphi\in \R$;
\item if $c\in \slicing(\varphi)$ and $c' \in \slicing(\varphi')$ with $\varphi > \varphi'$ then $\mor{c}{c'}=0$;
\item\label{HN filtration} for each nonzero object $c\in \cat{C}$ there is a finite collection of triangles
\begin{center}
\begin{tikzcd}
0=c_0 \ar{r} & c_1 \ar{r} \ar{d} & \cdots \ar{r} & c_{n-1} \ar{r} & c_n=c \ar{d}\\
& b_1\ar[dashed]{ul} &&& b_n \ar[dashed]{ul}
\end{tikzcd}
\end{center}
with $b_j \in \slicing(\varphi_j)$ where $\varphi_1 > \cdots > \varphi_n$.
\end{enumerate}
The homomorphism $\charge$ is known as the \defn{central charge} and the objects of $\slicing(\varphi)$ are said to be \defn{semistable of phase $\varphi$}. The objects $b_j$ are known as the \defn{semistable factors} of $c$. We define  $\varphi^+(c)=\varphi_1$ and $\varphi^-(c)=\varphi_n$. The \defn{mass}  of $c$ is defined to be $m(c)=\sum_{i=1}^n m(b_i)$.

For an interval $(a,b)\subseteq \R$ we set $\slicing(a,b) = \langle c \in \cat{C} \ \colon\  \varphi(c) \in (a,b) \rangle$, and similarly for half-open or closed intervals. Each stability condition $\sigma$ has an associated bounded t-structure $\ts{D}_\sigma = \left(\slicing(0,\infty), \slicing(-\infty,0]\right)$ with heart $\heart{D}_\sigma =\slicing(0,1]$.  Conversely, if we are given a bounded t-structure on $\cat{C}$ together with a stability function on the heart with the Harder--Narasimhan property ---  the abelian analogue of property (\ref{HN filtration}) above ---  then this determines a stability condition on $\cat{C}$ \cite[Proposition 5.3]{MR2373143}.

A stability condition is \defn{locally-finite} if we can find $\epsilon >0$ such that the quasi-abelian category $\slicing(t-\epsilon, t+\epsilon)$, generated by semistable objects with phases in $(t-\epsilon, t+\epsilon)$, has finite length (see \cite[Definition 5.7]{MR2373143}). The set of locally-finite stability conditions can be topologised so that it is a, possibly infinite-dimensional, complex manifold, which we denote $\stab{C}$ \cite[Theorem 1.2]{MR2373143}. The topology arises from the (generalised) metric
$$
d(\sigma,\tau) = \sup_{0\neq c \in \cat{C}} \max\left( | \varphi_\sigma^-(c) - \varphi_\tau^-(c)| , | \varphi_\sigma^+(c) - \varphi_\tau^+(c)|, \left| \log \frac{m_\sigma(c)}{m_\tau(c)}\right| \right)
$$
which takes values in $[0,\infty]$. It follows that for fixed $0\neq c\in \cat{C}$ the mass $m_\sigma(c)$, and lower and upper phases $\varphi_\sigma^-(c)$ and $\varphi_\sigma^+(c)$ are continuous functions $\stab{C} \to \R$. The projection
\[
\pi \colon \stab{C} \to \mor{K\cat{C}}{\C} \colon (\charge, \slicing) \mapsto \charge
\]
 is a local homeomorphism.

The group ${\rm Aut}(\cat{C})$ of auto-equivalences acts continuously on the space $\stab{C}$ of stability conditions with an automorphism $\alpha$ acting by
\begin{gather}
(\charge, \slicing) \mapsto \left(\charge\circ \alpha^{-1}, \alpha(\slicing)\right).
\end{gather}
There is also a smooth right action of the universal cover $G$ of $GL_2^+\R$. An element $g\in G$ corresponds to a pair $(T_g,\theta_g)$ where $T_g$ is the projection of $g$ to $GL_2^+\R$ under the covering map and $\theta_g\colon\R \to \R$ is an increasing map with $\theta_g(t+1)=\theta_g(t)+1$ which induces the same map as $T_g$  on the circle $\R/2\Z = \R^2-\{0\} / \R_{>0}$. The action is given by
\begin{equation}
\label{G action}
(\charge, \slicing) \mapsto \left(T_g^{-1} \circ \charge, \slicing\circ \theta_g\right).
\end{equation}
(Here we think of the central charge as valued in $\R^2$.) This action preserves the semistable objects, and also preserves the Harder--Narasimhan filtrations of all objects. The subgroup consisting of pairs with $T$ conformal is isomorphic to $\C$ with $\lambda\in \C$ acting via
\[
(\charge, \slicing) \mapsto \left(\exp(-i\pi\lambda)\charge, \slicing( \varphi + \re\lambda)\right)
\]
\ie by rotating the phases and rescaling the masses of semistable objects. This action is free and preserves the metric. The action of $1\in \C$ corresponds to the action of the shift automorphism $[1]$.
\begin{lemma}
\label{rotating tilts}
For any $g\in G$ the t-structures $\ts{D}_{g\cdot \sigma}$ and $\ts{D}_\sigma$ are related by a finite sequence of tilts.
\end{lemma}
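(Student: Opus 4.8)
The plan is to reduce the statement to the fact that the $G$-action is a smooth action, so any $g$ can be joined to the identity by a path, combined with the local structure of the tilting relation recorded in the second bulleted fact of the introduction (if $\sigma$ and $\tau$ are close then $\ts{D}_\sigma$ and $\ts{D}_\tau$ are mutual tilts of a common $t$--structure). Concretely, fix $\sigma\in\stab{C}$ and $g\in G$. Since $G$ is connected (it is the universal cover of $GL_2^+\R$, which is connected), choose a path $(g_t)_{t\in[0,1]}$ in $G$ with $g_0=1$ and $g_1=g$. This gives a path $\gamma_t = g_t\cdot\sigma$ in $\stab{C}$ from $\sigma$ to $g\cdot\sigma$, continuous because the $G$-action is continuous.

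Next I would use compactness of $[0,1]$ and continuity of $\gamma$ to choose a subdivision $0=t_0<t_1<\cdots<t_k=1$ so that consecutive points $\gamma_{t_{j-1}}$ and $\gamma_{t_j}$ are within the distance $\epsilon$ needed to invoke the local tilting fact: each pair $\ts{D}_{\gamma_{t_{j-1}}}$, $\ts{D}_{\gamma_{t_j}}$ is then a pair of mutual tilts of some common $t$--structure $\ts{H}_j$, hence related by (two) tilts. Composing across $j=1,\ldots,k$ shows $\ts{D}_\sigma = \ts{D}_{\gamma_0}$ and $\ts{D}_{g\cdot\sigma}=\ts{D}_{\gamma_k}$ are related by a finite sequence of tilts.

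The one point that needs care is the precise form of the local statement used: the bulleted fact cited from \cite[\S5]{MR3007660} asserts exactly that nearby stability conditions have associated $t$--structures which are mutual tilts of a third $t$--structure, so a single application per subdivision interval suffices; alternatively one can argue directly that for $g_t$ close to the identity the phases move by less than a full unit, so $\slicing_{g_t\cdot\sigma}(0,1]$ sits between $\heart{D}_\sigma$ and $\heart{D}_\sigma[1]$ (up to a bounded shift) and the two $t$--structures lie in a common interval $\its{\ts{E}}{\ts{E}[-1]}$, hence are tilts of one another by Proposition \ref{HRS} and Remark \ref{tilting poset rmks}. The main obstacle is simply making the ``$\epsilon$ small enough'' step uniform along the compact path; this is routine given that mass and phase functions are continuous on $\stab{C}$ and the metric controls exactly the data entering the tilting criterion. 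Note also that it suffices to treat $g$ in the conformal subgroup $\C$ together with a generating neighbourhood of $1$ in $G$, but the path argument handles all of $G$ at once.
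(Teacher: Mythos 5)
Your argument is correct, and it rests on the same two pillars as the paper's proof: connectivity of $G$ and the results of \cite[\S5]{MR3007660}. The difference is one of packaging. The paper's proof is two lines: since $G$ is connected, $\sigma$ and $g\cdot\sigma$ lie in the same component of $\stab{C}$, and then it cites \cite[Corollary 5.2]{MR3007660}, which says precisely that stability conditions in a common component have associated $t$--structures related by a finite sequence of tilts. You instead re-derive the needed instance of that corollary: take a path $g_t\cdot\sigma$, subdivide it by compactness, and apply the local statement that nearby stability conditions have $t$--structures which are mutual tilts of a common third one. This is essentially how the cited corollary is proved in the reference, so your route is a self-contained unpacking rather than a genuinely different argument; what it buys is independence from the black-box citation, at the cost of length. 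Your worry about making ``$\epsilon$ small enough'' uniform along the path is easily dispatched: in \cite{MR3007660} the mutual-tilt statement holds for a \emph{uniform} bound on $d(\sigma,\tau)$ (less than $1/2$), so continuity of the path on the compact interval $[0,1]$ immediately yields a suitable finite subdivision, and the concatenation step goes through as you describe.
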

\begin{proof}
This is simple to verify directly by considering the way in which $G$ acts on phases. Alternatively, note that  $G$ is connected, so that $\sigma$ and $g\cdot \sigma$ are in the same component of $\stab{C}$. Hence by \cite[Corollary 5.2]{MR3007660} the t-structures $\ts{D}_\sigma$ and $\ts{D}_\tau$ are related by a finite sequence of tilts.
\end{proof}

\subsection{Cellular stratified spaces}
\label{cellular stratified spaces}

A CW-cellular stratified space, in the sense of \cite{tamaki}, is a generalisation of a CW-complex in which non-compact cells are permitted. In \S\ref{stab alg} we will show that (parts of) stability spaces have this structure, and use it to show their contractibility. Here, we recall the definitions and results we will require.

A \defn{$k$-cell structure} on a subspace $e$ of a topological space $X$ is a continuous map $\alpha \colon D \to X$ where $\text{int}(\mathsf{D}^k) \subseteq D \subseteq \mathsf{D}^k$ is a subset of the $k$-dimensional disk $\mathsf{D}^k\subset \R^k$ containing the interior, such that $\alpha(D)=\overline{e}$, the restriction of $\alpha$ to $\text{int}(\mathsf{D}^k)$ is a homeomorphism onto $e$, and $\alpha$ does not extend to a map with these properties defined on any larger subset of $\mathsf{D}^k$. We refer to $e$ as a \defn{cell} and to $\alpha$ as a \defn{characteristic map} for $e$.

\begin{definition}
A \defn{cellular stratification}\label{cellstrat} of a topological space $X$ consists of a filtration
\[
\emptyset = X_{-1} \subseteq X_0 \subseteq X_1 \subseteq \cdots \subseteq X_k \subseteq \cdots
\]
by subspaces, with $X= \bigcup_{k\in \N} X_k$,  such that $X_k - X_{k-1} = \bigsqcup_{\lambda \in \Lambda_k} e_\lambda$ is a disjoint union of $k$-cells for each $k\in \N$. A  \defn{CW-cellular stratification}\label{CWcellstrat} is a cellular stratification satisfying the further conditions that
\begin{enumerate}
\item the stratification is closure-finite, i.e.\ the boundary $\partial e = \overline{e}- e$ of any $k$-cell is contained in a union of finitely many lower-dimensional cells;
\item $X$ has the weak topology determined by the closures $\overline{e}$ of the cells in the stratification, i.e.\ a subset $A$ of $X$ is closed if, and only if, its intersection with each $\overline{e}$ is closed.
\end{enumerate}
\end{definition}
When the domain of each characteristic map is the entire disk then a CW-cellular stratification is  nothing but a CW-complex structure on  $X$. Although the collection of cells and characteristic maps is part of the data of a cellular stratified space we will suppress it from our notation for ease-of-reading. Since we never consider more than one stratification of any given topological space there is no possibility for confusion.

A cellular stratification is said to be \defn{regular} if each characteristic map is a homeomorphism, and \defn{normal} if the boundary of each cell is a union of lower-dimensional cells. A regular, normal cellular stratification induces cellular stratifications on the domain of the characteristic map of each of its cells. Finally, we say a CW-cellular stratification is \defn{regular and totally-normal} if it is regular, normal, and in addition for each cell $e_\lambda$ with characteristic map $\alpha_\lambda \colon D_\lambda \to X$ the induced cellular stratification of $\partial D_\lambda = D_\lambda - \text{int} (\mathsf{D}^k)$ extends to a regular CW-complex structure on $\partial \mathsf{D}^k$. (The definition of totally-normal CW-cellular stratification in \cite{tamaki} is more subtle, as it handles the non-regular case too, but it reduces to the above for regular stratifications. A regular CW-complex is totally-normal, but regularity alone does not even entail normality for a
 CW-cellular stratified space.) Any union of strata in a regular, totally-normal CW-cellular stratified space is itself a regular, totally-normal CW-cellular stratified space.

A normal cellular stratified space $X$ has a \defn{poset of strata} (or face poset) $P(X)$ whose elements are the cells, and where $e_\lambda \leq e_\mu \iff e_\lambda \subseteq \overline{e_\mu}$. When $X$ is a regular CW-complex there is a homeomorphism  from the classifying space $BP(X)$ to $X$. More generally,
\begin{theorem}[{\cite[Theorem 2.50]{tamaki}}]
\label{X=BPX}
Suppose $X$ is a regular, totally-normal CW-cellular stratified space. Then $BP(X)$ embeds in $X$ as a strong deformation retract, in particular there is a homotopy equivalence $X \simeq BP(X)$.
\end{theorem}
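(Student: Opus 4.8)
This is \cite[Theorem 2.50]{tamaki}, so strictly speaking the proof is a citation; but let me indicate how the argument goes, since it generalises the classical fact that a regular CW-complex $Y$ is homeomorphic to the classifying space $BP(Y)$ of its face poset (the barycentric subdivision). The plan is to argue by induction on the skeleton $X_k$, having first chosen a ``barycentre'' $b_\lambda \in e_\lambda$ inside each cell. The base case $X_0$ is a discrete set, where $P(X_0)$ is an antichain, $BP(X_0)=X_0$, and there is nothing to do.

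For the inductive step, assume $f$ restricts to an embedding $BP(X_{k-1}) \hookrightarrow X_{k-1}$ which is a strong deformation retract. Fix a $k$-cell $e_\lambda$ with characteristic map $\alpha_\lambda\colon D_\lambda \to X$, $\text{int}(D^k)\subset D_\lambda \subset D^k$. By total-normality the induced stratification of $\partial D_\lambda = D_\lambda - \text{int}(D^k)$ extends to a regular CW-structure on the whole sphere $\partial D^k$; applying the classical result to this regular CW-complex identifies its barycentric subdivision with $\partial D^k$, and under this identification the subcomplex $\partial D_\lambda$ becomes exactly $BP(\mathcal{Q}_\lambda)$, where $\mathcal{Q}_\lambda = \{ e_\mu \in P(X) \colon e_\mu < e_\lambda \}$ is, by normality, the face poset of $\partial D_\lambda$; moreover $\alpha_\lambda$ identifies this homeomorphically with a subcomplex of $BP(X_{k-1})\subset X_{k-1}$, compatibly with the inductive embedding. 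Coning from the centre $0\in D^k$ then extends $f$ over the ``open star of $b_\lambda$'' in $BP(X)$ --- the subcomplex spanned by the chains with top element $e_\lambda$ --- sending it via $\alpha_\lambda$ into $\overline{e_\lambda}=\alpha_\lambda(D_\lambda)$. Doing this for every $k$-cell defines $f$ over $X_k$; one checks it is a topological embedding, using that distinct open simplices have disjoint images, each lying in a single cell (the top of the chain), together with closure-finiteness and the weak topology.

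It remains to extend the strong deformation retraction over the new $k$-cells and pass to the limit. The image $f(BP(X))\cap\overline{e_\lambda}$ is the $\alpha_\lambda$-image of the sub-cone $\{0\}\ast BP(\mathcal{Q}_\lambda)$ of $D^k$, so what is needed is a strong deformation retraction of $D_\lambda$ onto this sub-cone fixing $\partial D_\lambda$. Such retractions are the identity on $X_{k-1}$, so they glue over $X_k$; composing with $r_{k-1}$ gives $r_k$. Finally one takes the increasing union over $k$, the weak-topology axiom ensuring that the resulting $f$ and $r$ are continuous; hence $X\simeq BP(X)$.

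The main obstacle is the retraction of $D_\lambda$ onto $\{0\}\ast BP(\mathcal{Q}_\lambda)$ in the previous paragraph. The difficulty is that $D_\lambda$ is neither open nor closed in $D^k$ and that $\partial D_\lambda$ need not be all of $\partial D^k$, so the naive move --- radially collapsing the ``missing'' boundary region towards the centre --- is discontinuous along the frontier of $\partial D_\lambda$ in $\partial D^k$. One must instead use the piecewise-linear (regular CW) structure on $\partial D^k$ supplied by total-normality: take a collar of the subcomplex $\partial D_\lambda$ and push the complementary region inward through that collar. This is exactly the point where ``totally-normal'' is used essentially, rather than mere regularity. A secondary technical matter, relevant only when $X$ has infinitely many cells, is that the assembled $f$ is a genuine topological embedding and the nested union of retractions is continuous; both follow from closure-finiteness and the weak-topology clause in the definition of a CW-cellular stratified space.
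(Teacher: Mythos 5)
The paper offers no argument for this statement---it is imported verbatim from \cite[Theorem 2.50]{tamaki}---so your opening sentence, deferring to that citation, already matches the paper. However, the sketch you then give has a genuine error at its central step. The identification ``the subcomplex $\partial D_\lambda$ becomes exactly $BP(\mathcal{Q}_\lambda)$'' fails whenever the cell is genuinely non-compact, i.e.\ $D_\lambda \neq D^k$, which is precisely the situation this theorem exists to handle. The classical homeomorphism applies to the regular CW structure on the \emph{whole} sphere $\partial D^k$ supplied by total normality; a point of a cell $e'$ of $\partial D_\lambda$ lies in a subdivision simplex whose chain has maximum $e'$ but whose lower members may be cells of $\partial D^k - \partial D_\lambda$. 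Hence $BP(\mathcal{Q}_\lambda)$, built from chains lying entirely in $\mathcal{Q}_\lambda$, is in general a proper subspace of $\partial D_\lambda$ (only a strong deformation retract of it). Already for $X=\U\cup\R_{<0}$ with its $2$-cell $\U$ and $1$-cell $\R_{<0}$ --- the case $\rk K\cat{C}=1$ of the paper's $S_\ts{D}$ --- one has $\partial D_\lambda$ an open arc, while $BP(\mathcal{Q}_\lambda)$ is the single barycentre of that arc.

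This breaks the step your induction rests on: a strong deformation retraction of $D_\lambda$ onto the subcone $\{0\}\ast BP(\mathcal{Q}_\lambda)$ \emph{fixing $\partial D_\lambda$} cannot exist, because the target does not contain $\partial D_\lambda$; in the example it would have to fix the open arc pointwise while retracting onto a single segment. Consequently the gluing argument (``such retractions are the identity on $X_{k-1}$, so they glue over $X_k$'') collapses as well. The correct construction must, over each $k$-cell, produce a homotopy whose restriction to $\partial D_\lambda$ agrees with (the pullback under $\alpha_\lambda$ of) the retraction of $X_{k-1}$ onto $BP(X_{k-1})$ already built by induction---that is, one works relative to the embedded copy of $BP$, not relative to the skeleton---and then verifies compatibility of these homotopies across cells and continuity in the weak topology. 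That relative extension problem is where total normality and collar-type arguments genuinely enter in \cite{tamaki}; your collar remark is aimed at the wrong relative version of the problem, so as it stands the sketch does not prove the theorem, though the citation of course suffices for the purposes of the paper.
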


\section{Algebraic stability conditions}
\label{stab alg}
 We say a stability condition $\sigma$ is \defn{algebraic} if the corresponding t-structure $\ts{D}_\sigma$ is algebraic. Let  $\stabalg{C} \tsleq \stab{C}$ be the subspace of algebraic stability conditions.

Write $S_\ts{D} = \{ \sigma \in \stab{C} \ : \ \ts{D}_\sigma=\ts{D} \}$ for the set of stability conditions with associated t-structure $\ts{D}$. When $\ts{D}$ is algebraic, a stability condition in $S_\ts{D}$ is uniquely determined  by a choice of central charge in
\begin{equation}
\label{algebraic heart}
 \U_{-}=\{ r\exp(i\pi\theta) \in \C \ : \ r>0 \ \textrm{and}\  \theta\in (0,1] \}
\end{equation}
for each simple object in the heart \cite[Lemma 5.2]{MR2549952}. Hence, in this case, an ordering of the simple objects determines an isomorphism $S_\ts{D} \cong \left(  \U_{-} \right)^n$. In particular, if $\cat{C}$ has an algebraic t-structure then $\stabalg{C} \neq \emptyset$.

The action of $\mathrm{Aut}(\cat{C})$ on $\stab{C}$ restricts to an action on the subspace $\stabalg{C}$. In contrast $\stabalg{C}$ need not be preserved by the  action of $\C$ on $\stab{C}$. The action of $i\R\subseteq \C$ uniformly rescales the masses of semistable objects; this does not change the associated t-structure and so preserves $\stabalg{C}$. However, $\R\subseteq \C$ acts by rotating the phases of semistables. Thus the action of $\lambda\in \R$ alters the t-structure by a finite sequence of tilts, and can result in a non-algebraic t-structure. In fact, the union of orbits  $\C \cdot \stabalg{C}$ consists of those stability conditions $\sigma$  for which  $(\slicing_\sigma(\theta,\infty), \slicing_\sigma(-\infty,\theta])$ is an algebraic t-structure for some $\theta\in \R$.  The choice of $\theta=0$ for {\em the} associated t-structure is purely conventional. If we define
\[
\stabalgt{C} = \{ \sigma \in \stab{C} \ \colon\  (\slicing_\sigma(\theta,\infty), \slicing_\sigma(-\infty,\theta])\ \text{is algebraic}\}
\]
then there is a commutative diagram
\begin{center}
\begin{tikzcd}
\stabalg{C} \ar[hookrightarrow]{r} \ar{d} & \stab{C} \ar{d}{\sigma \mapsto \theta\cdot \sigma} \\
\stabalgt{C} \ar[hookrightarrow]{r} &  \stab{C}
\end{tikzcd}
\end{center}
in which the vertical maps are homeomorphisms. So $\stabalgt{C} $ is independent   up to homeomorphism of the choice of $\theta\in \R$, but the way in which it is embedded in $\stab{C}$ is not.

\begin{lemma}
\label{alg phase criterion}
Suppose $\stabalg{\cat{C}} \neq \emptyset$. Then the space of algebraic stability conditions is contained in the union of full components of $\stab{C}$, \ie those components locally homeomorphic to $\mor{K\cat{C}}{\C}$. A stability condition  $\sigma$ in a full component of $\stab{C}$ is algebraic if and only if $\slicing_\sigma(0,\epsilon) =\emptyset$ for some $\epsilon >0$.
\end{lemma}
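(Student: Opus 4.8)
The plan is to handle the two assertions separately: the first is a dimension count, the second an analysis of arguments of central charges. For the first, suppose $\sigma\in\stabalg{C}$, so $\ts{D}_\sigma$ is algebraic with, say, $n$ simple objects in its heart; since a bounded $t$-structure identifies the Grothendieck group of its heart with $K\cat{C}$, while the simple objects of a length category form a $\Z$-basis of its Grothendieck group, we get $K\cat{C}\cong\Z^{n}$ and $\mor{K\cat{C}}{\C}\cong\C^{n}$. By \cite[Lemma 5.2]{MR2549952} the map $\pi$ identifies $S_{\ts{D}_\sigma}$ with $(\U\cup\R_{<0})^{n}$, which contains $\U^{n}$, an open subset of $\C^{n}$ and hence a $2n$-dimensional topological manifold. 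Since the (connected) component of $\stab{C}$ containing $S_{\ts{D}_\sigma}$ is locally modelled on a single linear subspace of $\mor{K\cat{C}}{\C}\cong\C^{n}$, and $\U^{n}$ cannot embed topologically in a manifold of dimension below $2n$ (invariance of domain), that subspace must be all of $\mor{K\cat{C}}{\C}$. Hence the component is full, and $\stabalg{C}$ lies in the union of the full components.

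For the forward direction of the second assertion, let $\sigma$ be algebraic, with heart $\heart{D}_\sigma=\slicing_\sigma(0,1]$ having simple objects $s_1,\dots,s_n$. Any nonzero semistable $c$ of phase $\varphi(c)\in(0,1]$ lies in $\heart{D}_\sigma$, so by Jordan--H\"older $[c]=\sum_i m_i[s_i]$ in $K\cat{C}$ with the $m_i\in\N$ not all zero, whence $\charge(c)=\sum_i m_i\charge(s_i)$. As $\charge$ restricts to a stability function on $\heart{D}_\sigma$, each $\charge(s_i)$ is nonzero with argument in $(0,\pi]$; these finitely many arguments lie in a closed sub-interval $[\pi\epsilon,\pi]$ with $\epsilon>0$, and the cone over $[\pi\epsilon,\pi]$ is convex, so the positive combination $\charge(c)$ — again nonzero, because $\charge$ is a stability function — has argument at least $\pi\epsilon$ too. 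Since $\charge(c)=m_\sigma(c)\exp(i\pi\varphi(c))$ this forces $\varphi(c)\geq\epsilon$, so there are no semistables of phase in $(0,\epsilon)$ and $\slicing_\sigma(0,\epsilon)=\emptyset$. (This implication does not in fact use that the component is full.)

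For the converse, suppose $\sigma$ lies in a full component and $\slicing_\sigma(0,\epsilon)=\emptyset$; shrinking $\epsilon$, we may assume $\epsilon\in(0,1)$. Then every object of $\heart{D}_\sigma=\slicing_\sigma(0,1]$ has all of its Harder--Narasimhan factors of phase in $[\epsilon,1]$, so $\charge$ maps every nonzero object of $\heart{D}_\sigma$ into the convex cone $\Sigma=\{\,r\exp(i\pi\theta) : r>0,\ \theta\in[\epsilon,1]\,\}$, of angular width $(1-\epsilon)\pi<\pi$. Multiplying by $w=\exp(-i\pi(1+\epsilon)/2)$ centres this cone on the positive real axis with half-angle $(1-\epsilon)\pi/2<\pi/2$, so for every nonzero $c\in\heart{D}_\sigma$ we get $\re(w\charge(c))\geq\sin(\pi\epsilon/2)\,|\charge(c)|>0$, and, applying the same estimate to each Harder--Narasimhan factor and summing, $|\charge(c)|\geq\sin(\pi\epsilon/2)\,m_\sigma(c)$. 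The essential extra input is a uniform lower bound on mass: because $\sigma$ is locally-finite and $[\epsilon,1]$ is compact, finitely many intervals $I$ with $\slicing_\sigma(I)$ of finite length cover it, so the `simple' objects of these quasi-abelian categories form a finite list and there is a constant $\mu>0$ with $m_\sigma(b)\geq\mu$ for every semistable $b$ of phase in $[\epsilon,1]$; summing over Harder--Narasimhan factors then gives $m_\sigma(c)\geq\mu$ for every nonzero $c\in\heart{D}_\sigma$. Combining these, the additive functional $W\colon K\cat{C}\to\R$, $W(c)=\re(w\charge(c))$, satisfies $W(c)\geq\nu:=\mu\sin^{2}(\pi\epsilon/2)>0$ for every nonzero $c\in\heart{D}_\sigma$, and hence increases by at least $\nu$ at each proper step of any chain of subobjects of a fixed object; this bounds the lengths of such chains, so $\heart{D}_\sigma$ is artinian and noetherian, \ie a length category. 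Finally $K\cat{C}$ has finite rank because $\stabalg{C}\neq\emptyset$, so the simples of $\heart{D}_\sigma$, being a basis of $K\cat{C}$, are finite in number; thus $\heart{D}_\sigma$ is algebraic and so is $\sigma$.

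I expect the uniform mass bound $\mu$ in the converse to be the only real obstacle: the bookkeeping with the cone $\Sigma$ is routine, but that step genuinely relies on local-finiteness, via the compactness of $[\epsilon,1]$ and the finiteness of the quasi-abelian categories $\slicing_\sigma(I)$ for short intervals $I$ (see \cite[\S5]{MR2373143}). Minor points requiring care are the identification of $S_{\ts{D}_\sigma}$ with $(\U\cup\R_{<0})^{n}$ invoked in the first step, and the fact that $\charge$ restricts to a stability function on each heart that arises.
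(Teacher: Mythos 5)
The first assertion and the forward implication of the second are fine, and essentially follow the paper's route: fullness of the component comes from the explicit description $S_{\ts{D}}\cong(\U\cup\R_{<0})^n$ of \cite[Lemma 5.2]{MR2549952}, and the phase gap for an algebraic $\sigma$ comes from the finitely many (semistable) simple objects — the paper gets it even more directly from the minimal phase of a simple object, but your convex-cone computation with $[c]=\sum_i m_i[s_i]$ is a correct variant.

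The converse, however, has a genuine gap, and it sits exactly where you suspected. Your argument needs the uniform bound $m_\sigma(b)\geq\mu>0$ for all semistables $b$ with phase in $[\epsilon,1]$, and the justification offered — cover $[\epsilon,1]$ by finitely many intervals $I$ with $\slicing_\sigma(I)$ of finite length, hence ``the simple objects of these quasi-abelian categories form a finite list'' — is not valid. Finite length is an object-wise condition (every object has bounded chains of strict subobjects); it does not bound the number of isomorphism classes of simple objects in the slice (already a length abelian category can have infinitely many simples, e.g.\ torsion sheaves on a curve, and thin slices $\slicing_\sigma(I)$ of perfectly good stability conditions do contain infinitely many stables), and a fortiori it gives no positive lower bound on the masses $|\charge_\sigma(b)|$ of the stables in those slices: nothing in local-finiteness rules out stables of phase in $[\epsilon,1]$ whose central charges accumulate at $0$. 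So the inequality $W(c)\geq\nu>0$ on the heart, and with it the length-category conclusion, is unproved. Note also that your converse never uses that $\sigma$ lies in a \emph{full} component, whereas the lemma is stated (and later used) only under that hypothesis — this is not an accident. The paper closes precisely this gap by citing \cite[Lemma 4.5]{MR2376815}: since $\slicing_\sigma(0,\epsilon)=\emptyset$ the heart equals $\slicing_\sigma(\epsilon,1]$, a slice of width $1-\epsilon<1$, and that lemma (whose hypotheses are where fullness, together with local-finiteness, enters) shows such a slice is an abelian length category; finiteness of the set of simples then follows from $\rk K\cat{C}<\infty$ as you say. To repair your write-up you must either prove the uniform mass bound using the fullness of the component, or simply invoke the cited lemma as the paper does.
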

\begin{proof}
The assumption that $\stabalg{C}\neq \emptyset$ implies that $K\cat{C} \cong \Z^n$ for some $n\in \N$. It follows from the description of $S_\ts{D}$ for algebraic $\ts{D}$ above that any component containing an algebraic stability condition is full.

Suppose $\ts{D}$ is algebraic. Then for any $\sigma\in S_\ts{D}$ the simple objects are semistable. Since there are finitely many simple objects there is one, $s$ say, with minimal phase $\varphi^\pm_\sigma(s)=\epsilon >0$. It follows that $\slicing_\sigma(0,\epsilon) =\emptyset$.

Conversely, suppose $\slicing_\sigma(0,\epsilon) =\emptyset$ for some stability condition $\sigma$ in a full component. Then the heart $\slicing_\sigma(0,1] = \slicing_\sigma(\epsilon,1]$. Since $1-\epsilon<1$ we can  apply \cite[Lemma 4.5]{MR2376815} to deduce that the heart of $\sigma$ is an abelian length category.  It follows that the heart has $n$ simple objects (forming a basis of $K\cat{C}$), and hence is algebraic.
\end{proof}

\begin{lemma}
\label{nonempty interior}
The interior of $S_\ts{D}$ is non-empty precisely when $\ts{D}$ is algebraic.
\end{lemma}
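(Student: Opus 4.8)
The plan is to prove the two implications separately; the routine one is ``$\ts{D}$ algebraic $\Rightarrow$ $S_\ts{D}$ has nonempty interior''. For that direction I would use the identification $S_\ts{D}\cong(\U\cup\R_{<0})^n$ recalled just before the lemma and pick $\sigma\in S_\ts{D}$ whose central charge sends every simple object $s_1,\dots,s_n$ of $\heart{D}$ into $\U$. Then, exactly as in the proof of Lemma \ref{alg phase criterion}, each $s_i$ is $\sigma$-semistable of phase $\varphi_\sigma(s_i)\in(0,1)$, and since there are finitely many of them I can take $\delta>0$ with $[\varphi_\sigma(s_i)-\delta,\varphi_\sigma(s_i)+\delta]\subset(0,1)$ for all $i$. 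If $d(\sigma,\tau)<\delta$ then $\varphi^\pm_\tau(s_i)\in(0,1)$, so $s_i\in\slicing_\tau(0,1]=\heart{D}_\tau$ for each $i$; as $\heart{D}$ is a length category it is generated under extensions by the $s_i$, and $\heart{D}_\tau$ is extension-closed, so $\heart{D}\subset\heart{D}_\tau$, whence $\ts{D}_\tau=\ts{D}$ (the two being hearts of bounded t-structures) and $\tau\in S_\ts{D}$. Thus the $\delta$-ball about $\sigma$ lies in $S_\ts{D}$ and $\sigma$ is an interior point.

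For the converse, suppose $\sigma$ is an interior point of $S_\ts{D}$; I will assume $\sigma$ lies in a full component, so that $K\cat{C}\cong\Z^n$ and $\pi$ identifies a neighbourhood of $\sigma$ with an open subset of $\mor{K\cat{C}}{\C}$. By \cite[Lemma 4.5]{MR2376815} it is enough to show $\slicing_\sigma(0,\epsilon)=\emptyset$ for some $\epsilon>0$: then $\heart{D}=\slicing_\sigma[\epsilon,1]$ has width $<1$, hence is an abelian length category, and having Grothendieck group $\Z^n$ it has exactly $n$ simple objects, so $\ts{D}$ is algebraic. Assume instead $\slicing_\sigma(0,\epsilon)\neq\emptyset$ for all $\epsilon>0$ and choose $\sigma$-semistable $c_k$ with $\varphi_\sigma(c_k)\to 0^+$.

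The core of the argument is then as follows. The classes satisfy $\|[c_k]\|\to\infty$ (otherwise some subsequence has a fixed class, and hence a fixed positive phase, contradicting $\varphi_\sigma(c_k)\to 0$), so after passing to a subsequence $[c_k]/\|[c_k]\|\to\xi$ on the unit sphere of $K\cat{C}\otimes\R$; since $\arg Z_\sigma(c_k)=\pi\varphi_\sigma(c_k)\to 0$ and $Z_\sigma([c_k]/\|[c_k]\|)\to Z_\sigma(\xi)$, we get $Z_\sigma(\xi)\in\R_{\geq 0}$, in particular $\im Z_\sigma(\xi)=0$. Now pick $\eta\in\mor{K\cat{C}}{\C}$ with $\im\eta(\xi)<0$; for small $t>0$ the charge $Z_\sigma+t\eta$ is the central charge of a stability condition $\sigma_t$ with $d(\sigma,\sigma_t)\to 0$, and
\[
Z_{\sigma_t}(c_k)=\|[c_k]\|\bigl(Z_\sigma([c_k]/\|[c_k]\|)+t\,\eta([c_k]/\|[c_k]\|)\bigr)
\]
has argument tending to that of $Z_\sigma(\xi)+t\eta(\xi)$, which lies strictly below the real axis; so for each fixed $t>0$ there is some $k$ with $\im Z_{\sigma_t}(c_k)<0$. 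Reading off the $\sigma_t$-Harder--Narasimhan filtration of $c_k$, some semistable factor then has phase outside $(0,1]$, forcing $c_k\notin\slicing_{\sigma_t}(0,1]=\heart{D}_{\sigma_t}$ although $c_k\in\heart{D}$; hence $\sigma_t\notin S_\ts{D}$ for all $t>0$, contradicting that $\sigma$ is interior.

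I expect the main obstacle to be in this converse: first the reduction to the finite-rank/full-component situation, which is what allows the limiting direction $\xi$ to be extracted and the perturbation $\eta$ to be chosen freely; and, more essentially, the point that one \emph{fixed} arbitrarily small perturbation $\eta$ can push infinitely many of the $c_k$ out of the heart simultaneously. This succeeds precisely because the $c_k$ accumulate along a single direction $\xi$ on which $Z_\sigma$ is real, whereas a naive attempt to destabilise a single $c_k$ by a small perturbation fails, since the masses $m_\sigma(c_k)$ are not controlled.
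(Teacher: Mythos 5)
Your proof is correct, and your first direction is essentially the paper's: the explicit description $S_\ts{D}\cong(\U\cup\R_{<0})^n$ together with a small metric ball around a point whose simples all have charge in $\U$. The converse is where you take a genuinely different route. The paper argues: if $\ts{D}$ is not algebraic then, by Lemma \ref{alg phase criterion}, there are $\sigma$-semistable objects of arbitrarily small strictly positive phase, and then it simply rotates — acting by a small real $\lambda$ in the $\C$-action shifts all phases uniformly, so these objects leave the aisle and the $\C$-orbit through $\sigma$ contains stability conditions outside $S_\ts{D}$ converging to $\sigma$; hence no point of $S_\ts{D}$ is interior. You instead extract a limiting direction $\xi$ on the unit sphere of $K\cat{C}\otimes\R$ by compactness and destabilise along a linear deformation $\charge_\sigma+t\eta$ with $\im\eta(\xi)<0$. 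This works, but it is heavier than needed: the rotation trick requires no norms, no subsequence, and no lifting of charge deformations (the $\C$-action exists on all of $\stab{C}$ and moves $\sigma$ by distance $|\lambda|$), and it disposes for free of exactly the difficulty you flag at the end — the uncontrolled masses $m_\sigma(c_k)$ — since rotating phases is insensitive to masses. Your explicit full-component/finite-rank assumption is not a defect relative to the paper: the paper's own proof invokes the converse direction of Lemma \ref{alg phase criterion}, whose proof uses $K\cat{C}\cong\Z^n$ and \cite[Lemma 4.5]{MR2376815}, so both arguments reduce ``phase gap at $0$ implies algebraic'' to the same ingredients; the only real difference is how one produces nearby stability conditions outside $S_\ts{D}$ when there is no gap.
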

\begin{proof}
The explicit description of $S_\ts{D}$ for algebraic $\ts{D}$ above shows that the interior is non-empty in this case. Conversely, suppose $\ts{D}$ is not algebraic and $\sigma \in S_\ts{D}$. Then by Lemma~\ref{alg phase criterion} there are $\sigma$-semistable objects of arbitrarily small strictly positive phase. It follows that the $\C$-orbit through $\sigma$ contains a sequence of stability conditions not in $S_\ts{D}$ with limit $\sigma$. Hence $\sigma$ is not in the interior of $S_\ts{D}$. Since $\sigma$ was arbitrary the latter must be empty.
\end{proof}

\begin{corollary}
\label{cstabalg open}
The subset $\C \cdot \stabalg{C} \tsleq \stab{C}$ is open, and when non-empty consists of those stability conditions in full components of $\stab{C}$ for which the phases of semistable objects are not dense in $\R$.
\end{corollary}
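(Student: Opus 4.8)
The plan is to reduce everything to Lemma~\ref{alg phase criterion} together with the explicit formula for the $\C$-action on slicings. Since the empty set is open we may assume $\C\cdot\stabalg{C}\neq\emptyset$, hence $\stabalg{C}\neq\emptyset$ and $K\cat{C}\cong\Z^n$ as in the proof of that lemma. Write $U\subset\stab{C}$ for the set of stability conditions whose semistable objects do not have phases dense in $\R$. I would first identify $\C\cdot\stabalg{C}$ with $U$. Because $\C$ is connected its action preserves components, and because $\lambda\in\C$ acts on slicings by the uniform rotation $\slicing\mapsto\slicing(\varphi+\re\lambda)$ --- so that the associated $t$--structure depends only on $\re\lambda$ --- a stability condition $\sigma$ lies in $\C\cdot\stabalg{C}$ precisely when $\theta\cdot\sigma$ is algebraic for some $\theta\in\R$. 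Now $\theta\cdot\sigma$ has heart $\slicing_\sigma(\theta,\theta+1]$, and this heart contains no semistable object of phase within $\delta$ of $\theta$ exactly when $\slicing_\sigma(\theta,\theta+\delta)=\emptyset$; when $\theta\cdot\sigma$ is algebraic this holds with $\delta$ the minimal phase of a simple object by the argument in Lemma~\ref{alg phase criterion}, and conversely if $\slicing_\sigma(\theta,\theta+\delta)=\emptyset$ for some $\delta\in(0,1)$ then the heart equals $\slicing_\sigma(\theta+\delta,\theta+1]$, an interval of length $<1$, so \cite[Lemma 4.5]{MR2376815} makes it a length category, which then has exactly $n=\rk K\cat{C}$ simple objects and is therefore algebraic. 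Reformulating, $\sigma\in\C\cdot\stabalg{C}$ if and only if $\slicing_\sigma(a,a+\delta)=\emptyset$ for some $a\in\R$ and $\delta>0$; since the set of phases of semistable objects is invariant under translation by $1$, this is exactly the condition $\sigma\in U$. In particular $\C\cdot\stabalg{C}\subseteq U$ lies in the full components of $\stab{C}$, since any point of $U$ can be rotated into $\stabalg{C}$, which is contained in the full components by Lemma~\ref{alg phase criterion}; this is the source of the `full component' clause in the statement.

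It then remains to show that $U$ is open, which gives openness of $\C\cdot\stabalg{C}=U$. Let $\sigma\in U$; rotating by a suitable element of $\C$ (a homeomorphism of $\stab{C}$ that preserves $U$) we may assume $\slicing_\sigma(-\delta,\delta)=\emptyset$ for some $\delta>0$. Suppose $d(\sigma,\tau)<\delta/2$ and that $c$ is a $\tau$-semistable object of phase $\varphi$ with $|\varphi|<\delta/2$. Then $\varphi_\sigma^\pm(c)$ lie within $\delta/2$ of $\varphi_\tau^\pm(c)=\varphi$, hence in $(-\delta,\delta)$, so the semistable Harder--Narasimhan factors of $c$ with respect to $\sigma$ all have phases in $[\varphi_\sigma^-(c),\varphi_\sigma^+(c)]\subseteq(-\delta,\delta)$ --- contradicting $\slicing_\sigma(-\delta,\delta)=\emptyset$. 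Hence the open ball of radius $\delta/2$ about $\sigma$ is contained in $U$, so $U$ is open.

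The routine ingredients --- the openness of the full components, the continuity of masses and phases on $\stab{C}$, and the formula for the $\C$-action --- are all recorded in \S\ref{stability conditions}, and the metric estimate in the second paragraph is elementary. The one point that needs care, and which I expect to be the main obstacle, is the passage between the vanishing $\slicing_\sigma(\theta,\theta+\delta)=\emptyset$ and the algebraicity of $\theta\cdot\sigma$ \emph{without} assuming in advance that $\theta\cdot\sigma$ lies in a full component: this forces one to invoke \cite[Lemma 4.5]{MR2376815} to get a length heart and then deduce finiteness of the number of simples purely from $K\cat{C}\cong\Z^n$. Once that is in hand the identification $\C\cdot\stabalg{C}=U$ --- and with it both assertions of the corollary --- follows formally.
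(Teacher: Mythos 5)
Your overall strategy (rotate into $\stabalg{C}$ via the $\C$-action, characterise membership by a phase gap, and get openness from persistence of the gap under small deformations) is the paper's, and your metric argument for openness of the set of gap-possessing stability conditions is essentially identical to the one in the paper. The problem is the step you yourself flag as the crux: you claim $\C\cdot\stabalg{C}$ equals the set $U$ of \emph{all} stability conditions whose semistable phases are not dense, with no restriction to full components, by applying \cite[Lemma 4.5]{MR2376815} to $\theta\cdot\sigma$ without knowing it is full. That application is outside the hypotheses under which the cited lemma (and the paper's Lemma \ref{alg phase criterion}, which carries the full-component assumption for exactly this reason) operates. The issue is not the count of simple objects --- you are right that a length heart of a bounded $t$--structure must have exactly $\rk K\cat{C}$ simples --- it is the length property itself. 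A phase gap only forces the central charge of the heart $\slicing_\sigma(\theta,\theta+1]$ into a sector of angle strictly less than $\pi$; to rule out infinite descending chains of subobjects one needs in addition a lower bound on $|\charge(\gamma)|$ in terms of the class $\gamma$ of a semistable object (the support-type property encoded by fullness), which is precisely what being in a full component supplies. Without it, the implication ``gap $\Rightarrow$ length heart'' is unproven (and the fact that both the corollary and Lemma \ref{alg phase criterion} are stated only within full components is a strong hint that the authors do not believe the unrestricted statement). Consequently your identification $U=\C\cdot\stabalg{C}$, and with it your claim that every stability condition with non-dense phases automatically lies in a full component, is a genuine gap; it also undermines your openness argument as written, since that argument deduces openness of $\C\cdot\stabalg{C}$ from the unproven equality with $U$.

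The repair is to stay inside full components throughout, as the paper does: a point of $\C\cdot\stabalg{C}$ lies in a component meeting $\stabalg{C}$, hence in a full component; for $\sigma$ in a full component, every rotation $\theta\cdot\sigma$ stays in that same full component, so Lemma \ref{alg phase criterion} applies legitimately and gives $\sigma\in\C\cdot\stabalg{C}$ if and only if $\slicing_\sigma(t,t+\epsilon)=\emptyset$ for some $t,\epsilon$, i.e.\ if and only if the phases are not dense. Openness then follows either by your gap-persistence estimate together with the observation that a sufficiently small ball about $\sigma\in\C\cdot\stabalg{C}$ stays in the same (full) component, or equivalently by writing $\C\cdot\stabalg{C}=U\cap(\text{union of full components})$ and noting that components of the manifold $\stab{C}$ are open.
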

\begin{proof}
Suppose $\stabalg{\cat{C}}\neq \emptyset$. Then $K\cat{C}\cong \Z^n$ for some $n$.
A stability condition $\sigma\in \C\cdot \stabalg{C}$ clearly lies in a component of $\stab{C}$ meeting $\stabalg{C}$, and hence in a full component. By Lemma~\ref{alg phase criterion}, if $\sigma$ is in a full component then $\sigma \in \C \cdot \stabalg{C}$ if and only if $\slicing_\sigma(t,t+\epsilon) =\emptyset$ for some $t\in \R$ and $\epsilon >0$, equivalently if and only if the phases of semistable objects are not dense in $\R$.

To see that $\C\cdot\stabalg{C}$ is open note that if $\sigma\in \C\cdot \stabalg{C}$ and  $d(\sigma, \tau)<\epsilon /4$ then $\slicing_\sigma(t+\epsilon/4,t+3\epsilon/4) =\emptyset$ and so $\tau \in  \C \cdot \stabalg{C}$ too.
\end{proof}

\begin{example}
 Let $X$ be a smooth complex projective algebraic curve with genus $g(X)>0$. Then the space $\Stab(X)$ of stability conditions on the bounded derived category of coherent sheaves on $X$ is a single orbit of the $G$-action  \eqref{G action} through the stability condition with associated heart the coherent sheaves, and central charge $\charge(\mathcal{E})=-\mathrm{deg}\, \mathcal{E} +i \rk \mathcal{E}$ ---  see \cite[Theorem 9.1]{MR2373143} for $g(X)=1$ and \cite[Theorem 2.7]{MR2335991} for $g(X)>1$. It follows from the fact that there are semistable sheaves of any rational slope when $g(X)>0$ that the phases of semistable objects are dense for every stability condition in $\Stab(X)$. Hence $\Stab_\text{alg}(\cat{D}(X)) =\emptyset$. In fact this is true quite generally, since for `most' varieties the Grothendieck group $K(X)=K(\cat{D}(X))\not \cong \Z^n$.
 \end{example}

\begin{example}
\label{quiver density}
Let $Q$ be a finite connected quiver, and $\Stab(Q)$ the space of stability conditions on the bounded derived category of its finite-dimensional representations over an algebraically-closed field. When $Q$ has underlying graph of ADE Dynkin type, the phases of semistable objects form a discrete set \cite[Lemma 3.13]{MR3289326}; when it has extended ADE Dynkin type, the phases either form a discrete set or have accumulation points $t+\Z$ for some $t\in \R$ (all cases occur) \cite[Corollary 3.15]{MR3289326}; for any other  acyclic $Q$ there exists a  family of stability conditions for which the phases are dense in some non-empty open interval \cite[Proposition 3.32]{MR3289326}; and for $Q$ with oriented loops there exist stability conditions for which the phases of semistable objects are dense in $\R$ by \cite[Remark 3.33]{MR3289326}. It follows that $\Stab_\text{alg}(Q) = \Stab(Q)$ only in the Dynkin case; that $\C\cdot \Stab_\text{alg}(Q) = \Stab(Q)$ in the Dynkin or extended Dynkin cases; and that $\C\cdot \Stab_\text{alg}(Q) \neq \Stab(Q)$ when $Q$ has oriented loops. For a general acyclic quiver, we do not know whether $\C\cdot \Stab_\text{alg}(Q)= \Stab(Q)$ or not.
\end{example}

\begin{remark}
\label{density of phases}
The density of the phases of semistable objects for a stability condition is an important consideration in other contexts too. \cite[Proposition 4.1]{MR3007660} states that if phases for $\sigma$ are dense in $\R$ then the orbit of the universal cover $G$ of $GL_2^+(\R)$ through $\sigma$ is free, and the induced metric on the quotient $G \cdot \sigma /\C \cong G/\C \cong \U$ of the orbit is half the standard hyperbolic metric.
\end{remark}

\begin{lemma}
\label{cstabalg closed}
Suppose there exists a uniform lower bound on the maximal phase gap of algebraic stability conditions, \ie that there exists $\delta>0$ such that for each $\sigma \in \stabalg{C}$ there exists $\varphi\in \R$ with $\slicing_\sigma(\varphi-\delta,\varphi+\delta)=\emptyset$. Then $\C\cdot\stabalg{C}$ is closed, and hence is a union of components of $\stab{C}$.
\end{lemma}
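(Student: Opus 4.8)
The plan is to prove that $\C\cdot\stabalg{C}$ is sequentially closed in $\stab{C}$; since $\stab{C}$ carries the topology of a (possibly infinite-dimensional) manifold it is first-countable, so this gives closedness, and together with the openness established in Corollary~\ref{cstabalg open} it follows that $\C\cdot\stabalg{C}$ is a union of connected components. We may assume $\stabalg{C}\neq\emptyset$, since otherwise there is nothing to prove; then $K\cat{C}\cong\Z^n$ and Lemma~\ref{alg phase criterion} applies.

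So let $\sigma_m\to\sigma$ with each $\sigma_m\in\C\cdot\stabalg{C}$; we must show $\sigma\in\C\cdot\stabalg{C}$. Write $\sigma_m=\lambda_m\cdot\tau_m$ with $\tau_m\in\stabalg{C}$ and $\lambda_m\in\C$. The hypothesis provides $\varphi_m\in\R$ with $\slicing_{\tau_m}(\varphi_m-\delta,\varphi_m+\delta)=\emptyset$. Since the $\C$-action sends the slicing of $\tau_m$ to the slicing $\psi\mapsto\slicing_{\tau_m}(\psi+\re\lambda_m)$ (rotating phases and rescaling masses), the slicing of $\sigma_m$ has an empty `gap' of the same width: $\slicing_{\sigma_m}(\chi_m-\delta,\chi_m+\delta)=\emptyset$, where $\chi_m=\varphi_m-\re\lambda_m$. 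Because $\slicing_{\sigma_m}(\psi+1)=\slicing_{\sigma_m}(\psi)[1]$, this gap may be translated by any integer, so we may assume $\chi_m\in[0,1)$; passing to a subsequence, $\chi_m\to\chi$ for some $\chi\in[0,1]$.

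Next I would show that $\slicing_\sigma(\chi-\delta/2,\chi+\delta/2)=\emptyset$, so that the phases of $\sigma$-semistable objects are not dense in $\R$. If not, choose a nonzero $\sigma$-semistable $c$ with $\varphi_\sigma(c)\in(\chi-\delta/2,\chi+\delta/2)$ and then $m$ large enough that $d(\sigma_m,\sigma)<\delta/8$ and $|\chi_m-\chi|<\delta/8$. The defining metric bounds the change of the upper and lower phases of every nonzero object by $d(\sigma_m,\sigma)$, so $\varphi^{\pm}_{\sigma_m}(c)\in(\chi_m-\delta,\chi_m+\delta)$; since every Harder--Narasimhan factor of $c$ with respect to $\sigma_m$ then has phase in $(\chi_m-\delta,\chi_m+\delta)$, we get $c\in\slicing_{\sigma_m}(\chi_m-\delta,\chi_m+\delta)=\emptyset$, contradicting $c\neq 0$. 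Finally, $\sigma$ lies in a full component of $\stab{C}$: by Lemma~\ref{alg phase criterion} the orbit $\C\cdot\stabalg{C}$ lies in full components, these are open and closed, and $\sigma$ is a limit of the $\sigma_m$. Now Corollary~\ref{cstabalg open} applies and gives $\sigma\in\C\cdot\stabalg{C}$.

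The uniformity of the bound $\delta$ is what makes everything work --- it is precisely what allows the `gaps' of the individual $\sigma_m$ to survive in the limit; without it the gaps could shrink to zero. The one point requiring a little care is the translation step that keeps the $\chi_m$ in a bounded set, so that a convergent subsequence exists; the rest is a routine comparison of phases via the metric, together with the standard fact that a nonzero object all of whose semistable factors have phase in an open interval $(a,b)$ lies in $\slicing(a,b)$.
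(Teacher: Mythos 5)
Your proof is correct, and it establishes the lemma by the direct (contrapositive) route rather than the paper's argument by contradiction. The paper assumes the limit $\sigma$ lies in $\overline{\C\cdot\stabalg{C}}-\C\cdot\stabalg{C}$, invokes Corollary \ref{cstabalg open} to conclude that the $\sigma$-semistable phases are dense, and then uses the description of limiting semistable objects from \cite[\S3]{MR3007660} to produce, for every $\varphi\in\R$, a nonzero object of $\slicing_{\sigma_N}(\varphi-2\epsilon,\varphi+2\epsilon)$, contradicting the (implicitly transferred) gap of width $2\delta$ once $\epsilon<\delta/2$. You instead transfer the gap to each $\sigma_m$ explicitly via the $\C$-action, normalise the gap centres $\chi_m$ modulo the integer shift so that a convergent subsequence exists, and use only the defining inequality of the metric (upper and lower phases move by at most $d(\sigma_m,\sigma)$) to exhibit the gap $\slicing_\sigma(\chi-\delta/2,\chi+\delta/2)=0$ in the limit, finishing with the same Corollary \ref{cstabalg open}. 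Your version is marginally more self-contained: it needs only the easy direction that a $\sigma$-semistable object has $\sigma_m$-phases close to its $\sigma$-phase, rather than the cited result identifying which objects become semistable in the limit, at the cost of the small compactness/normalisation step. Your opening remark about sequential closedness is also fine --- the topology is induced by the generalised metric, so it is first-countable, and indeed the paper's own proof passes to a sequence without comment. Both arguments ultimately rest on the same two ingredients: the uniformity of $\delta$ and Corollary \ref{cstabalg open}.
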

\begin{proof}
Suppose $\sigma \in \overline{\C\cdot\stabalg{C}} - \C\cdot \stabalg{C}$. Let $\sigma_n\to \sigma$ be a sequence in $\C\cdot\stabalg{C}$ with limit $\sigma$. Write $\varphi_n^\pm$ for $\varphi_{\sigma_n}^\pm$ and so on.

Fix $\epsilon > 0$. There exists $N\in \N$ such that $d(\sigma_n,\sigma)<\epsilon$ for $n\geq N$. By Corollary~\ref{cstabalg open} the phases of semistable objects for $\sigma$ are dense in $\R$. Thus, given $\varphi \in \R$, we can find $\theta$ with $|\theta-\varphi|<\epsilon$ such that $\slicing_\sigma(\theta)\neq \emptyset$. So by \cite[\S3]{MR3007660} there exists $0\neq c\in \cat{C}$ such that $\varphi_n^\pm(c) \to \theta$. Hence $c\in \slicing_N(\theta-\epsilon,\theta+\epsilon) \tsleq  \slicing_N(\varphi-2\epsilon,\varphi+2\epsilon)$. In particular the latter is non-empty. Since $\varphi$ is arbitrary we obtain a contradiction by choosing $\epsilon<\delta/2$. Hence $\C\cdot \stabalg{C}$ is closed.
\end{proof}

\begin{example}
\label{P1 one}
Let $\Stab(\mathbb{P}^1)$ be the space of stability conditions on the bounded derived category $\cat{D}(\mathbb{P}^1)$ of coherent sheaves on $\mathbb{P}^1$. \cite[Theorem 1.1]{MR2219846} identifies  $\Stab(\mathbb{P}^1) \cong \C^2$. In particular there is a unique component, and it is full. The category $\cat{D}(\mathbb{P}^1)$ is equivalent to the bounded derived category $\cat{D}(\widetilde{A_1})$ of finite-dimensional representations of the Kronecker quiver $\widetilde{A_1}$. In particular, $\Stab_\text{alg}(\mathbb{P}^1)$ is non-empty. The Kronecker quiver has extended ADE Dynkin type, so by Example~\ref{quiver density} the phases of semistable objects for any $\sigma\in\Stab(\mathbb{P}^1)$ are either discrete or accumulate at the points $t+\Z$ for some $t\in \R$. The subspace $\Stab(\mathbb{P}^1) - \Stab_\text{alg}(\mathbb{P}^1)$ consists of those stability conditions with phases accumulating at $\Z \subseteq \R$. Therefore  $\C \cdot \Stab_\text{alg}(\mathbb{P}^1) = \Stab(\mathbb{P}^1)$ and $\Stab_\text{alg}(\P^1)$ is not closed. Neither is it  open \cite[p20]{MR2739061}: there are convergent sequences of stability conditions whose phases accumulate at $\Z$ such that the phase of each semistable object in the limiting stability condition is actually in $\Z$.

An explicit analysis of the semistable objects for each stability condition, as in \cite{MR2219846}, reveals that there is no lower bound on the maximum phase gap of algebraic stability conditions, so that whilst this condition is sufficient to ensure $\C\cdot\stabalg{C} =\stab{C}$ it is not necessary.
\end{example}

\subsection{The stratification of algebraic stability conditions}
\label{stratifying stab alg}

In this section we define and study a natural stratification of $\stabalg{C}$ with contractible strata. Suppose $\ts{D}$ is an algebraic t-structure on $\cat{C}$. Then $S_\ts{D} \cong (\U_{-})^n$ where $n=\rk(K\cat{C})$. For a subset $I$ of the simple objects in the heart $\heart{D}$ of $\ts{D}$ we define a subset of $\stab{C}$
\begin{align*}
S_{\ts{D},I} &= \{ \sigma \ \colon\  \ts{D}=\ts{D}_\sigma, \varphi_\sigma(s)=1 \ \text{for simple}\ s\in \heart{D} \iff  s \in I\}\\
&= \{ \sigma \ \colon\  \ts{D}=\ts{D}_\sigma,  \slicing_\sigma(1)=\langle I \rangle\}\\
&= \{ \sigma \ \colon\  \ts{D}= \left( \slicing_\sigma(0,\infty),  \slicing_\sigma(-\infty,0]\right),\  \lt{D}{I} = \left( \slicing_\sigma[0,\infty),  \slicing_\sigma(-\infty,0)\right)\}.
\end{align*}
Clearly $S_\ts{D} = \bigcup_I S_{\ts{D},I}$ and there is a decomposition
\begin{equation}
\label{stabalg decomp}
\stabalg{C} = \bigcup_{\ts{D}\ \text{alg}} S_\ts{D} = \bigcup_{\ts{D}\ \text{alg}} \Big(\bigcup_{I } S_{\ts{D},I}\Big).
\end{equation}
into strata of the form $S_{\ts{D},I}$. A choice of ordering of the simple objects of $\heart{D}$ determines a homeomorphism $S_\ts{D} \cong \left(  \U_{-} \right)^n$ under which the decomposition into strata corresponds to the the apparent decomposition of $\left(  \U_{-} \right)^n$ with $S_{\ts{D},I} \cong \U^{n-\# I} \times \R_{<0}^{\# I}$ where $\U$ is the strict upper half plane in $\C$. In particular each stratum $S_{\ts{D},I}$ is contractible.

Consider the closure $\overline{S_{\ts{D},I}}$ of a stratum. For  $I \subseteq K \subseteq \{s_1,\ldots,s_n\}$ let
\[
\partial_KS_{\ts{D},I} = \{ \sigma \in \overline{S_{\ts{D},I}} \ \colon\  \im \charge_\sigma(s)=0 \iff s\in K \},
\]
so that  $\overline{S_{\ts{D},I}} = \bigsqcup_{K} \partial_KS_{\ts{D},I}$ (as a set). For example $\partial_I S_{\ts{D},I} =S_{\ts{D},I}$.

\begin{lemma}
\label{components are strata}
For any t-structure $\ts{E}$, not necessarily algebraic, the intersection $S_\ts{E} \cap \partial_KS_{\ts{D},I}$ is a union of components of $\partial_KS_{\ts{D},I}$, \ie the heart of the stability condition remains constant in each component of $\partial_KS_{\ts{D},I}$. Each such component which lies in $\stabalg{C}$ is a stratum $S_{\ts{E},J}$ for some  $\ts{E}$ and subset $J$ of the simple objects in $\ts{E}$, with $\# J = \# K$.
\end{lemma}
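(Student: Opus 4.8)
The plan is to describe $\partial_K S_{\ts{D},I}$ explicitly via the central charges of the simple objects $s_1,\dots,s_n$ of $\heart{D}$, then to run a connectedness argument. The first input is a local picture: for every $\sigma\in\partial_K S_{\ts{D},I}$ each $s_i$ is $\sigma$-semistable, with $\charge_\sigma(s_i)\in\U$ when $i\notin K$, $\charge_\sigma(s_i)\in\R_{<0}$ when $i\in I$, and $\charge_\sigma(s_i)\in\R\setminus\{0\}$ when $i\in K\setminus I$. To see this, take $\sigma_m\in S_{\ts{D},I}$ with $\sigma_m\to\sigma$; continuity of $\varphi^\pm$ and of the mass along the sequence forces $\varphi^+_\sigma(s_i)=\varphi^-_\sigma(s_i)$ and $\charge_\sigma(s_i)\neq0$, and the position of $\charge_\sigma(s_i)$ is read off from the corresponding coordinate of $\sigma_m$ in $S_\ts{D}\cong(\U\cup\R_{<0})^n$ together with the defining condition $\im\charge_\sigma(s)=0\iff s\in K$. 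Setting $S(\sigma)=\{i\in K\setminus I:\charge_\sigma(s_i)\in\R_{>0}\}\subset K$, the simples with $i\in S(\sigma)$ then have phase $0$ and the remaining $s_i$ with $i\in K$ have phase $1$. Since $\sigma$ is a limit of stability conditions in $S_\ts{D}$, \cite[\S5]{MR3007660} shows $\ts{D}_\sigma$ is a tilt of $\ts{D}$; when $S(\sigma)=\emptyset$ this forces $\ts{D}_\sigma=\ts{D}$, and otherwise, since $s_i$ ($i\in S(\sigma)$) lies in the aisle of $\ts{D}$ but not of $\ts{D}_\sigma$, it is a non-trivial \emph{right} tilt $\ts{D}_\sigma=\rt{D}{\ts{T}(\sigma)}$. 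Comparing hearts identifies the torsion-free class of $\ts{T}(\sigma)$ with $\slicing_\sigma(0)\cap\heart{D}$, and on Grothendieck groups the simples $t_1,\dots,t_n$ of $\heart{D}_\sigma$ then satisfy $[t_i]=-[s_i]$ for $i\in S(\sigma)$ and $[t_i]\equiv[s_i]$ modulo $\langle[s_j]:j\in S(\sigma)\rangle$ otherwise.

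For the first assertion of the lemma I would show $\sigma\mapsto\ts{D}_\sigma$ is constant on connected components of $\partial_K S_{\ts{D},I}$; then $S_\ts{E}\cap\partial_K S_{\ts{D},I}$ is the union of those components on which $\ts{D}_\sigma\equiv\ts{E}$, hence a union of components. Along a path $\sigma_t$ in $\partial_K S_{\ts{D},I}$ the $\charge_{\sigma_t}(s_i)$ vary continuously, staying in $\U$ for $i\notin K$ and keeping a fixed sign on $\R\setminus\{0\}$ for $i\in K$, so $S(\sigma_t)$ is constant; and the torsion-free class $\slicing_{\sigma_t}(0)\cap\heart{D}$ --- hence $\ts{D}_{\sigma_t}$ --- can only change when some $a\in\heart{D}$ crosses between the $\sigma_t$-semistable and $\sigma_t$-unstable loci. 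Such an $a$ must have composition factors among $\{s_i:i\in K\}$ (otherwise $\charge_{\sigma_t}(a)\notin\R$) and hence, by the local picture, is either $\sigma_t$-semistable of phase $0$ or $1$, or $\sigma_t$-unstable with semistable factors of phases exactly $1$ and $0$; a Harder--Narasimhan/wall-crossing analysis then shows that at the transition a destabilising subobject $b\subsetneq a$ of $\heart{D}$, itself supported on $\{s_i:i\in K\}$ so with $\charge_{\sigma_t}(b)\in\R$, would pass from $\R_{<0}$ onto the positive ray, forcing $\charge_{\sigma_{t_0}}(b)=0$ for some $t_0$ on the path --- impossible as $\sigma_{t_0}\in\stab{C}$.

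For the second assertion, let $C$ be a connected component of $\partial_K S_{\ts{D},I}$ contained in $\stabalg{C}$; then $\ts{D}_\sigma=\ts{E}$ and $S(\sigma)=S$ are constant on $C$ and $\ts{E}=\rt{D}{\ts{T}(\sigma)}$ is algebraic. Put $J=\{t_i:i\in K\}$, a set of $\#K$ simples of $\heart{E}$. For $\sigma\in C$ a simple $t_i$ of $\heart{E}$ is $\sigma$-semistable of phase in $(0,1]$ (its Harder--Narasimhan filtration lies in $\heart{E}$ and $t_i$ is simple), so $\varphi_\sigma(t_i)=1$ iff $\im\charge_\sigma(t_i)=0$ iff $[t_i]\in\ker(\im\charge_\sigma)$ iff $[s_i]\in\ker(\im\charge_\sigma)$ --- by the Grothendieck-group relations above and $[s_j]\in\ker(\im\charge_\sigma)$ for $j\in S\subset K$ --- iff $i\in K$; hence $C\subset S_{\ts{E},J}$ and $\#J=\#K$. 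Running that computation in the other direction, every $\sigma\in S_{\ts{E},J}$ satisfies $\im\charge_\sigma(s_i)=0\iff i\in K$; and $S_{\ts{E},J}\subset\overline{S_{\ts{D},I}}$ since each such $\sigma$ is the limit of stability conditions $\tau_\epsilon\in S_{\ts{D},I}$ constructed (via \cite[Lemma 5.2]{MR2549952}) to have heart $\heart{D}$ and central charges on the $s_i$ obtained from $\charge_\sigma(s_i)$ by small rotations: unchanged for $i\in I$ and $i\notin K$, replaced by $e^{i\pi\epsilon}\lvert\charge_\sigma(s_i)\rvert$ for $i\in S$ (where $\charge_\sigma(s_i)\in\R_{>0}$) and by $e^{i\pi(1-\epsilon)}\lvert\charge_\sigma(s_i)\rvert$ for $i\in K\setminus(I\cup S)$ (where $\charge_\sigma(s_i)\in\R_{<0}$); then $\tau_\epsilon\in S_{\ts{D},I}$, $\charge_{\tau_\epsilon}\to\charge_\sigma$, and $\tau_\epsilon\to\sigma$ in $\stab{C}$.

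Combining: $C\subset S_{\ts{E},J}\subset\partial_K S_{\ts{D},I}$, and since $S_{\ts{E},J}\cong\U^{\,n-\#J}\times\R_{<0}^{\,\#J}$ is connected while $C$ is a whole component, $C=S_{\ts{E},J}$. The two steps carrying real content are the wall-crossing claim used in the second paragraph (the leverage being that the objects forcing a change of $\ts{D}_\sigma$ are supported on $\{s_i:i\in K\}$, so have real central charge on $\partial_K S_{\ts{D},I}$, which would be pushed through $0$), and the convergence $\tau_\epsilon\to\sigma$ in the third paragraph (that un-tilting $\heart{E}$ back to $\heart{D}$ while rotating central charges by $O(\epsilon)$ is an $O(\epsilon)$ displacement in the metric of $\stab{C}$, which one checks by estimating Harder--Narasimhan data, $\heart{D}$ being a single HRS tilt of $\heart{E}$). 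Everything else is bookkeeping with tilting and Grothendieck groups.
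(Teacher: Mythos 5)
Your outline follows the right contour (local picture at the boundary, local constancy of $\sigma\mapsto\ts{D}_\sigma$, identification of the component with a stratum), but two load-bearing steps are unjustified, and one of them is in general false. The first is the asserted change-of-basis formula for simples under the tilt: that the simples $t_1,\dots,t_n$ of $\heart{D}_\sigma$ satisfy $[t_i]=-[s_i]$ for $i\in S(\sigma)$ and $[t_i]\equiv[s_i]$ modulo $\langle [s_j]:j\in S(\sigma)\rangle$ otherwise. The torsion-free class $\ts{F}=\heart{D}\cap\slicing_\sigma(0)$ at a boundary point need not be generated by the simples $\{s_i: i\in S(\sigma)\}$: it can contain non-simple phase-zero objects supported on $K$ whose composition factors have real charges of both signs (e.g.\ an extension $0\to s_1\to f\to s_2\to 0$ with $\charge_\sigma(s_1)>0>\charge_\sigma(s_2)$ and $\charge_\sigma(f)>0$), and the shift $f[1]$ of such an object can be a simple of the new heart, with class $-[s_1]-[s_2]$, which is not of your claimed form. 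This is consistent with the paper, which only ever asserts that $\ts{D}$ is a tilt of $\ts{E}$ at a torsion structure \emph{contained in} $\langle J\rangle$, never one generated by simples, and which accordingly proves only $\#J=\#K$, by comparing the spans of $\{[t]:t\in J\}$ and $\{[s]:s\in K\}$ (both generate $K$ of the triangulated subcategory $\slicing_\sigma(\Z)$). Your formula is not a cosmetic shortcut: you use it to define $J$, to get $\#J=\#K$, and, crucially, to locate $\charge_{\sigma'}(s_i)$ (both $\im\charge_{\sigma'}(s_i)=0\iff i\in K$ and the signs of the real charges) at \emph{arbitrary} points $\sigma'\in S_{\ts{E},J}$, which is the input for your degeneration $\tau_\epsilon$. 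None of that survives without the formula.

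The second gap is the final step. You prove $C=S_{\ts{E},J}$ by showing the reverse inclusion $S_{\ts{E},J}\subset\partial_KS_{\ts{D},I}$, constructing abstract stability conditions $\tau_\epsilon$ on the heart $\heart{D}$ via \cite[Lemma 5.2]{MR2549952} and asserting $\tau_\epsilon\to\sigma'$ in $\stab{C}$. Convergence of central charges does not give convergence in the metric $d$, and the local homeomorphism $\pi$ only identifies your $\tau_\epsilon$ with the local lift of the perturbed charge once you already know $\tau_\epsilon$ is close to $\sigma'$ --- so "one checks by estimating Harder--Narasimhan data" is precisely the nontrivial content being skipped. Indeed this assertion is essentially the paper's separate frontier-condition Lemma \ref{frontier condition}, whose proof is a genuinely delicate argument showing each simple of $\heart{D}$ remains semistable for a small perturbation of $\sigma'$, using the phase gap coming from algebraicity of $\heart{E}$; the paper deliberately avoids needing it here, closing the present lemma instead by a codimension count ($\partial_KS_{\ts{D},I}$ and $S_{\ts{E},J}$ are submanifolds of codimension $\#K=\#J$, so the component is open in $S_{\ts{E},J}$; it is closed directly from the definition of $\partial_K$; connectedness of $S_{\ts{E},J}$ finishes). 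Finally, a repairable point in your first assertion: the contradiction "$\charge_{\sigma_{t_0}}(b)=0$ is impossible" only holds if $b$ is $\sigma_{t_0}$-semistable, which you do not establish (unstable objects can have vanishing charge); but your own trichotomy together with continuity of $\varphi^\pm$ already shows that $(\varphi_\sigma^-(a),\varphi_\sigma^+(a))$ takes values in the discrete set $\{(0,0),(0,1),(1,1)\}$ for $a$ supported on $K$, hence is locally constant on $\partial_KS_{\ts{D},I}$ --- which is exactly the paper's argument, so the wall-crossing detour should simply be dropped.
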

\begin{proof}
Suppose $\sigma_n \to \sigma$ in $\stab{C}$. Then $\slicing_\sigma(0) = \langle 0\neq c\in \cat{C} \ \colon\  \varphi_n^\pm(c) \to 0\rangle$ by \cite[\S3]{MR3007660}. If $\sigma_n\in S_\ts{D}$ for all $n$ then
\[
\slicing_\sigma(0) = \Big\langle \{ 0\neq d\in \heart{D} \ \colon\  \varphi_n^+(d) \to 0\}, \{ 0\neq d\in \heart{D} \ \colon\  \varphi_n^-(d) \to 1\}[-1] \Big\rangle.
\]
Furthermore, $\ts{D}_\sigma$ is the right tilt of $\ts{D}$ at the torsion theory
\begin{equation}
\label{torsion theory}
\Big\langle  0\neq d\in \heart{D} \ \colon\  \varphi_n^-(d) \not \to 0  \Big\rangle = {}^\perp\Big \langle 0\neq d\in \heart{D} \ \colon\  \varphi_n^+(d) \to 0\Big\rangle.
\end{equation}
Now suppose $\sigma \in \partial_KS_{\ts{D},I}$ and $(\sigma_n)$ is a sequence in $S_{\ts{D},I}$ with limit $\sigma$. If $\varphi_n^+(d)\to 0$ for some $0\neq d\in \heart{D}$ then $\charge_n(d) \to \charge_\sigma(d) \in \R_{>0}$. Hence $d\in \langle K\rangle$. For $d\in \langle K\rangle$ there are three possibilities:
\begin{enumerate}
\item $\varphi_n^\pm(d) \to 0$ and $d\in \slicing_\sigma(0)$;
\item $\varphi_n^\pm(d) \to 1$ and $d\in \slicing_\sigma(1)$;
\item $\varphi_n^-(d) \to 0$, $\varphi_n^+(d) \to 1$, and $d$ is not $\sigma$-semistable.
\end{enumerate}
Since the upper and lower phases of $d$ are continuous in $\stab{C}$, and the possibilities are distinguished by discrete conditions on the limiting phases, we deduce that the torsion theory (\ref{torsion theory}) is constant for $\sigma$ in a component of $\partial_K S_{\ts{D},I}$. Hence the component is contained in $S_\ts{E}$ for some t-structure $\ts{E}$, and $S_\ts{E} \cap \partial_K S_{\ts{D},I}$ is a union of components of $\partial_K S_{\ts{D},I}$ as claimed.

Now suppose that $\sigma \in S_{\ts{E},J} \cap \partial_K S_{\ts{D},I}$ for some algebraic $\ts{E}$. On the one hand, $\langle J \rangle = \slicing_\sigma(1)$  since $\sigma\in S_{\ts{E},J}$, and therefore the triangulated closure of $J$ is $\slicing_\sigma(\Z) =  \langle \slicing_\sigma(\varphi) \ \colon\ \varphi\in \Z\rangle$. On the other hand, $\sigma \in \partial_K S_{\ts{D},I}$ implies that $\slicing_\sigma(\Z)$ is also the triangulated closure of the set $K$ of simple objects. The image of the map on Grothendieck groups induced by the inclusion $\slicing_\sigma(\Z) \hookrightarrow \cat{C}$ is therefore $\langle\, [t] \ \colon\  t\in J \rangle = \langle\, [s] \ \colon\  s\in K \rangle$. Since the elements of $J$ are simple objects in the heart of $\ts{E}$, and those of $K$ are simple objects in the heart of $\ts{D}$, and both $\ts{D}$ and $\ts{E}$ are algebraic by assumption, this is a free subgroup of rank $\#J = \#K$.

By a similar argument to that used for the first part of this proof
\[
\Big\langle  0\neq d\in \heart{D} \ \colon\  \varphi_n^-(d) \to 1 \Big\rangle
\]
is constant for $\sigma$ in a component of $\partial_K S_{\ts{D},I}$. It follows that $\slicing_\sigma(0)$ is constant in a component. By the first part $\ts{E}$ is fixed by the choice of component. As $\langle J \rangle =  \slicing_\sigma(1)=\slicing_\sigma(0)[1]$ the subset $J$ of simple objects in $\ts{E}$ is also fixed. So each component $A$ of $\stabalg{C} \cap \partial_K S_{\ts{D},I}$ is contained in some stratum $S_{\ts{E},J}$. The fact that we can perturb a stability condition by perturbing the charge allows us to deduce that $\partial_K S_{\ts{D},I}$ is a codimension $\#K$ submanifold of $\stab{C}$ and that $S_{\ts{E},J}$ is a codimension $\#J$ submanifold. Since $\#J =\#K$ the component $A$ must be an open subset of $S_{\ts{E},J}$. But directly from the definition of $\partial_K S_{\ts{D},I}$ one sees that the component $A$ is also a closed subset and, since $S_{\ts{E},J}$ is connected, we deduce that $A=S_{\ts{E},J}$ as required.
\end{proof}

\begin{corollary}
\label{bdy structure}
The decomposition (\ref{stabalg decomp}) of $\stabalg{C}$ satisfies the frontier condition, \ie if $S_{\ts{E},J} \cap \overline{S_{\ts{D},I}} \neq \emptyset$  then $S_{\ts{E},J} \tsleq \overline{S_{\ts{D},I}}$. In particular, the closure of each stratum is a union of lower-dimensional strata. Moreover,
\[
S_{\ts{E},J} \tsleq \overline{S_{\ts{D},I}} \quad \Rightarrow \quad \ts{E} \tileq \ts{D} \tileq \lt{D}{I} \tileq \lt{E}{J}.
\]
\end{corollary}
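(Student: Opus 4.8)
The plan is to deduce everything from Lemma \ref{components are strata} together with the explicit description of the strata $S_{\ts{D},I}$ recorded just above the statement.

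\emph{Frontier condition and dimension count.} Suppose $\sigma\in S_{\ts{E},J}\cap\overline{S_{\ts{D},I}}$. Since $\overline{S_{\ts{D},I}}=\bigsqcup_K\partial_KS_{\ts{D},I}$, the point $\sigma$ lies in $\partial_KS_{\ts{D},I}$ for $K=\{s\text{ simple in }\heart{D}\ \colon\ \im\charge_\sigma(s)=0\}$, and $K\supseteq I$ because $\charge_\sigma$ is continuous and the $I$-simples have real central charge throughout $S_{\ts{D},I}$. By Lemma \ref{components are strata} the intersection $S_\ts{E}\cap\partial_KS_{\ts{D},I}$ is a union of components of $\partial_KS_{\ts{D},I}$, and the component $A$ containing $\sigma$, being inside $\stabalg{C}$, is a stratum $S_{\ts{E},J'}$; as $\sigma\in S_{\ts{E},J}\cap S_{\ts{E},J'}$ and distinct strata $S_{\ts{E},J},S_{\ts{E},J'}$ are disjoint, $J=J'$, so $S_{\ts{E},J}=A\subseteq\partial_KS_{\ts{D},I}\subseteq\overline{S_{\ts{D},I}}$, which is the frontier condition. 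Since $\partial_KS_{\ts{D},I}=S_{\ts{D},I}$ exactly when $K=I$, any other stratum meeting $\overline{S_{\ts{D},I}}$ has $\#J=\#K>\#I$, hence real codimension $>\#I=\codim S_{\ts{D},I}$, i.e.\ strictly lower dimension.

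\emph{The tilting chain.} I would fix $\sigma\in S_{\ts{E},J}$ and, using $\sigma\in\overline{S_{\ts{D},I}}$, choose a sequence $\sigma_n\to\sigma$ with $\sigma_n\in S_{\ts{D},I}$. The proof of Lemma \ref{components are strata} already shows $\ts{E}=\ts{D}_\sigma$ is a right tilt of $\ts{D}$, so $\ts{D}[1]\tsleq\ts{E}\tsleq\ts{D}$ and hence $\ts{E}\tileq\ts{D}$ by Remark \ref{tilting poset rmks}; and $\ts{D}\tileq\lt{D}{I}$ holds by definition of the hereditary left tilt. It remains to prove $\lt{D}{I}\tileq\lt{E}{J}$. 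The key observation is that the stratum descriptions give, exactly (not merely asymptotically), $\lt{D}{I}=(\slicing_{\sigma_n}[0,\infty),\slicing_{\sigma_n}(-\infty,0))$ for \emph{every} $n$, and $\lt{E}{J}=(\slicing_\sigma[0,\infty),\slicing_\sigma(-\infty,0))$. So if $c$ lies in the aisle of $\lt{D}{I}$ then $\varphi^-_{\sigma_n}(c)\geq 0$ for all $n$, whence $\varphi^-_\sigma(c)=\lim_n\varphi^-_{\sigma_n}(c)\geq 0$ by continuity of the lower phase on $\stab{C}$, so $c$ lies in the aisle of $\lt{E}{J}$; this gives $\lt{D}{I}\tsleq\lt{E}{J}$. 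For the opposite bound I would chain $\lt{E}{J}\tsleq\ts{E}[-1]\tsleq\ts{D}[-1]\tsleq\lt{D}{I}[-1]$, using in turn that $\lt{E}{J}$ is a left tilt of $\ts{E}$, that $\ts{E}\tsleq\ts{D}$, and that $\ts{D}\tsleq\lt{D}{I}$, each shifted by $[-1]$. Then $\lt{D}{I}\tsleq\lt{E}{J}\tsleq\lt{D}{I}[-1]$, and Remark \ref{tilting poset rmks} upgrades this to $\lt{D}{I}\tileq\lt{E}{J}$, completing $\ts{E}\tileq\ts{D}\tileq\lt{D}{I}\tileq\lt{E}{J}$.

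\emph{Main obstacle.} The only step carrying real content is $\lt{D}{I}\tileq\lt{E}{J}$, and within it the half-inclusion $\lt{D}{I}\tsleq\lt{E}{J}$, which is the single place a limiting argument intervenes; everything else is formal manipulation of $\tsleq$, $\tileq$ and the shift via Proposition \ref{HRS}/Remark \ref{tilting poset rmks}, plus the right-tilt statement already extracted from the proof of Lemma \ref{components are strata}. The subtle point to be careful about is that the aisle of $\lt{D}{I}$ is literally the \emph{same} subcategory $\slicing_{\sigma_n}[0,\infty)$ for all $n$ (by the defining property of $S_{\ts{D},I}$), so that $\varphi^-(c)\geq 0$ along the entire sequence and not merely eventually; this is exactly what makes the passage to the limit go through. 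I do not anticipate any deeper difficulty.
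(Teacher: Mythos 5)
Your proposal is correct and follows essentially the same route as the paper: the frontier condition is extracted from Lemma \ref{components are strata}, and the chain $\ts{E} \tileq \ts{D} \tileq \lt{D}{I} \tileq \lt{E}{J}$ comes from the exact stratum descriptions, continuity of the lower phase along a sequence $\sigma_n \to \sigma$, and the upgrade from $\tsleq$ to $\tileq$ via Remark \ref{tilting poset rmks}. The only cosmetic differences are that you spell out the frontier deduction the paper calls immediate, and you obtain $\ts{E} \tileq \ts{D}$ from the right-tilt observation inside the proof of Lemma \ref{components are strata} rather than rerunning the phase-continuity argument as the paper does.
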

\begin{proof}
The frontier condition follows immediately from Lemma~\ref{components are strata}. Suppose that $S_{\ts{E},J} \tsleq \overline{S_{\ts{D},I}}$, and choose $\sigma$ in $S_{\ts{E},J}$. Let $\sigma_n \to \sigma$ where $\sigma_n \in S_{\ts{D},I}$. Then $\aisle{D}{0}=\slicing_n(0,\infty)$, $\aisle{D}{0}_I=\slicing_n[0,\infty)$, $\aisle{E}{0}=\slicing_\sigma(0,\infty)$, and $\aisle{E}{0}_J=\slicing_\sigma[0,\infty)$. Since $\slicing_n(0,\infty)$ and $\slicing_n[0,\infty)$ do not vary with $n$, and the minimal phase $\varphi_\tau^-(c)$ of any $0\neq c\in \cat{C}$ is continuous in $\tau$,
\[
\slicing_\sigma(0,\infty) \tsleq \slicing_n(0,\infty) \tsleq \slicing_n[0,\infty) \tsleq \slicing_\sigma[0,\infty),
\]
\ie $\ts{E} \tsleq \ts{D} \tsleq \lt{D}{I} \tsleq \lt{E}{J}$. Since all these t-structures are in the interval between $\ts{E}$ and $\ts{E}[-1]$ Remark~\ref{tilting poset rmks} implies that $\ts{E} \tileq \ts{D} \tileq \lt{D}{I} \tileq \lt{E}{J}$.
\end{proof}
\begin{lemma}
\label{frontier condition}
Suppose $\ts{D}$ and $\ts{E}$ are algebraic t-structures, and that $I$ and $J$ are subsets of simple objects in the respective hearts. If $\ts{E} \tileq \ts{D} \tileq \lt{D}{I} \tileq \lt{E}{J}$ then $S_{\ts{E},J} \tsleq \overline{S_{\ts{D},I}}$.
\end{lemma}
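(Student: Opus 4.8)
It suffices to show that every $\sigma\in S_{\ts{E},J}$ lies in $\overline{S_{\ts{D},I}}$, and for this it is enough, given $\epsilon>0$, to produce $\tau\in S_{\ts{D},I}$ with $d(\sigma,\tau)<\epsilon$.

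The first step is to locate the heart $\heart{D}$ inside the slicing of $\sigma$. From $\ts{D}\tileq\lt{D}{I}\tileq\lt{E}{J}$ we get $\ts{E}\tileq\ts{D}\tileq\lt{E}{J}$, hence $\ts{E}\tsleq\ts{D}\tsleq\lt{E}{J}$ by Remark \ref{tilting poset rmks}. Since $\sigma\in S_{\ts{E},J}$ we have $\heart{E}=\slicing_\sigma(0,1]$ and $\langle J\rangle=\slicing_\sigma(1)$, and a short computation of the torsion--free class $\langle J\rangle^\perp=\slicing_\sigma(0,1)$ shows the heart of $\lt{E}{J}$ is $\slicing_\sigma[0,1)$; thus the aisle of $\ts{D}$ is squeezed between $\slicing_\sigma(0,\infty)$ and $\slicing_\sigma[0,\infty)$, so it is determined by a torsion class $\mathcal{A}_0$ in the abelian category $\slicing_\sigma(0)$, and one finds
\[
\heart{D}=\{\,c\ \colon\ \varphi^-_\sigma(c)\geq 0,\ \varphi^+_\sigma(c)\leq 1,\ c^{=0}\in\mathcal{A}_0,\ c^{=1}\in\mathcal{A}_0^{\perp}[1]\,\},
\]
where $c^{=0},c^{=1}$ denote the phase--$0$ and phase--$1$ parts of the Harder--Narasimhan filtration of $c$. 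An argument using simplicity, of the type appearing in the proof of Lemma \ref{components are strata}, then shows that \emph{every} simple object of $\heart{D}$ is $\sigma$--semistable, of phase $0$, of phase $1$, or of phase in $(0,1)$; write $\Sigma_0,\Sigma_1,\Sigma_{(0,1)}$ for these three sets, so that $\charge_\sigma$ carries them into $\R_{>0}$, $\R_{<0}$, $\U$ respectively. Finally, for a simple $s\in I$ the object $s[-1]$ lies in the heart of $\lt{D}{I}$, hence in the aisle of $\lt{E}{J}$, which is $\slicing_\sigma[0,\infty)$; this forces $\varphi^-_\sigma(s)\geq 1$, so in fact $I\subseteq\Sigma_1$.

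Now construct $\tau$. Since $\heart{D}$ is algebraic, \cite[Lemma 5.2]{MR2549952} lets us prescribe any value in $\U\cup\R_{<0}$ for the central charge of each simple of $\heart{D}$. Keep $\charge_\tau(s)=\charge_\sigma(s)$ for $s\in\Sigma_{(0,1)}$ and for $s\in I$; for $s\in\Sigma_0$ and for $s\in\Sigma_1\setminus I$ take $\charge_\tau(s)$ to be $\charge_\sigma(s)$ rotated through the angle $\pi\delta$ into the strict upper half--plane, for a small $\delta>0$. This is a legitimate stability function on $\heart{D}$, so it defines a stability condition $\tau\in S_\ts{D}$, and $\charge_\tau$ sends exactly the simples in $I$ to $\R_{<0}$; hence $\slicing_\tau(1)=\langle I\rangle$ and $\tau\in S_{\ts{D},I}$.

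It remains to check that $d(\sigma,\tau)\to 0$ as $\delta\to 0$, and I expect this to be the main obstacle. Every simple of $\heart{D}$ is $\sigma$--semistable; its $\sigma$-- and $\tau$--masses agree (rotation preserves modulus) and its $\sigma$-- and $\tau$--phases differ by at most $\delta$. Because $\heart{D}$ is a length category, these comparisons have to be propagated to the Harder--Narasimhan data of an arbitrary object of $\cat{C}$, exactly as in the limit analysis underlying Lemma \ref{components are strata} and \cite[\S 3]{MR3007660}, to conclude $d(\sigma,\tau)=O(\delta)$. This uniform estimate is the technical heart of the argument, and it is here — together with the assertion that all simples of $\heart{D}$ are $\sigma$--semistable — that the full hypothesis $\ts{E}\tileq\ts{D}\tileq\lt{D}{I}\tileq\lt{E}{J}$ is used: it is precisely what ensures that $\heart{D}$ is a ``small'' tilt of $\heart{E}$ from the point of view of $\sigma$ and that the simples indexed by $I$ sit at phase $1$.
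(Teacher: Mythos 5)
Your preliminary analysis is sound, and in fact reproduces the first half of the paper's argument: the simples of $\heart{D}$ lie in $\slicing_\sigma[0,1]$, those in $I$ have phase exactly $1$, and (with the extra observation that any proper quotient in $\heart{E}$ of a torsion-free simple of $\heart{D}$ lies in the torsion class, which is contained in $\slicing_\sigma(1)$) one can indeed show all simples of $\heart{D}$ are $\sigma$-semistable; this last point needs that quotient argument rather than anything from Lemma \ref{components are strata}, but it is true. The genuine gap is the step you yourself flag: the claim $d(\sigma,\tau)=O(\delta)$ for the stability condition $\tau$ you manufacture on the heart $\heart{D}$ via \cite[Lemma 5.2]{MR2549952}. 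Knowing that the simples of $\heart{D}$ are semistable for both $\sigma$ and $\tau$ with charges within $O(\delta)$ does not control Bridgeland's metric: the metric compares $\varphi^\pm$ and masses of \emph{all} objects, i.e. Harder--Narasimhan filtrations taken with respect to two different hearts, and closeness of central charges on a basis of $K\cat{C}$ gives no a priori control of this (stability conditions with the same central charge can be far apart). Making your ``propagation'' precise means showing that the $\tau$-HN data of an arbitrary object is a small perturbation of its $\sigma$-HN data, and that is essentially the whole content of the lemma; it is asserted, not proved.

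The paper avoids exactly this by arguing in the opposite direction: since the simples of $\heart{D}$ form a basis of $K\cat{C}$ and $\pi\colon\stab{C}\to\mor{K\cat{C}}{\C}$ is a local homeomorphism, one perturbs $\sigma$ itself by perturbing these charges, so that $d(\sigma,\tau)<\delta$ holds by construction, and the real work is then to identify the $t$--structure of the perturbed (and rotated) stability condition, i.e.\ to prove that every simple of $\heart{D}$ is $\tau$-semistable. That identification uses the phase gap $\slicing_\sigma(0,2\delta]=\emptyset$ supplied by Lemma \ref{alg phase criterion} (which forces $\tau$-HN filtrations of objects of $\heart{E}$ to be filtrations by subobjects in $\heart{E}$) together with the torsion-pair analysis of quotients of simples. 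Your route could be completed either by an estimate of this kind or by showing your $\tau$ coincides with the Bridgeland deformation of $\sigma$ along $\charge_\tau$ --- but the latter again presupposes knowing $\tau$ is close to $\sigma$. As written, the decisive metric estimate is missing.
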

\begin{proof}
Fix $\sigma \in S_{\ts{E},J}$. Since $\ts{E}\tileq \ts{D} \tileq  \lt{E}{J}$ we know that $\ts{D} = \lt{E}{\cat{T}}$ for some torsion structure $\ts{T}$ on $\heart{E}$, and moreover that $\aisle{T}{0} \tsleq \langle J \rangle = \slicing_\sigma(1)$. Any simple object of $\heart{D}$ lies either in $\aisle{T}{0}[-1]$ or in $\coaisle{T}{1}$. Hence any simple object $s$ of $\heart{D}$ lies in $\slicing_\sigma[0,1]$, and $s\in \slicing_\sigma(0) \iff s\in \aisle{T}{0}[-1]$. Moreover, if $s\in I$ then $s[-1]\in \lt{D}{I}^{\leq 0} \tsleq \lt{E}{J}^{\leq 0} = \slicing_\sigma[0,\infty)$. Thus $s\in I \Rightarrow s\in \slicing_\sigma(1)$.

Since the simple objects of $\heart{D}$ form a basis of $K \cat{C}$ we can perturb $\sigma$ by perturbing their charges. Given $\delta>0$ we can always make such a perturbation to obtain a stability condition $\tau$ with $d(\sigma,\tau)<\delta$ for which $\charge_\tau(s) \in \U \cup \R_{>0}$ for all simple $s$ in $\heart{D}$, and $\charge_\tau(s)\in \R_{>0} \iff s\in\slicing_\sigma(0)$. We can then rotate, \ie act by some $\lambda\in \R$, to obtain a stability condition $\omega$ with $d(\tau,\omega)<\delta$ such that $\charge_\tau(s) \in \U$ for all simple $s$ in $\ts{D}$. We will prove that $\omega\in S_\ts{D}$. Since the perturbation and rotation can be chosen arbitrarily small it will follow that $\sigma\in \overline{S_\ts{D}}$. And since $s\in \slicing_\sigma(1)$ whenever $s\in I$ we can refine this statement to $\sigma \in \overline{S_{\ts{D},I}}$ as claimed.

It remains to prove $\omega\in S_\ts{D}$. For this it suffices to show that each simple $s$ in $\heart{D}$ is $\tau$-semistable. For then $s$ is $\omega$-semistable too, and the choice of $\charge_\omega$ implies that $s \in \slicing_\omega(0,1]$. The hearts of distinct (bounded) t-structures cannot be nested, so this implies $\ts{D} = \ts{D}_\omega$, or equivalently $\omega\in S_\ts{D}$  as required.

Since $\ts{E}$ is algebraic Lemma~\ref{alg phase criterion} guarantees that there is some $\delta>0$ such that $\slicing_\sigma(0,2\delta]=\emptyset$.  Provided $d(\sigma,\tau)<\delta$ we have
\[
\slicing_\sigma(0,1]= \slicing_\sigma(2\delta,1] \tsleq \slicing_\tau(\delta,1+\delta] \tsleq \slicing_\sigma(0,1+2\delta]= \slicing_\sigma(0,1].
\]
It follows that the Harder--Narasimhan $\tau$-filtration of any  $e\in \heart{E} = \slicing_\sigma(0,1]$ is a filtration by subobjects of $e$ in the abelian category $\slicing_\sigma(0,1]$.

Consider a simple $s'$ in $\heart{D}$ with $s'[1] \in \aisle{T}{0}$. Since $\aisle{T}{0}$ is a torsion theory any quotient of $s'[1]$ is also in $\aisle{T}{0}$, in particular the final factor in the Harder--Narasimhan $\tau$-filtration, $t$ say, is in $\aisle{T}{0}$. Hence $t[-1] \in \heart{D}$ and $[t] = - \sum m_s [s] \in K\cat{C}$ where the sum is over the simple $s$ in $\heart{D}$ and the $m_s\in \N$. Since $\im \charge_\tau(s) \geq 0$ for each simple $s$ it follows that $\im \charge_\tau(t) = -\sum m_s \im \charge_\tau(s) \leq 0$. Combined with the fact that $t$ is $\tau$-semistable with phase in $(\delta,1+\delta]$ we have $\varphi_\tau^-(s'[1]) =\varphi_\tau(t) \geq 1$. Hence $s'\in \slicing_\tau[1,1+\delta]$. But $s'[1] \in \aisle{T}{0}$ so $\charge_\tau(s'[1]) \in \R_{<0}$ and therefore $s'[1] \in \slicing_\tau(1)$, and in particular is $\tau$-semistable.

Now suppose $s'\in \coaisle{T}{1}$.  Since $\coaisle{T}{1}$ is a torsion-free theory in $\slicing_\sigma(0,1]$  any subobject of $s'$  is also in $\coaisle{T}{1}$. In contrast, $s'$ cannot have any {\em proper} quotients in $\coaisle{T}{1}$: if it did we would obtain a short exact sequence \[0\to f \to s \to f' \to 0\] in $\slicing_\sigma(0,1]$ with $f,f'\in \coaisle{T}{1}$. This would also be short exact in $\heart{D}$, contradicting the fact that $s'$ is simple. It follows that any proper quotient of $s'$ is in $\aisle{T}{0}$. The argument of the previous paragraph then shows that either $s'$ is $\tau$-semistable (with no proper semistable quotient), or $s'\in \slicing_\tau[1,1+\delta]$. But $\im \charge_\tau(s')>0$ so the latter is impossible, and $s'$ must be $\tau$-semistable. This completes the proof.
\end{proof}

\begin{definition}
\label{poset of intervals}
Let $\PI{C}$ be the poset whose elements are intervals in the poset $\tilt{C}$ of t-structures of the form $\itilt{\ts{D}}{\lt{D}{I}}$, where $\ts{D}$ is algebraic and $I$ is a subset of the simple objects in the heart of $\ts{D}$. We order these intervals by inclusion. We do not assume that $\lt{D}{I}$ is algebraic.
\end{definition}

\begin{corollary}
\label{poset-iso}
There is an isomorphism $\PI{C}^\textrm{op} \to P(\stabalg{C})$ of posets given by the correspondence $\itilt{\ts{D}}{\lt{D}{I}}\longleftrightarrow S_{\ts{D},I}$.  Components of $\stabalg{C}$ correspond to  components of $\tiltalg{C}$.
\end{corollary}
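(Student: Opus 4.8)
The plan is to establish the two assertions of Corollary \ref{poset-iso} separately. For the poset isomorphism, first observe that the correspondence $\itilt{\ts{D}}{\lt{D}{I}} \longleftrightarrow S_{\ts{D},I}$ is well-defined and bijective: the stratum $S_{\ts{D},I}$ remembers the algebraic $t$--structure $\ts{D}$ (it is $\ts{D}_\sigma$ for any $\sigma$ in the stratum) and the subset $I$ (it is the set of simple objects of phase $1$), and conversely by Remark \ref{tilting poset rmks} the interval $\itilt{\ts{D}}{\lt{D}{I}}$ in $\tilt{C}$ determines its endpoints, hence the pair $(\ts{D}, I)$, since $I$ is recovered from the torsion structure $(\langle I\rangle, \langle I\rangle^\perp)$ on $\heart{D}$ corresponding to $\lt{D}{I}$. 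The content is that this bijection is an \emph{order-reversing} isomorphism: we must show
\[
\itilt{\ts{D}}{\lt{D}{I}} \subseteq \itilt{\ts{E}}{\lt{E}{J}} \iff S_{\ts{E},J} \subseteq \overline{S_{\ts{D},I}}.
\]
An inclusion of intervals in a poset is equivalent to the inequalities $\ts{E} \tileq \ts{D}$ and $\lt{D}{I} \tileq \lt{E}{J}$ on the endpoints (here one should note that $\ts{D} \tileq \lt{D}{I}$ and $\ts{E}\tileq \lt{E}{J}$ always hold, so the four-term chain $\ts{E}\tileq \ts{D}\tileq \lt{D}{I}\tileq\lt{E}{J}$ of Corollary \ref{bdy structure} is exactly the statement of interval inclusion). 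So the equivalence above is precisely the combination of Corollary \ref{bdy structure} (the $\Leftarrow$ direction) and Lemma \ref{frontier condition} (the $\Rightarrow$ direction), which have already done all the real work; the corollary is just repackaging them.

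For the statement about components, I would argue as follows. By \eqref{stabalg decomp} the space $\stabalg{C}$ is the union of the strata $S_{\ts{D},I}$, each of which is connected (indeed contractible). Two strata $S_{\ts{D},I}$ and $S_{\ts{E},J}$ lie in the same component of $\stabalg{C}$ if and only if they are connected by a chain of strata in which consecutive ones have one closure meeting the other; by the frontier condition (Corollary \ref{bdy structure}) and the poset isomorphism just established, this is exactly the condition that the corresponding elements $\itilt{\ts{D}}{\lt{D}{I}}$ and $\itilt{\ts{E}}{\lt{E}{J}}$ lie in the same component of the poset $\PI{C}$, equivalently of $P\stabalg{C}$. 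So it remains to identify the components of $\PI{C}$ with those of $\tiltalg{C}$. The natural map sends the interval $\itilt{\ts{D}}{\lt{D}{I}}$ to the element $\ts{D} \in \tiltalg{C}$ (this uses that $\ts{D}$ is algebraic by definition of $\PI{C}$, and that any interval with a given lower endpoint $\ts{D}$ is comparable in $\PI{C}$ to the trivial interval $[\ts{D},\ts{D}]$, so it does not matter that $\lt{D}{I}$ need not be algebraic). One then checks this induces a bijection on components: surjectivity is clear since every algebraic $\ts{D}$ gives the interval $[\ts{D},\ts{D}]$; and if $\ts{D}$ and $\ts{E}$ are in the same component of $\tiltalg{C}$, a finite zigzag of algebraic tilts between them lifts to a zigzag of inclusions of intervals of the required form (each single algebraic tilt $\ts{D}\to \lt{D}{s}$ gives $[\ts{D},\ts{D}] \subseteq [\ts{D}, \lt{D}{s}] \supseteq [\lt{D}{s}, \lt{D}{s}]$), so the images are in the same component of $\PI{C}$; conversely a chain of inclusions in $\PI{C}$ visibly connects the corresponding lower endpoints via tilts through algebraic $t$--structures.

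The main obstacle is not any single deep step — the geometric analysis has been front-loaded into Lemmas \ref{components are strata}, \ref{frontier condition} and Corollary \ref{bdy structure} — but rather the bookkeeping needed to translate cleanly between three descriptions of ``being connected'': connectedness in the topological space $\stabalg{C}$, in the face poset $P\stabalg{C}$, and in $\tiltalg{C}$. The one subtlety worth spelling out carefully is that $\PI{C}$ involves intervals whose upper endpoint $\lt{D}{I}$ may fail to be algebraic, so the projection to the lower endpoint is essential for landing in $\tiltalg{C}$; one must verify this projection is compatible with the order relation well enough to preserve and reflect components, which it does because within $\PI{C}$ every interval $\itilt{\ts{D}}{\lt{D}{I}}$ is always $\supseteq \itilt{\ts{D}}{\ts{D}}$ and $\subseteq \itilt{\lt{D}{I}}{\ldots}$-type relations only introduce further algebraic lower endpoints via tilting. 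I would also remark, as the excerpt does, that the local-finiteness hypotheses needed to invoke Tamaki's theorem are \emph{not} required here — this corollary is a purely combinatorial/topological identification of $\pi_0$ valid for any essentially small triangulated category admitting an algebraic $t$--structure.
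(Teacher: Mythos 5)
Your proposal is correct and follows essentially the same route as the paper: the order-reversing bijection is exactly the combination of Corollary \ref{bdy structure} and Lemma \ref{frontier condition}, and the component statement is the paper's (very terse) argument spelled out — identifying components of $\stabalg{C}$ with components of the face poset on one side, and using the lower-endpoint projection $\itilt{\ts{D}}{\lt{D}{I}}\mapsto\ts{D}$ on the other. The only point to tighten is your parenthetical lifting of zigzags, which treats only simple tilts whereas the relation $\atileq$ in $\tiltalg{C}$ is generated by left tilts at arbitrary torsion structures (not necessarily of the form $\lt{D}{s}$); the fix is immediate, since for any algebraic left tilt $\ts{E}$ of an algebraic $\ts{D}$ both $\itilt{\ts{D}}{\ts{D}}$ and $\itilt{\ts{E}}{\ts{E}}$ are contained in $\itilt{\ts{D}}{\ts{D}[-1]}\in\PI{C}$ (take $I$ to be the full set of simples of $\heart{D}$), so the lower endpoints still land in one component of $\PI{C}$.
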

\begin{proof}
The existence of the isomorphism is direct from Corollary~\ref{bdy structure} and Lemma~\ref{frontier condition}. In particular, components of these posets are in $1$-to-$1$ correspondence. The second statement follows because components of $\stabalg{C}$ correspond to components of $P(\stabalg{C})$, and components of $\PI{C}$ correspond to components of $\tiltalg{C}$.
\end{proof}
\begin{remark}
\label{BPP2}
Following Remark~\ref{BPP1} we note an alternative description of $\PI{C}$ when $\cat{C}= \cat{D}(A)$ is the bounded derived category of a finite-dimensional algebra $A$ over an algebraically-closed field, and has finite global dimension. By \cite[Lemma 4.1]{pauk2}  $\PI{C}^\text{op}  \cup \{\hat{0}\} \cong \mathbb{P}_2(\cat{C})$ is the {\em poset of silting pairs} defined in \cite[\S3]{pauk2}, where $\hat{0}$ is a formally adjoined minimal element. Hence, by the above corollary, $P(\stabalg{C}) \cup \{\hat{0}\}\cong \mathbb{P}_2(\cat{C})$.
\end{remark}

\begin{remark}
\label{poset relations}
If $\ts{D}$ and $\ts{E}$ are not both algebraic then $\ts{D} \tileq \ts{E} \tileq \ts{D}[-1]$ need not imply $S_\ts{D} \cap \overline{S_\ts{E}} \neq \emptyset$, see \cite[p20]{MR2739061} for an example. Thus components of $\stabalg{C}$ may not correspond to components of $\tilt{C}$. In general we have maps
\begin{center}
\begin{tikzcd}
\pi_0 \stabalg{C}  \ar{r} \ar[equals]{d} & \pi_0 \stab{C} \ar{d} & \\
\pi_0 \tiltalg{C} \ar{r} & \pi_0 \tilt{C} \ar{r} & \pi_0 \PT{C}.
\end{tikzcd}
\end{center}
The bottom row is induced from the maps $\tiltalg{C} \to \tilt{C} \to \PT{C}$, the vertical equality holds by the above corollary, and the vertical map exists because $S_\ts{D}$ and $S_\ts{E}$ in the same component of $\stab{C}$ implies that $\ts{D}$ and $\ts{E}$ are related by  a finite sequence of tilts \cite[Corollary 5.2]{MR3007660}.
\end{remark}
\begin{lemma}
\label{single component}
Suppose that $\tiltalg{C}=\tilt{C}=\PT{C}$ are non-empty. Then $\stabalg{C}=\stab{C}$ has a single component.
\end{lemma}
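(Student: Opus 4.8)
The plan is to deduce the lemma directly from the structural results already assembled in \S\ref{preliminaries} and \S\ref{stratifying stab alg}, with nothing more than bookkeeping required.

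First I would unwind the hypothesis. The equality $\tiltalg{C}=\PT{C}$ says in particular that every bounded $t$--structure on $\cat{C}$ is algebraic. Since $\PT{C}\neq\emptyset$, the category $\cat{C}$ admits an algebraic $t$--structure, so $\stabalg{C}\neq\emptyset$ by the discussion following \eqref{algebraic heart}; in particular the space we must analyse is non-empty, so that ``a single component'' is not a vacuous claim.

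Next I would prove $\stabalg{C}=\stab{C}$. Given any $\sigma\in\stab{C}$, its associated $t$--structure $\ts{D}_\sigma=(\slicing_\sigma(0,\infty),\slicing_\sigma(-\infty,0])$ is a bounded $t$--structure, hence an element of $\PT{C}=\tiltalg{C}$, hence algebraic; so $\sigma$ is algebraic. Thus $\stab{C}\subseteq\stabalg{C}$, and the reverse inclusion is immediate. Then I would count components: by Corollary \ref{poset-iso} the components of $\stabalg{C}$ are in bijection with the components of $\tiltalg{C}$, and since $\cat{C}$ admits an algebraic $t$--structure the remark following Lemma \ref{alg implies comparable} gives that $B\PT{C}$ is contractible, in particular that $\PT{C}$ is connected. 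As $\tiltalg{C}=\PT{C}$ as posets, $\tiltalg{C}$ is connected, whence $\stabalg{C}=\stab{C}$ has a single component.

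I do not anticipate a genuine obstacle here: the content lies entirely in picking out which earlier results to invoke. The only points needing a moment's care are that $\ts{D}_\sigma$ really is a \emph{bounded} $t$--structure — this is built into the definition of the $t$--structure associated to a stability condition — and that, although the hypothesis is stated as an equality of posets, only the underlying set equality $\tiltalg{C}=\PT{C}$ is needed for $\stabalg{C}=\stab{C}$, while only the (automatic) connectedness of $\PT{C}$ is needed for the single-component conclusion.
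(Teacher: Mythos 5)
Your proof is correct and takes essentially the paper's route: the paper compares the $t$--structures of two given stability conditions directly via Lemma \ref{alg implies comparable} together with the hypothesis $\tiltalg{C}=\PT{C}$ and then invokes Corollary \ref{poset-iso}, while you reach the same point by citing the (identically proved) connectedness of $\PT{C}$ and transferring it to $\tiltalg{C}$. One caution: your closing remark that only the underlying set equality and the connectedness of $\PT{C}$ are needed for the single-component conclusion is misleading, since transferring connectedness from $\PT{C}$ to $\tiltalg{C}$ requires that the order relations agree (i.e.\ that $\tsleq$ implies $\atileq$ on algebraic $t$--structures), which is exactly where the paper --- and, correctly, the main body of your argument --- uses the equality as posets rather than as sets.
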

\begin{proof}
It is clear that $\stab{C} = \stabalg{C}\neq \emptyset$. Let $\sigma,\tau\in \stab{C}$. Since $\tiltalg{C}=\tilt{C}$ the associated t-structures $\ts{D}_\sigma$ and $\ts{D}_\tau$ are algebraic, so that $\ts{D}_\sigma \tsleq \ts{D}_\tau[-j]$ for some $j\in\N$ by Lemma~\ref{alg implies comparable}. Since $\tiltalg{C}=\PT{C}$ this implies $\ts{D}_\sigma \atileq \ts{D}_\tau[-j]$, and thus $\ts{D}_\sigma$ and $\ts{D}_\tau$ are in the same component of $\tiltalg{C}$. Hence by Corollary~\ref{poset-iso} $\sigma$ and $\tau$ are in the same component of $\stabalg{C}=\stab{C}$.
\end{proof}
\begin{lemma}
\label{stabalg connected}
Suppose $\cat{C} = \cat{D}(A)$ for a finite-dimensional algebra $A$  over an algebraically-closed field, with finite global dimension. Then $\stabalg{C}$ is connected. Moreover, any component of $\stab{C}$ other than that containing $\stabalg{C}$ consists entirely of stability conditions for which the phases of semistable objects are dense in $\R$.
\end{lemma}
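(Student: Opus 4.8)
The argument splits according to the two assertions. For the first, by Corollary~\ref{poset-iso} the components of $\stabalg{C}$ correspond to those of the poset $\tiltalg{C}$, so it is enough to prove that $\tiltalg{C}$ is connected. The key input is Remark~\ref{BPP1}: because $\cat{C}=\cat{D}(A)$ with $A$ finite-dimensional of finite global dimension over an algebraically-closed field, the relations $\tsleq$ and $\atileq$ agree on algebraic $t$--structures. Given algebraic $t$--structures $\ts{D}$ and $\ts{E}$, Lemma~\ref{alg implies comparable} supplies $d\in\N$ with $\ts{E}\tsleq\ts{D}[-d]$, and trivially $\ts{D}\tsleq\ts{D}[-1]\tsleq\cdots\tsleq\ts{D}[-d]$; every $t$--structure appearing here is algebraic, so by Remark~\ref{BPP1} all these comparabilities also hold in $\tiltalg{C}$. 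Thus $\ts{D}$ and $\ts{E}$ both lie in the component of $\ts{D}[-d]$ in $\tiltalg{C}$, hence in a common component. Since $\ts{D},\ts{E}$ were arbitrary, $\tiltalg{C}$, and therefore $\stabalg{C}$, is connected; it is nonempty since the standard bounded $t$--structure on $\cat{D}(A)$ is algebraic, and in particular $K\cat{C}\cong\Z^{n}$ for $n$ the number of simple $A$--modules.

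Being connected, $\stabalg{C}$ lies in a single component $\stabo{C}$ of $\stab{C}$, and since $\C$ is connected so does $\C\cdot\stabalg{C}$. The second assertion then follows by contraposition once I show that any $\sigma\in\stab{C}$ whose semistable phases are \emph{not} dense in $\R$ must lie in $\C\cdot\stabalg{C}$, and hence in $\stabo{C}$. So suppose $\slicing_\sigma(t,t+\epsilon)=\emptyset$ for some $t\in\R$ and $\epsilon>0$. Acting by $t\in\R\subset\C$ produces a stability condition $\sigma'$ in the $\C$--orbit of $\sigma$ with $\slicing_{\sigma'}(0,\epsilon)=\emptyset$, so that $\heart{D}_{\sigma'}=\slicing_{\sigma'}(0,1]=\slicing_{\sigma'}(\epsilon,1]$. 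Exactly as in the proof of Lemma~\ref{alg phase criterion}, \cite[Lemma~4.5]{MR2376815} then shows $\heart{D}_{\sigma'}$ is an abelian length category; its simple objects form a $\Z$--basis of $K\heart{D}_{\sigma'}\cong K\cat{C}\cong\Z^{n}$, so there are exactly $n$ of them and $\heart{D}_{\sigma'}$ is algebraic. Hence $\sigma'\in\stabalg{C}$, and therefore $\sigma\in\C\cdot\stabalg{C}\subset\stabo{C}$. Consequently, if $\ts{U}$ is any component of $\stab{C}$ other than $\stabo{C}$, every stability condition in $\ts{U}$ has semistable phases dense in $\R$.

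The one place where genuine input is needed is the connectivity of $\tiltalg{C}$. Lemma~\ref{alg implies comparable} gives, cheaply, comparability of any two algebraic $t$--structures up to shift, but converting this into connectivity of $\tiltalg{C}$ rests essentially on Remark~\ref{BPP1} (the identification of $\tiltalg{C}$ with the opposite of the silting poset and the coincidence of $\tsleq$ with $\atileq$ on algebraic $t$--structures); without it one would be reduced to the connectivity of the silting poset under mutation, which is not known to hold in general. A minor subtlety in the second half is that the length-category argument does not require $\sigma'$ to lie in a full component of $\stab{C}$: the number of simple objects in $\heart{D}_{\sigma'}$ is pinned down purely by $K\cat{C}\cong\Z^{n}$.
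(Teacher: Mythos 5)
Your proof is correct and follows essentially the same route as the paper's: connectivity of $\tiltalg{C}$ via Remark \ref{BPP1} together with Lemma \ref{alg implies comparable} (i.e.\ the argument of Lemma \ref{single component}), transferred to $\stabalg{C}$ by Corollary \ref{poset-iso}, and the second assertion by observing that a stability condition with non-dense phases lies in $\C\cdot\stabalg{C}$ and hence in the component containing $\stabalg{C}$. Your additional remark that the full-component hypothesis of Lemma \ref{alg phase criterion} is not really needed here, since $K\cat{C}\cong\Z^n$ already pins down the number of simple objects in a length heart, just makes explicit a detail the paper leaves implicit.
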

\begin{proof}
By Remark~\ref{BPP1} $\tiltalg{C}$ is the sub-poset of $\PT{C}$ consisting of the algebraic t-structures. The proof that $\stabalg{C}$ is connected is then the same as that of the previous result. For the last part note that if $\sigma$ is a stability condition for which the phases of semistable objects are not dense then acting on $\sigma$ by some element of $\C$ we obtain an algebraic stability condition. Hence $\sigma$ must be in the unique component of $\stab{C}$ containing $\stabalg{C}$.
\end{proof}
\begin{remark}
\label{connectedness in algebra case}
To show that $\stab{C}$ is connected when $\cat{C}=\cat{D}(A)$ as in the previous result it suffices to show that there are no stability conditions for which the phases of semistable objects are dense. For example, from Example~\ref{quiver density}, and the fact that the path algebra of an acyclic quiver is a finite-dimensional algebra of global dimension $1$, we conclude that $\Stab(Q)$ is connected whenever $Q$ is of ADE Dynkin, or extended Dynkin, type. (Later we show that $\Stab(Q)$ is contractible in the Dynkin case; it was already known to be simply-connected by \cite{MR3281136}.)

By Remark~\ref{density of phases}, the universal cover $G=\widetilde{GL_2^+(\R)}$ acts freely on a component consisting of stability conditions for which the phases are dense. In contrast, it does not act freely on a component containing algebraic stability conditions since any such contains stability conditions for which the central charge is real, and these have non-trivial stabiliser. Hence, the $G$-action also distinguishes the component containing $\stabalg{C}$ from the others, and if there is no component on which $G$ acts freely $\stab{C}$ must be connected.
\end{remark}

Suppose $\stabalg{C}\neq \emptyset$. Let $\mathrm{Bases}(K\cat{C})$ be the groupoid whose objects are pairs consisting of an ordered basis of the free abelian group $K\cat{C}$ and a subset of this basis, and whose morphisms are automorphisms relating these bases (so there is precisely one morphism in each direction between any two objects; we do not ask that it preserve the subsets). Fix an ordering of the simple objects in the heart of each algebraic t-structure. This fixes isomorphisms
\[
S_{\ts{D},I} \cong \U^{n-\# I} \times \R_{<0}^{\# I}.
\]
 Regard the poset $\PI{C}$ as a category, and let $F_\cat{C} \colon \PI{C} \to \mathrm{Bases}(K\cat{C})$ be the functor taking $\itilt{\ts{D}}{\lt{D}{I}}$ to the pair consisting of the ordered basis of classes of simple objects in $\ts{D}$ and the subset of classes of $I$. This uniquely specifies $F_\cat{C}$ on morphisms.
 \begin{proposition}
 \label{reconstruction}
 The functor $F_\cat{C}$ determines $\stabalg{C}$ up to homeomorphism as a space over $\mor{K\cat{C}}{\C}$.
 \end{proposition}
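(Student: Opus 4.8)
Set $V=\mor{K\cat C}{\C}$. The plan is to reconstruct the pair $\left(\stabalg{C},\pi\right)$ out of the functor $F_\cat C$ by describing $\stabalg{C}$ locally inside $V$ and then recording how the local pieces glue, using throughout that $\pi$ is a local homeomorphism.

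First I would attach to each $p=\itilt{\ts D}{\lt D I}\in\PI{C}$ an explicit subspace of $V$. Writing $F_\cat C(p)$ as an ordered basis $\left(\beta_1,\dots,\beta_n\right)$ of $K\cat C$ together with the subset of the $\beta_j$ indexed by $I$, set
\[
U_p=\left\{\zeta\in V\ :\ \zeta(\beta_j)\in\U\ \text{for}\ \beta_j\notin I,\ \ \zeta(\beta_j)\in\R_{<0}\ \text{for}\ \beta_j\in I\right\},
\]
a locally closed submanifold of $V$ depending only on $F_\cat C(p)$. By the description of $S_\ts D$ recalled at the start of \S\ref{stab alg} --- a stability condition with algebraic $t$-structure $\ts D$ is freely determined by the central charges of the simples of $\heart{\ts D}$, which range over $\U\cup\R_{<0}$ --- the central-charge map $\pi$ identifies each stratum $S_p$ homeomorphically with $U_p$ over $V$; and since $\pi$ is a local homeomorphism this identification extends to a homeomorphism over $V$ of a neighbourhood of $S_p$ in $\stab{C}$ onto a neighbourhood of $U_p$ in $V$. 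Thus a single stratum together with a transverse slice is already determined by $F_\cat C(p)$.

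Next I would account for the incidences between strata. By Corollary~\ref{bdy structure} and Lemma~\ref{frontier condition}, $S_q\subseteq\overline{S_p}$ precisely when $p\leq q$ in $\PI{C}$, so that $\overline{S_q}\cap\stabalg{C}=\bigsqcup_{r\geq q}S_r$. Given $\sigma\in S_q$, pick an open $O_\sigma\ni\sigma$ in $\stab{C}$ with $\pi|_{O_\sigma}$ a homeomorphism onto an open ball $W_\sigma\subseteq V$ about $\pi(\sigma)$. For each $p\leq q$ the stratum $S_p$ accumulates at $\sigma$, and $\pi|_{O_\sigma}$ carries $S_p\cap O_\sigma$ homeomorphically onto $U_p\cap W_\sigma$. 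The claim I would then establish is that a point of $O_\sigma$ lies in $\stabalg{C}$ exactly when its $\pi$-image lies in $\bigcup_{p\leq q}U_p$, and that the stratum to which it belongs is read off from which $U_p$ contains that image. Granting this, $O_\sigma\cap\stabalg{C}$ is carried by $\pi$ homeomorphically, over $V$, onto $\left(\bigcup_{p\leq q}U_p\right)\cap W_\sigma$ --- a chart built purely from the subspaces $U_p$ and the order of $\PI{C}$. These charts cover $\stabalg{C}$; on an overlap the transition map is the corresponding $\pi$-coordinate change, recovered by matching up strata via $\PI{C}$ and the embeddings $U_p\hookrightarrow V$, since $\pi$ is injective on each individual stratum. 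Assembling the charts produces a space over $V$ depending only on $F_\cat C$ and homeomorphic to $\stabalg{C}$.

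The hard part will be the claim in the previous paragraph: that near $\sigma$ the image of $\stabalg{C}$ in $V$ is exactly $\bigcup_{p\leq q}U_p$, with the stratification faithfully encoded in the poset $\PI{C}$, and in particular that the possibly many sheets of $\stab{C}$ lying over a common region of $V$ do not produce coincidences beyond those already forced by $F_\cat C$. This is where the input that $\pi$ is a genuine local homeomorphism is essential, together with the frontier condition of Corollary~\ref{bdy structure}, which pins down exactly which strata accumulate at a given stability condition. The remaining points --- that the $U_p$ are described correctly, that the transition maps are continuous, and that the assembled collection of charts carries the correct topology --- should then be routine.
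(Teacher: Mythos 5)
You have identified the right reconstruction (since $\pi$ is injective on each stratum, a point of $\stabalg{C}$ is determined by its stratum together with its central charge, so the underlying set is canonically $\bigsqcup_p U_p$, which is exactly where the paper starts), but the proof hinges on the two-sided local claim that for sufficiently small $O_\sigma$ one has $O_\sigma\cap\stabalg{C}=(\pi|_{O_\sigma})^{-1}\bigl(\bigcup_{p\leq q}U_p\bigr)$ with the stratum read off from the charge, and this is precisely what you do not prove. It is not routine. The ``if'' half --- that the point of $O_\sigma$ lying over a charge in $U_p$ really lies in $S_p$ --- cannot be extracted formally from $\pi$ being a local homeomorphism: one must check that the simple objects of the relevant heart are semistable for that deformed stability condition, which is the phase-gap and torsion-theory analysis carried out in the proof of Lemma \ref{frontier condition}. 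The ``only if'' half --- that no stratum outside the open star of $S_q$ meets a sufficiently small $O_\sigma$ --- is the assertion that upward-closed unions of strata are neighbourhoods in $\stabalg{C}$; this is delicate exactly because the proposition is stated in full generality, where the stratification need be neither locally finite nor closure-finite and $\stabalg{C}$ need not be locally closed (the $\P^1$ case of Examples \ref{P1 two} and \ref{P1 three}), so one cannot argue that a small ball meets only finitely many strata, each with $\sigma$ in its closure. A further, minor, error: the claim that the identification $S_p\cong U_p$ ``extends to a homeomorphism over $V$ of a neighbourhood of $S_p$'' is false in general --- a local homeomorphism injective on a non-compact subset need not be injective on any neighbourhood of it --- though you do not actually use this later.

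For comparison, the paper never builds charts inside $\mor{K\cat{C}}{\C}$: it equips the abstract disjoint union $\sum_{\ts{D},I}\U^{n-\# I}\times\R_{<0}^{\# I}$, with projections $\pi_{\ts{D},I}$ given by $F_\cat{C}$, with the topology whose basic open sets are $U_{\ts{E},J}=\bigcup_{\ts{E}\tileq\ts{D}\tileq\lt{D}{I}\tileq\lt{E}{J}}\pi_{\ts{D},I}^{-1}U$, and then checks that the tautological bijection $\beta$ is a homeomorphism: continuity because $\beta^{-1}U_{\ts{E},J}$ is the intersection of $\pi^{-1}U$ with an upward-closed union of strata, openness because such sets form a neighbourhood basis at each point. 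These two one-sided statements are exactly the two halves of your flagged claim, packaged more weakly; so your plan, once completed, would essentially reproduce the paper's argument, but as written the crux is missing, and filling it requires Corollary \ref{bdy structure} and the semistability argument of Lemma \ref{frontier condition} rather than the ``routine'' verifications you anticipate.
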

 \begin{proof}
 As sets there is a commutative diagram
\begin{center}
\begin{tikzcd}
\stabalg{C} \ar{rr}{\beta} \ar{dr}[swap]{\pi}&& \sum_{\ts{D},I} \U^{n-\# I} \times \R_{<0}^{\# I} \ar{dl}{\sum \pi_{\ts{D},I}} \\
& \mor{K\cat{C}}{\C} &
\end{tikzcd}
\end{center}
where the map $\pi_{\ts{D},I}$ is determined from the pair $F_\cat{C}\left(\itilt{\ts{D}}{\lt{D}{I}}\right)$ of basis and subset, and $\beta$ is defined using the bijections $S_{\ts{D},I} \cong \U^{n-\# I} \times \R_{<0}^{\# I}$. The subsets
\[
U_{\ts{E},J} = \bigcup_{\ts{E}\tileq \ts{D} \tileq \lt{D}{I} \tileq \lt{E}{J}} \pi_{\ts{D},I}^{-1}U,
\]
where $U$ is open in $\mor{K\cat{C}}{\C}$, form a base for a topology. With this topology, $\beta$ is a homeomorphism. To see this note that
\[
\beta^{-1}U_{\ts{E},J} =  \left(\bigcup_{\ts{E}\tileq \ts{D} \tileq \lt{D}{I} \tileq \lt{E}{J}} S_{\ts{D},I} \right)\cap \pi^{-1}U
\]
is the intersection of an open subset with an upward-closed union of strata, hence open. So $\beta$ is continuous. Moreover, all sufficiently small open neighbourhoods of a point of $\stabalg{C}$ have this form, so the bijection $\beta$ is an open map, hence a homeomorphism.
\end{proof}
A more practical approach is to study the homotopy-type of $\stabalg{C}$. In good cases this is encoded in the poset $P(\stabalg{C}) \cong \PI{C}^\text{op}$.

Recall that a stratification is \defn{locally-finite} if any stratum is contained in the closure of only finitely many other strata, and \defn{closure-finite} if the closure of each stratum is a union of finitely many strata.
\begin{lemma}
\label{local and closure finiteness}
The following are equivalent:
\begin{enumerate}
\item the stratification of $\stabalg{C}$ is locally-finite;
\item the stratification of $\stabalg{C}$ is closure-finite;
\item each interval $\ialg{\ts{D}}{\ts{D}[-1] }$ in $\tiltalg{C}$ is finite.
\end{enumerate}
\end{lemma}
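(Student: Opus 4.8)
The plan is to move everything to the tilting poset using the identification $P\stabalg{C}\cong\PI{C}^{\mathrm{op}}$ of Corollary~\ref{poset-iso}, under which (combining Corollary~\ref{bdy structure} with its converse Lemma~\ref{frontier condition}) the closure order reads
\[
S_{\ts{E},J}\subset\overline{S_{\ts{D},I}}\iff \ts{E}\tileq\ts{D}\tileq\lt{D}{I}\tileq\lt{E}{J}.
\]
Thus (1) asserts that each $S_{\ts{D},I}$ lies in the closure of only finitely many $S_{\ts{E},J}$, and (2) that each $\overline{S_{\ts{D},I}}$ meets only finitely many $S_{\ts{E},J}$; in either case the pair $(\ts{E},J)$ is pinned down by a chain of four $\tileq$-inequalities. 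I would first record two elementary facts that convert such chains into intervals of the shape in (3). For algebraic $\ts{D}$ one has $\lt{D}{\varnothing}=\ts{D}$ and $\lt{D}{I_0}=\ts{D}[-1]$, where $I_0$ is the set of all simple objects of $\heart{D}$ (the corresponding torsion structures being $(0,\heart{D})$ and $(\heart{D},0)$). Secondly, for algebraic $\ts{D}$ and $\ts{E}$ one has $\ts{E}\in\ialg{\ts{D}}{\ts{D}[-1]}$ as soon as $\ts{D}\tsleq\ts{E}\tsleq\ts{D}[-1]$: by Proposition~\ref{HRS} and Remark~\ref{tilting poset rmks} such an $\ts{E}$ is a single left tilt of $\ts{D}$, and since both endpoints are algebraic this tilt is admissible. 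Hence $\ialg{\ts{D}}{\ts{D}[-1]}$ is precisely the set of algebraic $t$-structures in $\its{\ts{D}}{\ts{D}[-1]}$, and for each algebraic $\ts{D}$ there are at most $2^{\rk K\cat{C}}$ possible subsets $I$ of its simples.

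For $(3)\Rightarrow(1)$ and $(3)\Rightarrow(2)$ I would fix $S_{\ts{D},I}$ and read off from the four-term chain that the $t$-structure $\ts{E}$ of any stratum with $S_{\ts{D},I}\subset\overline{S_{\ts{E},J}}$ satisfies $\ts{D}\tsleq\ts{E}\tsleq\ts{D}[-1]$, while the $\ts{E}$ of any stratum with $S_{\ts{E},J}\subset\overline{S_{\ts{D},I}}$ satisfies $\ts{D}[1]\tsleq\ts{E}\tsleq\ts{D}$. By the facts above these $\ts{E}$ range over $\ialg{\ts{D}}{\ts{D}[-1]}$, resp.\ $\ialg{\ts{D}[1]}{\ts{D}}$, which are finite under (3), and each such $\ts{E}$ admits only finitely many $J$; so only finitely many strata occur, giving (1), resp.\ (2).

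For the converses I would test the finiteness hypotheses on the two extreme strata of a fixed algebraic $\ts{D}$. Applying (1) to the minimal stratum $S_{\ts{D},I_0}$: for every $\ts{E}\in\ialg{\ts{D}}{\ts{D}[-1]}$ the chain $\ts{D}\tileq\ts{E}\tileq\ts{E}\tileq\ts{D}[-1]$ holds, so by Lemma~\ref{frontier condition} $S_{\ts{D},I_0}\subset\overline{S_{\ts{E},\varnothing}}$; distinct $\ts{E}$ give distinct strata $S_{\ts{E},\varnothing}$, so (1) forces $\ialg{\ts{D}}{\ts{D}[-1]}$ to be finite. Applying (2) to the maximal stratum $S_{\ts{D},\varnothing}$: for every $\ts{E}\in\ialg{\ts{D}[1]}{\ts{D}}$, taking $J_0$ the full set of simples of $\heart{E}$ so that $\lt{E}{J_0}=\ts{E}[-1]$, the chain $\ts{E}\tileq\ts{D}\tileq\ts{D}\tileq\ts{E}[-1]$ holds, so $S_{\ts{E},J_0}\subset\overline{S_{\ts{D},\varnothing}}$, and again distinct $\ts{E}$ give distinct strata, so (2) forces $\ialg{\ts{D}[1]}{\ts{D}}$ to be finite. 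Since $\ts{D}$ is an arbitrary algebraic $t$-structure and $\ts{D}\mapsto\ts{D}[1]$ is a bijection of these, either conclusion is exactly (3).

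I expect the only real difficulty to be bookkeeping: Corollary~\ref{poset-iso} involves an opposite poset, the shift on $\tilt{C}$ runs downwards ($\ts{D}\tsleq\ts{D}[-1]$), and one must pass repeatedly between $\tileq$ and $\tsleq$ via Remark~\ref{tilting poset rmks}, so it is easy to reverse an inequality or misplace a shift. The conceptual content is merely that every interval $\its{\ts{D}}{\ts{D}[-1]}$ is witnessed both ``above'' the minimal stratum of $\ts{D}$ and ``below'' the maximal stratum of $\ts{D}[1]$, so its finiteness is forced by either local- or closure-finiteness and, conversely, controls both.
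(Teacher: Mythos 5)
Your proof is correct and takes essentially the same route as the paper: both arguments reduce everything to the closure criterion $S_{\ts{E},J}\subset\overline{S_{\ts{D},I}}\iff \ts{E}\tileq\ts{D}\tileq\lt{D}{I}\tileq\lt{E}{J}$ of Corollary \ref{poset-iso} and identify the size of $\ialg{\ts{D}}{\ts{D}[-1]}$ with the number of $t$--structures whose strata lie above, respectively below, those of $S_\ts{D}$ in the closure order, each $S_\ts{E}$ contributing only finitely many strata $S_{\ts{E},J}$. Your version just makes explicit the witness strata ($S_{\ts{D},I_0}$ with $\lt{D}{I_0}=\ts{D}[-1]$, and $S_{\ts{D},\emptyset}$) that the paper leaves implicit.
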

\begin{proof}
This follows easily from Corollary~\ref{poset-iso} which states that $S_{\ts{E},J} \tsleq \overline{S_{\ts{D},I}} \iff \ts{E} \tileq \ts{D} \tileq \lt{D}{I} \tileq \lt{E}{J}$. Thus the size of the interval $\ialg{\ts{D}}{\ts{D}[-1]}$ is precisely
\[
 \#\{ \ts{E} \in \tiltalg{C} \ \colon\  \overline{S_\ts{E}} \cap S_\ts{D} \neq \emptyset \}
= \#\{ \ts{E} \in \tiltalg{C} \ \colon\  \overline{S_\ts{D}} \cap S_{\ts{E}[1]} \neq \emptyset \}.
\]
The result follows because each $S_\ts{D}$ is a finite union of strata, and each stratum is in some $S_\ts{D}$.
\end{proof}
\begin{proposition}
\label{stabalg  is cellular}
The space $\stabalg{C}$ of algebraic stability conditions, with the decomposition into the strata $S_{\ts{D},I}$, can be given the structure of a regular, normal cellular stratified space. It is a regular, totally-normal CW-cellular stratified space precisely when $\stabalg{C}$ is locally-finite.
\end{proposition}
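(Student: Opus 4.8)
My plan is: (i) read off the open cells and the skeletal filtration from the explicit shape of the strata; (ii) get normality for free from the frontier condition; (iii) construct characteristic maps from local central-charge coordinates, which gives regularity; (iv) match the two remaining requirements for a (regular, totally-normal) CW-cellular stratification with local-finiteness via Lemma \ref{local and closure finiteness}. Throughout, closures are taken inside $\stabalg{C}$.

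Steps (i)–(ii). By the description (\ref{algebraic heart}), fixing an ordering of the simple objects of $\heart{D}$ gives a homeomorphism $S_{\ts{D},I}\cong\U^{n-\#I}\times\R_{<0}^{\#I}$, so each $S_{\ts{D},I}$ is an open cell of dimension $k(\ts{D},I):=2n-\#I$ (these dimensions run over $n\le k\le 2n$). Set $\Lambda_k=\{(\ts{D},I):k(\ts{D},I)=k\}$ and $X_k=\bigcup_{k(\ts{D},I)\le k}\overline{S_{\ts{D},I}}$. By Corollary \ref{bdy structure} the decomposition (\ref{stabalg decomp}) satisfies the frontier condition and the closure of each stratum is a union of strictly lower-dimensional strata; this makes $\emptyset=X_{-1}\subseteq X_0\subseteq\cdots$ a filtration with union $\stabalg{C}$ and $X_k-X_{k-1}=\bigsqcup_{(\ts{D},I)\in\Lambda_k}S_{\ts{D},I}$, and it shows directly that $\partial S_{\ts{D},I}=\overline{S_{\ts{D},I}}-S_{\ts{D},I}$ is a union of lower-dimensional cells, which is \emph{normality}.

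Step (iii). For each $(\ts{D},I)$ I must produce a characteristic map $\alpha_{\ts{D},I}\colon D_{\ts{D},I}\to\stabalg{C}$ with $\text{int}(D^{k})\subseteq D_{\ts{D},I}\subseteq D^{k}$ ($k=k(\ts{D},I)$) which restricts to a homeomorphism onto $S_{\ts{D},I}$, has image $\overline{S_{\ts{D},I}}$, and does not extend; \emph{regularity} is then the statement that $\alpha_{\ts{D},I}$ is a homeomorphism onto $\overline{S_{\ts{D},I}}$. I would build it from the local coordinates given by the central charges of the simple objects of $\heart{D}$, using the local homeomorphism $\pi$: applying $\U\cup\R_{<0}\cong(0,1]\times\R$ in each coordinate exhibits $S_{\ts{D},I}$ as a product of copies of $(0,1)\times\R$ and $\{1\}\times\R$, i.e. as $\text{int}(D^{k})$ after a further homeomorphism, and $\overline{S_{\ts{D},I}}$ is obtained by letting the relevant charges run to the real axis. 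The computations in the proofs of Lemma \ref{components are strata} and Lemma \ref{frontier condition} identify the stability conditions so obtained as precisely the strata $S_{\ts{E},J}$ with $\ts{E}\tileq\ts{D}\tileq\lt{D}{I}\tileq\lt{E}{J}$, show that near each of them $\overline{S_{\ts{D},I}}$ is a $k$-dimensional manifold-with-corners, and show that one cannot add more without changing the heart to one not of the required form or leaving $\stabalg{C}$ (non-extendability). Patching these local pictures realises $\overline{S_{\ts{D},I}}$ as the homeomorphic image of such a $D_{\ts{D},I}$, giving a regular, normal cellular stratification with no finiteness hypothesis. I expect this step — turning the local analysis into a single global homeomorphism of $\overline{S_{\ts{D},I}}$ onto a subset of $D^{k}$ containing its interior, with the predicted boundary faces — to be the main obstacle, since it requires a careful global bookkeeping of how hearts tilt as the charges of the simple objects cross the real axis.

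Step (iv). A regular normal cellular stratification is CW-cellular precisely when it is closure-finite and $\stabalg{C}$ carries the weak topology determined by the closed cells. By Lemma \ref{local and closure finiteness} closure-finiteness is equivalent to local-finiteness of the stratification. When local-finiteness holds, each point of $\stabalg{C}$ has a neighbourhood meeting only finitely many cells, so the weak topology coincides with the manifold topology; moreover, for each cell the product-of-intervals local model of Step (iii) presents $\partial D_{\ts{D},I}$ as a finite regular cell complex which extends to a regular CW structure on $\partial D^{k}$, so the stratification is totally-normal. Conversely, a CW-cellular stratification is closure-finite by definition, hence locally-finite by Lemma \ref{local and closure finiteness}. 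Therefore $\stabalg{C}$, with its decomposition into the strata $S_{\ts{D},I}$, is a regular, totally-normal CW-cellular stratified space if and only if it is locally-finite.
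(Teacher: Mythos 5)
Your steps (i), (ii) and (iv) track the paper's argument closely: normality comes from Corollary \ref{bdy structure}, and the equivalence with local-finiteness uses Lemma \ref{local and closure finiteness} together with the observation that every point lies in the interior of a finite closed union of cells. The genuine gap is step (iii), which you yourself flag as ``the main obstacle'' and only sketch; but constructing the characteristic maps \emph{is} the content of the first assertion, and the picture ``$\U\cup\R_{<0}\cong(0,1]\times\R$ in each coordinate, then let the relevant charges run to the real axis'' does not by itself produce a map $\alpha_{\ts{D},I}\colon D_{\ts{D},I}\to\stabalg{C}$ with $\mathrm{int}(D^k)\subset D_{\ts{D},I}\subset D^k$. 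Two specific issues are hidden there. First, the closure of $S_{\ts{D},I}$ inside $\stabalg{C}$ is \emph{not} everything obtained by letting charges reach the real axis: limit points at which the charge of some simple object tends to $0$ are not stability conditions at all (masses of semistable objects are bounded away from $0$ along convergent sequences), and by Lemma \ref{components are strata} only those components of $\partial_K S_{\ts{D},I}$ whose limiting heart is algebraic survive in $\stabalg{C}$; so $D_{\ts{D},I}$ must be a carefully specified proper subset of $D^k$ whose missing faces reflect both phenomena, and the non-extendability clause in the definition of a cell structure has to be checked for it. Second, your ``patching of local pictures'' must yield one \emph{global} homeomorphism of $\overline{S_{\ts{D},I}}\cap\stabalg{C}$ onto such a subset; local charts coming from the local homeomorphism $\pi$ do not automatically glue to one.

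The paper closes exactly this gap by a global construction you could adopt: $\pi$ maps $\overline{S_{\ts{D},I}}\cap\stabalg{C}$ homeomorphically into the closed convex polyhedral cone $C=\overline{\pi(S_{\ts{D},I})}\subset\mor{K\cat{C}}{\C}$, identifying $S_{\ts{D},I}$ with the relative interior of $C$; each boundary stratum projects to a subset cut out by finitely many real linear equalities and inequalities, so $C$ can be subdivided into polyhedral sub-cones whose relative interiors match the strata; since the image avoids the cone point, one radially contracts $C-\{0\}$ onto $C\cap A(1,2)$ and includes it into the compact curvilinear polyhedron $C\cap A[1,2]\cong D^k$; the inverse of the resulting embedding is the characteristic map, and regularity is then immediate. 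In the locally-finite case the same subdivision can be chosen finite, giving the regular CW structure on $\partial D^k$ required for total normality --- a point which in your step (iv) was again deferred to the unproven local model. As written, then, your proposal supplies the combinatorial bookkeeping but not the existence of the cell structures, which is the heart of the proposition.
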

\begin{proof}
 First we define a cell structure on $S_{\ts{D},I}$. Denote the projection onto the central charge by $\pi \colon \stab{C} \to \mor{K\cat{C}}{\C}$. Choose a basis for $K\cat{C}$ and identify $\mor{K\cat{C}}{\C} \cong \C^n \cong \R^{2n}$ with $2n$-dimensional Euclidean space. Note that
\[
\overline{S_{\ts{D},I}}\cap\stabalg{C} \cong \pi\left( \overline{S_{\ts{D},I}} \cap\stabalg{C} \right) \tsleq \overline{ \pi \left( S_{\ts{D},I} \right)}
\]
and that $\overline{ \pi \left( S_{\ts{D},I} \right)}$ is the real convex closed polyhedral cone
\[
C= \{ \charge \ \colon\   \im \charge(s) \geq 0\ \text{for}\ s\not \in I\ \text{and}\ \im \charge(s)=0,\ \re \charge(s) \leq 0\  \text{for } s\in I \}
\]
in $\mor{K\cat{C}}{\C}$. The projection $\pi$ identifies the stratum $S_{\ts{D},I}$ with the (relative) interior of $C$. By Corollary~\ref{bdy structure} $\overline{S_{\ts{D},I}} \cap \stabalg{C}$ is a union of strata. Moreover, the projection of each boundary stratum
\[
S_{\ts{E},J} \tsleq \overline{S_{\ts{D},I}} \cap \stabalg{C}
\]
 is cut out by a finite set of (real) linear equalities and inequalities. Therefore we can subdivide $C$ into a union of real convex polyhedral sub-cones in such a way that each stratum is identified with the (relative) interior of one of these sub-cones.

Let $A(1,2)$ be the open annulus in $\mor{K\cat{C}}{\C}$ consisting of points of distance in the range $(1,2)$ from the origin, and $A[1,2]$ its closure. Then we have a continuous map
\[
\overline{S_{\ts{D},I}}\cap\stabalg{C} \stackrel{\pi}{\longrightarrow} C - \{0\} \cong C\cap A(1,2) \hookrightarrow C\cap A[1,2]
\]
where $C - \{0\} $ is identified with $C\cap A(1,2)$ via a radial contraction. The subdivision of $C$ into cones induces the structure of a compact curvilinear polyhedron on the intersection $C\cap A[1,2]$. A choice of homeomorphism from $C\cap A[1,2]$ to a closed cell yields a map from $\overline{S_{\ts{D},I}}\cap\stabalg{C}$ to a closed cell which is a homeomorphism onto its image. The inverse from this image is a characteristic map for the stratum $S_{\ts{D},I}$, and the collection of these gives $\stabalg{C}$ the structure of a regular, normal cellular stratified space.

When the stratification of $\stabalg{C}$ is locally-finite the cellular stratification is closure-finite by Lemma~\ref{local and closure finiteness}, and any point is contained in the interior of a closed union of finitely many cells. This guarantees that $\stabalg{C}$ has the weak topology arising from the cellular stratification, which is therefore a CW-cellular stratification. We can also choose the above subdivision of $C$ to have  finitely many sub-cones. In this case the curvilinear polyhedron $C\cap A[1,2]$ has finitely many faces, and therefore has a CW-structure for which the strata of $\overline{S_{\ts{D},I}}\cap\stabalg{C}$ are identified with certain open cells. It follows that the cellular stratification is totally-normal. Conversely, if the stratification is CW-cellular then it is closure-finite, and hence by Lemma~\ref{local and closure finiteness} it is locally-finite.
\end{proof}

\begin{corollary}
\label{comb model for stab alg}
Suppose the stratification of $\stabalg{C}$ is locally-finite
and let $n = \rk(K\cat{C})$.
Then we have the following:
\begin{enumerate}
\item There is a homotopy equivalence $\stabalg{C} \simeq BP\left( \stabalg{C} \right)$.
\item $BP\left(\stabalg{C}\right)$ is a CW-complex of dimension $\leq n$
\item The integral homology groups $H_i\left(\stabalg{C}\right) = 0$ for $i > n$.
\end{enumerate}
\end{corollary}
\begin{proof}
The first claim is direct from Proposition~\ref{stabalg is cellular} and Theorem~\ref{X=BPX}.
By Corollary~\ref{comb model for stab alg} $\stabalg{C} \simeq BP\left(\stabalg{C}\right)$.  A chain in the poset $P\left(\stabalg{C}\right)$ consists of a sequence of strata of $\stabalg{C}$ of decreasing codimension, each in the closure of the next. Since the maximum codimension of any stratum is $n$, the length of any chain is less than or equal to $n$.  Hence $BP\left(\stabalg{C}\right)$ is a CW-complex of dimension $\leq n$, and the last claim also follows.
\end{proof}

\begin{remark}
\label{subspaces of stabalg}
If $\stabalg{C}$ is  locally-finite then any union $U$ of strata of $\stabalg{C}$ is a regular, totally-normal CW-cellular stratified space. Hence there is a homotopy equivalence $U \simeq BP(U)$ and $H_i(U)=0$ for $i>n=\rk(K\cat{C})$.
\end{remark}

\begin{example}
\label{P1 two}
We continue Example~\ref{P1 one}. The `Kronecker heart' \[\langle \mathcal{O}, \mathcal{O}(-1)[1] \rangle\] of $\cat{D}(\P^1)$ is algebraic. There are infinitely many torsion structures on this heart such that the tilt is a t-structure with heart isomorphic to the Kronecker heart \cite[\S3.2]{MR2739061}. It quickly follows from Corollary~\ref{poset-iso} that the stratification of $\Stab_\text{alg}(\P^1)$ is neither closure-finite nor locally-finite ---  see \cite[Figure 5]{MR2739061} for a diagram of the codimension $2$ strata in the closure of the stratum corresponding to the Kronecker heart.
\end{example}

\subsection{More on the poset of strata}
\label{poset of strata}

Corollary~\ref{comb model for stab alg} shows that if $\stabalg{C}$ is closure-finite  and locally-finite, then its homotopy-theoretic properties are encoded in the poset $P\left( \stabalg{C} \right)$. In the remainder of this section we elucidate some of the latter's good properties.

 The assumptions that $\stabalg{C}$ is locally-finite and closure-finite are respectively equivalent to the statements that the unbounded closed intervals $[S,\infty)$ and $(-\infty,S]$ are finite for each $S\in P\left( \stabalg{C} \right)$. It follows of course that closed bounded intervals are also finite, but in fact the latter holds without these assumptions.
\begin{lemma}
\label{closed interval lemma}
 Suppose $S_{\ts{E},J} \tsleq \overline{S_{\ts{D},I}}$. Then the closed interval $[S_{\ts{E},J},S_{\ts{D},I}]$ in $P\left(\stabalg{C}\right)$ is isomorphic to a sub-poset of  $[I,K]^\text{op}$. Here the subset  $K$ is uniquely determined by the requirement that $S_{\ts{E},J} \tsleq \partial_KS_{\ts{D},I}$, and subsets of the simple objects in $\heart{D}$ are ordered by inclusion.
\end{lemma}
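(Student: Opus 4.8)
The plan is to build an explicit order-embedding $\rho$ of $[S_{\ts{E},J},S_{\ts{D},I}]$ into $[I,K]^\text{op}$. Note first that $K$ is well-defined: the connected set $S_{\ts{E},J}\subseteq\overline{S_{\ts{D},I}}=\bigsqcup_K\partial_K S_{\ts{D},I}$ lies, by Lemma \ref{components are strata}, in a single piece $\partial_K S_{\ts{D},I}$, so $K$ consists of those simple objects $s$ of $\heart{D}$ with $\im\charge_\sigma(s)=0$ for $\sigma\in S_{\ts{E},J}$. Given a stratum $S_{\ts{F},L}$ in the interval, Corollary \ref{bdy structure} and Lemma \ref{frontier condition} place it in $\overline{S_{\ts{D},I}}$, hence (again by Lemma \ref{components are strata}) in a unique $\partial_M S_{\ts{D},I}$; set $\rho(S_{\ts{F},L})=M$. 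That $M\in[I,K]$, and that $\rho$ preserves order as a map into $[I,K]^\text{op}$, both follow from continuity of $\sigma\mapsto\charge_\sigma(s)$: for $s\in I$ and $\sigma\in\overline{S_{\ts{D},I}}$ one has $\charge_\sigma(s)\in\R_{<0}$ (a nonzero limit of negative reals), so $I\subseteq M$; and since $S_{\ts{E},J}\subseteq\overline{S_{\ts{F},L}}\subseteq\overline{\partial_M S_{\ts{D},I}}$, the function $\im\charge_\sigma(s)$ which vanishes on $\partial_M S_{\ts{D},I}$ for $s\in M$ must also vanish on $S_{\ts{E},J}$, giving $M\subseteq K$; the identical argument gives $S_{\ts{F},L}\subseteq\overline{S_{\ts{F}',L'}}\Rightarrow\rho(S_{\ts{F}',L'})\subseteq\rho(S_{\ts{F},L})$.

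To show $\rho$ is an isomorphism onto its image I would pass to the local structure near a point $\sigma_0\in S_{\ts{E},J}$. Since $\pi\colon\stab{C}\to\mor{K\cat{C}}{\C}$ is a local homeomorphism, injective on $\overline{S_{\ts{D},I}}$, a neighbourhood of $\sigma_0$ is identified with a neighbourhood $V$ of $p_0=\pi(\sigma_0)$ in $\mor{K\cat{C}}{\C}\cong\R^{2n}$, coordinatised by the real and imaginary parts of $\charge(s)$, $s$ simple in $\heart{D}$. By choice of $\sigma_0$ we have $\re\charge_{\sigma_0}(s)<0$ for $s\in I$ and $\im\charge_{\sigma_0}(s)>0$ for $s\notin K$, so among the defining (in)equalities of the polyhedral cone $\overline{\pi(S_{\ts{D},I})}$ only $\im\charge(s)=0$ (for $s\in I$) and $\im\charge(s)\ge 0$ (for $s\in K\setminus I$) are active at $p_0$. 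Hence the germ of $\overline{S_{\ts{D},I}}$ at $\sigma_0$ lies in the tangent cone $C_{p_0}\cong\R^{2n-\#K}\times\R_{\ge 0}^{\#K-\#I}$, whose $\#K-\#I$ nonnegative coordinates are the $\im\charge(s)$ for $s\in K\setminus I$, and whose face lattice is $[I,K]^\text{op}$: the face $F_M$ cut out by $\im\charge(s)=0$ for $s\in M\setminus I$ corresponds to $M$, and $F_M\subseteq F_{M'}\iff M'\subseteq M$. Moreover $\partial_M S_{\ts{D},I}$ maps into the relative interior $P_M$ of $F_M$, and $p_0\in\overline{P_M}$ for every $M\in[I,K]$.

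The crux is that, after shrinking $V$, the sign of each $\re\charge(s)$, $s\in K\setminus I$, is constant on $V\cap C_{p_0}$ (equal to its nonzero value at $p_0$); consequently $\ts{D}_\sigma$ is constant along each $P_M$ --- it is the tilt of $\ts{D}$ read off from which simples of $\heart{D}$ lie in $M\setminus I$ and the fixed signs of their real charges, just as in the heart-tracking step of Lemma \ref{frontier condition}. So each $P_M$ is either a single stratum $S^{(M)}$ (when this heart is algebraic) or disjoint from $\stabalg{C}$. Since $p_0\in\overline{P_M}$, every realised $S^{(M)}$ has $\sigma_0$ in its closure and so lies in $[S_{\ts{E},J},S_{\ts{D},I}]$; conversely every stratum in this interval has $\sigma_0$ in its closure, meets $V$, and so equals some $S^{(M)}$ with $M=\rho(S^{(M)})$. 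Thus $M\mapsto S^{(M)}$ is a bijection from $\{M\in[I,K]:P_M\text{ realised}\}$ onto the interval, inverse to $\rho$; and as $M'\subseteq M$ gives $F_M\subseteq F_{M'}$, hence $S^{(M)}\subseteq\overline{S^{(M')}}$ by the frontier condition (Corollary \ref{bdy structure}), this inverse is a poset map. Therefore $\rho$ is an isomorphism of $[S_{\ts{E},J},S_{\ts{D},I}]$ onto a sub-poset of $[I,K]^\text{op}$ (and finiteness of the interval is automatic, since $[I,K]$ is finite). The main obstacle is this last part --- pinning down the germ of $\overline{S_{\ts{D},I}}$ at a deep stratum as a sub-fan of $C_{p_0}$ whose strata are relative interiors of faces --- which, granting the polyhedral description of strata from the proof of Proposition \ref{stabalg is cellular}, reduces to the constancy of $\ts{D}_\sigma$ on each $P_M$; there the delicate input, as in Lemma \ref{frontier condition}, is controlling the Harder--Narasimhan behaviour of those simples of $\heart{D}$ whose central charge has small positive imaginary part.
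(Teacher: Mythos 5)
Your construction of $\rho$ and the forward order comparison are fine (modulo the unproved, though true, assertion that $\charge_{\sigma_0}(s)\neq 0$ for every simple $s$ of $\heart{D}$, which follows from continuity of the mass $m(s)=|\charge(s)|$ along $S_{\ts{D},I}$), and your overall route --- working locally at $\sigma_0$ through the local homeomorphism $\pi$ and matching strata of the interval with the faces indexed by $M\in[I,K]$ --- is essentially the paper's, which chooses a neighbourhood $U$ of $\sigma_0$ in which $U\cap\partial_L S_{\ts{D},I}$ is non-empty and connected for each $I\subset L\subset K$ and empty otherwise, and then invokes Lemma \ref{components are strata}. The gap is at what you yourself call the crux. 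You justify ``each $P_M$ carries at most one stratum'' by claiming that for $\tau\in\partial_M S_{\ts{D},I}$ near $\sigma_0$ the $t$--structure $\ts{D}_\tau$ is read off from $M$ together with the signs of $\re\charge(s)$ for the simples $s\in M\setminus I$. That principle is false: the limiting $t$--structure depends on the limiting semistability of all objects of $\langle M\rangle$, not only the simples. For $\cat{C}=\cat{D}(A_2)$ with module heart $\heart{D}$, simples $s_1,s_2$ and the nonsplit extension $0\to s_2\to e\to s_1\to 0$, take $I=\emptyset$ and limit points of $S_{\ts{D},\emptyset}$ with $\re\charge(s_1)<0<\re\charge(s_2)$, so $M=\{s_1,s_2\}$: if $\re\charge(e)>0$ the limiting heart is $\langle s_1,s_2[1],e[1]\rangle$ with simple objects $s_1$ and $e[1]$, whereas if $\re\charge(e)<0$ it is $\langle s_1,e,s_2[1]\rangle$ with simple objects $e$ and $s_2[1]$; these are different strata lying over the same sign chamber of the same face.

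Consequently the constancy of $\ts{D}_\tau$ over $P_M$ near $\sigma_0$ is not a formal consequence of the constancy of the signs of $\re\charge(s)$ for $s\in K\setminus I$: one must rule out that a wall $\re\charge(d)=0$ with $d\in\langle M\rangle$ non-simple separates, arbitrarily close to $\sigma_0$, two components of $\partial_M S_{\ts{D},I}$ with different hearts. (In the example this cannot happen at a point of $\overline{S_{\ts{D},\emptyset}}$ with $\re\charge_{\sigma_0}(e)=0$, because there the sign pattern is forced to be $\re\charge_{\sigma_0}(s_1)>0>\re\charge_{\sigma_0}(s_2)$ and then $e$ is destabilised by $s_2$ on both sides of the wall --- but seeing this is exactly a Harder--Narasimhan argument in the style of the proofs of Lemmas \ref{components are strata} and \ref{frontier condition}, not sign bookkeeping on simples.) You flag this point (``controlling the Harder--Narasimhan behaviour \dots'') but do not carry it out; without it neither the injectivity of $\rho$ nor the reverse order implication is established --- for the latter you also need that points of $\overline{S_{\ts{D},I}}$ lying over $P_{M'}$ actually accumulate on $S^{(M)}$ before Corollary \ref{bdy structure} can be applied. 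So the proposal reproduces the paper's strategy but leaves its one substantive local step --- connectedness of $\partial_M S_{\ts{D},I}$ near $\sigma_0$, equivalently constancy of the associated $t$--structure there --- unproved, and the mechanism offered in its place is incorrect.
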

\begin{proof}
Suppose $S_{\ts{E},J}\tsleq \partial_KS_{\ts{D},I}$ and fix $\sigma\in S_{\ts{E},J}$. Using the fact that $\stab{C}$ is locally isomorphic to $\mor{K\cat{C}}{\C}$ we can choose an open neighbourhood $U$ of $\sigma$ in $\stab{C}$ so that $U \cap \partial_LS_{\ts{D},I}$ is non-empty and connected for any subset $I \subseteq L \subseteq K$, and empty when $L \not \subseteq K$. It follows that $U$ meets a unique component of  $\partial_LS_{\ts{D},I}$ for each $I \subseteq L \subseteq K$. The strata in $[S_{\ts{E},J},S_{\ts{D},I}]$ correspond to those components for which the heart is algebraic. Since $\partial_LS_{\ts{D},I} \tsleq \overline{\partial_{L'} S_{\ts{D},I}} \iff L' \subseteq L$ the result follows.
\end{proof}

We have seen that $\stabalg{C}$ need be neither open nor closed as a subset of $\stab{C}$. The next two results show that whether or not it is locally closed is closely related to the structure  of the bounded closed intervals in $P(\stabalg{C})$.
\begin{lemma}
\label{locally closed lemma}
The first of the statements below implies the second and third, which are equivalent. When $\stabalg{C}$ is locally-finite all three are equivalent.
\begin{enumerate}
\item The subset $\stabalg{C}$ is \defn{locally closed} as a subspace of $\stab{C}$.
\item The inclusion $\stabalg{C} \cap \overline{S_\ts{D}} \hookrightarrow \overline{S_\ts{D}}$ is open for each algebraic $\ts{D}$.
\item For each pair of strata $S_{\ts{E},J} \tsleq \overline{S_{\ts{D},I}}$ there is an isomorphism \[[S_{\ts{E},J},S_{\ts{D},I}] \cong [I,K]^\text{op},\] where $K$ is uniquely determined by the requirement that $S_{\ts{E},J} \tsleq \partial_KS_{\ts{D},I}$.
\end{enumerate}
  \end{lemma}
\begin{proof}
Suppose $\stabalg{C}$ is locally closed. Let $\sigma \in \stabalg{C}\cap \overline{S_\ts{D}}$ where $\ts{D}$ is algebraic. Then there is a neighbourhood $U$ of $\sigma$ in $\stab{C}$ such that $U\cap \stabalg{C}$ is closed in $U$. Then $U \cap S_\ts{D} \tsleq U \cap \stabalg{C}$ so
\[
U \cap \overline{S_\ts{D}} \tsleq U \cap \stabalg{C}
\]
and $\stabalg{C} \cap \overline{S_\ts{D}}$ is open in $\overline{S_\ts{D}}$.

Now suppose $\stabalg{C} \cap \overline{S_\ts{D}}$ is open in $\overline{S_\ts{D}}$. Then we can choose a neighbourhood $U$ of $\sigma$ so that $U \cap \partial_LS_{\ts{D},I}$ is non-empty and connected for each $I \subseteq L \subseteq K$ and, moreover, $U \cap \overline{ S_{\ts{D}} } \tsleq \stabalg{C}$. It follows, as in the proof of Lemma~\ref{closed interval lemma}, that $[S_{\ts{E},J},S_{\ts{D},I}] \cong [I,K]^\text{op}$.

Conversely, if $[S_{\ts{E},J},S_{\ts{D},I}] \cong [I,K]^\text{op}$ then given a neighbourhood $U$ with $U \cap \partial_LS_{\ts{D},I}$ non-empty and connected for each $I \subseteq L \subseteq K$ we see that it meets only components of the $\partial_LS_{\ts{D},I}$ which are in $\stabalg{C}$. Hence $\stabalg{C} \cap \overline{S_\ts{D}}$ is open in $\overline{S_\ts{D}}$.

Finally, assume the stratification of $\stabalg{C}$ is locally-finite and that $\stabalg{C} \cap \overline{S_\ts{D}} \hookrightarrow \overline{S_\ts{D}}$ is open for each algebraic $\ts{D}$. Fix $\sigma\in \stabalg{C}$. There are finitely many algebraic $\ts{D}$ with $\sigma \in \overline{S_\ts{D}}$. There is an open neighbourhood $U$ of $\sigma$ in $\stab{C}$ such that
\[
U\cap \overline{S_\ts{D}} \tsleq \overline{S_\ts{D}} \cap \stabalg{C}
\]
for any algebraic $\ts{D}$ (the left-hand side is empty for all but finitely many such). Hence
\[
U \cap \stabalg{C} =U\cap \bigcup_{\ts{D} \ \text{alg}} S_\ts{D}
\tsleq U\cap \bigcup_{\ts{D} \ \text{alg}} \overline{S_\ts{D}}
= \bigcup_{\ts{D} \ \text{alg}} U\cap\overline{S_\ts{D}}
\tsleq U \cap \stabalg{C}
\]
and so $U \cap \stabalg{C} = \bigcup_{\ts{D} \ \text{alg}} U\cap\overline{S_\ts{D}}$. The latter is a {\em finite} union of closed subsets of $U$, hence closed in $U$. Therefore each $\sigma \in \stabalg{C}$ has an open neighbourhood $U \ni \sigma$ such that $U \cap \stabalg{C}$ is closed in $U$. It follows that $\stabalg{C}$ is locally closed.
\end{proof}

\begin{corollary}
Suppose $\stabalg{C}$ is locally closed as a subspace of $\stab{\cat{C}}$. Then $P(\stabalg{C})$ is pure of length $n=\rk (K\cat{C})$.
\end{corollary}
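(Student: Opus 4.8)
The plan is to read off purity from the codimension function on $P\stabalg{C}$, which sends a stratum $S_{\ts{D},I}$ to $\#I$ (recall from \S\ref{stratifying stab alg} that $S_{\ts{D},I}$ has codimension $\#I$ in $\stab{C}$). By Lemma \ref{closed interval lemma} this function is strictly order-reversing: $S_{\ts{E},J}<S_{\ts{D},I}$ forces $I\subsetneq K$ for the subset $K$ with $S_{\ts{E},J}\subseteq\partial_KS_{\ts{D},I}$, whence $\#I<\#K=\#J$. So it suffices to prove three things: (a) a maximal element of $P\stabalg{C}$ has codimension $0$; (b) a minimal element has codimension $n$; (c) a covering relation lowers the codimension by exactly $1$. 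Granting these, if $p_0<\cdots<p_k$ is any maximal chain then, writing $p_i=S_{\ts{D}_i,I_i}$, we get $\#I_0=\#I_k+k$ from (c), and $\#I_0=n$, $\#I_k=0$ from (a) and (b), so $k=n$; hence every maximal chain has length $n$. (Here I assume $\stabalg{C}\neq\emptyset$, so that $K\cat{C}\cong\Z^n$ and $n=\rk K\cat{C}$ is defined; otherwise there is nothing to prove.)

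Parts (a) and (b) do not need local-closedness. For (a): for every algebraic $\ts{D}$ and every $I$ one has $S_{\ts{D},I}\subseteq S_\ts{D}\subseteq\overline{S_{\ts{D},\emptyset}}$, the last inclusion because $S_{\ts{D},\emptyset}$ is open and dense in $S_\ts{D}$ (under $S_\ts{D}\cong(\U\cup\R_{<0})^n$ it is the copy of $\U^n$); so $S_{\ts{D},I}\leq S_{\ts{D},\emptyset}$ in $P\stabalg{C}$, strictly unless $I=\emptyset$, and a maximal stratum must therefore be some $S_{\ts{D},\emptyset}$. For (b) I would argue contrapositively: if $\#I<n$, pick a simple object $s$ of $\heart{D}$ not in $I$; then $\lt{D}{I}\tileq\lt{D}{I\cup\{s\}}$ --- using $\lt{D}{I}\vee\lt{D}{\{s\}}=\lt{D}{I\cup\{s\}}$ from \S\ref{simple tilts} together with Remark \ref{tilting poset rmks} to identify $\tsleq$ with $\tileq$ on $\its{\ts{D}}{\ts{D}[-1]}$ --- so Lemma \ref{frontier condition} gives $S_{\ts{D},I\cup\{s\}}\subseteq\overline{S_{\ts{D},I}}$, i.e.\ $S_{\ts{D},I}$ is not minimal.

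The real content, and where local-closedness is indispensable, is (c). Lemma \ref{locally-closed lemma} upgrades the mere embedding of Lemma \ref{closed interval lemma} into a full isomorphism: for every comparable pair $S_{\ts{E},J}\leq S_{\ts{D},I}$ one has $[S_{\ts{E},J},S_{\ts{D},I}]\cong[I,K]^{\text{op}}$, where $K\supseteq I$ is the subset with $S_{\ts{E},J}\subseteq\partial_KS_{\ts{D},I}$ and $\#K=\#J$ by Lemma \ref{components are strata}. Since $[I,K]$ is the Boolean lattice on $K\setminus I$, it --- and hence $[I,K]^{\text{op}}$ --- is graded of rank $\#K-\#I=\#J-\#I$, so every unrefinable chain from $S_{\ts{E},J}$ to $S_{\ts{D},I}$ has length $\#J-\#I$; in particular covers lower codimension by $1$. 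I expect (c) to be the crux: without local-closedness Lemma \ref{closed interval lemma} only gives a sub-poset of $[I,K]^{\text{op}}$, which need not be graded, so a priori a cover could skip several codimensions and the poset fail to be pure. Once (a)--(c) are in hand the telescoping argument of the first paragraph finishes the proof.
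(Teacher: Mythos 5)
Your proof is correct and is essentially the paper's argument: both identify the minimal strata as those of the form $S_{\ts{D},\{s_1,\ldots,s_n\}}$ and the maximal ones as those of the form $S_{\ts{E},\emptyset}$, and both get the length count from Lemma \ref{locally-closed lemma} (whose statement (3) holds here because local-closedness implies it), which makes closed intervals Boolean. The only difference is presentational: the paper applies that lemma once to the interval spanned by a maximal chain and notes that the poset of subsets of an $n$-element set is pure of length $n$, whereas you apply it to each covering pair and telescope the codimension function $\#I$.
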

\begin{proof}
The stratum $S_{\ts{D},I}$ contains $S_{\ts{D},\{s_1,\ldots,s_n\}}$ in its closure, and is in the closure of $S_{\ts{D},\emptyset}$. It follows that any maximal chain in $P\left( \stabalg{C} \right)$ is in a  closed interval of the form $[ S_{\ts{D},\{s_1,\ldots,s_n\}}, S_{\ts{E},\emptyset} ]$. As $\stab{C}$ is locally closed this is isomorphic to the poset of subsets of an $n$-element set by Lemma~\ref{locally closed lemma}. This implies $P(\stabalg{C})$ is pure of length $n$.
\end{proof}

\begin{example}
\label{P1 three}
Recall Examples~\ref{P1 one} and~\ref{P1 two}. The subspace $\Stab_\text{alg}(\mathbb{P}^1)$ is not locally closed: if it were then $\Stab(\mathbb{P}^1) - \Stab_\text{alg}(\mathbb{P}^1) = A \cup U$ for some closed $A$ and open $U$. This subset consists of those stability conditions for which the phases of semistable objects accumulate at $\Z\subseteq \R$, and this has empty interior. Hence the only possibility is that $U=\emptyset$, in which case $\Stab_\text{alg}(\mathbb{P}^1)$ would be open. This is not the case, so $\Stab_\text{alg}(\mathbb{P}^1)$ cannot be  locally closed. Nevertheless, from the explicit description of stability conditions in \cite{MR2219846} one can see that the poset of strata is pure (of rank $2$), and that the second two conditions of Lemma~\ref{locally closed lemma} are satisfied.
\end{example}

\section{Finite-type components}\label{ftt cpts}
\subsection{The main theorem}
We say a t-structure is of \defn{finite tilting type} if it is algebraic and has only finitely many torsion-structures in its heart. A t-structure has finite tilting type if and only if it is algebraic and the interval $\itilt{\ts{D}}{\ts{D}[-1]}$ in $\tilt{C}$ is finite. We say a component $\tilto{C}$ is of \defn{finite tilting type} if each t-structure in it has finite tilting type. It follows from Lemmas~\ref{lattice} and~\ref{finite intervals} that a finite tilting type component $\tilto{C}$  is a lattice, and that closed bounded intervals in it are finite.
\begin{lemma}
\label{simple finite tilting}
Suppose that the set $S$ of t-structures obtained from some $\ts{D}$ by finite sequences of simple tilts consists entirely of t-structures of finite tilting type. Then $S$ is (the underlying set of) a finite tilting type component of $\tilt{C}$. Moreover, every finite tilting type component arises in this way.
\end{lemma}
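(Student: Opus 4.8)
The plan is to show that $S$ is exactly the component $\tilto{C}$ of $\tilt{C}$ containing $\ts{D}$. This gives the first statement, since every element of $S$ is by assumption algebraic of finite tilting type, so the component $\tilto{C}=S$ is of finite-type; and it gives the ``moreover'' clause as well, since for any finite-type component $\tilto{C}$ and any $\ts{D}\in\tilto{C}$ the set $S$ built from $\ts{D}$ satisfies $S\subseteq\tilto{C}$ (a simple tilt is in particular a tilt), so the hypothesis holds automatically and then $S=\tilto{C}$. The inclusion $S\subseteq\tilto{C}$ holds in general. For the reverse inclusion, recall that $\tilto{C}$ is precisely the set of $t$-structures reachable from $\ts{D}$ by a finite sequence of (not necessarily simple) left and right tilts; since $\ts{D}\in S$, it therefore suffices to prove that $S$ is closed under passing to a single left tilt and to a single right tilt.

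The crux is the following claim: if $\ts{E}\in S$ and $\ts{E}\tsleq\ts{F}\tsleq\ts{E}[-1]$, then $\ts{F}$ is obtained from $\ts{E}$ by a finite sequence of simple left tilts, and in particular $\ts{F}\in S$. I would prove this by induction on $\#\itilt{\ts{E}}{\ts{F}}$, which is finite because $\ts{E}\in S$ has finite tilting type and $\itilt{\ts{E}}{\ts{F}}\subseteq\itilt{\ts{E}}{\ts{E}[-1]}$. If $\ts{F}=\ts{E}$ there is nothing to prove. Otherwise write $\ts{F}=\lt{E}{\ts{T}}$ for the torsion structure $\ts{T}$ on $\heart{E}$ corresponding to $\ts{F}$ under Proposition \ref{HRS}; its torsion class $\aisle{T}{0}$ is nonzero, so choosing $0\neq t\in\aisle{T}{0}$ and a simple quotient $s$ of $t$ (which exists since $\heart{E}$ is a length category) we get $s\in\aisle{T}{0}$, whence $\langle s\rangle\subseteq\aisle{T}{0}$. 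As $\langle s\rangle$ is the torsion class of the simple left tilt $\lt{E}{s}$, the order-isomorphism of Proposition \ref{HRS} gives $\lt{E}{s}\tsleq\ts{F}$, and $\lt{E}{s}\neq\ts{E}$ since $\langle s\rangle\neq 0$. Now $\lt{E}{s}\in S$, being a simple tilt of $\ts{E}\in S$, so by hypothesis it is again algebraic of finite tilting type; moreover $\lt{E}{s}\tsleq\ts{F}\tsleq\ts{E}[-1]\tsleq\lt{E}{s}[-1]$ and $\itilt{\lt{E}{s}}{\ts{F}}\subsetneq\itilt{\ts{E}}{\ts{F}}$. The inductive hypothesis applied to the pair $(\lt{E}{s},\ts{F})$ then produces a finite sequence of simple left tilts from $\lt{E}{s}$, and hence from $\ts{E}$, to $\ts{F}$.

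Taking $\ts{F}=\ts{E}[-1]$ in this claim shows $\ts{E}[-1]\in S$ whenever $\ts{E}\in S$, and the claim itself shows $S$ is closed under single left tilts. The entirely symmetric argument with simple right tilts in place of simple left tilts --- equivalently, the same argument run in the opposite abelian category, cf.\ \S\ref{simple tilts} --- shows that $\ts{E}\in S$ and $\ts{E}[1]\tsleq\ts{F}\tsleq\ts{E}$ imply $\ts{F}\in S$, so $S$ is also closed under single right tilts (and $\ts{E}[1]\in S$). Hence $S\supseteq\tilto{C}$, so $S=\tilto{C}$, completing the proof. The one essential use of the hypothesis --- which I would flag as the subtle point rather than a genuine obstacle --- is the induction step: a simple tilt of an algebraic heart need not be algebraic (cf.\ \S\ref{simple tilts}), so the induction only makes sense because we are told that \emph{all} of $S$, in particular each intermediate $\lt{E}{s}$, is algebraic of finite tilting type; assuming this only for $\ts{D}$ would not suffice. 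The remaining ingredients are routine: identifying $\itilt{\ts{E}}{\ts{E}[-1]}$ with the finite lattice of torsion structures on $\heart{E}$ via Proposition \ref{HRS}, finite tilting type, and Lemma \ref{local lattice}; the elementary fact that any nonzero torsion class in a length category contains $\langle s\rangle$ for some simple $s$ (so that below any nontrivial tilt there sits a simple one); and checking that the induction measure strictly decreases.
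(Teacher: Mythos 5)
Your proof is correct and follows essentially the same route as the paper: the paper's argument rests on the one-line assertion that any tilt of a $t$--structure of finite tilting type decomposes into a finite sequence of simple tilts, from which closure of $S$ under all tilts and hence $S=\tilto{C}$ follows, with the converse exactly as you give it. Your induction on $\#\itilt{\ts{E}}{\ts{F}}$, peeling off a simple object of the torsion class via Proposition \ref{HRS}, simply supplies the details behind that assertion (and correctly flags that the hypothesis on all of $S$, not just $\ts{D}$, is what keeps the intermediate hearts length categories).
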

\begin{proof}
If $\ts{D}$ has finite tilting type then any tilt of $\ts{D}$ can be decomposed into a finite sequence of simple tilts. It follows that $S$ is a component of $\tilt{C}$ as claimed. It is clearly of finite tilting type. Conversely if $\tilto{C}$ is a finite tilting type component, and $\ts{D} \in \tilto{C}$, then every t-structure obtained from $\ts{D}$ by a finite sequence of simple tilts is algebraic, and has finite tilting type. Hence $\ts{D}$ contains the set $S$, and by the first part $S=\tilto{C}$.
\end{proof}

If the heart of a t-structure contains only finitely many isomorphism classes of indecomposable objects, then it is of finite tilting type (because a torsion theory is determined by the indecomposable objects it contains). Therefore, whilst we do not use it in this paper,  the following result may be useful in detecting finite tilting type components, particularly if up to automorphism there are only finitely many t-structures which can be reached from $\ts{D}$ by finite sequences of simple tilts. In very good cases ---  for instance when tilting at a $2$-spherical simple object $s$ with the property that $\GrMor{\cat{C}}{i}{s}{s'}=0$ for $i\neq 1$ for any other simple object $s'$ ---  the tilted t-structure itself is obtained by applying an automorphism of $\cat{C}$ and hence inherits the property of being algebraic of finite tilting type. A similar situation arises if $\ts{D}$ is an algebraic t-structure in which all simple objects are rigid, \ie have no self extensions. In this case \cite[Proposition 5.4]{king-qiu} states that all simple tilts of $\ts{D}$ are also algebraic.

\begin{lemma}
\label{finite-type implies simple tilt length}
Suppose that $\ts{D}$ is a t-structure on a triangulated category $\cat{C}$ whose heart is a length category with only finitely many isomorphism classes of indecomposable objects. Then any simple tilt of $\ts{D}$ is algebraic.
\end{lemma}
\begin{proof}
It suffices to prove that the claim holds for any simple right tilt, since the simple left tilts are shifts of these. Since there are only finitely many indecomposable objects in $\heart{D}$ there are in particular only finitely many simple objects. Let these be $s_1, \ldots, s_n$ and consider the right tilt at $s_1$. Let $\sigma \in S_\ts{D}$ be the unique stability condition with $\charge_\sigma(s_1)=i$ and $\charge_\sigma(s_j)=-1$ for $j=2,\ldots,n$. Let $\tau$ be obtained by acting on $\sigma$ by $-1/2 \in \C$. Then $\ts{D}_\tau$ is the right tilt of $\ts{D}_\sigma$ at $s_1$. As there are only finitely many indecomposable objects in $\heart{D}$ the set of $\varphi\in \R$ such that $\slicing_\sigma(\varphi)\neq \emptyset$ is discrete. The same is therefore true for $\tau$. It follows that $\slicing_\tau(0,\epsilon)=\emptyset$ for some $\epsilon >0$. The component of $\stab{C}$ containing $\sigma$ and $\tau$ is full since $\sigma$ is algebraic. Hence by Lemma~\ref{alg phase criterion} the stability condition $\tau$ is algebraic too.
\end{proof}

\begin{lemma}
\label{finite-type cpts correspondence}
Let $\tilto{C}$ be a finite tilting type component of $\tilt{C}$. Then
\begin{equation}
\label{ft cpt}
\stabo{C} = \bigcup_{\ts{D} \in \tilto{C}} S_\ts{D}
\end{equation}
 is a component of $\stab{C}$.
\end{lemma}
\begin{proof}
Clearly $\tilto{C}$ is also a component of $\tiltalg{C}$. By Corollary~\ref{poset-iso} there is a corresponding component $\stabalgo{C}$ of $\stabalg{C}$ given by the RHS of (\ref{ft cpt}). Let $\stabo{C}$ be the unique component of $\stab{C}$ containing $\stabalgo{C}$. Recall from \cite[Corollary 5.2]{MR3007660} that the t-structures associated to stability conditions in  a component of $\stab{C}$ are related by finite sequences of tilts. Thus, each stability condition in $\stabo{C}$ has associated t-structure in $\tilto{C}$. In particular, the t-structure is algebraic and $\stabalgo{C}=\stabo{C}$ is actually a component of $\stab{C}$.
\end{proof}
A \defn{finite-type} component $\stabo{C}$ of $\stab{C}$ is one which arises in this way from a finite tilting type component $\tilto{C}$ of $\tilt{C}$.
\begin{lemma}
\label{lf implies lf and cf}
Suppose $\stabo{C}$ is a finite-type component. The stratification of $\stabo{C}$ is locally-finite and closure-finite.
\end{lemma}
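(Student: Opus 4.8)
The plan is to deduce this from the combinatorial description of the stratification in Corollary~\ref{poset-iso}, essentially running the proof of Lemma~\ref{local and closure finiteness} localised to the finite-type component $\tilto{C}$.

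First I would assemble the combinatorial input. Since $\tilto{C}$ is a finite-type component, the paragraph preceding this lemma records (via Lemmas~\ref{lattice} and~\ref{finite intervals}) that all closed bounded intervals in $\tilto{C}$ are finite; in particular, for every $\ts{D}\in\tilto{C}$ the intervals $\itilt{\ts{D}}{\ts{D}[-1]}$ and $\itilt{\ts{D}[1]}{\ts{D}}$ are finite. I would also note that $\tilto{C}$ is closed under all tilts and shifts: one has $\ts{D}[1]\tileq\ts{D}\tileq\ts{D}[-1]$, since $\ts{D}$ and $\ts{D}[-1]$ lie in an interval of the form $\its{\ts{D}}{\ts{D}[-1]}$, on which the relations $\tsleq$ and $\tileq$ coincide (Remark~\ref{tilting poset rmks}). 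Hence every $t$--structure encountered below is an algebraic member of $\tilto{C}$, with only finitely many simple objects in its heart.

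Next, fix a stratum $S_{\ts{D},I}$ of $\stabo{C}$, so $\ts{D}\in\tilto{C}$. By Corollary~\ref{poset-iso} (\ie Corollary~\ref{bdy structure} together with Lemma~\ref{frontier condition}), a stratum lies in $\overline{S_{\ts{D},I}}$ exactly when it has the form $S_{\ts{E},J}$ with $\ts{E}\tileq\ts{D}\tileq\lt{D}{I}\tileq\lt{E}{J}$, and contains $S_{\ts{D},I}$ in its closure exactly when it has the form $S_{\ts{E},J}$ with $\ts{D}\tileq\ts{E}\tileq\lt{E}{J}\tileq\lt{D}{I}$. In the first case $\ts{E}\tileq\ts{D}$, and shifting $\ts{D}\tileq\lt{E}{J}\tileq\ts{E}[-1]$ gives $\ts{D}[1]\tileq\ts{E}$, so $\ts{E}\in\itilt{\ts{D}[1]}{\ts{D}}$; in the second case $\ts{D}\tileq\ts{E}$ and $\ts{E}\tileq\lt{E}{J}\tileq\lt{D}{I}\tileq\ts{D}[-1]$, so $\ts{E}\in\itilt{\ts{D}}{\ts{D}[-1]}$. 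Either way there are finitely many choices for $\ts{E}$, and for each such $\ts{E}\in\tilto{C}$ only finitely many subsets $J$ of the simple objects of $\heart{E}$. Since $\stabo{C}$ is a (closed) connected component of $\stab{C}$ which, being of finite type, coincides with the component $\stabalgo{C}$ of $\stabalg{C}$, the closure of any stratum of $\stabo{C}$ stays inside $\stabo{C}\subset\stabalg{C}$ and is therefore a union of strata; combined with the finiteness counts above this gives closure-finiteness (the closure of each stratum is a finite union of strata) and local-finiteness (each stratum lies in the closure of finitely many strata).

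I do not expect a real obstacle here: the content is bookkeeping on top of Corollary~\ref{poset-iso}. The only points requiring care are that all the $t$--structures arising stay in the finite-type component $\tilto{C}$ --- so that the relevant intervals $\itilt{\ts{D}}{\ts{D}[-1]}$ and $\itilt{\ts{D}[1]}{\ts{D}}$ are finite and each heart has finitely many simple objects --- and that closures of strata of $\stabo{C}$ do not escape $\stabo{C}$; both follow at once from $\tilto{C}$ being closed under tilts and shifts and from $\stabo{C}$ being a connected component of $\stab{C}$.
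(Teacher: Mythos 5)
Your proposal is correct and follows essentially the same route as the paper: the paper simply cites Lemma \ref{local and closure finiteness} together with the observation that finiteness of $\itilt{\ts{D}}{\ts{D}[-1]}$ forces finiteness of $\ialg{\ts{D}}{\ts{D}[-1]}$, which is exactly the reduction you perform. The only difference is that you unfold the proof of Lemma \ref{local and closure finiteness} and localise it to the finite-type component (using closure of $\tilto{C}$ under shifts and that $\stabo{C}$ is a closed component), which is a slightly more careful rendering of the same argument rather than a new one.
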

\begin{proof}
This is immediate from Lemma~\ref{local and closure finiteness} and the obvious fact that the interval $\ialg{\ts{D}_\sigma}{\ts{D}_\sigma[-1]}$ of algebraic tilts is finite when the interval $\itilt{\ts{D}_\sigma}{\ts{D}_\sigma[-1]}$ of all tilts is finite.
\end{proof}
\begin{corollary}
\label{lf htpy eq}
Suppose $\stabo{C}$ is a finite-type component. There is a homotopy equivalence $\stabo{C} \simeq BP\left(\stabo{C}\right)$, in particular $\stabo{C}$ has the homotopy-type of a CW-complex of dimension $\dim_\C \stabo{C}$.
\end{corollary}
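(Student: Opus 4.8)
The plan is to deduce this from the cellular-stratified-space structure already established for $\stabalg{C}$, localised to the component $\stabo{C}$. By Lemma \ref{finite type cpts correspondence} the finite-type component $\stabo{C}$ coincides with the component $\stabalgo{C}$ of $\stabalg{C}$ determined by $\tilto{C}$; in particular it is a clopen, stratum-saturated subset of $\stabalg{C}$, and by Corollary \ref{bdy structure} the closure of any stratum $S_{\ts{D},I}$ with $\ts{D}\in\tilto{C}$ meets $\stabalg{C}$ only in strata that again lie in $\stabo{C}$. By Lemma \ref{lf implies lf and cf} the induced stratification of $\stabo{C}$ is locally-finite and closure-finite.

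First I would rerun the construction in the proof of Proposition \ref{stabalg is cellular}, applied to the strata contained in $\stabo{C}$: for each such $S_{\ts{D},I}$ the projection $\pi$ to the central charge identifies $\overline{S_{\ts{D},I}}\cap\stabo{C}$ with a curvilinear polyhedron in $C\cap A[1,2]$, and, using local-finiteness to take the polyhedral subdivision of the cone $C$ finite, one obtains a regular characteristic map whose image is a finite union of cells. Together with closure-finiteness, which guarantees the weak topology, this exhibits $\stabo{C}$ as a regular, totally-normal CW-cellular stratified space. The only point needing a word is that restricting from $\stabalg{C}$ to its clopen, stratum-saturated subset $\stabo{C}$ affects none of these conditions, which is clear since a component is open and closed. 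Then Theorem \ref{X=BPX} applies verbatim: $BP\left(\stabo{C}\right)$ embeds in $\stabo{C}$ as a strong deformation retract, giving the homotopy equivalence $\stabo{C}\simeq BP\left(\stabo{C}\right)$.

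For the final clause I would argue as in Corollary \ref{homology vanishing}: a chain in the poset $P\left(\stabo{C}\right)$ is a sequence of strata of strictly decreasing codimension, each in the closure of the next, and the codimension of $S_{\ts{D},I}$ is $\#I\in\{0,\dots,n\}$ with $n=\rk(K\cat{C})$, so $BP\left(\stabo{C}\right)$ is a CW-complex of dimension at most $n$. Since $\stabo{C}$ contains algebraic stability conditions it is a full component of $\stab{C}$ by Lemma \ref{alg phase criterion}, hence locally modelled on $\mor{K\cat{C}}{\C}$, so $\dim_\C\stabo{C}=n$ and the bound is precisely $\dim_\C\stabo{C}$. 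There is no real obstacle here; the corollary is a bookkeeping consequence of Lemma \ref{lf implies lf and cf}, Proposition \ref{stabalg is cellular} and Theorem \ref{X=BPX}, the only mild subtlety being the verification that the cited results localise correctly to a single component.
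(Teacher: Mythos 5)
Your proposal is correct and follows essentially the same route as the paper, which deduces the corollary immediately from Lemma \ref{lf implies lf and cf} together with Corollary \ref{comb model for stab alg} (i.e.\ the cellular-stratified structure of Proposition \ref{stabalg is cellular} and Theorem \ref{X=BPX}). The only difference is that you spell out the harmless localisation of that argument to the clopen, stratum-saturated component $\stabo{C}$ and the chain-length bound giving the dimension statement, both of which the paper treats as immediate.
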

\begin{proof}
This is immediate from Lemma~\ref{lf implies lf and cf} and Corollary~\ref{comb model for stab alg}.
\end{proof}

We now prove that finite-type components are contractible. Our approach is modelled on the proof  of the simply-connectedness of the stability spaces of representations of Dynkin quivers \cite[Theorem 4.7]{MR3281136}. The key is to show that certain `conical unions of strata' are contractible.

The \defn{open star} $S_{\ts{D},I}^*$ of a stratum $S_{\ts{D},I}$ is the union of all strata containing $S_{\ts{D},I}$ in their closure. An open star is contractible:  $S_{\ts{D},I}^* \simeq BP(S_{\ts{D},I}^*)$ by Remark~\ref{subspaces of stabalg}, and, since $P(S_{\ts{D},I}^*)$ is a poset with lower bound $S_{\ts{D},I}$, its classifying space is  contractible.
\begin{definition}
For a finite set $F$ of  t-structures in $\tilto{C}$ let the cone
\[
C(F) = \{ (\ts{E},J) \ \colon\  \ts{F} \atileq \ts{E} \atileq \lt{E}{J} \atileq \sup F \ \text{for some}\ \ts{F}\in F  \}.
\]
Let $V(F) =\bigcup_{(\ts{E},J) \in C(F)} S_{\ts{E},J}$ be the union of the corresponding strata; we call such a subspace \defn{conical}. For example, $V(\{\ts{D}\}) = S_{\ts{D},\emptyset}$. More generally, if $F = \{ \ts{D}, L_s\ts{D} \ \colon\ s\in I\}$ then $\sup F = L_I\ts{D}$ and $V(F) = S_{\ts{D},I}^*$.
\end{definition}
\begin{remark}
\label{cones are finite}
If $(\ts{E},J) \in C(F)$ then $\inf F \atileq \ts{E} \atileq \sup F$. Since $\ialg{\inf F}{\sup F}$ is finite, and there are only finitely many possible $J$ for each $\ts{E}$, it follows that $C(F)$ is a finite set. Let $c(F) = \# C(F)$ be the number of elements, which is also the number of strata in $V(F)$.
\end{remark}
Note that $V(F)$ is an open subset of $\stabo{C}$ since $S_{\ts{D},I} \tsleq V(F)$ and $S_{\ts{D},I}\tsleq \overline{S_{\ts{E},J}}$ implies
\[
\ts{F} \atileq \ts{D} \atileq \ts{E} \atileq \lt{E}{J} \atileq \lt{D}{I} \atileq \sup F
\]
for some $\ts{F}\in F$ so that $S_{\ts{E},J} \tsleq V(F)$ too. In particular $S_{\ts{D},I} \tsleq V(F)$ implies $S_{\ts{D},I}^* \tsleq V(F)$. It is also non-empty since it contains $S_{\sup F,\emptyset}$.

\begin{proposition}
\label{conical homology vanishing}
The conical subspace $V(F)$ is contractible for any finite set $F \tsleq \tilto{C}$.
\end{proposition}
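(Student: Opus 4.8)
The plan is to prove by induction on the number $c(F)$ of strata of $V(F)$---finite by Remark~\ref{cones are finite}---that $V(F)$ is contractible. Write $\ts{G}=\sup F$. First I would record two reformulations. The chambers (codimension-$0$ strata) of $V(F)$ are exactly the $t$-structures in $A_F:=\bigcup_{\ts{F}\in F}\ialg{\ts{F}}{\ts{G}}$, and one has $(\ts{E},J)\in C(F)$ if and only if $\ts{E}\in A_F$ and $\lt{E}{J}\atileq\ts{G}$; indeed, given $\ts{F}\atileq\ts{E}\atileq\ts{G}$ one automatically has $\ts{F}\atileq\ts{E}\atileq\lt{E}{J}\atileq\ts{G}$ as soon as $\lt{E}{J}\atileq\ts{G}$. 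Moreover, for a chamber $\ts{E}$, the sets $J$ of simple objects of its heart with $\lt{E}{J}\atileq\ts{G}$ are closed under union (because $\lt{E}{J}\vee\lt{E}{J'}=\lt{E}{J\cup J'}$ by \S\ref{simple tilts} and $\tilto{C}$ is a lattice by Lemma~\ref{lattice}) and hence, being finite in number, are precisely the subsets of their union $I_{\ts{E}}$; so the strata of $V(F)$ with chamber $\ts{E}$ are exactly the $S_{\ts{E},J}$ with $J\subseteq I_{\ts{E}}$.

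If $c(F)=1$ then $(\ts{G},\emptyset)\in C(F)$ forces $V(F)=S_{\ts{G},\emptyset}\cong\U^{n}$, which is contractible; this is the base case. Assume now $c(F)\geq2$ and choose a minimal element $\ts{E}_0$ of $F$. By minimality $\ts{E}_0\neq\ts{G}$ (otherwise $F=\{\ts{E}_0\}$ and $c(F)=1$), and since $\tilto{C}$ is of finite type the relation $\ts{E}_0\atileq\ts{G}$, $\ts{E}_0\neq\ts{G}$, is realised by a nonempty chain of simple tilts (Lemma~\ref{simple finite tilting}), so $I_0:=I_{\ts{E}_0}$ is nonempty. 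Set
\[
F''=\bigl(F\smallsetminus\{\ts{E}_0\}\bigr)\cup\{\,L_s\ts{E}_0:s\in I_0\,\},
\qquad
F_{*}=\{\,L_s\ts{E}_0:s\in I_0\,\}.
\]
A short computation, using $\ts{E}_0\atileq L_{I_0}\ts{E}_0\atileq\ts{G}$, gives $\sup F''=\ts{G}$ and $\sup F_{*}=L_{I_0}\ts{E}_0$, and I would check that $A_{F''}=A_F\smallsetminus\{\ts{E}_0\}$: any chamber $\ts{E}\neq\ts{E}_0$ of $V(F)$ with $\ts{E}_0\atileq\ts{E}$ dominates $L_s\ts{E}_0$ for the simple object $s$ appearing in the first tilt of a chain from $\ts{E}_0$ to $\ts{E}$, hence lies in $\ialg{L_s\ts{E}_0}{\ts{G}}\subseteq A_{F''}$, while $\ts{E}_0\notin A_{F''}$ because $\ts{E}_0$ is minimal in $F$ and is a proper predecessor of each $L_s\ts{E}_0$. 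It follows that $V(F'')=V(F)\smallsetminus V_{\ts{E}_0}$, where $V_{\ts{E}_0}=\bigcup_{J\subseteq I_0}S_{\ts{E}_0,J}$ is the nonempty union of the strata of $V(F)$ with chamber $\ts{E}_0$; in particular $V(F'')$ is an open conical subspace with $c(F'')<c(F)$.

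Using the frontier description $S_{\ts{E},J}\subseteq\overline{S_{\ts{D},I}}\iff\ts{E}\atileq\ts{D}\atileq\lt{D}{I}\atileq\lt{E}{J}$ (Corollary~\ref{bdy structure}, Lemma~\ref{frontier condition}) together with the fact that a stratum $S_{\ts{E},J}$ of $V(F)$ with $\ts{E}\atileq\ts{E}_0$ has $\ts{E}=\ts{E}_0$, I would then establish the identities
\[
V(F)=V(F'')\cup S_{\ts{E}_0,I_0}^{*},
\qquad
V(F'')\cap S_{\ts{E}_0,I_0}^{*}=V(F_{*}),
\]
the first because the strata of $V(F)$ missing from $V(F'')$ are exactly those in $V_{\ts{E}_0}$, each of which lies in the open star $S_{\ts{E}_0,I_0}^{*}$ (since $S_{\ts{E}_0,I_0}\subseteq\overline{S_{\ts{E}_0,J}}$ whenever $J\subseteq I_0$), and the second by comparing defining conditions. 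Since $V(F_{*})\subseteq V(F'')$ we also have $c(F_{*})<c(F)$. Now $S_{\ts{E}_0,I_0}^{*}$ is an open star, hence open and contractible (its poset of strata has least element $S_{\ts{E}_0,I_0}$); $V(F'')$ and $V(F_{*})$ are nonempty open conical subspaces, contractible by the inductive hypothesis; and $V(F'')\cap S_{\ts{E}_0,I_0}^{*}=V(F_{*})$ is path-connected. As $V(F)$ is open in the complex manifold $\stab{C}$, it has the homotopy type of a CW complex, so Mayer--Vietoris gives $\widetilde H_{*}(V(F))=0$, the Seifert--van Kampen theorem gives $\pi_1(V(F))=1$, and Whitehead's theorem then yields that $V(F)$ is contractible.

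The step I expect to be the main obstacle is the verification, in the inductive step, of the equality $A_{F''}=A_F\smallsetminus\{\ts{E}_0\}$ and of the two displayed set-theoretic identities---this is precisely where the hypotheses enter. It is essential that $\tilto{C}$ is a finite lattice in which every tilt factors through simple tilts, so that passing from $F$ to $F''$ deletes exactly the chamber $\ts{E}_0$ (together with the strata above it inside $S_{\ts{E}_0}$), while the equality $\sup F''=\ts{G}$ keeps unchanged the ``ceiling'' $\ts{G}$ that governs which index sets $J$ occur in the strata of $V(F)$.
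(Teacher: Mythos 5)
Your proposal is correct, and its combinatorial skeleton is the one the paper uses: induct on the number of strata, remove the chamber of a minimal element $\ts{E}_0$ of $F$ (the paper instead picks a $t$--structure minimal among all chambers of $V(F)$, but this is the same thing, since no chamber can lie strictly below a minimal element of $F$), and compare $V(F)$ with the smaller cones $V(F'')$ and $V(F_*)$ together with the open star $S_{\ts{E}_0,I_0}^*$. These are exactly the paper's three auxiliary spaces: your $F''$ is the paper's $F'$, your identity $V(F'')=V(F)- S_{\ts{E}_0}$ is the paper's $V-S_\ts{D}=V(F')$, and your $V(F_*)=V(F'')\cap S_{\ts{E}_0,I_0}^*=S_{\ts{E}_0,I_0}^*-S_{\ts{E}_0}$ is the paper's description of the punctured star; your verification of $A_{F''}=A_F\smallsetminus\{\ts{E}_0\}$ via decomposition of tilts into simple tilts inside the finite-type component is legitimate and in fact supplies details the paper asserts without proof. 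Where you genuinely differ is the topological assembly: the paper runs the long exact sequence of the pair $(V,V-S_\ts{D})$, excises down to $(S_{\ts{D},I}^*,S_{\ts{D},I}^*-S_\ts{D})$, and concludes from $H_i(V)=0$ via Hurewicz--Whitehead, whereas you cover $V(F)$ by the two open contractible sets $V(F'')$ and $S_{\ts{E}_0,I_0}^*$ with contractible intersection $V(F_*)$ and apply Mayer--Vietoris, van Kampen, and Whitehead. The two encodings of the decomposition are essentially equivalent, but your route has a real merit: van Kampen gives simple-connectivity explicitly at each inductive stage, which is precisely what the paper's appeal to Hurewicz tacitly requires (vanishing of positive-degree homology alone does not force contractibility). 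One small point to tighten: closure of the admissible index sets under union does not by itself show that every subset of $I_{\ts{E}}$ is admissible; you also need downward closure, i.e.\ $J\subseteq J'$ implies $\lt{E}{J}\atileq \lt{E}{J'}$, but this follows from the same identity $\lt{E}{J}\vee\lt{E}{J'}=\lt{E}{J\cup J'}$, so the claim stands (the paper's choice of $I$ glosses over the identical point).
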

\begin{proof}
Let $C=C(F)$, $c=c(F)$, and $V=V(F)$. We prove this result by induction on the number of strata $c$. When $c=1$ we have $C=\{(\sup F,\emptyset)\}$ so that $V=S_{\sup F,\emptyset}$ is contractible as claimed. Suppose the result holds for all conical subspaces with strictly fewer than $c$ strata.

Recall from Remark~\ref{subspaces of stabalg} that $V\simeq BP(V)$ so that $V$ has the homotopy-type of a CW-complex. Hence it suffices, by the Hurewicz and Whitehead Theorems, to show that $V$ is simply-connected and that the integral homology groups $H_i(V)=0$ for $i>0$. Choose $(\ts{D},I) \in C$ such that
\begin{enumerate}
\item  $\nexists\, (\ts{E},J)\in C$ with $\ts{E} \prec \ts{D}$;
\item  $(\ts{D},I')\in C \iff I'\subseteq I$.
\end{enumerate}
It is possible to choose such a $\ts{D}$ since $C$ is finite; note that $\ts{D}$ is necessarily in $F$. It is then possible to choose such an $I$ because if $S_{\ts{D},I'},S_{\ts{D},I''} \tsleq V$ then $\lt{D}{I'} ,\lt{D}{I''} \atileq \sup F$ which implies  $\lt{D}{I'\cup I''} = \lt{D}{I'} \vee \lt{D}{I''} \atileq \sup F$.

The conical subset $V$ has an open cover $V = S_{\ts{D},I}^* \cup (V-S_{\ts{D}})$. We remarked above that $S_{\ts{D},I}^*$ is contractible. In addition, by the choice of $\ts{D}$, the subspace $V-S_\ts{D} = V(F')$  is also conical, with
\[
F'= F \cup \{\lt{D}{s}  \ \colon\  s\in \ts{D}^\circ \ \text{simple}, \lt{D}{s} \atileq \sup F\} -\{\ts{D}\}.
\]
Since $V(F')$ has fewer strata than $V$ it is contractible by the inductive hypothesis. Finally, the intersection $S_{\ts{D},I}^* \cap (V-S_{\ts{D}}) = S_{\ts{D},I}^* -S_{\ts{D}}$ is the conical subspace
\[
 \bigcup_{\ts{D} \prec \ts{E} \atileq \lt{E}{J} \atileq \lt{D}{I}} S_{\ts{E},J}= V\left(  \{ \lt{D}{s} \ \colon\  s\in I\}  \right),
\]
which has fewer strata than $V$. Hence this too is contractible by the inductive hypothesis. It follows that $V$ is simply-connected by the van Kampen Theorem, and that $H_i(V)=0$ for $i>0$ by the Mayer--Vietoris sequence for the open cover by $S_{\ts{D},I}^*$ and $V-S_{\ts{D}}$. Hence $V$ is contractible by the Hurewicz and Whitehead Theorems. This completes the inductive step.
\end{proof}

\begin{theorem}
\label{stab contractible}
Suppose $\stabo{C}$ is a finite-type component. Then $\stabo{C}$ is contractible.
\end{theorem}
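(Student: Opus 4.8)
\emph{Proof strategy.} The plan is to realise $\stabo{C}$ as a directed union of the contractible conical open subspaces $V(F)$ of Proposition~\ref{conical homology vanishing}, conclude that $\stabo{C}$ is weakly contractible, and then upgrade this to genuine contractibility using Corollary~\ref{lf htpy eq}.

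First I would check that $\{\, V(F) \colon F\subset\tilto{C}\ \text{finite}\,\}$ is a directed open cover of $\stabo{C}$. Each $V(F)$ is open in $\stabo{C}$, as observed just after the definition of conical subspaces, so in particular $V(F)\subset\stabo{C}$. These subspaces cover $\stabo{C}$ because every stratum sits inside its open star, $S_{\ts{D},I}^{*}=V\bigl(\{\ts{D}\}\cup\{\lt{D}{s}\colon s\in I\}\bigr)$, so that $\stabo{C}=\bigcup_{\ts{D},I}S_{\ts{D},I}\subset\bigcup_F V(F)\subset\stabo{C}$. They form a directed family because the finite-type component $\tilto{C}$ is a lattice: for finite $F,F'\subset\tilto{C}$ the union $F\cup F'$ is finite and hence has a supremum in $\tilto{C}$, and unwinding the definition of the cone one finds $C(F)\cup C(F')\subset C(F\cup F')$, whence $V(F)\cup V(F')\subset V(F\cup F')$.

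Next comes the standard colimit argument. Any compact subset of $\stabo{C}$ is covered by finitely many of the $V(F)$, and therefore, by directedness, by a single one. Consequently the image of any continuous map from a sphere into $\stabo{C}$, together with the image of any homotopy contracting that sphere, lies inside some $V(F)$; since each $V(F)$ is contractible by Proposition~\ref{conical homology vanishing}, this forces $\pi_n(\stabo{C})=0$ for all $n\geq1$. As $\stabo{C}$ is a connected complex manifold it is path-connected, so $\stabo{C}$ is weakly contractible. Finally, Corollary~\ref{lf htpy eq} supplies a homotopy equivalence $\stabo{C}\simeq BP(\stabo{C})$ with $BP(\stabo{C})$ a CW-complex; transporting weak contractibility along this equivalence and applying Whitehead's theorem shows that $BP(\stabo{C})$, and hence $\stabo{C}$, is contractible.

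The substance of the proof lies entirely in Proposition~\ref{conical homology vanishing}, established earlier by induction on the number $c(F)$ of strata in $V(F)$; what remains is the routine but essential packaging that lets contractibility of the finite pieces propagate to the whole space. The one point requiring a little care is verifying that the $V(F)$ genuinely exhaust $\stabo{C}$ and form a directed system, both of which rest on $\tilto{C}$ being a lattice all of whose intervals $\ialg{\ts{D}}{\ts{D}[-1]}$ are finite --- precisely the finite-type hypothesis.
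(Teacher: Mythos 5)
Your proposal is correct and is essentially the paper's own argument: both reduce everything to Proposition~\ref{conical homology vanishing}, observe that any compact piece of $\stabo{C}$ (hence any cycle or sphere) lies in some conical open subspace $V(F)$, and then upgrade weak contractibility to contractibility via the CW model $\stabo{C}\simeq BP\left(\stabo{C}\right)$ of Corollary~\ref{lf htpy eq} and Whitehead's theorem. The only cosmetic difference is that you kill homotopy groups directly using the directed cover by the $V(F)$, while the paper kills homology (a cycle meets only finitely many strata by local finiteness) and adds an appeal to Hurewicz.
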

\begin{proof}
By Lemma~\ref{lf implies lf and cf} $\stabo{C}$ is a locally-finite stratified space. Thus a singular integral $i$-cycle in $\stabo{C}$ has support meeting only finitely many strata, say the support is contained in $\{S_\ts{F} \ \colon\  \ts{F}\in F\}$. Therefore the cycle  has support in $V(F)$, and so is null-homologous whenever $i>0$ by Proposition~\ref{conical homology vanishing}. This shows that $H_i(\stabo{C})=0$ for $i>0$. An analogous argument shows that $\stabo{C}$ is simply-connected. Since $\stabo{C}$ has the homotopy type of a CW-complex it follows from the Hurewicz and Whitehead Theorems that $\stabo{C}$ is contractible.
\end{proof}

We discuss two classes of examples of triangulated categories in which each component of the stability space is of finite-type, and hence is contractible. Each class contains the bounded derived category of finite-dimensional representations of ADE Dynkin quivers, so these can be seen as two ways to generalise from these.

\subsection{Locally-finite triangulated categories}
\label{locally-finite categories}
We recall the definition of locally-finite triangulated category from \cite{MR2955969}. Let $\cat{C}$ be a triangulated category. The \defn{abelianisation} $\ab(\cat{C})$ of $\cat{C}$ is the full subcategory of functors $F \colon \cat{C}^\text{op} \to \ab$ 
fitting into an exact sequence
\[
\Mor{\cat{C}}{-}{c} \to \Mor{\cat{C}}{-}{c'} \to F \to 0
\]
for some $c,c'\in \cat{C}$. The Yoneda embedding $\cat{C} \to \ab(\cat{C})$ is the universal cohomological functor on $\cat{C}$, in the sense that any cohomological functor to an abelian category factors, essentially uniquely, as the Yoneda embedding followed by an exact functor.
\begin{defn}
A triangulated category\footnote{Our default assumption that all categories are essentially small is necessary here.} $\cat{C}$ is \defn{locally-finite} if idempotents split and its abelianisation $\ab(\cat{C})$ is a length category.
\end{defn}
The following `internal' characterisation is due to Auslander \cite[Theorem 2.12]{MR0349747}.
\begin{proposition}
A triangulated category $\cat{C}$ in which idempotents are split is locally-finite if and only if for each $c\in \cat{C}$
\begin{enumerate}
\item there are only finitely many isomorphism classes of indecomposable objects $c'\in \cat{C}$ with $\Mor{\cat{C}}{c'}{c} \neq 0$;
\item for each indecomposable $c'\in \cat{C}$, the $\mathrm{End}_\cat{C}(c')$-module $\Mor{\cat{C}}{c'}{c}$ has finite length.
\end{enumerate}
\end{proposition}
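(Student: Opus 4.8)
The plan is to analyze the abelian category $\ab(\cat{C})$ directly, using the standard facts about Freyd's construction: $\ab(\cat{C})$ has enough projectives, the projectives being exactly the representable functors $h_c:=\Mor{\cat{C}}{-}{c}$ --- here split idempotents are used, so that a summand of a representable is again representable --- every object receives an epimorphism from some $h_c$ out of its defining presentation, the evaluation functors $F\mapsto F(x)$ are exact, and Yoneda gives $\Mor{\ab(\cat{C})}{h_{c'}}{F}\cong F(c')$. The first reduction is that $\ab(\cat{C})$ is a length category if and only if every $h_c$ has finite length in $\ab(\cat{C})$: one direction is immediate, and for the other, every object is a quotient of some $h_c$ and quotients of finite-length objects have finite length. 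So it suffices to show that every $h_c$ has finite length if and only if (1) and (2) hold.

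Next I would assemble two structural inputs. The first, which is where the real work lies, is that $\cat{C}$ is a Krull--Schmidt category with every indecomposable having a local endomorphism ring: this follows from (1) together with split idempotents, and one checks that (2) with $c=c'$ makes $\mathrm{End}_\cat{C}(c')$ artinian while an artinian ring with no nontrivial idempotents is local; dually, if $\ab(\cat{C})$ is a length category then $\mathrm{End}_\cat{C}(c)=\mathrm{End}_{\ab(\cat{C})}(h_c)$ is the endomorphism ring of a finite-length object, hence semiperfect, and local when $c$ is indecomposable by Fitting's lemma. The second is the resulting description of the simple objects: for indecomposable $c'$ the representable $h_{c'}$ (itself indecomposable, since a summand is $h_{c''}$ with $c''$ a summand of $c'$) has a unique maximal subfunctor $\operatorname{rad} h_{c'}$, the quotient $S_{c'}:=h_{c'}/\operatorname{rad} h_{c'}$ is simple with $S_{c'}(c'')=0$ for indecomposable $c''\not\cong c'$ and $S_{c'}(c')$ of length one over $\mathrm{End}_\cat{C}(c')$, and conversely every simple object of $\ab(\cat{C})$ equals some $S_{c'}$ (a simple $S$ with $S(c')\neq 0$ is a quotient of $h_{c'}$; here Krull--Schmidt lets one choose $c'$ indecomposable).

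For the forward implication, fix a composition series $0=F_0\subset\cdots\subset F_n=h_c$ with simple subquotients $S_i$; applying the exact evaluation at an indecomposable $c'$ shows $\Mor{\cat{C}}{c'}{c}=h_c(c')\neq 0$ forces $S_i\cong S_{c'}$ for some $i$, so (1) holds since only finitely many isomorphism types occur among the $S_i$, and $\Mor{\cat{C}}{c'}{c}$ inherits a finite filtration whose subquotients are the finite-length $\mathrm{End}_\cat{C}(c')$-modules $S_i(c')$, so (2) holds. For the reverse implication, assuming (1) and (2), I would use the additive invariant $\ell(F)=\sum_{[c']}\ell_{\mathrm{End}_\cat{C}(c')}(F(c'))$, summed over isomorphism classes of indecomposables: exactness of evaluation makes $\ell$ additive on short exact sequences, $\ell$ vanishes only on the zero functor (by Krull--Schmidt), each $S_{c'}$ has $\ell=1$, and $\ell(h_c)<\infty$ by (1) and (2). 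An induction on $\ell(F)$ then shows every $F$ with $\ell(F)<\infty$ has finite length in $\ab(\cat{C})$: a nonzero such $F$ has a nonzero subobject of minimal $\ell$, necessarily simple, and quotienting by it drops $\ell$ by one. Applying this to $h_c$ and invoking the first reduction completes the proof.

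The main obstacle is the second paragraph: verifying that (1) and (2) force $\cat{C}$ to be Krull--Schmidt and pinning down the simple functors with their one-point supports. These are classical --- this is, after all, Auslander's theorem --- but they are the substance; once they are in hand, both implications reduce to bookkeeping with the exact evaluation functors and the invariant $\ell$.
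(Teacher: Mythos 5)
The paper does not actually prove this proposition --- it is quoted from Auslander's Theorem 2.12 --- so there is no in-text argument to compare with line by line. Your architecture is the classical functor-category one: Freyd's $\ab(\cat{C})$ with the representables $h_c$ as projectives, exact evaluation functors, one-point support of simple functors, and the additive invariant $\ell$. Your forward implication is essentially complete: finite length of $h_{c'}$ gives locality of $\mathrm{End}_{\cat{C}}(c')$ for indecomposable $c'$ by Fitting, the projectivity/lifting argument gives the one-point support of simples and the simplicity of $S(c')$ over $\mathrm{End}_{\cat{C}}(c')$, and (1) and (2) then follow by evaluating a composition series of $h_c$.

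The gap is in the converse, at the sentence asserting that $\cat{C}$ is Krull--Schmidt because of (1), split idempotents and locality of endomorphism rings. Locality does follow from (2) with $c=c'$ as you say, and it yields \emph{uniqueness} of decompositions; but the \emph{existence} of a finite decomposition of an arbitrary object into indecomposables does not follow from (1) and (2), and you use it twice in an essential way: to know that every simple functor is nonzero on some indecomposable, and to know that $\ell(F)=0$ forces $F=0$ (without which the induction on $\ell$ says nothing about $h_c$). This cannot be waved through: take $R=\varinjlim M_{2^n}(k)$ along the diagonal embeddings and $\cat{C}=K^b(\mathrm{proj}\,R)$. Since $R$ is von Neumann regular, every bounded complex of finitely generated projectives splits into its cohomologies, and $R$ has no indecomposable finitely generated projectives, so $\cat{C}$ has split idempotents and no indecomposable objects whatsoever; hence (1) and (2) hold vacuously. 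Yet a strictly increasing chain of idempotents $e_1<e_2<\cdots$ yields a strictly increasing chain of subobjects $h_{e_1R}\subsetneq h_{e_2R}\subsetneq\cdots$ of $h_R$ in $\ab(\cat{C})$, so $\ab(\cat{C})$ is not a length category. So the reverse implication needs an explicit source for the Krull--Schmidt property (it is available in the categories to which the paper applies the proposition), and your proof must either derive it from an input beyond (1), (2) and split idempotents, or record it as an additional hypothesis; as written, the step ``Krull--Schmidt follows from (1) together with split idempotents'' is exactly where the argument fails.
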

The category $\cat{C}$ is locally-finite if and only if $\cat{C}^\text{op}$ is locally-finite so that the above properties are equivalent to the dual ones.

Locally-finite triangulated categories have many good properties:  they have a Serre functor,
equivalently by \cite{RV} they have Auslander--Reiten triangles, the inclusion of any thick subcategory has both left and right adjoints, any thick subcategory, or  quotient thereby, is also locally-finite. See \cite{MR2955969,MR2430189,MR2153264} for further details.

\begin{lemma}[{\cf \cite[Proposition 6.1]{pauk}}]
\label{alg implies finite-type}
Suppose that $\cat{C}$ is a locally-finite triangulated category $\cat{C}$ with $\rk K\cat{C}<\infty$. Then any t-structure on $\cat{C}$ is algebraic, with only finitely many isomorphism classes of indecomposable objects in its heart.
\end{lemma}
\begin{proof}
Let $d$ be an object in the heart of a t-structure, and suppose it has infinitely many pairwise non-isomorphic subobjects. Write each of these as a direct sum of the indecomposable objects with non-zero morphisms to $d$. Since there are only finitely many isomorphism classes of such indecomposable objects, there must be one of them, $c$ say, such that $c^{\oplus k}$ appears in these decompositions for each $k=1,2,\ldots$. Hence $c^{\oplus k} \hookrightarrow d$ for each $k$, which contradicts the fact that $\Mor{\cat{C}}{c}{d}$ has finite length as an $\mathrm{End}_\cat{C}(c)$-module (because it has a filtration by $\{ \alpha \colon c\to d \ \colon\ \alpha\ \text{factors through}\ c^{\oplus k} \to d\}$ for $k\in \N$). We conclude that any object in the heart has only finitely many pairwise non-isomorphic subobjects. It follows that the heart is a length category. Since $\rk K\cat{C}<\infty$ it has finitely many simple objects, and so is algebraic.

To see that there are only finitely many indecomposable objects (up to isomorphism) note that any indecomposable object in the heart has a simple quotient. There are only finitely many such simple objects, and each of these admits non-zero morphisms from only finitely many isomorphism classes of indecomposable objects.
\end{proof}

\begin{remark}
\label{finite torsion poset}
Since a torsion theory is determined by its indecomposable objects it follows that a t-structure on $\cat{C}$ as above has only finitely many torsion structures on its heart, \ie it has finite tilting type.
\end{remark}

\begin{corollary}
\label{LF contractibility}
Suppose $\cat{C}$ is a locally-finite triangulated category and that  $\rk K\cat{C}< \infty$. Then the stability space is a (possibly empty) disjoint union of finite-type components, each of which is contractible.
\end{corollary}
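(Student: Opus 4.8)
The plan is to observe that the hypotheses force every bounded $t$--structure on $\cat{C}$ to be algebraic and of finite tilting type, so that $\stab{C}=\stabalg{C}$ and every component is already of finite-type; contractibility then follows immediately from Theorem \ref{stab contractible}.

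First I would invoke Lemma \ref{alg implies finite type}: since $\cat{C}$ is locally-finite with $\rk K\cat{C}<\infty$, every bounded $t$--structure $\ts{D}$ on $\cat{C}$ is algebraic and has only finitely many isomorphism classes of indecomposable objects in its heart. By Remark \ref{finite torsion poset} such a heart then carries only finitely many torsion structures, i.e.\ $\ts{D}$ has finite tilting type. Consequently $\tiltalg{C}=\tilt{C}$, and by the discussion preceding Lemma \ref{simple finite tilting} every component $\tilto{C}$ of $\tilt{C}$ is a finite-type component.

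Next I would note that because every associated $t$--structure is algebraic, every stability condition is algebraic, so $\stabalg{C}=\stab{C}$. By Corollary \ref{poset-iso} the components of $\stabalg{C}$ are in bijection with those of $\tiltalg{C}=\tilt{C}$, each of which is a finite-type component $\tilto{C}$. By Lemma \ref{finite type cpts correspondence} the corresponding union $\bigcup_{\ts{D} \in \tilto{C}} S_\ts{D}$ is a component $\stabo{C}$ of $\stab{C}$, i.e.\ a finite-type component, and these are open and closed in $\stab{C}$, hence genuinely its connected components. Finally, Theorem \ref{stab contractible} gives that each $\stabo{C}$ is contractible, so $\stab{C}$ is a (possibly empty) disjoint union of contractible finite-type components, as claimed.

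There is essentially no obstacle here: all the substantive work lies in Lemma \ref{alg implies finite type} and in the inductive argument behind Theorem \ref{stab contractible}. The only point requiring mild care is the bookkeeping that the $\rk K\cat{C}<\infty$ hypothesis is what upgrades "length heart" to "finitely many simple objects" (hence "algebraic"), and hence that \emph{every} $t$--structure, not merely those in one distinguished component, is of finite tilting type; this is exactly what makes the decomposition of $\stab{C}$ into finite-type components exhaustive.
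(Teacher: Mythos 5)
Your proposal is correct and follows essentially the same route as the paper: Lemma \ref{alg implies finite type} (together with the observation of Remark \ref{finite torsion poset}) shows every $t$--structure is algebraic of finite tilting type, so each component of $\tilt{C}$ is of finite-type, and Theorem \ref{stab contractible} then gives contractibility. The extra bookkeeping you supply via Corollary \ref{poset-iso} and Lemma \ref{finite type cpts correspondence} is exactly what the paper leaves implicit in Lemma \ref{simple finite tilting}, so there is no substantive difference.
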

\begin{proof}
Combining Lemma~\ref{alg implies finite-type} with Lemma~\ref{simple finite tilting} shows that each component of the tilting poset is of finite tilting type. The result follows from Theorem~\ref{stab contractible}.
\end{proof}
\begin{example}
\label{Dynkin quiver}
 Let $Q$ be a quiver whose underlying graph is an ADE Dynkin diagram, and suppose the field $k$ is algebraically-closed. Then $\cat{D}(Q)$ is a locally-finite triangulated category \cite[\S2]{MR3050703}. The space $\Stab(Q)$ of stability conditions is non-empty and connected (by Remark~\ref{connectedness in algebra case} or the results of \cite{MR976638}), and hence by Corollary~\ref{LF contractibility} is contractible. This affirms the first part of \cite[Conjecture 5.8]{MR3281136}. Previously $\Stab(Q)$ was known to be simply-connected \cite[Theorem 4.7]{MR3281136}.
\end{example}
\begin{example}
\label{cluster category}
For $m\geq 1$ the cluster category $\cat{C}_m(Q) = \cat{D}(Q) / \Sigma_m$ is the quotient of $\cat{D}(Q)$ by the automorphism  $\Sigma_m= \tau^{-1} [m-1]$, where $\tau$ is the Auslander--Reiten translation. Each $\cat{C}_m(Q)$ is locally-finite \cite[\S2]{MR2955969}, but $\Stab(\cat{C}_m(Q)) = \emptyset$ because there are no t-structures on $\cat{C}_m(Q)$.

Remark 5.6 of \cite{MR3281136} proposes that $\stabGQ{N} / \braidGQ{N}$ should be considered as an appropriate substitute for the stability space of $C_{N-1}(Q)$. Our results show that the former is homotopy equivalent to the classifying space of the braid group $\braidGQ{N}$, which might be considered as further support for this point of view.
\end{example}

\subsection{Discrete derived categories}\label{sec:DDC}
This class of triangulated categories was introduced and classified by Vossieck \cite{MR1851659}; we use the more explicit classification in \cite{MR2041666}. The contractibility of the stability space, Corollary~\ref{discrete contractibility} below, follows from the results of this paper combined with the detailed analysis of t-structures on these categories in \cite{pauk}. \cite[Theorem 7.1]{pauk2} provides an independent proof of the contractibility of $B\PI{C}$ for a discrete derived category $\cat{C}$, using the interpretation of $\PI{C}$ in terms of the poset $\mathbb{P}_2(\cat{C})$ of silting pairs (Remark~\ref{BPP2}). Combining this with Corollary~\ref{comb model for stab alg} one obtains an alternative proof \cite[Theorem 8.10]{pauk2} of the contractibility of the stability space.

Let $A$ be a finite-dimensional associative algebra over an algebraically-closed field. Let $\cat{D}(A)$ be the bounded derived category of finite-dimensional right $A$-modules.
\begin{definition}
The derived category $\cat{D}(A)$ is \defn{discrete} if for each map (of sets) $\mu \colon \Z \to K\left(\cat{D}(A)\right)$ there are only finitely many isomorphism classes of objects $d\in \cat{D}(A)$ with $[H^id] = \mu(i)$ for all $i\in \Z$.
\end{definition}
The derived category $\cat{D}(Q)$ of a quiver whose underlying graph is an ADE Dynkin diagram is discrete. \cite[Theorem A]{MR2041666} states that if $\cat{D}(A)$ is discrete but not of this type then it is equivalent as a triangulated category to $\cat{D}\left( \Lambda(r,n,m) \right)$ for some $n\geq r \geq 1$ and $m\geq 0$ where $\Lambda(r,n,m)$ is the path algebra of the bound quiver in Figure~\ref{discrete quiver}. Indeed, $\cat{D}(A)$ is discrete if and only if $A$ is tilting-cotilting equivalent either to the path algebra of an ADE Dynkin quiver or to one of the $\Lambda(r,n,m)$.
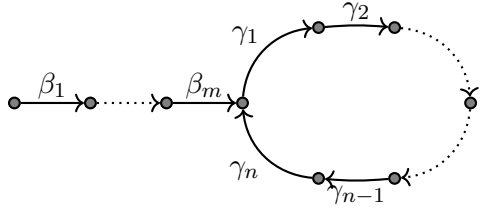
\begin{figure}[htb]
\begin{center}
\begin{tikzpicture}
[auto, inner sep=0.5mm, vertex/.style={circle,draw,fill=gray,thick}];

\node (tail1) at (0,0) [vertex] {};
\node (tail2) at (1,0) [vertex] {};
\node (tail3) at (2,0) [vertex] {};
\node (join) at (3,0) [vertex] {};

\draw[->,thick] (tail1) to node {$\beta_1$} (tail2);
\draw[->,dotted,thick] (tail2) to (tail3);
\draw[->,thick] (tail3) to node {$\beta_m$} (join);

\node (cycle1) at (4,1) [vertex]{};
\node (cycle2) at (5,1) [vertex]{};
\node (cycle3) at (6,0) [vertex]{};
\node (cycle4) at (5,-1) [vertex]{};
\node (cycle5) at (4,-1) [vertex]{};

\draw[->,thick] (join) to [bend left=40] node {$\gamma_1$} (cycle1);
\draw[->,thick] (cycle1) to [bend left=5] node{$\gamma_2$} (cycle2);
\draw[->,dotted,thick] (cycle2) to [bend left=40] (cycle3);
\draw[->,dotted,thick] (cycle3) to [bend left=40] (cycle4);
\draw[->,thick] (cycle4) to [bend left=5] node{$\gamma_{n-1}$} (cycle5);
\draw[->,thick] (cycle5) to [bend left=40] node{$\gamma_{n}$} (join);
\end{tikzpicture}
\end{center}
\caption{The algebra $\Lambda(r,n,m)$ is the path algebra of the quiver $Q(r,n,m)$ above with relations $\gamma_{n-r+1}\gamma_{n-r+2}=\cdots = \gamma_{n}\gamma_1=0$. }
\label{discrete quiver}
\end{figure}

Discrete derived categories form an interesting class of examples as they are intermediate between the locally-finite case considered in the previous section and  derived categories of tame representation type algebras. More precisely, the distinctions are captured by the Krull--Gabriel dimension of the abelianisation, which measures how far the latter is from being a length category. In particular, $\KGdim \left(\ab(\cat{C})\right) \leq 0$ if and only if $\cat{C}$ is locally-finite \cite{krause}. Krause conjectures \cite[Conjecture 4.8]{krause} that $\KGdim \left(\ab\left(\cat{D}(A) \right) \right)= 0$ or $1$ if and only if $\cat{D}(A)$ is discrete. As evidence he shows that for  the full subcategory $\mathrm{proj}\, \mathbf{k}[\epsilon]$ of finitely generated projective modules over the algebra  $\mathbf{k}[\epsilon]$ of dual numbers, $\KGdim \left( \ab\left(\cat{D}_b(\mathrm{proj}\, \mathbf{k}[\epsilon]) \right) \right)= 1$. The bounded derived category $\cat{D}(\mathrm{proj}\, \mathbf{k}[\epsilon])$ is discrete ---  there are infinitely many indecomposable objects, even up to shift, but no continuous families ---  but not locally-finite. Finally, by \cite[Theorem 4.3]{MR808682} $\KGdim\left(\cat{D}(A)\right)=2$ when $A$ is a tame hereditary Artin algebra, for example the path algebra of the Kronecker quiver $\widetilde{A_1}$.

Since the Dynkin case was covered in the previous section we restrict to the categories $\cat{D}\left( \Lambda(r,n,m) \right)$. These have finite global dimension if and only if  $r<n$, and we further restrict to this situation.
\begin{corollary}[\cf {\cite[Theorem 8.10]{pauk2}}]
\label{discrete contractibility}
Suppose $\cat{C} = \cat{D}\left( \Lambda(r,n,m) \right)$, where $n>r \geq 1$ and $m\geq 0$. Then the stability space $\stab{C}$  is  contractible.
\end{corollary}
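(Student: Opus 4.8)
The plan is to reduce the statement to the two general theorems already established: that a finite-type component of $\stab{C}$ is contractible (Theorem \ref{stab contractible}), and that $\stabalg{C}$ is connected when $\cat{C}$ is the bounded derived category of a finite-dimensional algebra of finite global dimension (Lemma \ref{stabalg connected}). Since $\Lambda(r,n,m)$ has finite global dimension exactly when $r<n$ --- our standing hypothesis --- the substantive topological work is already in place, and what is left is to import the structure of bounded t-structures on discrete derived categories.

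The single input I would quote from \cite{pauk} is the classification of bounded t-structures on $\cat{D}\!\left(\Lambda(r,n,m)\right)$: \emph{every bounded t-structure on a discrete derived category is algebraic, and its heart is a length category with only finitely many isomorphism classes of indecomposable objects.} Granting this, first note that a torsion structure on such a heart is determined by the indecomposable objects it contains, so there are only finitely many of them; hence every bounded t-structure has finite tilting type. Then the hypothesis of Lemma \ref{simple finite tilting} holds for every $\ts{D}$ (a fortiori for the set of $\ts{D}$ reachable from a fixed one by finite sequences of simple tilts), so every component of $\tilt{C}$ is of finite-type. By Lemma \ref{finite type cpts correspondence} every component of $\stab{C}$ is then a finite-type component, and therefore contractible by Theorem \ref{stab contractible}. (Alternatively one may invoke Lemma \ref{finite type implies simple tilt length} directly to see that simple tilts of these hearts remain algebraic.)

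For connectedness I would argue as follows: since every bounded t-structure on $\cat{C}$ is algebraic, every stability condition is algebraic, so $\stab{C}=\stabalg{C}$, which is connected by Lemma \ref{stabalg connected}; equivalently, there is no stability condition whose semistable phases are dense in $\R$, so Remark \ref{connectedness in algebra case} applies. Combining this with the previous paragraph, $\stab{C}$ is a single, contractible component, which is the claim. (The text also records that \cite{pauk2} obtains this independently via the poset of silting pairs.)

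The main obstacle --- and the reason the argument quotes \cite{pauk} rather than being self-contained --- is establishing the emphasised classification: that every bounded t-structure on $\cat{D}\!\left(\Lambda(r,n,m)\right)$ with $r<n$ is algebraic with a representation-finite heart. Everything downstream is a routine assembly of the lemmas of \S\ref{stab alg} and \S\ref{ftt cpts}. It is worth checking that the hypothesis $r<n$ is used precisely where it must be: in Lemma \ref{stabalg connected} (hence for connectedness) and inside the t-structure analysis of \cite{pauk} (where finite global dimension enters); for $r=n$ the algebra has infinite global dimension and this route is unavailable.
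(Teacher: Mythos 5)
Your proposal is correct and follows essentially the same route as the paper: quote \cite[Proposition 6.1]{pauk} to get that every $t$--structure is algebraic with representation-finite heart (hence of finite tilting type, since a torsion theory is determined by its indecomposables), apply Lemma \ref{simple finite tilting} and Theorem \ref{stab contractible} to get that every component is finite-type and contractible, and use Lemma \ref{stabalg connected} (with finite global dimension, i.e.\ $r<n$) for connectedness. Your extra remarks on where $r<n$ enters and the alternative via Remark \ref{connectedness in algebra case} are consistent with the paper's argument.
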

\begin{proof}
By \cite[Proposition 6.1]{pauk} any t-structure on $\cat{C}$ is algebraic with only finitely many isomorphism classes of indecomposable objects in its heart. Lemma~\ref{simple finite tilting} then shows that each component of the tilting poset has finite-type. By Theorem~\ref{stab contractible} $\stab{C}=\stabalg{C}$, and is a union of contractible components. By Lemma~\ref{stabalg connected} $\stabalg{C}$ is connected. Hence $\stab{C}$ is contractible.
\end{proof}
\begin{example}
The space of stability conditions in the simplest case, $(n,r,m)=(2,1,0)$, was computed in \cite{MR2739061} and shown to be $\C^2$. (The category was described geometrically in \cite{MR2739061}, as the constructible derived category of $\P^1$ stratified by a point and its complement, but it is known that in this case the constructible derived category is equivalent to the derived category of the perverse sheaves, and these have a nearby and vanishing-cycle description as representations of the quiver $Q(2,1,0)$ with relation $\gamma_2\gamma_1=0$.)
\end{example}

\section{The {C}alabi-{Y}au-{$N$}-category of a {D}ynkin quiver}
\label{CYNGinzburg}
\subsection{The category}
In this section we consider in detail another important example of a finite-type component, associated to the Ginzburg algebra of an ADE Dynkin quiver. We also address the related question of the faithfulness of the braid group action on the associated derived category.

Let $Q$ be a quiver whose underlying unoriented graph is an ADE Dynkin diagram. Fix $N\geq 2$ and let $\GQ{N}$ be the associated Ginzburg algebra of degree $N$, let $\catGQ{N}$ be the bounded derived category of finite-dimensional representations of $\GQ{N}$ over an algebraically-closed field $\mathbf{k}$, and let $\Stab(\GQ{N})$ be the space of stability conditions on $\catGQ{N}$. See \cite[\S7]{MR3050703} for the details of the construction of the differential-graded algebra $\GQ{N}$ and its derived category, and for a proof that $\catGQ{N}$ is a Calabi--Yau-$N$ category. (Recall that a $k$-linear triangulated category $\cat{C}$ is \defn{Calabi--Yau-$N$}
if, for any objects $c,c'$ in $\cat{C}$ we have a natural isomorphism
\begin{equation}
\label{eq:serre}
    \mathfrak{S} \colon \GrMor{\cat{C}}{\bullet}{c}{c'}
        \xrightarrow{\sim}\GrMor{\cat{C}}{\bullet}{c'}{c}^\vee[N].
\end{equation}
Here the graded dual of a graded vector space
$V=\oplus_{i\in\Z} V_i[i]$  is defined by $V^\vee=\oplus_{i\in\Z} V_i^*[-i]$.)
By \cite{AMY}, $\tiltGQ{N}$ and $\Stab(\GQ{N})$ are connected.

\begin{corollary}
\label{CYN contractibility}
The stability space $\stabGQ{N}$ is of finite-type, and hence is contractible.
\end{corollary}
\begin{proof}
By \cite[Corollary 8.4]{king-qiu} each t-structure obtained from the standard one, whose heart is the representations of $\GQ{N}$, by a finite sequence of simple tilts is algebraic. \cite[Lemma 5.1 and Proposition 5.2]{MR3281136} show that each of these t-structures is of finite tilting type.
Hence by Lemma~\ref{simple finite tilting} $\tiltGQ{N}$ has finite tilting type,
and therefore by Theorem~\ref{stab contractible} $\stabGQ{N}$ is contractible.
\end{proof}
This affirms the second part of \cite[Conjecture 5.8]{MR3281136}.

\subsection{The braid group}
\label{spherical twist group}
An object $s$ of a $k$-linear triangulated category is \defn{$N$-spherical} if $\GrMor{\cat{C}}{\bullet}{s}{s}\cong \mathbf{k} \oplus \mathbf{k}[-N]$ and \eqref{eq:serre} holds functorially for $c=s$ and any $c'$ in $\cat{C}$. The \defn{twist functor $\varphi_s$ of a spherical object}   $s$ was  defined in \cite{MR1831820} to be
\begin{equation}
\label{eq:phi}
    \varphi_s(c)=\Cone\left(s\otimes\GrMor{}{\bullet}{s}{c}\to c\right)
\end{equation}
with inverse $\varphi_s^{-1}(c)=\Cone\left(c\to s\otimes\GrMor{}{\bullet}{s}{c}^\vee \right)[-1]$.
Denote by $\GQzero$ the canonical heart in $\catGQ{N}$, which is equivalent to the module category of $Q$.  Each simple object in $\GQzero$ is $N$-spherical
cf. \cite[\S~7.1]{king-qiu}.
The \defn{braid group} or \defn{spherical twist group} $\braidGQ{N}$ of $\catGQ{N}$ is the subgroup of $\Aut\catGQ{N}$ generated by $\{\varphi_{s}\ \colon\  s \text{ is simple in }\GQzero\}$. The lemma below follows directly from the definition of spherical twists.
\begin{lemma}
\label{twists and functors}
Let $\cat{C}$ be a $k$-linear triangulated category, $\varphi_s$ a spherical twist, and $F$ any auto-equivalence. Then $F\circ \varphi_s = \varphi_{F(s)}\circ F$.
\end{lemma}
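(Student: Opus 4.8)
The plan is to apply $F$ directly to the defining triangle \eqref{eq:phi} for $\varphi_s$ and to use that $F$, being an exact autoequivalence, commutes with the formation of cones and induces isomorphisms on graded morphism spaces. First I would record that for any graded $\mathbf{k}$-vector space $V=\oplus_i V_i[i]$ the object $s\otimes V$ is by definition a coproduct of shifted copies of $s$, so that $F$ (which commutes with finite coproducts and with $[1]$) gives a canonical isomorphism $F(s\otimes V)\cong F(s)\otimes V$. Applying this to $V=\GrMor{\cat{C}}{\bullet}{s}{c}$, together with the isomorphism of graded vector spaces
\[
F_{s,c}\colon \GrMor{\cat{C}}{\bullet}{s}{c}\xrightarrow{\ \sim\ }\GrMor{\cat{C}}{\bullet}{F(s)}{F(c)},\qquad \psi\mapsto F(\psi),
\]
yields a canonical isomorphism $F\bigl(s\otimes\GrMor{}{\bullet}{s}{c}\bigr)\cong F(s)\otimes\GrMor{}{\bullet}{F(s)}{F(c)}$. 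Note also that $F(s)$ is again $N$-spherical, since $F$ respects the graded morphism spaces and the Serre isomorphism \eqref{eq:serre}, so $\varphi_{F(s)}$ is defined.

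Next I would check that this isomorphism intertwines the two evaluation maps, i.e.\ that the square
\begin{center}
\begin{tikzcd}
F\bigl(s\otimes\GrMor{}{\bullet}{s}{c}\bigr) \ar{r}\ar{d}{\cong} & F(c) \ar[equals]{d} \\
F(s)\otimes\GrMor{}{\bullet}{F(s)}{F(c)} \ar{r} & F(c)
\end{tikzcd}
\end{center}
commutes, where the top horizontal arrow is $F$ applied to the canonical evaluation $s\otimes\GrMor{}{\bullet}{s}{c}\to c$ and the bottom one is the evaluation appearing in the definition of $\varphi_{F(s)}(F(c))$. This is immediate from the definitions: the evaluation is the sum over a basis $\{\psi\}$ of $\GrMor{\cat{C}}{i}{s}{c}$ of the maps $s[i]\xrightarrow{\psi}c$, and $F$ sends this sum to the sum of the $F(s)[i]\xrightarrow{F(\psi)}F(c)$, which under $F_{s,c}$ is exactly the evaluation for $F(s)$ and $F(c)$.

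Since $F$ is triangulated it sends the triangle $s\otimes\GrMor{}{\bullet}{s}{c}\to c\to\varphi_s(c)\to$ to a triangle whose first map, by the commuting square above, is identified with the evaluation $F(s)\otimes\GrMor{}{\bullet}{F(s)}{F(c)}\to F(c)$. By uniqueness of the cone we obtain $F(\varphi_s(c))\cong\varphi_{F(s)}(F(c))$. Finally I would observe that every ingredient — the evaluation morphism, the defining triangle, and the comparison isomorphism $F_{s,c}$ — is natural in $c$, so these isomorphisms assemble into a natural isomorphism of functors $F\circ\varphi_s\cong\varphi_{F(s)}\circ F$, which is the asserted identity. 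The only mildly delicate point is the commutativity of the evaluation square, but as indicated this reduces directly to unwinding the definition of the evaluation map and of $F_{s,c}$.
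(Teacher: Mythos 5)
Your argument is correct and is exactly the verification the paper has in mind: the paper offers no written proof, simply asserting that the lemma ``follows directly from the definition of spherical twists,'' and your unwinding of that — applying $F$ to the defining triangle \eqref{eq:phi}, identifying $F(s\otimes\GrMor{}{\bullet}{s}{c})$ with $F(s)\otimes\GrMor{}{\bullet}{F(s)}{F(c)}$ via full faithfulness, checking compatibility of the evaluation maps, and concluding by uniqueness of cones and naturality — is the standard argument. No discrepancy with the paper's approach.
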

An important consequence is that two twists $\varphi_s$ and $\varphi_t$ by simple objects $s$ and $t$ satisfy the
\begin{itemize}
\item braid relation $\varphi_s\varphi_t\varphi_s = \varphi_t\varphi_s\varphi_t$
if and only if $\GrMor{}{\bullet}{s}{t}\cong k[-j]$ for some $j\in\mathbb{Z}$;
\item commutativion relation $\varphi_s\varphi_t= \varphi_t\varphi_s$
if and only if $\GrMor{}{\bullet}{s}{t}=0$;
\end{itemize}
It follows that there is a surjection
\begin{equation}
\label{braid surjection}
\braidsurj_N \colon \braidQ \epic \braidGQ{N}.
\end{equation}
from the braid group $\braidQ$ of the underlying Dynkin diagram, which has a generator $b_i$ for each vertex $i$ and relations $b_ib_jb_i=b_jb_ib_j$ when there is an edge between vertices $i$ and $j$, and $b_ib_j=b_jb_i$ otherwise. We will show that $\braidsurj_N$ is an isomorphism for any $N\geq 2$. We deal with the cases when $N=2$, and when $Q$ has type $A$ (for any $N\geq 2$) below; these are already known but we obtain new proofs.

Let $\g$ be the finite-dimensional complex simple Lie algebra associated to the underlying Dynkin diagram of $Q$. Let $\h\subseteq \g$ denote the Cartan subalgebra and let $\hreg\subseteq \h$ be the complement of the root hyperplanes in $\h$, \ie
\[\hreg=\{\theta\in \h:\theta(\alpha)\neq 0\text{ for all }\alpha\in \Lambda\},\]
where $\Lambda$ is a set of simple roots, i.e.\ a basis of $\h$ such that each root can be written as an integral linear combination of basis vectors with either all non-negative or all non-positive coefficients.
The Weyl group $W$ is generated by reflections in the root hyperplanes and acts freely on $\hreg$.

\begin{theorem}[{\cite[Theorem~1.1]{MR2549952}}]\label{thm:Br}
Let $Q$ be an ADE Dynkin quiver. Then $\stabGQ{2}$ is a covering space of $\hreg/W$
and $\Br(\Gamma_2Q)$  preserves this component and acts as the group of deck transformations.
\end{theorem}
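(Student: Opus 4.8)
The statement is \cite[Theorem~1.1]{MR2549952}; here is how I would prove it. The plan is to realise $\catGQ{2}$ geometrically and then run a covering-space argument. First I would recall the equivalence of $\catGQ{2}$ with the full subcategory of the bounded derived category of coherent sheaves on the minimal resolution of the corresponding Kleinian singularity consisting of complexes supported on the exceptional fibre. Under this equivalence $K\catGQ{2}$ becomes the root lattice of $\g$, the simple objects of the canonical heart $\GQzero$ become the simple roots $\alpha_1,\dots,\alpha_n$, the symmetrised Euler form becomes the Cartan form, and every root is the class of a spherical object. Consequently $\mor{K\catGQ{2}}{\C}\cong\h$ in such a way that the vanishing loci of the functionals $\theta\mapsto\theta(\alpha)$, $\alpha\in\Lambda$, are exactly the root hyperplanes, so $\hreg$ agrees with its definition above, and the projection $\pi$ of \S\ref{stability conditions} sends a stability condition on $\catGQ{2}$ to a point of $\h$.

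\emph{Step 1: $\pi$ maps $\stabGQ{2}$ into $\hreg$.} Given $\sigma\in\stabGQ{2}$ the heart $\heart{D}_\sigma$ lies in $\tiltGQ{2}$, on which $\braidGQ{2}$ acts transitively when $N=2$, so $\heart{D}_\sigma=\varphi(\GQzero)$ for some $\varphi\in\braidGQ{2}$, with simple objects $\varphi(s_i)$ of class $w(\alpha_i)$, where $w\colon\braidGQ{2}\epic W$ is the surjection induced by the action on $K\catGQ{2}$ (each $\varphi_{s_i}$ acting as the reflection $s_{\alpha_i}$). The classes $\{w(\alpha_i)\}$ form a base of $\Lambda$, so any root equals $\pm\sum_i c_i\,w(\alpha_i)$ with $c_i\in\N$; hence, up to sign, it is the class of the nonzero object $\bigoplus_i\varphi(s_i)^{\oplus c_i}$ of $\heart{D}_\sigma$, and the stability-function axiom forces the central charge of that object to lie in $\U\cup\R_{<0}$, in particular to be nonzero. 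Thus $\pi(\sigma)\in\hreg$.

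\emph{Step 2: $\pi\colon\stabGQ{2}\to\hreg$ is a covering map.} Since $\pi$ is a local homeomorphism it suffices to lift paths with uniquely determined endpoints, and by the generalised metric of \S\ref{stability conditions} the only obstruction is a blow-up of masses. Here I would invoke Bridgeland's description of the stable objects of $\catGQ{2}$: for any $\sigma\in\stabGQ{2}$ every stable object is spherical, hence of class a root, so its mass is $|\charge_\sigma(\alpha)|$ for some $\alpha\in\Lambda$. Along a compact path in $\hreg$ the finitely many continuous functions $|\charge(\alpha)|$, $\alpha\in\Lambda$, are bounded above and bounded away from zero, so the total mass of any object stays controlled and the Cauchy criterion lets us lift; path-lifting also shows $\pi$ is onto $\hreg$. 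This is the step I expect to be the main obstacle: making the lifting precise requires the classification of the (semi)stable objects in $\catGQ{2}$, which is the technical heart of \cite{MR2549952}.

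\emph{Step 3: the braid group is the deck group.} Each $\varphi_{s_i}$ maps $\GQzero$ to a heart in $\tiltGQ{2}$, hence preserves the component $\stabGQ{2}$, and acts on central charges by precomposition with $s_{\alpha_i}^{-1}$, which under $\mor{K\catGQ{2}}{\C}\cong\h$ is the reflection $s_{\alpha_i}\in W$ (an isometry of the Cartan form). So $\pi(\varphi\cdot\sigma)=w(\varphi)\cdot\pi(\sigma)$ for all $\varphi\in\braidGQ{2}$, and composing with the Galois $|W|$-sheeted cover $q\colon\hreg\to\hreg/W$ exhibits $p:=q\circ\pi\colon\stabGQ{2}\to\hreg/W$ as a covering on which $\braidGQ{2}$ acts by deck transformations. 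To see that these exhaust $\mathrm{Deck}(p)$ it is enough, since $\stabGQ{2}$ is connected, that $\braidGQ{2}$ act transitively on one fibre of $p$. I would fix $\theta\in\h$ with $\theta(\alpha)\in\U$ for every positive root $\alpha$ (so $\theta\in\hreg$) and let $\sigma_0\in\stabGQ{2}$ be the stability condition with heart $\GQzero$ and central charge $\theta$. If $\sigma\in\pi^{-1}(\theta)$ then its heart is $\varphi(\GQzero)$ for some $\varphi\in\braidGQ{2}$, and its simple objects, being $\sigma$-semistable of phase in $(0,1]$, have classes $w(\varphi)(\alpha_i)$ with $\theta\bigl(w(\varphi)(\alpha_i)\bigr)\in\U\cup\R_{<0}$; by the choice of $\theta$ this forces each $w(\varphi)(\alpha_i)$ to be a positive root, hence $w(\varphi)=1$, as $W$ acts simply transitively on bases. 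Since a stability condition is determined by its heart together with its (length-category) central charge, it follows that $\pi^{-1}(\theta)=\ker(w)\cdot\sigma_0$ and $p^{-1}([\theta])=\braidGQ{2}\cdot\sigma_0$. Therefore $\braidGQ{2}$ acts as the full (and necessarily free) group of deck transformations of $p$, which is the assertion of the theorem.
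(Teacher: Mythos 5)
The first thing to say is that the paper contains no proof of this statement: Theorem \ref{thm:Br} is quoted verbatim from Bridgeland \cite{MR2549952} and used purely as an imported input (combined with Corollary \ref{CYN contractibility} to rederive Theorem \ref{thm:BT}), so there is no internal argument to compare yours against. Your proposal is essentially a reconstruction of Bridgeland's original route: identify $\mor{K\catGQ{2}}{\C}$ with $\h$, show the charge map lands in $\hreg$, establish the covering property, and identify the deck group from the equivariance $\pi(\varphi\cdot\sigma)=w(\varphi)\cdot\pi(\sigma)$ together with transitivity of $\braidGQ{2}$ on a fibre. Steps 1 and 3 are sound as written: for $N=2$ the fundamental domain of \eqref{eq:fund} is the single standard $t$--structure, so transitivity of $\braidGQ{2}$ on hearts in the principal component is available without any faithfulness input, and your argument that $w(\varphi)(\alpha_i)$ all positive forces $w(\varphi)=1$, hence fibre transitivity and (by connectedness) that $\braidGQ{2}$ exhausts the deck group, is correct.

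The genuine gap is the one you flag yourself, Step 2: the covering property is the actual content of the cited theorem, and ``masses bounded away from zero along a compact path, so the Cauchy criterion lets us lift'' is not yet an argument — a local homeomorphism with some path lifting is not thereby a covering, and the lifting itself is where the work lies. The standard repair is a uniform statement: over a compact subset of $\hreg$ every $\sigma$-stable object has class a root, so its mass is $|\charge_\sigma(\alpha)|$ with $\alpha\in\Lambda$, bounded below; Bridgeland's deformation theorem \cite[Theorem 7.1]{MR2373143} then applies with a radius depending only on $\min_{\alpha\in\Lambda}|\charge(\alpha)|$, producing evenly covered neighbourhoods and, at the same time, surjectivity onto $\hreg$. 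Note also that you do not need Bridgeland's classification of semistable objects for the input ``stable $\Rightarrow$ spherical'': in the Calabi--Yau-$2$ category the Euler form is symmetric and is identified with the (even, positive definite) Cartan form on the root lattice, every nonzero object of a heart in the principal component has nonzero class, and for a stable object $E$ one has $\chi(E,E)=2-\dim\Ext^1(E,E)$, so positivity and evenness force $\Ext^1(E,E)=0$ and $[E]\in\Lambda$. With that uniform deformation estimate supplied in place of the Cauchy-criterion sentence, your outline becomes a complete proof along Bridgeland's lines.
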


It is well-known that the fundamental group of $\hreg/W$ is the braid group $\braidQ$ associated to the quiver $Q$. We therefore obtain new proofs for the following two theorems,
by combining Theorem~\ref{thm:Br} and Corollary~\ref{CYN contractibility}.
\begin{theorem}[{\cite[Theorem~1.1]{MR2854121}}]\label{thm:BT}
Let $Q$ be an ADE Dynkin quiver. Then $\Phi_2 \colon \braidQ \to \Br(\Gamma_2Q)$ is an isomorphism.

\end{theorem}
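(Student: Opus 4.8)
The plan is to combine Bridgeland's covering-space description of $\Stap(\Gamma_2Q)$ in Theorem~\ref{thm:Br} with the contractibility of this component, which is the case $N=2$ of Corollary~\ref{CYN contractibility}. Together these say that $\Stap(\Gamma_2Q)$ is a connected, contractible — hence simply-connected — covering space of $\hreg/W$, and so is \emph{the} universal cover. The group of deck transformations of the universal cover of $\hreg/W$ is $\pi_1(\hreg/W)$, which, as noted above, is the braid group $\braidQ$. Since by Theorem~\ref{thm:Br} the group $\braidGQ{2}$ acts on $\Stap(\Gamma_2Q)$ exactly as this group of deck transformations, we obtain an abstract isomorphism $\braidGQ{2}\cong\braidQ$.

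It then remains to check that this abstract isomorphism refines the surjection $\Phi_2\colon\braidQ\epic\braidGQ{2}$ of \eqref{braid surjection}, \ie that $\Phi_2$ is itself injective. Composing $\Phi_2$ with the isomorphism $\braidGQ{2}\xrightarrow{\sim}\pi_1(\hreg/W)=\braidQ$ obtained above yields a surjective endomorphism of $\braidQ$. Now $\braidQ$ is a finite-type Artin group, and is known to be linear, hence residually finite, hence Hopfian; so a surjective endomorphism of $\braidQ$ is automatically an automorphism. Therefore $\ker\Phi_2=1$, and $\Phi_2$ is an isomorphism. Alternatively one can avoid Hopficity by tracking basepoints: Bridgeland's construction identifies the standard twists $\varphi_s$ generating $\braidGQ{2}$ with a standard generating set of $\pi_1(\hreg/W)$, so the composite above carries the Artin generators $b_i$ to such a generating set and is visibly an automorphism.

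I expect the only real subtlety to be precisely this last point — matching the abstract isomorphism to the given map $\Phi_2$ — rather than the covering-space argument, which is purely formal once contractibility is in hand. Indeed essentially all of the substance is imported from elsewhere: the surjectivity of $\Phi_2$ follows from the braid relations among spherical twists (Lemma~\ref{twists and functors} and the discussion after it), Theorem~\ref{thm:Br} is due to Bridgeland, and the decisive new input is the contractibility of $\Stap(\Gamma_2Q)=\stabGQ{2}$ supplied by Corollary~\ref{CYN contractibility}, which is what turns this into a genuinely new proof of the Brav--Thomas theorem.
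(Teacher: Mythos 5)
Your proposal is correct and follows essentially the paper's own argument, which likewise obtains the result simply by combining Bridgeland's covering-space description in Theorem~\ref{thm:Br} with the contractibility of $\stabGQ{2}$ from Corollary~\ref{CYN contractibility}, so that $\Stap(\Gamma_2Q)$ is the universal cover of $\hreg/W$ and $\braidGQ{2}$ its deck group $\pi_1(\hreg/W)\cong\braidQ$. Your additional check that the abstract isomorphism really forces injectivity of $\Phi_2$ itself (via Hopficity of $\braidQ$, or by tracking basepoints and generators) addresses a point the paper leaves implicit and is a sound way to close it.
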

\begin{theorem}[{\cite{MR0422673}}]\label{thm:Deligne}
The universal cover of $\hreg/W$ is contractible.
\end{theorem}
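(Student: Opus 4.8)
The plan is to deduce this statement directly from the results just established, in the same way that Theorem~\ref{thm:BT} was obtained. Note first that $\hreg$ is the complement of finitely many complex hyperplanes in the Cartan subalgebra $\h\cong\C^{\rk\g}$, hence is connected, and that $W$ acts freely upon it; therefore $\hreg/W$ is a connected complex manifold, and in particular it is locally contractible and admits a universal cover, unique up to isomorphism of coverings.

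First I would apply Corollary~\ref{CYN contractibility} with $N=2$: the principal component $\stabGQ{2}=\Stap(\Gamma_2Q)$ is of finite type, and hence contractible. By Theorem~\ref{thm:Br}, this component is a covering space of $\hreg/W$. A contractible space is in particular connected and simply connected, so $\Stap(\Gamma_2Q)\to\hreg/W$ is a connected, simply-connected covering, and is therefore (isomorphic to) the universal cover of $\hreg/W$. Since $\Stap(\Gamma_2Q)$ is contractible, so is the universal cover.

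If one prefers to phrase the last step via deck transformations rather than invoking simple-connectedness: the covering $\Stap(\Gamma_2Q)\to\hreg/W$ is connected, and by Theorems~\ref{thm:Br} and~\ref{thm:BT} its group of deck transformations is $\Br(\Gamma_2Q)\cong\braidQ$, which is the full fundamental group $\pi_1(\hreg/W)$; a connected covering whose deck group equals the fundamental group of the base is the universal cover, and the same conclusion follows. There is no substantive obstacle here: all the genuine content has already been carried out, in Corollary~\ref{CYN contractibility} (which rests on Theorem~\ref{stab contractible}) and in Bridgeland's identification of $\Stap(\Gamma_2Q)$ as a covering of $\hreg/W$ in Theorem~\ref{thm:Br}; only the standard covering-space theory, valid since $\hreg/W$ is a manifold, is needed to conclude.
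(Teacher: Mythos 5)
Your proposal is correct and is exactly the paper's intended argument: the paper obtains this theorem precisely by combining Corollary~\ref{CYN contractibility} (with $N=2$, so $\Stap(\Gamma_2Q)$ is contractible) with Theorem~\ref{thm:Br} (it is a covering space of $\hreg/W$), whence it is the simply-connected, hence universal, cover. Only a small caution about your optional rephrasing: a connected cover whose deck group is merely abstractly isomorphic to $\pi_1$ of the base need not be universal in general (and that variant also leans on Theorem~\ref{thm:BT}, which the paper deduces from the same two ingredients), so the first, direct argument is the one to keep.
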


Ikeda has extended Bridgeland--Smith's work relating stability conditions with quadratic differentials to obtain the following result.
\begin{theorem}[{\cite[Theorem~1.1]{Akishi}}]\label{thm:AI}
Let $Q$ be a Dynkin quiver of type $A$. Then there is an isomorphism
$\stabGQ{N}/\braidGQ{N}\cong\hreg/W$ of complex manifolds.
\end{theorem}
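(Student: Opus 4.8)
The plan is to reconstruct Ikeda's argument, which builds an explicit period map realising $\stabGQ{N}$ as the universal cover of $\hreg/W$ with deck group $\braidGQ{N}$, and then passes to the quotient. Since $Q$ has type $A_n$, the Cartan subalgebra $\h$ is that of $\mathfrak{sl}_{n+1}$, $W=S_{n+1}$, and $\hreg/W$ is canonically the $n$-dimensional complex manifold $M_n$ of monic polynomials $p(z)=z^{n+1}+a_2z^{n-1}+\cdots+a_{n+1}$ with simple roots --- equivalently the configuration space of $n+1$ unordered points of $\C$ with barycentre $0$ --- and recall $\pi_1(M_n)\cong\braidQ=\Br_{n+1}$. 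First I would, to each $p\in M_n$, associate the quadratic differential $\phi_p=p(z)^{N-2}\,dz^{\otimes 2}$ on $\P^1$, its branched double cover $\pi_p\colon\Sigma_p\to\P^1$ on which $\sqrt{\phi_p}$ becomes single-valued, the covering involution $\iota$, and the $\iota$-anti-invariant part $\widehat H_p=H_1\bigl(\Sigma_p\setminus\pi_p^{-1}(\infty);\Z\bigr)^{-}$ of the first homology. As $p$ varies over $M_n$ these assemble into a local system carrying an intersection pairing which, by Picard--Lefschetz, matches the Euler pairing on $K\catGQ{N}$; fixing the standard heart $\GQzero$ identifies $\widehat H_p$ with $K\catGQ{N}$ compatibly, in such a way that the monodromy $\pi_1(M_n)\to\Aut\widehat H_p$ is exactly the representation induced by the $\braidQ$-action by spherical twists.

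Next I would build a map $f\colon\widetilde{M_n}\to\stabGQ{N}$ from the universal cover of $M_n$: the central charge attached to $p$ is the period $Z_p(\gamma)=\int_\gamma\sqrt{\phi_p}$, and the slicing is read off from the horizontal foliation of $\phi_p$, with semistable objects of phase $\varphi$ corresponding to the finite-length (saddle) trajectories in the direction determined by $\varphi$ and Harder--Narasimhan filtrations coming from the trajectory structure, exactly as in the Bridgeland--Smith correspondence \cite{MR3349833} extended to degree $N$. This map is equivariant for $\pi_1(M_n)=\braidQ$ acting by deck transformations on the source and via $\braidsurj_N$ on the target. To finish, I would show $f$ is a biholomorphism. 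Both spaces are $n$-dimensional complex manifolds, and near any point the composite with $\pi\colon\stabGQ{N}\to\mor{K\catGQ{N}}{\C}$ is $p\mapsto Z_p$, a local biholomorphism by the standard non-degeneracy of these periods; since $\pi$ is a local homeomorphism \cite[Theorem 1.2]{MR2373143}, so is $f$. A completeness argument --- a convergent path in $\stabGQ{N}$ cannot pull back to a path escaping to the boundary of $M_n$, since a colliding pair of roots of $p$ would force a semistable object to become massless --- gives path-lifting, so $f$ is a covering map; and by Corollary \ref{CYN contractibility} the target $\stabGQ{N}$ is simply connected, so the connected covering $f$ is a homeomorphism, hence (being holomorphic) a biholomorphism. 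Transporting the $\braidQ$-action across $f$ and using that the deck action on $\widetilde{M_n}$ is free then forces $\braidsurj_N$ to be injective --- recovering $\braidGQ{N}\cong\braidQ$ for type $A$ --- and identifies $\stabGQ{N}/\braidGQ{N}$ with $\widetilde{M_n}/\braidQ=M_n=\hreg/W$ as complex manifolds.

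The main obstacle is the construction of the slicing in the second step: proving that the horizontal-trajectory data of $\phi_p=p(z)^{N-2}\,dz^{\otimes 2}$ genuinely defines a locally-finite stability condition --- the existence and finiteness of Harder--Narasimhan filtrations --- and, intertwined with this, matching the simple tilts of hearts in $\tiltGQ{N}$ with flips of the associated WKB triangulation (here triangulations of an $(n+3)$-gon), which is what pins the deck group down to be exactly $\braidGQ{N}$. For $N=3$ this is Bridgeland--Smith; for general $N$ it is Ikeda's extension of the WKB analysis, and carries essentially all the analytic weight. A secondary technical point is the completeness/properness input needed to upgrade the local isomorphism $f$ to a genuine covering map.
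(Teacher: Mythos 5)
This statement is not proved in the paper: it is imported verbatim from Ikeda \cite[Theorem~1.1]{Akishi}, and the paper's only ``proof'' is the citation, so the honest comparison is with Ikeda's argument as the introduction summarises it (central charges as periods of $p(z)^{N-2}dz^{\otimes 2}$, in the style of Bridgeland--Smith \cite{MR3349833}). Your outline follows exactly that strategy, and at the level of a sketch it is coherent; but the steps you yourself flag as the ``main obstacle'' --- that the trajectory data of $p(z)^{N-2}dz^{\otimes 2}$ really produces locally-finite stability conditions with the Harder--Narasimhan property, and that the monodromy/deck action is identified with the spherical twist action (tilts matched with flips) --- are precisely the content of Ikeda's paper, so what you have written is a plan for reconstructing the cited result rather than a proof of it. As submitted it therefore has genuine gaps, but they are the expected ones and are explicitly acknowledged.

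One structural point is worth noting. You invoke Corollary \ref{CYN contractibility} (simple connectedness of $\stabGQ{N}$) in the middle of the argument to upgrade the period map to a homeomorphism from $\widetilde{M_n}$, and you then extract injectivity of $\braidsurj_N$ and the quotient identification. Within this paper's logic that is not circular, since contractibility is proved by the finite-type machinery of Theorem \ref{stab contractible} independently of \cite{Akishi}; but it is not Ikeda's route, and it inverts the paper's intended flow, in which Theorem \ref{thm:AI} is an external input that is \emph{combined with} Corollary \ref{CYN contractibility} to give a new proof of Seidel--Thomas faithfulness (Theorem \ref{thm:ST}). Your version would make faithfulness a byproduct of the proof of \ref{thm:AI} itself, so if you pursue it you should either justify the covering/properness step without assuming what the deck group is (as Ikeda must), or be explicit that you are proving a package equivalent to \ref{thm:AI} plus \ref{thm:ST} using the present paper's contractibility theorem as input. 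You should also be careful that equivariance of the period map with respect to $\braidsurj_N$ is needed \emph{before} you know $\braidsurj_N$ is injective; only the image group $\braidGQ{N}$ acts on $\stabGQ{N}$, so the quotient statement $\stabGQ{N}/\braidGQ{N}\cong\hreg/W$ requires you to check that the $\braidQ$-orbits and $\braidGQ{N}$-orbits coincide, which is immediate but should be said.
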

Combining this with Corollary~\ref{CYN contractibility}, we obtain a new proof of

\begin{theorem}[{\cite{MR1831820}}]\label{thm:ST}
Let $Q$ be a quiver of type $A$. Then  $\Phi_N \colon \braidQ\to \braidGQ{N}$ is an isomorphism.
\end{theorem}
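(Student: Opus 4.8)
The plan is to combine Ikeda's identification $\stabGQ{N}/\braidGQ{N}\cong\hreg/W$ from Theorem~\ref{thm:AI} with the contractibility of $\stabGQ{N}$ from Corollary~\ref{CYN contractibility}, following the same pattern used above (via Theorems~\ref{thm:Br} and~\ref{thm:BT}) in the case $N=2$. First I would observe that Theorem~\ref{thm:AI}, or rather the construction underlying it, shows that $\braidGQ{N}$ acts freely and properly discontinuously on $\stabGQ{N}$, with quotient biholomorphic to $\hreg/W$; in particular the quotient map $q\colon\stabGQ{N}\to\hreg/W$ is a covering map. Since $\stabGQ{N}$ is contractible by Corollary~\ref{CYN contractibility}, it is connected, locally path-connected and simply connected, so $q$ is the universal covering of $\hreg/W$. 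Hence its deck transformation group $\braidGQ{N}$ is abstractly isomorphic to $\pi_1(\hreg/W)$, which is well-known to be $\braidQ$.

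It then remains to identify the surjection $\Phi_N\colon\braidQ\epic\braidGQ{N}$ of~\eqref{braid surjection} with this isomorphism, equivalently to prove $\Phi_N$ injective. Here I would argue as follows: composing $\Phi_N$ with any abstract isomorphism $\braidGQ{N}\cong\braidQ$ produces a surjective endomorphism of $\braidQ$; since $Q$ has type $A$ the group $\braidQ=\Br_{n+1}$ is residually finite, hence Hopfian, so this endomorphism, and therefore $\Phi_N$ itself, must be an isomorphism. (Alternatively one can trace generators directly: the deck transformation $\varphi_{s_i}$ corresponds under $q$ to a loop encircling a single reflection hyperplane of $\hreg$, \ie to a standard generator of $\braidQ=\pi_1(\hreg/W)$, so that $\Phi_N$ is literally the canonical isomorphism.)

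The main obstacle, and the only ingredient not already contained in the results above, is establishing that the $\braidGQ{N}$-action on $\stabGQ{N}$ is free and properly discontinuous. This is precisely the content extracted from the proof of Theorem~\ref{thm:AI} for type $A$ (and, for $N=2$, it is built into Theorem~\ref{thm:Br}); everything else is formal covering-space theory together with the Hopfian property of the braid group.
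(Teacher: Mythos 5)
Your proposal is correct and follows essentially the same route as the paper, which likewise deduces the result by combining Ikeda's identification $\stabGQ{N}/\braidGQ{N}\cong\hreg/W$ (Theorem~\ref{thm:AI}) with the contractibility of $\stabGQ{N}$ (Corollary~\ref{CYN contractibility}), exactly as in the $N=2$ case via Theorems~\ref{thm:Br} and~\ref{thm:BT}. You simply make explicit two points the paper leaves implicit --- that the freeness and proper discontinuity of the $\braidGQ{N}$-action come from Ikeda's construction, and that one passes from the abstract isomorphism $\braidGQ{N}\cong\pi_1(\hreg/W)\cong\braidQ$ to the statement that $\Phi_N$ itself is an isomorphism via the Hopfian property of $\braidQ$ (or by tracing generators) --- both of which are sound.
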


Unfortunately we do not yet know enough about the geometry of the stability spaces for the Calabi--Yau-$N$ categories constructed from Dynkin quivers of other types to deduce  the analogous faithfulness of the braid group in those cases. In \S\ref{braid action free} we give an alternative proof of faithfulness which works for all Dynkin quivers (Corollary~\ref{braid action free 2}), which also provides a new proof of Theorem~\ref{thm:Deligne}.

Although not phrased in these terms, the above proof is equivalent to showing that the action of $\braidQ$ on the combinatorial model $\mathrm{Int}^\circ\!\left(\catGQ{N}\right)$ of $\stabGQ{N}$ is free. The alternative proof in \S\ref{braid action free} proceeds by showing instead that the action of $\braidQ$ on $\tiltGQ{N}$ is free.

\section{The braid action is free}\label{braid action free}

In this section we show that the action of the braid group on $\tiltGQ{N}$ via the surjection $\braidsurj_N \colon \braidQ \to \braidGQ{N}$ is free. Our strategy uses the isomorphism $\braidsurj_2 \colon \braidQ \to \braidGQ{2}$ from Theorem~\ref{thm:AI} as a key step, i.e.\ we bootstrap from the $N=2$ case. Therefore we assume $N\geq 3$ unless otherwise specified.

For ease of reading we will usually omit $\braidsurj_N$ from our notation when discussing the action, writing simply $b\cdot \ts{D}$ for $\braidsurj_N(b) \ts{D}$ where $b\in \braidQ$ and $\ts{D}\in \tiltGQ{N}$.

\subsection{Local Structure of $\tiltGQ{N}$}

We describe the intervals from $\ts{D}$ to $L_{\langle s_i,s_j\rangle}\ts{D}$ where $s_i$ and $s_j$ are distinct simple objects of the heart of some $\ts{D}$. It will be convenient to consider $\tiltGQ{N}$ as a category, with objects the elements of the poset and with a unique morphism $\ts{D}\to \ts{E}$ whenever $\ts{D}\leq \ts{E}$. The following lemma is the analogue for $\catGQ{N}$ of \cite[Lemma~4.3]{MR3281136}.
\begin{lemma}
\label{lem:45}
Suppose $s_i$ and $s_j$ are distinct simple objects of the heart of a t-structure $\ts{D}\in\tiltGQ{N}$. Then there is either a square or pentagonal commutative diagram of the form
\begin{gather}\label{45}
\xymatrix@C=1.4pc@R=1pc{
    &  L_{s_i}\ts{D}    \ar[dr]^{}\\
    \ts{D}  \ar[ur]^{} \ar[dr]_{} &&  L_{\langle s_i, s_j\rangle}\ts{D}\\
    &   L_{s_j}\ts{D}  \ar[ur]_{}
}\qquad
\xymatrix@C=1pc@R=0.8pc{
    &   L_{s_i}\ts{D}  \ar[rr]^{} &&      \ts{D}' \ar[dd]^{} \\
   \ts{D}\ar[ur]^{}\ar[dr]_{}     \\
    &   L_{s_j}\ts{D}   \ar[rr]_{}  &&L_{\langle s_i, s_j\rangle}\ts{D}
}
\end{gather}
in $\tiltGQ{N}$, where we may need to exchange $i$ and $j$ to get the precise diagram in the pentagonal case, and the t-structure $\ts{D}'$ is uniquely specified by the diagram. The square occurs when $\GrMor{}{1}{s_i}{s_j} = 0 = \GrMor{}{1}{s_j}{s_i}$ and the pentagon occurs when $\GrMor{}{1}{s_i}{s_j} = 0$ and  $\GrMor{}{1}{s_j}{s_i} \cong k$.
\end{lemma}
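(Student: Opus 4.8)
\textbf{Proof strategy for Lemma \ref{lem:45}.}

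The plan is to reduce everything to computations with torsion structures on the heart $\heart{D}$, using Proposition \ref{HRS} to translate between $t$--structures in the interval $\itilt{\ts{D}}{\ts{D}[-1]}$ and torsion structures in $\heart{D}$. First I would fix the two simple objects $s_i$ and $s_j$ and consider the four torsion theories that control the relevant tilts: $\langle s_i\rangle$, $\langle s_j\rangle$, $\langle s_i,s_j\rangle$ (generating $L_{s_i}\ts{D}$, $L_{s_j}\ts{D}$, $L_{\langle s_i,s_j\rangle}\ts{D}$ respectively via $(S,S^\perp)$), and then analyse what happens when one tilts $L_{s_i}\ts{D}$ again. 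The key algebraic input is a description of the simple objects of $L_{s_i}\ts{D}$: by the usual HRS computation, the heart of $L_{s_i}\ts{D}$ has simple objects $s_i[-1]$ together with, for each other simple $s_k$, an object $s_k'$ which is either $s_k$ itself (if $\GrMor{}{1}{s_i}{s_k}=0$) or the universal extension of $s_k$ by copies of $s_i[-1]$ (if $\GrMor{}{1}{s_i}{s_k}\neq 0$). Because $\catGQ{N}$ is Calabi--Yau-$N$ with $N\geq 3$, the only possible nonzero degree-one morphism space between distinct simples is $\GrMor{}{1}{s_i}{s_j}$, and $\GrMor{}{1}{s_i}{s_i}=0$; this rigidity (all simples in $\heart{D}$ are $N$-spherical with $N\geq 3$) is what makes the local picture finite and forces the dichotomy between the square and the pentagon. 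In particular, by \cite[Corollary 8.4]{king-qiu} every $t$--structure reached from the standard one by simple tilts is algebraic, so all the hearts appearing are genuinely algebraic length categories and one may speak of their simple objects.

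Next I would treat the two cases separately. When $\GrMor{}{1}{s_i}{s_j}=0=\GrMor{}{1}{s_j}{s_i}$, tilting at $s_i$ and tilting at $s_j$ "commute": in $L_{s_i}\ts{D}$ the object $s_j$ survives unchanged as a simple object whose further left tilt is $L_{s_j}L_{s_i}\ts{D}$, and a direct torsion-theory computation shows this equals $L_{\langle s_i,s_j\rangle}\ts{D}$, and symmetrically from the other side; this gives the square, with all four arrows simple tilts. When $\GrMor{}{1}{s_i}{s_j}=0$ but $\GrMor{}{1}{s_j}{s_i}\cong k$ (after possibly swapping $i$ and $j$ so that the nonzero ext points the right way), the situation is asymmetric: in $L_{s_i}\ts{D}$ the object $s_j$ is still simple, but in $L_{s_j}\ts{D}$ the simple replacing $s_i$ is the nontrivial extension of $s_i$ by $s_j$, so the two length-one paths $\ts{D}\to L_{s_i}\ts{D}$ and $\ts{D}\to L_{s_j}\ts{D}$ cannot be completed to $L_{\langle s_i,s_j\rangle}\ts{D}$ in one more simple tilt from both sides symmetrically. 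Instead I would identify $\ts{D}'$ as the left tilt of $L_{s_i}\ts{D}$ at the simple $s_j$ there, check (via the torsion-theory bookkeeping on $\heart{D}$, i.e.\ that the corresponding torsion theory is generated by $s_j$ together with the extension class) that $\ts{D}\tsleq \ts{D}' \tsleq \ts{D}[-1]$ and that $L_{\langle s_i,s_j\rangle}\ts{D}$ is a simple tilt of both $\ts{D}'$ and $L_{s_j}\ts{D}$, and then Proposition \ref{HRS} guarantees the required order relations and hence the pentagon. Uniqueness of $\ts{D}'$ is automatic since it is forced to be $L_{s_j}(L_{s_i}\ts{D})$, i.e.\ determined by the diagram.

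The main obstacle I anticipate is the bookkeeping in the pentagonal case: one must verify carefully that the "middle" $t$--structure $\ts{D}'=L_{s_j}L_{s_i}\ts{D}$ really does lie strictly between $\ts{D}$ and $L_{\langle s_i,s_j\rangle}\ts{D}$ in $\tilt{\catGQ{N}}$ and that the final arrow $\ts{D}'\to L_{\langle s_i,s_j\rangle}\ts{D}$ is indeed a (simple) tilt rather than just an order relation. This amounts to computing, inside $\heart{D}$, the torsion theory generated by the simples of $\ts{D}'$ that get tilted and checking it coincides with ${}^\perp(\text{appropriate subcategory})$ matching $\langle s_i,s_j\rangle$; the Calabi--Yau-$N$ hypothesis with $N\geq 3$ keeps all the relevant $\Ext$-computations between the at most three relevant simples under control, so no continuous families of subobjects intervene. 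This is essentially the $\catGQ{N}$-analogue of the argument in \cite[Lemma 4.3]{MR3281136}, and I would follow that template, substituting the spherical-object $\Ext$-computations of \cite[\S7.1]{king-qiu} for the quiver-representation computations used there.
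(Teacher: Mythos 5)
The main gap is in your first step: you claim that the Calabi--Yau-$N$ property with $N\geq 3$, together with the sphericality of the simple objects, already forces the dichotomy (at most one of $\GrMor{}{1}{s_i}{s_j}$ and $\GrMor{}{1}{s_j}{s_i}$ nonzero, and then one-dimensional). It does not. Serre duality in a CY-$N$ category identifies $\GrMor{}{1}{s_i}{s_j}$ with $\GrMor{}{N-1}{s_j}{s_i}^*$, so for $N\geq 3$ it imposes no relation between the two degree-one spaces, and sphericality only kills self-extensions; nothing you invoke rules out both spaces being nonzero, or either being higher-dimensional. This dichotomy is exactly the first half of the paper's proof and requires genuine input: by \cite[Corollary~8.4 and Proposition~7.4]{king-qiu} the Ext-quiver of $\heart{D}$ is the CY-$N$ double of the Ext-quiver of a heart of $\cat{D}(Q)$, so $\dim\GrMor{}{d}{s_k}{s_l}=\dim\GrMor{}{d}{t_k}{t_l}+\dim\GrMor{}{N-d}{t_l}{t_k}$, and by the Dynkin-specific bound \cite[Lemma~4.2]{MR3281136} one has $\dim\GrMor{}{\bullet}{t_k}{t_l}+\dim\GrMor{}{\bullet}{t_l}{t_k}\leq 1$; only combining these two facts with $N\geq 3$ yields $\dim\GrMor{}{1}{s_i}{s_j}+\dim\GrMor{}{1}{s_j}{s_i}\leq 1$. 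Without this, your case division --- and the claim that in the nonzero case the subcategory $\langle s_i,s_j\rangle$ is equivalent to $A_2$-representations, so that one gets exactly a pentagon --- is unsupported.

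There is also a concrete slip in your pentagon. With your orientation ($\GrMor{}{1}{s_i}{s_j}=0$, $\GrMor{}{1}{s_j}{s_i}\cong k$) the indecomposable extension is $0\to s_i\to e\to s_j\to 0$, and while $s_j$ does survive as a simple of $L_{s_i}\ts{D}$, tilting $L_{s_i}\ts{D}$ at $s_j$ corresponds under Proposition \ref{HRS} to the torsion class of $\heart{D}$ generated by $s_i$ and $s_j$, which already contains $e$; hence $L_{s_j}L_{s_i}\ts{D}=L_{\langle s_i,s_j\rangle}\ts{D}$ is the terminal vertex of the pentagon, not $\ts{D}'$. The fifth vertex sits on the other side: $\ts{D}'=L_{\langle s_j,e\rangle}\ts{D}$, reached from $L_{s_j}\ts{D}$ by tilting at its new simple $e=\varphi_{s_j}(s_i)$ (equivalently, with the paper's opposite orientation, $\ts{D}'=L_{\langle s_i,e\rangle}\ts{D}$). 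Your route of tracking simples through successive tilts can be repaired, but compare the paper's cleaner argument: it identifies the interval from $\ts{D}$ to $L_{\langle s_i,s_j\rangle}\ts{D}$ with the poset of torsion theories contained in $\langle s_i,s_j\rangle$, equivalently torsion theories of the abelian subcategory $\langle s_i,s_j\rangle$ itself, which is the category of representations of either the two-vertex quiver with no arrows or the $A_2$ quiver; listing the four, respectively five, torsion classes produces the square or pentagon and the uniqueness of $\ts{D}'$ in one stroke, with no iterated simple-tilt bookkeeping.
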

\begin{proof}
First, we claim that either $\GrMor{}{1}{s_i}{s_j} = 0 = \GrMor{}{1}{s_j}{s_i}$ or that  $\GrMor{}{1}{s_i}{s_j} = 0$ and  $\GrMor{}{1}{s_j}{s_i} \cong k$. Let the set of simple objects in the heart of $\ts{D}$ be $\{s_1,\ldots,s_n\}$. By \cite[Corollary~8.4 and Proposition~7.4]{king-qiu}, there is a t-structure $\ts{E}$ in $\cat{D}(Q)$ such that the Ext-quiver of the heart of $\ts{D}$ is the Calabi--Yau-$N$ double of the Ext-quiver of the heart of $\ts{E}$. In other words, one can label the simple objects in the latter as $\{t_1,\ldots,t_n\}$ in such a way that
\begin{gather}\label{eq:CY}
    \dim\GrMor{}{d}{s_k}{s_l}= \dim\GrMor{}{d}{t_k}{t_l} + \dim\GrMor{}{N-d}{t_l}{t_k}
\end{gather}
for any $1\leq k,l\leq n$. Moreover, by \cite[Lemma~4.2]{MR3281136}, we have
\[
   \dim\GrMor{}{\bullet}{t_k}{t_l} + \dim\GrMor{}{\bullet}{t_l}{t_k}\leq 1,
\]
for any $1\leq k,l\leq n$. So we may assume, without loss of generality, that $\GrMor{}{\bullet}{t_i}{t_j} =0$
and $\GrMor{}{\bullet}{t_j}{t_i}$ is either zero or is one-dimensional and concentrated in degree $d$ for some $d\in\mathbb{Z}$. Therefore, as $N\geq 3$,
\begin{align*}
    \dim \GrMor{}{1}{s_i}{s_j} +&\dim \GrMor{}{1}{s_j}{s_i} =\\
    &\dim\GrMor{}{N-1}{t_j}{t_i}+\dim\GrMor{}{1}{t_j}{t_i}  \leq1
\end{align*}
and the claim follows. Since the simple objects $\{s_1,\ldots,s_n\}$ are $N$-spherical, and $N\geq 3$, we also note that $\GrMor{}{1}{s_i}{s_i} = 0 = \GrMor{}{1}{s_j}{s_j}$ so that neither $s_i$ nor $s_j$ has any self-extensions.

The required diagrams arise from the poset of torsion theories in the heart of $\ts{D}$ which are contained in the extension-closure $\langle s_i,s_j\rangle$. This is the same as the poset of torsion theories in the full subcategory $\langle s_i,s_j\rangle$. When $\GrMor{}{1}{s_i}{s_j} = 0 = \GrMor{}{1}{s_j}{s_i}$ this subcategory is equivalent to representations of the quiver with two vertices and no arrows, and when $\GrMor{}{1}{s_j}{s_i} = 0$ and  $\GrMor{}{1}{s_i}{s_j} \cong \mathbf{k}$ it is equivalent to representations of the $A_2$ quiver. Identifying torsion theories with the set of non-zero indecomposable objects contained within them we have four in the first case ---  $\emptyset$, $\{s_j\}$, $\{s_i\}$, and $\{s_j,s_i\}$ ---  and five in the second ---  $\emptyset$, $\{s_j\}$, $\{s_i\}$, $\{e,s_i\}$, and $\{s_j,s_i\}$ where $e$ is the indecomposable extension $0\to s_j \to e \to s_i \to 0$. These clearly give rise to the square and pentagonal diagrams above. Moreover, note that $\ts{D}' = L_{\langle s_i, e\rangle}\ts{D}$ is uniquely specified as claimed.
\end{proof}

\begin{remark}
\label{generators and relations for tilt}
Recall from Lemma~\ref{lattice} that $\tiltGQ{N}$ is a lattice. It follows that the above lemma allows us to give a presentation for the category $\tiltGQ{N}$ in terms of generating morphisms and relations. The generators are the simple left tilts. The relations are provided by the squares and pentagons of the above lemma.
\end{remark}

\subsection{Associating generating sets}
\label{sec:gs}

By \cite[Corollary 8.4]{king-qiu} the simple objects of the heart of any t-structure in $\tiltGQ{N}$ are $N$-spherical, and the associated spherical twists form a generating set for $\braidGQ{N}$. Moreover, we can explicitly describe how the generating set changes as we perform a simple tilt. Let $s_1,\ldots,s_n$ be the simple objects of the heart of $\ts{D}$. By \cite[Proposition~5.4 and Remark~7.1]{king-qiu}, the simple objects of the heart of $L_{s_i} \ts{D}$
are
\begin{equation}
\label{simple change 1}
    \{s_i[-1]\}\cup\{s_k\ \colon\  \GrMor{}{1}{s_i}{s_k}=0, k\neq i\}\cup\{\varphi_{s_i}(s_j)\ \colon\  \GrMor{}{1}{s_i}{s_j}\neq0\}.
\end{equation}
As $\varphi_{\varphi_{s_i}(s_j)} = \varphi_{s_i} \varphi_{s_j}\varphi^{-1}_{s_i}$ by Lemma~\ref{twists and functors},
\begin{equation}
\label{generator change 1}
    \{\varphi_{s_i}\}\cup\{\varphi_{s_k}\ \colon\  \GrMor{}{1}{s_i}{s_k}=0 \}\cup\{\varphi_{s_i}\varphi_{s_j}\varphi_{s_i}^{-1}\ \colon\  \GrMor{}{1}{s_i}{s_j}\neq0\}
\end{equation}
is the new generating set for $\braidGQ{N}$.
In this section we lift the above generating sets, in certain cases, along the surjection $\braidsurj_N$ to generating sets for $\braidQ$.

Let $\GQzero$ be the standard t-structure in $\catGQ{N}$. By \cite[Theorem~8.6]{king-qiu} there is a canonical bijection
\begin{gather}\label{eq:fund}
   \tiltGQo{N} \xrightarrow{1-1}  \tiltGQ{N}/\braidGQ{N},
\end{gather}
where $\tiltGQo{N}$ is the full subcategory of $\tiltGQ{N}$ consisting of t-structures between $\GQzero$ and $\GQzero[2-N]$. 
Let $\Qzero$ be the standard t-structure in $\cat{D}(Q)$ and let $\tiltQo{N}$ be the full subcategory of $\tiltQ$ consisting of t-structures
between $\Qzero$ and $\Qzero[2-N]$. Recall from  \cite[Definition 7.3, \S8]{king-qiu} that there is a strong Lagrangian immersion $\LI{N} \colon\cat{D}(Q)\to\catGQ{N}$, i.e.\ a triangulated functor with the additional property that
for any $x,y\in\cat{D}(Q)$,
\begin{gather}\label{eq:double}
    \GrMor{}{d}{\LI{N}(x)}{\LI{N}(y)}\cong \GrMor{}{d}{x}{y} \oplus  \GrMor{}{N-d}{y}{x}^*.
\end{gather}
In this case, by \cite[Theorem~8.6]{king-qiu}, the Lagrangian immersion induces an isomorphism
\begin{gather}\label{eq:==}
    \LI{N}_* \colon \tiltQo{N} \to \tiltGQo{N},
\end{gather}
sending $\Qzero$ to $\GQzero$. Moreover, for $\ts{E} \in\tiltQo{N}$ the simple objects of the heart of $\LI{N}_*(\ts{E})\in\tiltGQo{N}$ are the images under $\LI{N}$ of the simple objects of the heart of $\ts{E}$.

Denote by $\Ind\cat{C}$ the set of indecomposable objects in an additive category $\cat{C}$.
For any  acyclic quiver $Q$, it is known that $\Ind \cat{D}(Q)=\bigcup_{l\in\mathbb{Z}}\Ind\Qzero[l]$ where $\Qzero$ is the standard heart. By Theorem~\ref{thm:BT} there is an isomorphism $\braidsurj_2^{-1} \colon\braidGQ{2}\to\braidQ$. We define a map
\[
    b \colon \Ind\cat{D}(Q)\to \braidQ \colon x\mapsto \braidsurj_2^{-1}(\varphi_{\LI{2}(x)}).
\]
To spell it out, we first send $x$ to $\LI{2}(x)$, which is a $2$-spherical object in $\catGQ{2}$ (see the lemma below),
and then take the image of its spherical twist in $\braidQ$ under the isomorphism $\braidsurj_2^{-1}$. Note that $b$ is invariant under shifts.
\begin{lemma}\label{lem:key}
Let $x, y\in\Ind\cat{D}(Q)$. Then
\begin{enumerate}
\item\label{key:spherical} $\LI{2}(x)$ is a $2$-spherical object for any $x\in\Ind\cat{D}(Q)$;
\item\label{key:commute} if $\GrMor{}{\bullet}{x}{y} = \GrMor{}{\bullet}{y}{x} = 0$, then
$b(x) b(y)= b(y) b(x)$;
\item\label{key:braid} if there is a triangle $y\to z\to x\to y[1]$ in $\Ind\cat{D}(Q)$ for some
some $z\in\Ind\cat{D}(Q)$, then $b(z)=b(x) b(y) b(x)^{-1}$ and
\[
b(x) b(y) b(x)= b(y) b(x) b(y),
\]
i.e.\ $b(x)$ and $b(y)$ satisfy the braid relation.
\end{enumerate}
\end{lemma}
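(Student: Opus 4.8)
The plan is to transport everything to $\catGQ{2}$ via the strong Lagrangian immersion $\LI{2}\colon\cat{D}(Q)\to\catGQ{2}$ and the isomorphism $\braidsurj_2^{-1}\colon\braidGQ{2}\to\braidQ$ of Theorem~\ref{thm:BT}, using the $N=2$ case of \eqref{eq:double}, i.e.\ $\GrMor{}{d}{\LI{2}(x)}{\LI{2}(y)}\cong\GrMor{}{d}{x}{y}\oplus\GrMor{}{2-d}{y}{x}^*$, together with Lemma~\ref{twists and functors}. For part~\ref{key:spherical}: since $Q$ is Dynkin, every $x\in\Ind\cat{D}(Q)$ is exceptional, so $\GrMor{}{\bullet}{x}{x}\cong\mathbf{k}$ concentrated in degree $0$; substituting into \eqref{eq:double} gives $\GrMor{}{\bullet}{\LI{2}(x)}{\LI{2}(x)}\cong\mathbf{k}\oplus\mathbf{k}[-2]$, and the functoriality of \eqref{eq:serre} demanded of a spherical object holds automatically because $\catGQ{2}$ is Calabi--Yau-$2$; hence $\LI{2}(x)$ is $2$-spherical (so that $b$ is well-defined). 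For part~\ref{key:commute}: if $\GrMor{}{\bullet}{x}{y}=\GrMor{}{\bullet}{y}{x}=0$ then \eqref{eq:double} gives $\GrMor{}{\bullet}{\LI{2}(x)}{\LI{2}(y)}=0$, so \eqref{eq:phi} shows $\varphi_{\LI{2}(x)}(\LI{2}(y))=\LI{2}(y)$; Lemma~\ref{twists and functors} then gives $\varphi_{\LI{2}(x)}\varphi_{\LI{2}(y)}=\varphi_{\varphi_{\LI{2}(x)}(\LI{2}(y))}\varphi_{\LI{2}(x)}=\varphi_{\LI{2}(y)}\varphi_{\LI{2}(x)}$, and applying the isomorphism $\braidsurj_2^{-1}$ yields $b(x)b(y)=b(y)b(x)$.

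The content lies in part~\ref{key:braid}. The crucial preliminary step is to show that the hypothesis forces $\GrMor{}{\bullet}{x}{y}$ to be one-dimensional and concentrated in degree $1$, while $\GrMor{}{\bullet}{y}{x}=0$. Because $b$ is shift-invariant, and because $\cat{D}(Q)$ is hereditary and every indecomposable object is a single shift of a module, one may shift the triangle so that it becomes a non-split short exact sequence $0\to a\to b\to c\to 0$ of indecomposable modules (with $\{a,b,c\}$ some reordering, up to shift, of $\{x,y,z\}$); the connecting map is nonzero since $z$ is indecomposable. Representation-directedness of $\cat{D}(Q)$ --- there are no cycles of nonzero non-isomorphisms between indecomposables --- together with exceptionality yields the vanishing of the relevant degree-$0$ hom-space by a short diagram chase, and then the positive-definiteness of the Euler form of $Q$ (so that the symmetrized Euler pairing of the two classes is at most $1$ in absolute value), combined with Serre duality $\GrMor{}{\bullet}{u}{v}\cong\GrMor{}{1-\bullet}{v}{\tau u}^*$, pins the remaining dimensions down to the claimed values. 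This is the step I expect to be the main obstacle: it is the $\cat{D}(Q)$-analogue of \cite[Lemma~4.2]{MR3281136}, and writing it out cleanly requires either a short case analysis on the possible shifts or an appeal to the corresponding statement in \cite{king-qiu} or \cite{MR3281136}.

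Granting this, \eqref{eq:double} gives $\GrMor{}{\bullet}{\LI{2}(x)}{\LI{2}(y)}\cong\mathbf{k}$ concentrated in degree $1$, and by Calabi--Yau-$2$ duality the same holds for $\GrMor{}{\bullet}{\LI{2}(y)}{\LI{2}(x)}$. Applying the triangulated functor $\LI{2}$ to the given triangle produces $\LI{2}(y)\to\LI{2}(z)\to\LI{2}(x)\to\LI{2}(y)[1]$; since $\GrMor{}{\bullet}{\LI{2}(x)}{\LI{2}(y)}$ is one-dimensional, the cone occurring in the defining triangle \eqref{eq:phi} of $\varphi_{\LI{2}(x)}$ is unique up to isomorphism, and rotating and comparing with \eqref{eq:phi} identifies $\varphi_{\LI{2}(x)}(\LI{2}(y))\cong\LI{2}(z)$. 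Lemma~\ref{twists and functors} then gives $\varphi_{\LI{2}(z)}=\varphi_{\varphi_{\LI{2}(x)}(\LI{2}(y))}=\varphi_{\LI{2}(x)}\varphi_{\LI{2}(y)}\varphi_{\LI{2}(x)}^{-1}$, so applying $\braidsurj_2^{-1}$ yields $b(z)=b(x)b(y)b(x)^{-1}$. Finally, because $\GrMor{}{1}{\LI{2}(x)}{\LI{2}(y)}\cong\mathbf{k}$, the spherical twists $\varphi_{\LI{2}(x)}$ and $\varphi_{\LI{2}(y)}$ satisfy the braid relation (as recalled just before Lemma~\ref{twists and functors}, following \cite{MR1831820}); applying $\braidsurj_2^{-1}$ gives $b(x)b(y)b(x)=b(y)b(x)b(y)$, which together with the previous identity completes part~\ref{key:braid}.
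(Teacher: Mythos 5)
Your proposal is correct and follows the paper's overall strategy --- transport everything through $\LI{2}$ and the isomorphism $\braidsurj_2^{-1}$ of Theorem \ref{thm:BT}, use the $N=2$ case of \eqref{eq:double} to compute graded homs in $\catGQ{2}$, and conjugate twists via Lemma \ref{twists and functors} --- but it differs in two places worth noting. For part (\ref{key:spherical}) the paper does not argue via exceptionality: it shows that any indecomposable $x$ is a source of a section of the Auslander--Reiten quiver, hence a simple object in the heart of a $t$--structure in $\tiltQ$, and then quotes \cite[Corollary 8.4]{king-qiu} to conclude that $\LI{2}(x)$ is spherical. Your route --- every indecomposable over a Dynkin quiver is exceptional, so \eqref{eq:double} gives $\GrMor{}{\bullet}{\LI{2}(x)}{\LI{2}(x)}\cong\mathbf{k}\oplus\mathbf{k}[-2]$, and the functoriality required in the definition of a spherical object is automatic because $\catGQ{2}$ is Calabi--Yau-$2$ --- is shorter and equally valid, though the paper's detour through tilted hearts is what it reuses elsewhere. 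For part (\ref{key:braid}) the step you correctly single out as the crux, namely that the non-split triangle with indecomposable terms forces $\GrMor{}{\bullet}{x}{y}\cong\mathbf{k}[-1]$ and $\GrMor{}{\bullet}{y}{x}=0$, is handled in the paper simply by citing \cite[Lemma~4.2]{MR3281136}; so your fallback of appealing to that result is exactly the paper's move, and your sketched first-principles argument (directedness, the Euler form, and the fact that in the hereditary category an indecomposable cone forces the relevant module map to be injective or surjective) would indeed need the case analysis you acknowledge if you wanted it self-contained. The remaining cosmetic difference is that the paper extracts the braid relation from the double identification $\varphi_{\LI{2}(x)}(\LI{2}(y))\cong\LI{2}(z)\cong\varphi_{\LI{2}(y)}^{-1}(\LI{2}(x))$, giving $\varphi_{\LI{2}(x)}\varphi_{\LI{2}(y)}\varphi_{\LI{2}(x)}^{-1}=\varphi_{\LI{2}(y)}^{-1}\varphi_{\LI{2}(x)}\varphi_{\LI{2}(y)}$ directly, whereas you quote the Seidel--Thomas braid relation for spherical objects with one-dimensional graded hom; both are legitimate, and your identification of $\varphi_{\LI{2}(x)}(\LI{2}(y))$ with $\LI{2}(z)$ (using that the connecting map survives under the strong Lagrangian immersion and that the hom space is one-dimensional) matches the paper's.
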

\begin{proof}
Let $x$ be an indecomposable in $\cat{D}(Q)$. Then, by \cite[Lemma~2.4]{MR3281136}, $x$ induces a section $P(x)$ of the Auslander--Reiten quiver of $\cat{D}(Q)$, and hence a t-structure $\ts{D}_x=[P(x),\infty)$. For a Dynkin quiver, all such t-structures are known to be related to the standard t-structure by tilting, so $\ts{D}_x\in \tiltQ$. Moreover, again by \cite[Lemma~2.4]{MR3281136}, the heart of $\ts{D}_x$ is isomorphic to the category of $kQ'$ modules for some quiver $Q'$ with the same underlying diagram as $Q$. It follows that the section $P(x)$ is isomorphic to $(Q')^\text{op}$ and consists of the projective representations of $kQ'$. By definition $x$ is a source of the section, so is the projective corresponding to a sink in $Q'$, and is therefore a simple object of the heart. By \cite[Corollary 8.4]{king-qiu} the image of any such simple object is $2$-spherical. Hence (\ref{key:spherical}) follows.

For ease of reading, denote by $\tilde{x}$, $\tilde{y}$ and $\tilde{z}$ the images of $x$, $y$ and $z$ respectively under $\LI{2}$.
When $x$ and $y$ are orthogonal \eqref{eq:double} implies
\[
\GrMor{}{\bullet}{\tilde{x}}{\tilde{y}}=\GrMor{}{\bullet}{\tilde{y}}{\tilde{x}}=0,
\]
and so the associated twists commute.

To prove (\ref{key:braid}) note that the triangle $y\to z\to x\to y[1]$ induces a non-trivial triangle in $\catGQ{2}$ via $\LI{2}$.
By \cite[Lemma~4.2]{MR3281136}
\[
\GrMor{}{\bullet}{x}{y}\cong\mathbf{k}[-1]\quad\text{and}\quad\GrMor{}{\bullet}{y}{x}=0.
\]
Thus \eqref{eq:double} yields $\GrMor{}{\bullet}{\tilde{x}}{\tilde{y}} \cong \mathbf{k}[-1]$ and $\GrMor{\bullet}{\tilde{y}}{\tilde{x}}\cong\mathbf{k}[-1]$, and we deduce that $\tilde{z}=\varphi_{\tilde{x}}(\tilde{y})=\varphi_{\tilde{y}}^{-1}(\tilde{x})$.  Therefore
\[
  \varphi_{\tilde{x}}\circ\varphi_{\tilde{y}}\circ\varphi_{\tilde{x}}^{-1}
    =\varphi_{\tilde{z}}=\varphi_{\tilde{y}}^{-1}\circ\varphi_{\tilde{x}}\circ\varphi_{\tilde{y}},
    \]
as required.
\end{proof}

\begin{construction}\label{cons1}
We associate to any t-structure in $\tiltQ$ the generating set $\{b(t_1),\ldots,b(t_n)\}$ of $\Br(Q)$  where  $\{t_1,\ldots,t_n\}$ are the simple objects of the heart. The generating set associated to $\Qzero$ is the standard one.
\end{construction}
The following proposition gives an alternative inductive construction of these generating sets which we use in the sequel.
\begin{proposition}\label{pp:key}
Suppose $\ts{D}$ is a t-structure in $\tiltQo{N} \subseteq \tiltQ$. Then
\begin{enumerate}
\item[(i)] if $x$ and $y$ are two simple objects in the heart of $\cat{D}$  one has
    \[\begin{cases}
    b(x) b(y)= b(y) b(x), &\text{if $\GrMor{}{\bullet}{x}{y} = \GrMor{}{\bullet}{y}{x}= 0$},\\
    b(x) b(y) b(x)= b(y) b(x) b(y),
         &\text{otherwise}.
\end{cases}\]
\item[(ii)] if $\{t_i\}$ is the set of simple objects in the heart of $\cat{D}$,  the simple objects of the heart of $L_{t_i} \cat{D}$ are
    \begin{equation}
    \label{simple change 2}
        \{t_i[-1]\}\cup\{t_k\ \colon\  \GrMor{}{1}{t_i}{t_k}=0, k\neq i\}\cup\{\varphi_{t_i}(t_j)\ \colon\  \GrMor{}{1}{t_i}{t_j}\neq0\}
    \end{equation}
    and the corresponding associated generating set of $\Br(Q)$ is
    \begin{equation}
    \label{generator change 2}
        \{b_i\}\cup\{b_k\ \colon\  \GrMor{}{1}{t_i}{t_k}=0, k\neq i\}\cup\{b_i b_j b_i^{-1}\ \colon\  \GrMor{}{1}{t_i}{t_j}\neq0\},
    \end{equation}
    where $\{b_i:=b(t_i)\}$ is the generating set associated to $\cat{D}$.
\end{enumerate}
In particular, any such associated set is indeed a generating set of $\Br(Q)$. Here in (\ref{simple change 2}) we use the notation $\varphi_{a}(b):=\Cone\left(a\otimes\GrMor{}{\bullet}{a}{b}\to a\right)$ even when $a$ is not a spherical object.
\end{proposition}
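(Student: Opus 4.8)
The plan is to deduce both parts from Lemma~\ref{lem:key} together with the known structure of the hearts occurring in $\tiltQ$. Recall (\cite{king-qiu}; \cf\ \cite[Lemma~2.4]{MR3281136}) that for a Dynkin quiver $Q$ every heart of a $t$--structure in $\tiltQ$ is equivalent to the category of $kQ'$-modules for some acyclic quiver $Q'$ with the same underlying graph, so that its simple objects are rigid; and by \cite[Lemma~4.2]{MR3281136} two distinct simple objects $x,y$ of such a heart are either orthogonal, i.e.\ $\GrMor{}{\bullet}{x}{y}=\GrMor{}{\bullet}{y}{x}=0$, or --- after interchanging $x$ and $y$ if necessary --- satisfy $\GrMor{}{\bullet}{x}{y}\cong\mathbf{k}[-1]$ and $\GrMor{}{\bullet}{y}{x}=0$.

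For part (i), if $x$ and $y$ are orthogonal then $b(x)b(y)=b(y)b(x)$ is immediate from Lemma~\ref{lem:key}(\ref{key:commute}). Otherwise, choosing the labelling so that $\GrMor{}{1}{x}{y}\cong\mathbf{k}$ and $\GrMor{}{\bullet}{y}{x}=0$, a non-zero class in $\GrMor{}{1}{x}{y}$ gives a non-split short exact sequence $0\to y\to z\to x\to 0$ in the heart with $z$ indecomposable (within the heart $\langle x,y\rangle$ is the category of representations of the $A_2$ quiver and $z$ is its length-two indecomposable). The associated triangle $y\to z\to x\to y[1]$ in $\Ind\cat{D}(Q)$ then yields the braid relation $b(x)b(y)b(x)=b(y)b(x)b(y)$ via Lemma~\ref{lem:key}(\ref{key:braid}).

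For part (ii), I would first invoke \cite[Proposition~5.4]{king-qiu}, applied to the rigid algebraic heart of $\cat{D}$, for the description \eqref{simple change 2} of the simple objects of $L_{t_i}\cat{D}$; note that when $\GrMor{}{1}{t_i}{t_j}\neq0$ one has $\GrMor{}{\bullet}{t_i}{t_j}\cong\mathbf{k}[-1]$, so $\varphi_{t_i}(t_j)=\Cone(t_i[-1]\to t_j)$ for the evaluation map, a generator of $\GrMor{}{1}{t_i}{t_j}=\mor{t_i[-1]}{t_j}$. Then, since $b$ is shift-invariant, $b(t_i[-1])=b(t_i)=b_i$ and $b(t_k)=b_k$ for the unchanged simples, while rotating the defining triangle of $\varphi_{t_i}(t_j)$ gives a triangle $t_j\to\varphi_{t_i}(t_j)\to t_i\to t_j[1]$ in $\Ind\cat{D}(Q)$ with all three terms indecomposable, so $b(\varphi_{t_i}(t_j))=b(t_i)b(t_j)b(t_i)^{-1}=b_ib_jb_i^{-1}$ by Lemma~\ref{lem:key}(\ref{key:braid}); this establishes \eqref{generator change 2}.

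Finally, that every associated set generates $\Br(Q)$ would follow by induction along simple left tilts: since $\cat{D}(Q)$ is locally-finite, Lemma~\ref{alg implies finite type} and Lemma~\ref{simple finite tilting} show the component of $\Qzero$ in the tilting poset of $\cat{D}(Q)$ is of finite type, so $\tiltQo{N}=[\Qzero,\Qzero[2-N]]$ is finite and any $\cat{D}\in\tiltQo{N}$ is reached from $\Qzero$ by a finite sequence of simple left tilts inside $\tiltQo{N}$. The set associated to $\Qzero$ is the standard generating set $\{b_1,\dots,b_n\}$ because $b(S_i)=\braidsurj_2^{-1}(\varphi_{\LI{2}(S_i)})=b_i$, and by \eqref{generator change 2} each simple left tilt replaces a generating set by one generating the same subgroup. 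The main thing to be careful about is the bookkeeping in part (ii) --- pinning down the precise form of $\varphi_{t_i}(t_j)$ and the orientation of the triangles so that Lemma~\ref{lem:key}(\ref{key:braid}) applies --- but there is no serious obstacle, the real work (the bootstrap from the $N=2$ isomorphism $\braidsurj_2$) having already been carried out in Lemma~\ref{lem:key}.
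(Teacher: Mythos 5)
There is a genuine gap in your proof of part (i). The ``otherwise'' clause of the proposition covers every pair of simples that is not totally orthogonal, and for a heart in $\tiltQo{N}$ the nonzero graded Hom between two such simples is one-dimensional but concentrated in some degree $d\geq 1$ which need not equal $1$. Your reduction to $d=1$ rests on the claim that every heart of a $t$--structure in $\tiltQ$ is equivalent to $\mathbf{k}Q'$-mod for a quiver $Q'$ with the same underlying graph; that is not what \cite[Lemma~2.4]{MR3281136} says --- it concerns the particular hearts $\ts{D}_x$ attached to sections of the Auslander--Reiten quiver, which is exactly how it is used in the proof of part (\ref{key:spherical}) of Lemma~\ref{lem:key} --- and the claim is false for general hearts in $\tiltQo{N}$. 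For instance, take $Q=A_2$ with orientation $1\to 2$, simples $S_1,S_2$ and $\Ext^1(S_1,S_2)\cong\mathbf{k}$. The simple left tilt $L_{S_2}\Qzero$ lies in $\tiltQo{N}$ for every $N\geq 3$ and, by \eqref{simple change 2}, has simples $S_1$ and $S_2[-1]$, with $\GrMor{}{\bullet}{S_1}{S_2[-1]}\cong\mathbf{k}[-2]$ and $\GrMor{}{\bullet}{S_2[-1]}{S_1}=0$. These simples are not orthogonal, so the proposition asserts the braid relation for $b(S_1)$ and $b(S_2[-1])$, but there is no non-split short exact sequence $0\to S_2[-1]\to z\to S_1\to 0$ in this heart (the relevant $\Ext^1$ vanishes in both directions), so your argument produces no triangle to feed into Lemma~\ref{lem:key}(\ref{key:braid}). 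The paper's proof handles general $d$ by iterating (ii): tilting $d$ times at $x$ (and its shifts) one reaches a heart in which $z=\varphi_x(y)$ is a simple object, hence indecomposable, and there is a triangle $z\to x[-d]\to y\to z[1]$ in $\Ind\cat{D}(Q)$; Lemma~\ref{lem:key}(\ref{key:braid}) together with the shift-invariance of $b$ then gives the braid relation between $b(x)$ and $b(y)$. Your proof needs this (or some substitute) to cover $d\geq 2$.

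Your part (ii) and the concluding generation argument are essentially the paper's: \eqref{simple change 2} is quoted from \cite[Proposition~5.4]{king-qiu}, \eqref{generator change 2} follows from shift-invariance of $b$ and Lemma~\ref{lem:key}(\ref{key:braid}) applied to the extension triangle (here the total Hom really is $\mathbf{k}[-1]$, so your bookkeeping is fine), and generation follows by induction along simple left tilts from $\Qzero$, whose associated set is the standard one. One smaller point of the same nature as above: the hypotheses needed to apply \cite[Proposition~5.4]{king-qiu} should not be justified by heredity of the heart; the paper instead cites \cite[Theorem 5.9 and Proposition 6.4]{king-qiu} for the required conditions.
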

\begin{proof}
First we note that \eqref{simple change 2} in (ii) is a special case of \cite[Proposition~5.4]{king-qiu}. The necessary conditions to apply this proposition follow from \cite[Theorem 5.9 and Proposition 6.4]{king-qiu}.

For (i), if $x$ and $y$ are mutually orthogonal then the commutative relations follow from (2) of Lemma~\ref{lem:key}.
Otherwise, by \cite[Lemma~4.2]{MR3281136},
\[
\GrMor{}{\bullet}{x}{y}\cong\mathbf{k}[-d]\quad\text{and}\quad\GrMor{}{\bullet}{y}{x}=0.
\]
for some strictly positive integer $d$. By \eqref{simple change 2}, after tilting $\cat{D}$
with respect to the simple object $x$ (and its shifts) $d$ times we reach a  heart with a simple object $z=\varphi_{x}(y)$.
In particular, there is a triangle $z\to x[-d]\to y\to z[1]$ in $\cat{D}(Q)$ where $z\in\Ind\cat{D}(Q)$. The braid relation then follows from (3) of Lemma~\ref{lem:key}.

Finally, \eqref{generator change 2} in (ii) follows from a direct calculation.
\end{proof}

We can use this construction to associate generating sets to t-structures in $\tiltGQo{N} \subseteq \tiltGQ{N}$.
Let $\ts{E}$ be such a t-structure, and $\{s_i\}$ the set of simple objects of its heart. Then  $(\LI{N})^{-1}s_i$ is well-defined, and we associate the generating set $\{ b_{s_i}:= b\left( (\LI{N})^{-1}s_i\right) \}$ of $\Br(Q)$ to $\ts{E}$.
\begin{remark}
This construction only works for $\ts{E} \in \tiltGQo{N}$ because the simple objects of the hearts of other t-structures need not be in the image of the Lagrangian immersion. This is the same reason that the isomorphism \eqref{eq:==} cannot be extended to the whole of $\tiltQ$.
\end{remark}
The next result follows immediately from Proposition~\ref{pp:key}.
\begin{corollary}\label{cor:key}
Let $\ts{E}\in \tiltGQo{N}$, and let $\{s_i\}$ be the set of simple objects in its heart, with corresponding generating set $\{b_{s_i}\}$. Then
\[\begin{cases}
    b_{s_i} b_{s_j}= b_{s_j} b_{s_i}, &\text{if $\GrMor{}{\bullet}{s_i}{s_j} = 0$},\\
    b_{s_i} b_{s_j} b_{s_i}= b_{s_j} b_{s_i} b_{s_j}, &\text{otherwise}.
\end{cases}\]
Moreover, the simple objects of the heart of $L_{s_i} \ts{E}$ are
    \begin{equation}
    \label{simple change 3}
        \{s_i[-1]\}\cup\{s_k\ \colon\  \GrMor{}{1}{s_i}{s_k}=0, k\neq i\}\cup\{\varphi_{s_i}(s_j)\ \colon\  \GrMor{}{1}{s_i}{s_j}\neq0\}
    \end{equation}
and the corresponding associated generating set is
    \begin{equation}
    \label{generator change 3}
        \{b_{s_i}\}\cup\{b_{s_k}\ \colon\  \GrMor{}{1}{s_i}{s_k}=0, k\neq i\}\cup\{b_{s_i} b_{s_j} b_{s_i}^{-1}\ \colon\  \GrMor{}{1}{s_i}{s_j}\neq0\}.
    \end{equation}
\end{corollary}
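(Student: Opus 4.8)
The plan is to transport Proposition~\ref{pp:key} from $\tiltQo{N}$ to $\tiltGQo{N}$ along the isomorphism $\LI{N}_*\colon\tiltQo{N}\to\tiltGQo{N}$ of \eqref{eq:==}. Since $\ts{E}\in\tiltGQo{N}$ there is a unique $\ts{D}\in\tiltQo{N}$ with $\LI{N}_*(\ts{D})=\ts{E}$; writing $\{t_i\}$ for the simple objects of the heart of $\ts{D}$, the last sentence of the paragraph containing \eqref{eq:==} gives that the simple objects of the heart of $\ts{E}$ are the $s_i=\LI{N}(t_i)$, so by definition of the associated generating set $b_{s_i}=b\big((\LI{N})^{-1}s_i\big)=b(t_i)$.

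First I would prove part~(i). By \eqref{eq:double} one has $\GrMor{}{\bullet}{s_i}{s_j}\cong\GrMor{}{\bullet}{t_i}{t_j}\oplus\GrMor{}{N-\bullet}{t_j}{t_i}^{*}$, so $\GrMor{}{\bullet}{s_i}{s_j}=0$ precisely when $\GrMor{}{\bullet}{t_i}{t_j}=0=\GrMor{}{\bullet}{t_j}{t_i}$; the two cases of the corollary are thus the two cases of Proposition~\ref{pp:key}(i) applied to $x=t_i$, $y=t_j$, and the commutation or braid relation for $b_{s_i}=b(t_i)$ and $b_{s_j}=b(t_j)$ follows. For part~(ii), the formula \eqref{simple change 3} for the simples of $L_{s_i}\ts{E}$ is identical to \eqref{simple change 1}, which holds throughout $\tiltGQ{N}$ by \cite[Proposition~5.4 and Remark~7.1]{king-qiu} (using that, as $N\geq3$, the $N$-spherical simple $s_i$ has no self-extension, so $L_{s_i}\ts{E}$ is a simple left tilt). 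To get \eqref{generator change 3} I would observe that every simple of $L_{s_i}\ts{E}$ is in the image of $\LI{N}$ and pull it back along $\LI{N}$: $s_i[-1]=\LI{N}(t_i[-1])$; each $s_k$ with $\GrMor{}{1}{s_i}{s_k}=0$ equals $\LI{N}(t_k)$ with $\GrMor{}{1}{t_i}{t_k}=0$; and when $\GrMor{}{1}{s_i}{s_j}\neq0$ one has $\GrMor{}{\bullet}{t_i}{t_j}\cong\mathbf{k}[-1]$ and $\GrMor{}{\bullet}{t_j}{t_i}=0$, whence $\varphi_{s_i}(s_j)=\LI{N}(\varphi_{t_i}(t_j))$ because $\LI{N}$ is triangulated and commutes with tensoring by graded vector spaces. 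Since $b$ is shift-invariant, applying it and invoking \eqref{generator change 2} of Proposition~\ref{pp:key}(ii) yields \eqref{generator change 3}; in particular the resulting set generates $\Br(Q)$.

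The step I expect to be the main obstacle is keeping the dictionary between Ext-degrees on the two sides honest. In particular I must know that ``$\GrMor{}{1}{s_i}{s_k}=0$'' is equivalent to ``$\GrMor{}{1}{t_i}{t_k}=0$'' (and likewise for the non-vanishing cases), which via \eqref{eq:double} requires the extra summand $\GrMor{}{N-1}{t_k}{t_i}^{*}$ to vanish. This is where one uses that, for a $t$--structure in the fundamental domain $\tiltQo{N}$ (those between $\Qzero$ and $\Qzero[2-N]$), the graded morphism spaces between the simples of its heart are concentrated in degrees $0,\ldots,N-2$, together with the fact from \cite[Lemma~4.2]{MR3281136} that at most one of $\GrMor{}{\bullet}{t_i}{t_j}$, $\GrMor{}{\bullet}{t_j}{t_i}$ is non-zero --- these are exactly the inputs \cite[Theorem~5.9 and Proposition~6.4]{king-qiu} already invoked in the proof of Proposition~\ref{pp:key}. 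The degree bound $N-2$, rather than $N-1$, is precisely why the fundamental domain is taken to be this interval: for degree $N-1$ one would have the extra degree-$(N-1)$ arrows of the Calabi--Yau-$N$ double to contend with.
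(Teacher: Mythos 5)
Your overall route is exactly the one the paper intends: its entire proof is the single sentence that the corollary ``follows immediately from Proposition \ref{pp:key}'', i.e.\ transport along $\LI{N}_*$ using \eqref{eq:double} and \eqref{eq:==}, and your treatment of the first part is correct, since $\GrMor{}{\bullet}{s_i}{s_j}=0$ is equivalent, by \eqref{eq:double}, to the vanishing of both $\GrMor{}{\bullet}{t_i}{t_j}$ and $\GrMor{}{\bullet}{t_j}{t_i}$. The problem lies in the ``moreover'' part, at precisely the step you flag as the main obstacle: the degree bound you invoke is false. Morphism spaces between simples of a heart in $\tiltQo{N}$ are \emph{not} concentrated in degrees $0,\ldots,N-2$. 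For instance, let $Q$ be the $A_2$ quiver oriented so that $\mathrm{Ext}^1(S_1,S_2)\cong\mathbf{k}$, and take $N=3$. The left tilt of $\Qzero$ at the torsion class $\langle S_2\rangle$ lies in $\tiltQo{3}$, and its heart has simple objects $t_j=S_1$ and $t_i=S_2[-1]$, for which $\GrMor{}{2}{t_j}{t_i}\cong\mathrm{Ext}^1(S_1,S_2)\neq 0$ --- a nonzero morphism in degree $N-1$.

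In this example \eqref{eq:double} gives $\GrMor{}{1}{s_i}{s_j}\cong\GrMor{}{N-1}{t_j}{t_i}^*\neq 0$ although $\GrMor{}{1}{t_i}{t_j}=0$, so your asserted equivalence of the $\mathrm{Ext}^1$-conditions on the two sides fails, and with it the transport of part (ii): \eqref{simple change 3} is \emph{not} the $\LI{N}$-image of \eqref{simple change 2} here (the tilt at $s_i$ replaces $s_j$ by the extension $\varphi_{s_i}(s_j)$, whereas the tilt at $t_i$ leaves $t_j$ unchanged); the new simple $\varphi_{s_i}(s_j)$ is not even in the image of $\LI{N}$, since its class $[s_i]+[s_j]=[\LI{N}(S_1)]-[\LI{N}(S_2)]$ is not the class of the image of any indecomposable of $\cat{D}(Q)$; and consequently \eqref{generator change 3} cannot be obtained by applying the shift-invariant map $b$ to \eqref{generator change 2}. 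Note that this failure occurs exactly when $s_i\notin\GQzero[3-N]$, i.e.\ when $L_{s_i}\ts{E}$ leaves the fundamental domain (Lemma \ref{tilting dichotomy}). So a correct argument must treat the degree-$(N-1)$ contribution of the Calabi--Yau double separately --- for example by restricting the ``moreover'' statement to tilts with $L_{s_i}\ts{E}\in\tiltGQo{N}$, or by producing the element attached to the new simple directly from the conjugation relation of Lemma \ref{lem:key}(\ref{key:braid}) rather than by pulling the simple back along $\LI{N}$ --- and cannot rest on the claimed concentration of degrees. You were right that this is the delicate point (the paper's one-line proof glosses over it), but the proposed resolution does not hold.
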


\begin{lemma}
\label{tilting dichotomy}
Let $s$ be a simple object in the heart of $\ts{E} \in \tiltGQo{N}$. Then either $L_s\ts{E} \in \tiltGQo{N}$ or $\varphi_s^{-1}L_s\ts{E} \in \tiltGQo{N}$. The first case occurs if and only if, in addition, $s\in \GQzero[3-N]$.
\end{lemma}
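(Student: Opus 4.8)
The plan is to translate the whole statement into a question about $\GQzero$--cohomology, using the fact that for a $t$--structure $\ts{F}$ in the principal component $\tiltGQ{N}$, membership in $\tiltGQo{N}$ is equivalent to $\heart{F}$ being contained in the full subcategory $\GQzero^{[0,N-2]}$ of objects whose $\GQzero$--cohomology vanishes outside degrees $0,\dots,N-2$. So I would first prove this reformulation. If $\ts{F}\in\tiltGQo{N}$, i.e.\ $\GQzero\tileq\ts{F}\tileq\GQzero[2-N]$, then by Remark~\ref{tilting poset rmks} we get $\GQzero^{\leq0}\subseteq\ts{F}^{\leq0}\subseteq\GQzero^{\leq N-2}$, which unwinds to $\heart{F}\subseteq\GQzero^{[0,N-2]}$. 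Conversely, if $\heart{F}\subseteq\GQzero^{[0,N-2]}$ then, since $\heart{F}$ generates $\ts{F}^{\leq0}$ and $\ts{F}^{\geq1}$, we obtain $\GQzero\tsleq\ts{F}\tsleq\GQzero[2-N]$; and because $\tiltGQ{N}$ is a lattice whose infima and suprema agree with those of $\PT{C}$ (Lemma~\ref{lattice}), $\GQzero\tsleq\ts{F}$ forces $\GQzero\wedge\ts{F}=\GQzero$, hence $\GQzero\tileq\ts{F}$, and dually $\ts{F}\tileq\GQzero[2-N]$. Note also that the long exact cohomology sequence shows extensions preserve the range of cohomological support, so for $\heart{F}$ (a length category) it suffices to test containment in $\GQzero^{[0,N-2]}$ on its simple objects.

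Next I would pin down where $s$ lives. Since $\ts{E}\in\tiltGQo{N}$, the isomorphism $\LI{N}_*\colon\tiltQo{N}\to\tiltGQo{N}$ lets me write $\ts{E}=\LI{N}_*(\ts{E}_0)$ and $s=\LI{N}(t)$ for a simple object $t$ of $\heart{E_0}$, where $\ts{E}_0\in\tiltQo{N}\subseteq\tiltQ$. As $t$ is indecomposable in $\cat{D}(Q)$ it lies in a single shift of the standard heart, $t\in\Qzero^0[l]$, and the cohomological reformulation (applied in $\cat{D}(Q)$) forces $l\in\{0,-1,\dots,2-N\}$. Since $\LI{N}$ is triangulated and sends $\Qzero$ to $\GQzero$, the object $s$ is concentrated in $\GQzero$--cohomological degree $-l$, and the hypothesis ``$s\in\GQzero[3-N]$'' (meaning $s$ lies in the aisle of $\GQzero[3-N]$, i.e.\ $s\in\GQzero^{\leq N-3}$) is precisely $-l\le N-3$, i.e.\ $l\neq2-N$. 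Using \eqref{eq:CY} together with \cite[Lemma~4.2]{MR3281136} I would observe that for distinct simple objects $s,s_k$ of $\heart{E}$ the space $\GrMor{}{\bullet}{s}{s_k}$ is at most one--dimensional and concentrated in a single degree $d\in\{1,\dots,N-1\}$. Combined with Corollary~\ref{cor:key}, which lists the simple objects of $\heart{L_s\ts{E}}$ as $s[-1]$, the unchanged $s_k$ (those with $\GrMor{}{1}{s}{s_k}=0$) and the objects $\varphi_s(s_j)$ (those with $\GrMor{}{1}{s}{s_j}\neq0$), this yields: $s[-1]$ sits in degree $1-l$; each surviving $s_k$ keeps its degree in $\{0,\dots,N-2\}$; and each $\varphi_s(s_j)=\Cone(s[-1]\to s_j)$ is an extension of $s$ by $s_j$, hence has cohomology in degrees $\{-l,-l_j\}$.

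Now the dichotomy follows. If $l\neq2-N$ then $1\le 1-l\le N-2$, so every simple object of $\heart{L_s\ts{E}}$ lies in $\GQzero^{[0,N-2]}$; hence $\heart{L_s\ts{E}}\subseteq\GQzero^{[0,N-2]}$ and, $L_s\ts{E}$ being in $\tiltGQ{N}$, the reformulation gives $L_s\ts{E}\in\tiltGQo{N}$. If $l=2-N$ then $s[-1]$ has cohomology in degree $N-1$, so $\heart{L_s\ts{E}}\not\subseteq\GQzero^{[0,N-2]}$ and $L_s\ts{E}\notin\tiltGQo{N}$; in this case I would apply $\varphi_s^{-1}$, which lies in $\braidGQ{N}$ (it is produced from the generating set by the rules of Corollary~\ref{cor:key}, \eqref{generator change 3}) and therefore preserves the component $\tiltGQ{N}$. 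Using $\varphi_s^{-1}(s)=s[N-1]$ for the $N$--spherical $s$, the simple objects of $\heart{\varphi_s^{-1}L_s\ts{E}}=\varphi_s^{-1}(\heart{L_s\ts{E}})$ are: $s[N-2]$, which lies in degree $0$ since $s$ is in degree $N-2$; the $s_j$; and, for a surviving $s_k$ with $\GrMor{}{\bullet}{s}{s_k}\neq0$ necessarily in degree $d\in\{2,\dots,N-1\}$, the object $\varphi_s^{-1}(s_k)$, which is an extension of $s_k$ by $s[d-1]$ and so has support $\{N-1-d,\,d_k\}\subseteq\{0,\dots,N-2\}$. Hence $\heart{\varphi_s^{-1}L_s\ts{E}}\subseteq\GQzero^{[0,N-2]}$ and $\varphi_s^{-1}L_s\ts{E}\in\tiltGQo{N}$. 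Putting the two cases together gives the dichotomy and shows the first alternative holds exactly when $s\in\GQzero[3-N]$.

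I expect the main obstacle to be the structural book-keeping rather than any single computation: establishing the equivalence $\ts{F}\in\tiltGQo{N}\iff\heart{F}\subseteq\GQzero^{[0,N-2]}$ (which genuinely uses the lattice property of Lemma~\ref{lattice} to pass between the aisle order $\tsleq$ and the tilting order $\tileq$), and checking that $\varphi_s^{-1}$ really preserves the principal component. The degree computations themselves are routine once one knows, via the Lagrangian immersion, that the relevant simple objects are concentrated in a single $\GQzero$--cohomological degree and that the graded $\Ext$--spaces between distinct simples are one--dimensional; this is exactly where $N\ge3$ enters, since it is what makes $\varphi_s(s)=s[1-N]$ genuinely different from $s[-1]$ and collapses those graded $\Ext$--spaces onto a single degree.
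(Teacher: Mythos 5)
Your argument is correct, but it takes a genuinely different route from the paper. The paper's proof is essentially a citation: by \cite[Corollary 8.4]{king-qiu} the twist $\varphi_s$ carries $\ts{E}$ to the $t$--structure obtained by tilting $N-1$ times at $s,s[-1],\dots,s[2-N]$, so the dichotomy follows at once from the isomorphism $\tiltQo{N}\cong\tiltGQo{N}$, and the ``if and only if'' is a one-line aisle computation with $s[-1]$. You instead prove an intrinsic characterisation of the fundamental domain --- $\ts{F}\in\tiltGQo{N}$ for $\ts{F}\in\tiltGQ{N}$ iff $\heart{F}\subset\GQzero^{[0,N-2]}$, using Remark \ref{tilting poset rmks} one way and the lattice property of Lemma \ref{lattice} (with infima/suprema agreeing with those in $\PT{C}$) the other --- and then verify containment on the explicit simple objects of $\heart{L_s\ts{E}}$ and $\heart{\varphi_s^{-1}L_s\ts{E}}$, using the single-degree concentration of graded Homs between distinct simples. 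What the paper's route buys is brevity; what yours buys is a self-contained, mechanism-revealing argument that never needs the ``twist $=$ composite of $N-1$ tilts'' description, and as a by-product it shows directly why exactly one of the two alternatives occurs. Two small repairs: your justification that $\varphi_s^{-1}$ preserves $\tiltGQ{N}$ should not be attributed to Corollary \ref{cor:key} (which concerns generating sets of $\braidQ$) but to the fact that $\varphi_s\in\braidGQ{N}$ by \cite[Corollary 8.4]{king-qiu} and that $\braidGQ{N}$ acts on the principal component, as used for \eqref{eq:fund}; and the claim that $s=\LI{N}(t)$ is concentrated in a single $\GQzero$-degree needs the (easy, but worth stating) observation that $\LI{N}$ sends $\Qzero^{0}$ into $\GQzero^{0}$, which follows since simples of $\Qzero^{0}$ map to simples of $\GQzero^{0}$ and both shifted hearts are extension-closed. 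Note also that your support computation for $\varphi_s^{-1}(s_k)$ is insensitive to the dualisation convention in \eqref{eq:phi}: whether the third term is $s[d-1]$ or $s[N-1-d]$, its degree lies in $\{0,\dots,N-2\}$ when $s$ sits in degree $N-2$ and $d\in\{2,\dots,N-1\}$, so the conclusion stands either way.
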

\begin{proof}
By \cite[Corollary 8.4]{king-qiu} the spherical twist $\varphi_s$ takes $\ts{E}$ to the t-structure obtained from it by tilting $N-1$ times `in the direction of $s$', i.e.\ by tilting at $s, s[-1], s[-2],\ldots, s[3-N]$ and finally $s[2-N]$. The first statement then follows from the isomorphism $\tiltQo{N} \cong \tiltGQo{N}$ of \cite[Theorem 8.1 and Proposition 5.13]{king-qiu}. For the second statement note that if $L_s\ts{E} \in \tiltGQo{N}$ then $s[-1] \in \GQzero[2-N]$, so $s\in \GQzero[3-N]$, and conversely if $s\not \in \GQzero[3-N]$ then $s[-1]\not \in \GQzero[2-N]$ which implies $L_s\ts{E} \not \in \tiltGQo{N}$.
\end{proof}
The above lemma justifies the following definition.
\begin{definition}
Let $\CoveringPoset$ be the poset whose underlying set is \[\braidQ\times \tiltGQo{N},\] and whose relation is generated by $(b,\ts{E})\leq (b',\ts{E}')$ if either $b=b'$ and $\ts{E} \leq \ts{E}'$ in $\tiltGQo{N}$, or $b'=b\cdot b_s$ and $\ts{E}' = \varphi_s^{-1}L_s\ts{E}$ where $s$ is a simple object of the heart of $\ts{E}$ with the property that $L_s\ts{E} \not \in \tiltGQo{N}$, equivalently, by Lemma~\ref{tilting dichotomy}, $s\not \in \GQzero[3-N]$.
\end{definition}
\begin{lemma}
\label{P maps to tilt}
There is a map of posets \[\alpha \colon \CoveringPoset \to \tiltGQ{N} \colon (b,\ts{E}) \mapsto b\cdot \ts{E}:=\braidsurj_N(b)\ts{E},\] which is surjective on objects and on morphisms. Moreover, $\CoveringPoset$ is connected and $\alpha$ is equivariant with respect to the canonical free left $\braidQ$-action on $\CoveringPoset$.
\end{lemma}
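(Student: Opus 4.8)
The plan is to verify in turn that $\alpha$ is order-preserving, that it is $\braidQ$-equivariant, that it is surjective on objects, that it is surjective on morphisms, and that $\CoveringPoset$ is connected; the substance lies in the last two. Everything rests on the identity $\braidsurj_N(b_s)=\varphi_s$ for every simple object $s$ of the heart of every $\ts{E}\in\tiltGQo{N}$. This holds for $\ts{E}=\GQzero$ by the very definition of $\braidsurj_N$ (the generating set associated to $\GQzero$ is the standard one), and I would propagate it along simple tilts by comparing the transformation rule \eqref{generator change 3} of Corollary~\ref{cor:key} with Lemma~\ref{twists and functors}: the latter turns $b_{s_i}b_{s_j}b_{s_i}^{-1}$ into $\varphi_{s_i}\varphi_{s_j}\varphi_{s_i}^{-1}=\varphi_{\varphi_{s_i}(s_j)}$, while $\varphi_{s_i}=\varphi_{s_i[-1]}$, so the property is preserved. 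Granting this, $\alpha$ is order-preserving: on a generating relation of the first kind, $(b,\ts{E})\leq(b,\ts{E}')$ with $\ts{E}\leq\ts{E}'$, it amounts to the fact that $\braidsurj_N(b)$, being induced by an auto-equivalence of $\catGQ{N}$, is a poset automorphism of $\tiltGQ{N}$; on a generating relation of the second kind, $(b,\ts{E})\leq(b\cdot b_s,\varphi_s^{-1}L_s\ts{E})$, one computes $\alpha(b\cdot b_s,\varphi_s^{-1}L_s\ts{E})=\braidsurj_N(b)\varphi_s\varphi_s^{-1}L_s\ts{E}=\braidsurj_N(b)(L_s\ts{E})\geq\braidsurj_N(b)\ts{E}=\alpha(b,\ts{E})$. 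The left action $g\cdot(b,\ts{E})=(gb,\ts{E})$ is free and is by poset automorphisms, since the conditions defining the two kinds of generating relation involve only $\ts{E}$ and $s$ and not $b$; equivariance of $\alpha$ is then the computation $\alpha(gb,\ts{E})=\braidsurj_N(g)\,\braidsurj_N(b)\ts{E}=g\cdot\alpha(b,\ts{E})$.

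Surjectivity on objects is immediate from the bijection \eqref{eq:fund}: any $\ts{F}\in\tiltGQ{N}$ equals $\psi\cdot\ts{E}$ for some $\ts{E}\in\tiltGQo{N}$ and $\psi\in\braidGQ{N}$, and lifting $\psi$ along the surjection $\braidsurj_N$ yields a preimage $(b,\ts{E})$ of $\ts{F}$. For surjectivity on morphisms, note that since $\tiltGQ{N}$ has finite tilting type every relation in it is a composite of simple left tilts (Lemma~\ref{simple finite tilting}), so, $\alpha$ being a functor, it suffices to show each simple left tilt $\ts{F}\to L_s\ts{F}$ of $\tiltGQ{N}$ lies in the image of $\alpha$. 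Choose $(b,\ts{E})$ with $\braidsurj_N(b)\ts{E}=\ts{F}$; then $s_0:=\braidsurj_N(b)^{-1}(s)$ is a simple object of the heart of $\ts{E}$, and $L_s\ts{F}=\braidsurj_N(b)(L_{s_0}\ts{E})$ because auto-equivalences commute with simple tilting. Lemma~\ref{tilting dichotomy} now gives a dichotomy: if $L_{s_0}\ts{E}\in\tiltGQo{N}$ the first-kind relation $(b,\ts{E})\leq(b,L_{s_0}\ts{E})$ is carried by $\alpha$ to $\ts{F}\to L_s\ts{F}$; if not, the second-kind relation $(b,\ts{E})\leq(b\cdot b_{s_0},\varphi_{s_0}^{-1}L_{s_0}\ts{E})$ is, using the computation of the previous paragraph. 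Hence $\alpha$ is surjective on morphisms.

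For connectedness, each fibre $\{b\}\times\tiltGQo{N}$ is connected because $\tiltGQo{N}$ is, so it remains to link the fibres, and here I would use the top element $\GQzero[2-N]$ of $\tiltGQo{N}$. Its heart has simple objects $s_i[2-N]=\LI{N}(t_i)[2-N]$ for $i=1,\dots,n$; the tilt $L_{s_i[2-N]}(\GQzero[2-N])=(L_{s_i}\GQzero)[2-N]$ lies strictly above $\GQzero[2-N]$ in $\tiltGQ{N}$ and so outside $\tiltGQo{N}$; and $b_{s_i[2-N]}=b(t_i[2-N])=b(t_i)=b_i$ because $b$ is shift-invariant and the set associated to $\GQzero$ is the standard one. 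Thus for every $b$ and every $i$ there is a generating relation $(b,\GQzero[2-N])\leq(b\,b_i,\ast)$ in $\CoveringPoset$, placing the fibres over $b$ and $b\,b_i$ in a common component; as $\{b_1,\dots,b_n\}$ generates $\braidQ$, all fibres lie in one component, so $\CoveringPoset$ is connected.

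The step I expect to be the main obstacle is surjectivity on morphisms, and within it the identity $\braidsurj_N(b_s)=\varphi_s$ for $s$ ranging over simples of hearts throughout $\tiltGQo{N}$: no single computation is hard, but it forces one to keep track simultaneously of the generating-set bookkeeping of \S\ref{sec:gs}, the quotient description \eqref{eq:fund}, and the dichotomy of Lemma~\ref{tilting dichotomy}, and it is precisely in this circle of results that the standing hypothesis $N\geq3$ is indispensable.
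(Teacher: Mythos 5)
Your proof is correct and follows essentially the same route as the paper's: check that the two kinds of generating relations of $\CoveringPoset$ map to tilting relations, obtain surjectivity on morphisms by lifting simple tilts via the dichotomy of Lemma \ref{tilting dichotomy}, and connect the fibres $\{b\}\times\tiltGQo{N}$ using relations of the second kind. The only difference is one of explicitness: you spell out and propagate the identity $\braidsurj_N(b_s)=\varphi_s$ (via Corollary \ref{cor:key} and Lemma \ref{twists and functors}) and you pin down connectivity using the simples of the heart of $\GQzero[2-N]$, points the paper's proof treats as clear.
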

\begin{proof}
To check that $\alpha$ is a map of posets we need only check that the generating relations for $\CoveringPoset$ map to relations in $\tiltGQ{N}$. This is clear since (in either case) $b'\cdot \ts{E}' = b\cdot L_s\ts{E} = L_{b \cdot s} \left( b\cdot \ts{E} \right)$. It is surjective on objects by \cite[Proposition 8.3]{king-qiu}. To see that it is surjective on morphisms it suffices to check that each morphism $\ts{F} \leq L_t\ts{F}$, where $t$ is a simple object of the heart of $\ts{F}$, lifts to $\CoveringPoset$. For this, suppose $\ts{F} = b \cdot \ts{E}$ where $\ts{E} \in \tiltGQo{N}$, and that $t = b\cdot s$ for simple $s$ in the heart of $\ts{E}$. Then either $L_s \ts{E} \in \tiltGQo{N}$ and $(b,\ts{E}) \leq (b,L_s\ts{E})$ is the required lift, or $L_s \ts{E} \not \in \tiltGQo{N}$ and
\[
(b,\ts{E}) \leq (b\cdot b_s,\varphi_s^{-1}L_s\ts{E})
\]
is the required lift.

The connectivity of $\CoveringPoset$ follows from the facts that $(b,\ts{E}) \leq (b\cdot b_s, \ts{E})$ for any simple object $s$ of the heart of $\ts{E} \in \tiltGQo{N}$ and that $\tiltGQo{N}$ is connected. Finally, the equivariance with respect to the left $\braidQ$-action $b'\cdot (b,\ts{E}) = (b'b,\ts{E})$ is clear.
\end{proof}

\begin{proposition}
\label{P covers tilt}
The morphism $\alpha \colon \CoveringPoset \to \tiltGQ{N}$ is a covering.
\end{proposition}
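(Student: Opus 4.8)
The plan is to show that $\alpha$ is a covering map of posets by verifying the standard criterion: for each $(b,\ts{E}) \in \CoveringPoset$, the restriction of $\alpha$ to the up-set $[(b,\ts{E}),\infty)$ and to the down-set $(-\infty,(b,\ts{E})]$ is an isomorphism onto the corresponding up-set and down-set of $b\cdot\ts{E}$ in $\tiltGQ{N}$. Since $\alpha$ is already known to be surjective on objects and morphisms, and $\braidQ$-equivariant, by equivariance it suffices to treat the fibre over a single orbit representative, and we may exploit the fact that $\tiltGQ{N}$ is a lattice (Lemma~\ref{lattice}) with the intervals $\itilt{\ts{D}}{L_{\langle s_i,s_j\rangle}\ts{D}}$ described explicitly in Lemma~\ref{lem:45}, and that the category $\tiltGQ{N}$ is generated by simple left tilts subject to the square and pentagon relations (Remark~\ref{generators and relations for tilt}).

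The first step is local: fix $(b,\ts{E})$ with $\ts{E}\in\tiltGQo{N}$ and show that the simple left tilts out of $b\cdot\ts{E}$ are in bijection, via $\alpha$, with the covering morphisms out of $(b,\ts{E})$ in $\CoveringPoset$. A covering morphism $(b,\ts{E})\le(b',\ts{E}')$ is, by definition, determined by a choice of simple object $s$ in the heart of $\ts{E}$: either $L_s\ts{E}\in\tiltGQo{N}$, giving $(b,L_s\ts{E})$, or $L_s\ts{E}\notin\tiltGQo{N}$, giving $(b\cdot b_s,\varphi_s^{-1}L_s\ts{E})$; in both cases $\alpha$ sends it to $b\cdot L_s\ts{E} = L_{b\cdot s}(b\cdot\ts{E})$, and distinct $s$ give distinct simple objects $b\cdot s$, hence distinct tilts. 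Conversely every simple left tilt of $b\cdot\ts{E}$ arises this way (this is exactly the surjectivity-on-morphisms argument in Lemma~\ref{P maps to tilt}), so the correspondence is a bijection. The same argument run with right tilts (equivalently, simple left tilts into $b\cdot\ts{E}$) handles the down-direction; here one uses that $\tiltGQo{N}$ is an interval $\itilt{\GQzero}{\GQzero[2-N]}$, so the `boundary' behaviour at $\GQzero$ and $\GQzero[2-N]$ is controlled by Lemma~\ref{tilting dichotomy}.

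The second step is to upgrade this local bijection on covering morphisms to an isomorphism on the full up-sets and down-sets, and this is where the main work lies. Because $\tiltGQ{N}$ is a lattice whose relation is generated by simple tilts with the relations of Lemma~\ref{lem:45}, it suffices to check that the two-step intervals lift compatibly: given $(b,\ts{E})$ and two distinct simple objects $s_i,s_j$ of the heart of $\ts{E}$, the square or pentagon of Lemma~\ref{lem:45} based at $b\cdot\ts{E}$ must lift uniquely to a diagram in $\CoveringPoset$ based at $(b,\ts{E})$, with $\alpha$ carrying it isomorphically. For the square case, since $s_i,s_j$ commute and tilting at one does not affect the other, one tracks through the four cases according to whether $L_{s_i}\ts{E}$ and $L_{s_j}\ts{E}$ remain in $\tiltGQo{N}$, using Corollary~\ref{cor:key} to see that the braid-group labels compose consistently (the key point being that $b_{s_i}$ and $b_{s_j}$ commute in $\braidQ$, matching $\GrMor{}{\bullet}{s_i}{s_j}=0$). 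For the pentagon case one similarly tracks the five $t$-structures, now using the braid relation $b_{s_i}b_{s_j}b_{s_i}=b_{s_j}b_{s_i}b_{s_j}$ from Corollary~\ref{cor:key}, together with the fact (again Corollary~\ref{cor:key}) that the generator associated to $\varphi_{s_i}(s_j)$ is $b_{s_i}b_{s_j}b_{s_i}^{-1}$, to check that the top path and the bottom path of the pentagon produce the same element $b'\in\braidQ$ over the common endpoint $L_{\langle s_i,s_j\rangle}(b\cdot\ts{E})$.

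The hard part will be the bookkeeping in the pentagon case near the boundary of the fundamental domain $\tiltGQo{N}$: one must verify that the various possible patterns of `which tilts leave $\tiltGQo{N}$' are consistent with a single coherent lift, i.e.\ that the labels $b'\in\braidQ$ attached by the two routes around the pentagon genuinely agree, and that no route is obstructed. This reduces, after applying the lattice structure and the presentation of Remark~\ref{generators and relations for tilt}, to a finite check governed by Lemma~\ref{tilting dichotomy} and the relations in Corollary~\ref{cor:key}; once it is done, an arbitrary interval $[(b,\ts{E}),(b',\ts{E}')]$ in $\CoveringPoset$ maps isomorphically to $[b\cdot\ts{E}, \alpha(b',\ts{E}')]$ because both are built from these elementary squares and pentagons, and $\alpha$ is a bijection on each. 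This establishes that $\alpha$ is a covering.
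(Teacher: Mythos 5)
Your strategy is exactly the one the paper uses: reduce, via the presentation of $\tiltGQ{N}$ by simple tilts modulo the square and pentagon relations (Remark \ref{generators and relations for tilt}) and via $\braidQ$-equivariance, to showing that each square or pentagon of Lemma \ref{lem:45} based at $b\cdot\ts{E}$ lifts uniquely to $\CoveringPoset$ based at $(1,\ts{E})$, with a case analysis governed by which of the simple tilts leave the fundamental domain $\tiltGQo{N}$ (Lemma \ref{tilting dichotomy}) and with the braid-group labels controlled by Corollary \ref{cor:key}. Your first step (bijection between covering morphisms out of $(b,\ts{E})$ and simple left tilts out of $b\cdot\ts{E}$) is also correct and matches Lemma \ref{P maps to tilt}.

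The gap is that the step you yourself flag as ``the hard part'' --- the pentagon bookkeeping near the boundary of $\tiltGQo{N}$ --- is only asserted to be ``a finite check,'' and this check is the actual content of the proposition; it does not follow formally from Corollary \ref{cor:key} and Lemma \ref{tilting dichotomy} as you suggest. Concretely, writing $e$ for the extension $0\to s_j\to e\to s_i\to 0$, the paper must treat four cases according to whether $L_{s_i}\ts{D}$ and $L_{s_j}\ts{D}$ lie in $\tiltGQo{N}$, and the nontrivial ones require identities about the \emph{lifted} generators that are not among the relations listed in Corollary \ref{cor:key}: for instance, when $L_{s_j}\ts{D}\notin\tiltGQo{N}$ one needs $b_{s_j}b_e=b_e b_{s_i}$ (equivalently $b_{s_i}b_{s_j}=b_{s_j}b_e$), obtained by applying part (3) of Lemma \ref{lem:key} to the triangle $s_i[-1]\to s_j\to e\to s_i$, in order to see that the two routes around the pentagon carry the same element of $\braidQ$ and that the right-hand edge is genuinely a morphism of $\CoveringPoset$. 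Moreover the mixed case splits further according to whether $e\in\GQzero[3-N]$ (so the shape of the lift itself changes), and in the case where both simple tilts leave $\tiltGQo{N}$ one must argue that $e\notin\GQzero[3-N]$ as well. Without carrying out these verifications --- identifying the twisted simples ($\varphi_{s_i}^{-1}e=s_j$, $\varphi_e^{-1}s_j=s_i[-1]$, etc.), checking membership in $\GQzero[3-N]$ for each labelled edge, and matching the braid words on the two routes --- the unique-lifting claim, and hence the covering property, is not established.
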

\begin{proof}
By Lemma~\ref{P maps to tilt} we know $\alpha$ is surjective on objects and on morphisms, so all we need to show is that each morphism lifts {\em uniquely} to $\CoveringPoset$ once the source is given. By Remark~\ref{generators and relations for tilt} it suffices to show that the squares and pentagons (\ref{45}) of Lemma~\ref{lem:45} lift to $\CoveringPoset$. Using the $\braidQ$-action on $\CoveringPoset$ it suffices to show that the diagrams with source $\ts{D}$ lift to diagrams with source $(1,\ts{D})$. We treat only the case of the pentagon, since the square is similar but simpler. We use the notation of Lemma~\ref{lem:45}: $s_i$ and $s_j$ are simple objects in the heart of $\ts{D} \in \tiltGQo{N}$ with $\GrMor{}{1}{s_i}{s_j} \cong k$ and $\GrMor{}{1}{s_j}{s_i} \cong 0$, and $e$ is the extension sitting in the non-trivial triangle $s_j \to e \to s_i \to s_j[1]$.

There are four cases depending on whether or not $L_{s_i}\ts{D}$ and $L_{s_j}\ts{D}$ are in $\tiltGQo{N}$ or not.
\begin{description}
\item[Case A] If $L_{s_i}\ts{D}, L_{s_j}\ts{D} \in \tiltGQo{N}$ then $L_{\langle s_i,s_j\rangle}\ts{D} = L_{s_i}\ts{D} \vee L_{s_j}\ts{D} \in \tiltGQo{N}$ too. Hence there is obviously a lifted diagram in $1\times \tiltGQo{N}$.
\item[Case B] If $L_{s_i}\ts{D} \not \in  \tiltGQo{N}$ but $L_{s_j}\ts{D} \in \tiltGQo{N}$ then we claim that
\[
\xymatrix@C=1pc@R=0.8pc{
    &   (b_{s_i}, \varphi_{s_i}^{-1}L_{s_i}\ts{D})  \ar[rr]^{\varphi_{s_i}^{-1}e} &&      (b_{s_i}, \varphi_{s_i}^{-1}\ts{D}' \ar[dd]^{\varphi_{s_i}^{-1}s_j}) \\
  (1, \ts{D}) \ar[ur]^{s_i\quad}\ar[dr]_{s_j\quad}     \\
    &   (1,L_{s_j}\ts{D})   \ar[rr]_{s_i\qquad}  &&(b_{s_i} , \varphi_{s_i}^{-1} L_{\langle s_i, s_j\rangle}\ts{D})
    }
\]
is the required lift. (Here, and in the sequel, we label the morphisms by the associated simple object.) To confirm this we note that by Lemma~\ref{tilting dichotomy} $s_i \not \in \GQzero[3-N]$, from which it follows that the bottom morphism is in $\CoveringPoset$, and that similarly $\varphi_{s_i}^{-1}e=s_j \in \GQzero[3-N]$ so that the top morphism is in $\CoveringPoset$. It follows that the right hand morphism is in $\CoveringPoset$ too, because $\varphi_{s_i}^{-1} L_{\langle s_i, s_j\rangle}\ts{D} \in \tiltGQo{N}$.
\item[Case C] If $L_{s_i}\ts{D} \in  \tiltGQo{N}$ but $L_{s_j}\ts{D} \not \in \tiltGQo{N}$ then one can verify that
\[
\xymatrix@C=1pc@R=0.8pc{
    &   (1, L_{s_i}\ts{D})  \ar[rr]^{e} &&      (1, \ts{D}') \ar[dd]^{s_j} \\
  (1, \ts{D}) \ar[ur]^{s_i\quad}\ar[dr]_{s_j\quad}     \\
    &   (b_{s_j},\varphi_{s_j}^{-1}L_{s_j}\ts{D})   \ar[rr]_{\varphi_{s_j}^{-1}s_i\quad}  &&(b_{s_j} , \varphi_{s_j}^{-1} L_{\langle s_i, s_j\rangle}\ts{D})
    }
\]
is the required lift when $\varphi_{s_j}^{-1}s_i = e \in \GQzero[3-N]$. If $e \not \in \GQzero[3-N]$ then
\[
\xymatrix@C=1pc@R=0.8pc{
    &   (1, L_{s_i}\ts{D})  \ar[rr]^{e} &&      (b_e, \varphi_e^{-1}\ts{D}') \ar[dd]^{\varphi_e^{-1} s_j} \\
  (1, \ts{D}) \ar[ur]^{s_i\quad}\ar[dr]_{s_j\quad}     \\
    &   (b_{s_j},\varphi_{s_j}^{-1}L_{s_j}\ts{D})   \ar[rr]_{\varphi_{s_j}^{-1}s_i\qquad}  &&(b_{s_j}b_e , \varphi_e^{-1}\varphi_{s_j}^{-1} L_{\langle s_i, s_j\rangle}\ts{D})
    }
\]
is the required lift. We need only check that the right-hand morphism is in $\CoveringPoset$. For this note that $\varphi_e^{-1}s_j = s_i[-1]$ so that $b_{\varphi_e^{-1}s_j} = b_{s_i}$, and that applying  (\ref{key:braid}) of Lemma~\ref{lem:key} to the triangle $s_i[-1] \to s_j \to e \to s_i$ we have $b_{s_j} = b_e b_{s_i} b_e^{-1}$, or equivalently $b_{s_j}b_e = b_eb_{\varphi_e^{-1}s_j}$. Moreover, since
\[
\varphi^{-1}_{\varphi_e^{-1}s_j} L_{\varphi_e^{-1}s_j}\varphi_e^{-1}\ts{D}' =
\varphi^{-1}_{\varphi_e^{-1}s_j} \varphi_e^{-1} L_{s_j}\ts{D}' =
\varphi_e^{-1}\varphi^{-1}_{s_j}  L_{\langle s_i,s_j\rangle }\ts{D},
\]
and we already know the latter is in $\tiltGQo{N}$, we see that the right-hand morphism is indeed in $\CoveringPoset$.
\item[Case D] If $L_{s_i}\ts{D}, L_{s_j}\ts{D} \not \in \tiltGQo{N}$ then the lifted pentagon is
\[
\xymatrix@C=1pc@R=0.8pc{
    &   (b_{s_i}, \varphi_{s_i}^{-1}L_{s_i}\ts{D})  \ar[rr]^{\varphi_{s_i}^{-1}e} &&      (b_{s_i}b_{s_j}, \varphi_{s_j}^{-1}\varphi_{s_i}^{-1}\ts{D}' \ar[dd]^{\varphi_{s_j}^{-1}\varphi_{s_i}^{-1}s_j}) \\
  (1, \ts{D}) \ar[ur]^{s_i\quad}\ar[dr]_{s_j\quad}     \\
    &   (b_{s_j},\varphi_{s_j}^{-1}L_{s_j}\ts{D})   \ar[rr]_{\varphi_{s_j}^{-1}s_i\qquad}  &&(b_{s_j}b_e , \varphi_e^{-1}\varphi_{s_j}^{-1} L_{\langle s_i, s_j\rangle}\ts{D})
    }
\]
The top morphism is in $\CoveringPoset$ because $\varphi_{s_i}^{-1}e = s_j \not \in \GQzero[3-N]$. The bottom morphism is in $\CoveringPoset$ because $\varphi_{s_j}^{-1}s_i=e \not \in \GQzero[3-N]$, for if it were then $s_i$ would be in $\GQzero[3-N]$, which is false by assumption. It remains to check that the right-hand morphism is in $\CoveringPoset$. Note that
\[
L_{\varphi_{s_j}^{-1}\varphi_{s_i}^{-1}s_j} \varphi_{s_j}^{-1}\varphi_{s_i}^{-1}\ts{D}' =
\varphi_{s_j}^{-1}\varphi_{s_i}^{-1} L_{s_j} \ts{D}' =
\varphi_{s_j}^{-1}\varphi_{s_i}^{-1} L_{\langle s_i,s_j\rangle} \ts{D}.
\]
Therefore, since we already know that $\varphi_e^{-1}\varphi_{s_j}^{-1} L_{\langle s_i, s_j\rangle}\ts{D} \in \tiltGQo{N}$, it suffices to show that $b_{s_i}b_{s_j} = b_{s_j}b_e$, since it then follows that $\varphi_{s_j}^{-1}\varphi_{s_i}^{-1} = \varphi_e^{-1}\varphi_{s_j}^{-1}$. The required equation is obtained by applying (\ref{key:braid}) of Lemma~\ref{lem:key} to the triangle $e\to s_i \to s_j[1] \to e[1]$, and recalling that $b$ is invariant under shifts.\qedhere
\end{description}
\end{proof}

\begin{corollary}
\label{braid action free 1}
For $N\geq 2$, the map $\alpha \colon \CoveringPoset \to \tiltGQ{N}$ is a $\braidQ$-equivariant isomorphism, and in particular $\braidQ$ acts freely on $\tiltGQ{N}$. The map $\braidsurj_N \colon \braidQ \to \braidGQ{N}$ is an isomorphism.
\end{corollary}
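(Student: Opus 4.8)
The plan is to combine the covering $\alpha\colon\CoveringPoset\to\tiltGQ{N}$ of Proposition~\ref{P covers tilt} with the contractibility of the classifying space of $\tiltGQ{N}$: once $B\tiltGQ{N}$ is known to be simply connected, a connected covering of it must be trivial, so $\alpha$ is forced to be an isomorphism, and freeness of the $\braidQ$-action then follows at once from the evident free $\braidQ$-action on $\CoveringPoset$.

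First I would dispose of the case $N=2$ separately, since the construction of $\CoveringPoset$ in this section assumes $N\geq 3$. When $N=2$ one has $\tiltGQo{2}=\{\GQzero\}$, so $\CoveringPoset=\braidQ\times\{\GQzero\}$ and $\alpha$ is the orbit map $b\mapsto b\cdot\GQzero$; by Theorem~\ref{thm:BT} the map $\braidsurj_2$ is an isomorphism, and since $\braidGQ{2}$ acts simply transitively on $\tiltGQ{2}$ (the exchange graph being the Cayley graph of $\braidGQ{2}$), the action of $\braidQ$ is free and $\alpha$ is an equivariant isomorphism of posets. From then on I would assume $N\geq 3$.

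For $N\geq 3$ the key input is that $\tiltGQ{N}$ is of finite-type (Corollary~\ref{CYN contractibility}), hence a lattice by Lemma~\ref{lattice}; in particular every finite subset has an upper bound, so by the observation in \S\ref{posets} its classifying space $B\tiltGQ{N}$ is contractible, a fortiori simply connected. Proposition~\ref{P covers tilt} gives that $\alpha$ is a covering, so $B\alpha\colon B\CoveringPoset\to B\tiltGQ{N}$ is a covering map, and $B\CoveringPoset$ is connected because $\CoveringPoset$ is connected (Lemma~\ref{P maps to tilt}). A connected covering of a simply connected CW complex is a homeomorphism, so $B\alpha$ is one; in particular $\alpha$ is a bijection on objects, and being a monotone covering it is then an isomorphism of posets (given $\alpha(b,\ts{E})\leq\alpha(b',\ts{E}')$, lift the comparison uniquely along $\alpha$ starting from $(b,\ts{E})$ and invoke injectivity on objects). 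It is $\braidQ$-equivariant by Lemma~\ref{P maps to tilt}. Finally, since $\braidQ$ acts freely on $\CoveringPoset$ by left translation in the first factor, it acts freely on $\tiltGQ{N}$; hence $\ker\braidsurj_N=1$ (if $\braidsurj_N(b)=1$ then $b$ fixes every object of $\tiltGQ{N}$), and as $\braidsurj_N$ is surjective by \eqref{braid surjection} it is an isomorphism.

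I expect no genuine obstacle at this stage: the real content is already packaged into Proposition~\ref{P covers tilt}, whose proof is the case analysis lifting the squares and pentagons of Lemma~\ref{lem:45} to $\CoveringPoset$, and into Corollary~\ref{CYN contractibility}. The only point demanding a little care is the passage from ``$B\alpha$ is a homeomorphism'' to ``$\alpha$ is a poset isomorphism'', which uses that $\alpha$ is monotone and a covering rather than merely that $B\alpha$ is a homeomorphism.
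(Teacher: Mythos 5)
Your argument is correct and is essentially the paper's own proof: the paper likewise deduces the corollary immediately from Proposition \ref{P covers tilt} (the connected $\braidQ$-equivariant cover with free action on $\CoveringPoset$) together with the contractibility of the classifying space of $\tiltGQ{N}$, freeness then forcing injectivity of the surjection $\braidsurj_N$. Your additions --- treating $N=2$ separately, justifying contractibility of $B\tiltGQ{N}$ via the lattice property of a finite-type component, and spelling out how a homeomorphism on classifying spaces yields a poset isomorphism --- merely fill in details the paper leaves implicit.
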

\begin{proof}
This follows immediately from the fact that $\tiltGQ{N}$ is contractible, i.e.\ has contractible classifying space, and that $\alpha \colon \CoveringPoset \to \tiltGQ{N}$ is a connected $\braidQ$-equivariant cover on which $\braidQ$ acts freely.

Recall that $\braidQ$ acts on $\tiltGQ{N}$ via the surjective homomorphism $\braidsurj_N$. Since the action is free $\braidsurj_N$ must also be injective, and therefore is an isomorphism.
\end{proof}
\begin{remark}
When $Q$ is of type A, Corollary~\ref{braid action free 1} provides a third proof of Theorem~\ref{thm:ST}.
When $Q$ is of type E, it shows that there is a faithful symplectic representation of the braid group,
because $\catGQ{N}$ is a subcategory of a derived Fukaya category, while the spherical twists are the higher version of Dehn twists. This is contrary to the result in \cite{MR1719815} in the surface case, which says that there is
no faithful geometric representation of the braid group of type $E$.
\end{remark}

\begin{corollary}
\label{braid action free 2}
For $N\geq 2$, the induced action of $\braidQ$ on $\stabGQ{N}$ is free.
\end{corollary}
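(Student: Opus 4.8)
The plan is to deduce freeness of the $\braidQ$-action on $\stabGQ{N}$ from the already-established freeness of the action on the combinatorial model $\tiltGQ{N}$ (Corollary \ref{braid action free 1}), by exploiting the compatibility between the action on stability conditions and the action on their associated $t$--structures. Concretely, suppose $b\in \braidQ$ fixes some $\sigma \in \stabGQ{N}$, i.e.\ $\braidsurj_N(b)\cdot \sigma = \sigma$. Then in particular the associated $t$--structures agree, $\braidsurj_N(b)\cdot \ts{D}_\sigma = \ts{D}_{\braidsurj_N(b)\cdot\sigma} = \ts{D}_\sigma$, so $b$ fixes the point $\ts{D}_\sigma \in \tiltGQ{N}$. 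By Corollary \ref{braid action free 1} the $\braidQ$-action on $\tiltGQ{N}$ is free, hence $b=1$. This shows the stabiliser of every point of $\stabGQ{N}$ is trivial, which is exactly freeness.

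The one point that needs a little care is that $\ts{D}_\sigma$ genuinely lies in the component $\tiltGQ{N}$ of $\tilt{\catGQ{N}}$ on which Corollary \ref{braid action free 1} asserts freeness, rather than in some other component; but this is immediate since $\sigma \in \stabGQ{N}$ and, by Lemma \ref{finite type cpts correspondence} (and the definition of the finite-type component $\stabGQ{N}$ corresponding to $\tiltGQ{N}$), every stability condition in the principal component has its associated $t$--structure in $\tiltGQ{N}$. One should also recall that the action of $\Aut(\catGQ{N})$ on $\stab{\catGQ{N}}$ is by $(\charge,\slicing)\mapsto (\charge\circ\alpha^{-1},\alpha(\slicing))$ and that an autoequivalence $\alpha$ sends the $t$--structure $\ts{D}_\sigma$ to $\alpha(\ts{D}_\sigma)=\ts{D}_{\alpha\cdot\sigma}$, so the displayed equality $\ts{D}_{\braidsurj_N(b)\cdot\sigma}=\braidsurj_N(b)\cdot\ts{D}_\sigma$ is just the functoriality of passing from a stability condition to its aisle $\slicing_\sigma(0,\infty)$.

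There is essentially no obstacle here: the work has all been done in establishing that $\tiltGQ{N}$ is a faithful combinatorial model and that $\braidQ$ acts freely on it. The proof is a two-line reduction, and the only thing to double-check is the bookkeeping that an element fixing $\sigma$ fixes $\ts{D}_\sigma$, which follows from the naturality of the construction $\sigma \mapsto \ts{D}_\sigma$ under autoequivalences. I would write the proof as follows.

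\begin{proof}
Suppose $b\in \braidQ$ and $\sigma\in \stabGQ{N}$ satisfy $\braidsurj_N(b)\cdot\sigma = \sigma$. Passing to associated $t$--structures, and using that an autoequivalence $\alpha$ of $\catGQ{N}$ satisfies $\ts{D}_{\alpha\cdot\sigma} = \alpha(\ts{D}_\sigma)$, we obtain $\braidsurj_N(b)\cdot\ts{D}_\sigma = \ts{D}_\sigma$. Since $\sigma$ lies in the principal component $\stabGQ{N}$, which is the finite-type component corresponding to $\tiltGQ{N}$ (Lemma \ref{finite type cpts correspondence}), we have $\ts{D}_\sigma \in \tiltGQ{N}$. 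But $\braidQ$ acts freely on $\tiltGQ{N}$ by Corollary \ref{braid action free 1}, so $b=1$. Hence the stabiliser of every $\sigma\in\stabGQ{N}$ is trivial, i.e.\ the action of $\braidQ$ on $\stabGQ{N}$ is free.
\end{proof}
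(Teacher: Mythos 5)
Your proof is correct and is essentially the paper's own argument: the paper's proof is precisely the one-line observation that an element of $\braidQ$ fixing $\sigma$ must fix the associated $t$--structure in $\tiltGQ{N}$, whence freeness follows from Corollary \ref{braid action free 1}. Your additional bookkeeping (compatibility of $\sigma\mapsto\ts{D}_\sigma$ with autoequivalences and membership of $\ts{D}_\sigma$ in the principal component) is a harmless elaboration of the same reduction.
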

\begin{proof}
If an element of $\braidQ$ fixes $\sigma \in \stabGQ{N}$ then it must fix the associated t-structure in $\tiltGQ{N}$.
\end{proof}
Note that we recover the well-known fact that $\braidQ$ is torsion-free from this last corollary because $\stabGQ{N}$ is contractible and $\braidQ$ acts freely so $\stabGQ{N}/\braidQ$ is a {\em finite-dimensional} classifying space for $\braidQ$. The classifying space of any group with torsion must be infinite-dimensional.

\subsection{Higher cluster theory}
\label{higher cluster theory}

The quotient $\tiltGQ{N} / \braidQ$ has a natural description in terms of higher cluster theory. We recall the relevant notions from \cite[Secion~4]{king-qiu}. As previously, $\cat{D}(Q)$ is the bounded derived category of the quiver $Q$.
\begin{definition}
\label{def:cluster}
For any integer $m\geq 2$, the \emph{$m$-cluster shift} is the auto-equivalence of $\cat{D}(Q)$ given by
$\shift{m}=\tau^{-1}\circ[m-1]$, where $\tau$ is the Auslander--Reiten translation. The \defn{$m$-cluster category} $\cluster{m}{Q}= \cat{D}(Q)/\shift{m}$ is the orbit category,
which is Calabi--Yau-$m$. When it is clear from the context we will omit the index $m$ from the notation.
\end{definition}

 An \defn{$m$-cluster tilting set} $\{p_j\}_{j=1}^n$ in $\cluster{m}{Q}$ is an Ext-configuration, i.e.\ a maximal collection of non-isomorphic indecomposable objects such that
 \[
 \Ext^k_{\cluster{m}{Q}}(p_i, p_j)=0,\ \text{for}\ 1\leq k\leq m-1.
 \]
Any $m$-cluster tilting set consists of $n=\rk K\cat{D}(Q)$ objects.

   New cluster tilting sets can be obtained by mutations. The \defn{forward mutation} $\mu_{p_i}^\sharp P$ of an $m$-cluster tilting set $P=\{p_j\}_{j=1}^n$ at the object $p_i$ is obtained by replacing $p_i$ by
    \[
        p_i^\sharp = \Cone(p_i \to \bigoplus_{j\neq i} \Irr(p_i,p_j)^*\otimes p_j).
    \]
    Here $\Irr(p_i,p_j)$ is the space of irreducible maps from $p_i$ to $p_j$ in the full additive subcategory $\Add \left(\bigoplus_{i=1}^n p_i\right)$ of $\cluster{m}{Q}$ generated by the objects of the original cluster tilting set. Similarly, the \emph{backward mutation} $\mu_{p_i}^\flat P$ is obtained by replacing $p_i$ by
    \[
      p_i^\flat =  \Cone(\bigoplus_{j\neq i} \Irr(p_j,p_i)\otimes p_j \to p_i)[-1].
    \]
    As the names suggest, forward and backward mutation are inverse processes.

    Cluster tilting sets in $\cluster{N-1}{Q}$ and their mutations are closely related to t-structures in $\catGQ{N}$ and tilting between them. To be more precise,  \cite[Theorem~8.6]{king-qiu}, based on the construction of \cite[\S2]{MR2640929}, states that $(N-1)$-cluster tilting sets are in bijection with the $\braidQ$-orbits in $\tiltGQ{N}$, and that a cluster tilting set $P'$ is obtained from $P$ by a backward mutation if and only if each t-structure in the orbit corresponding to $P'$ is obtained by a simple left tilt from one in the orbit corresponding to $P$. This motivates the following definition.
\begin{definition}
\label{cluster mutation category}
The \defn{cluster mutation category} $\mutation{N-1}{Q}$ is the category whose objects are the $(N-1)$-cluster tilting sets, and whose morphisms are generated by backward mutations subject to the relations that for distinct $p_i,p_j\in P$ the diagrams
\begin{gather}\label{45v2}
\xymatrix@C=1.4pc@R=1pc{
    &  \mu^\flat_{p_i}P    \ar[dr]^{}\\
    P  \ar[ur]^{} \ar[dr]_{} &&  \mu^\flat_{p_j}\mu^\flat_{p_i}P \\
    &   \mu^\flat_{p_j}P   \ar[ur]_{}
}\qquad
\xymatrix@C=1pc@R=0.8pc{
    &   \mu^\flat_{p_i}P   \ar[rr]^{} &&      \mu^\flat_{p_i}\mu^\flat_{p_j}P \ar[dd]^{} \\
   P \ar[ur]^{}\ar[dr]_{}     \\
    &   \mu^\flat_{p_j}P   \ar[rr]_{}  &&\mu^\flat_{p_j}\mu^\flat_{p_i}P
}
\end{gather}
commute whenever there is a corresponding lifted diagram of simple left tilts in $\tiltGQ{N}$. Note that, possibly after switching the indices $i$ and $j$ in the pentagonal case, there is always a diagram of one of the above two types.
\end{definition}

\begin{proposition}
\label{mutations from tilts}
There is an isomorphism of categories
\[
\tiltGQ{N}/ \braidQ \cong \mutation{N-1}{Q}.
\]
The classifying space of $\mutation{N-1}{Q}$ is a $K(\braidQ,1)$.
\end{proposition}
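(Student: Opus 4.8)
The plan is to build the isomorphism $\tiltGQ{N}/\braidQ \cong \mutation{N-1}{Q}$ from the presentation of $\tiltGQ{N}$ by generators and relations described in Remark \ref{generators and relations for tilt}, and then deduce the statement about the classifying space from the contractibility of $\tiltGQ{N}$ (Corollary \ref{CYN contractibility}) together with the freeness of the $\braidQ$-action (Corollary \ref{braid action free 1}).

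First I would define a functor $\Psi \colon \tiltGQ{N}/\braidQ \to \mutation{N-1}{Q}$. On objects, the $\braidQ$-orbits in $\tiltGQ{N}$ are in bijection with $(N-1)$-cluster tilting sets by \cite[Theorem~8.6]{king-qiu}, which gives the object assignment. On morphisms, recall that $\tiltGQ{N}$ is generated, as a category, by simple left tilts (Remark \ref{generators and relations for tilt}), and by \cite[Theorem~8.6]{king-qiu} a simple left tilt from a $t$--structure in the orbit of $P$ lands in the orbit obtained from $P$ by a backward mutation. Since backward mutations generate $\mutation{N-1}{Q}$ by definition, this prescribes $\Psi$ on a generating set of morphisms. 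To see that $\Psi$ is well-defined, one must check it respects relations: the relations in $\tiltGQ{N}$ are precisely the squares and pentagons of Lemma \ref{lem:45}, and these descend to $\tiltGQ{N}/\braidQ$; by construction of $\mutation{N-1}{Q}$ (Definition \ref{cluster mutation category}), a square or pentagon of backward mutations is imposed as a relation exactly when there is a lifted diagram of simple left tilts in $\tiltGQ{N}$ — which is exactly the image of the corresponding relation in $\tiltGQ{N}/\braidQ$. Hence $\Psi$ is a well-defined functor, bijective on objects.

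Next I would check $\Psi$ is an isomorphism of categories. It is bijective on objects by \cite[Theorem~8.6]{king-qiu}. For morphisms, surjectivity is immediate since backward mutations generate $\mutation{N-1}{Q}$ and each is hit. For injectivity (faithfulness), the key point is that the relations imposed in $\mutation{N-1}{Q}$ are not merely a subset of, but exactly, the images of the defining relations of $\tiltGQ{N}$: both categories are presented by the same generators (simple left tilts / backward mutations, matched up by the orbit bijection) modulo the same squares and pentagons, so the quotient functor $\tiltGQ{N} \to \mutation{N-1}{Q}$ factors through $\tiltGQ{N}/\braidQ$ and induces an isomorphism. Here one uses Lemma \ref{lem:45} to know that every local configuration of two simple tilts is a square or a pentagon, so that these relations suffice to present $\tiltGQ{N}$, and hence also present the quotient.

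Finally, for the classifying space: by Corollary \ref{braid action free 1} the group $\braidQ$ acts freely on $\tiltGQ{N}$, and by Corollary \ref{CYN contractibility} the classifying space $B\tiltGQ{N}$ is contractible (being the order complex of a finite-type component, hence contractible by Theorem \ref{stab contractible}). A free action of a group $G$ on a category $\mathcal{X}$ with contractible classifying space yields $B(\mathcal{X}/G) \simeq BG$, since $B\mathcal{X} \to B(\mathcal{X}/G)$ is a covering map with deck group $G$ and contractible total space, hence a universal cover, so $B(\mathcal{X}/G)$ is a $K(G,1)$. Applying this with $G = \braidQ$ and $\mathcal{X} = \tiltGQ{N}$, and using the isomorphism $\tiltGQ{N}/\braidQ \cong \mutation{N-1}{Q}$ just established, gives that $B\mutation{N-1}{Q}$ is a $K(\braidQ,1)$.

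The main obstacle I expect is verifying that $\Psi$ is genuinely \emph{faithful} rather than merely full — i.e.\ that no extra collapsing of morphisms occurs when passing to $\braidQ$-orbits beyond what is already recorded by the square/pentagon relations. This amounts to checking that the presentation of $\tiltGQ{N}$ by Lemma \ref{lem:45} is compatible with the quotient in the precise sense that the relation module is generated $\braidQ$-equivariantly by the local diagrams; concretely, one must confirm that whenever two composites of backward mutations agree in $\mutation{N-1}{Q}$, the corresponding composites of simple left tilts agree in $\tiltGQ{N}/\braidQ$, which requires care about how the freeness of the action interacts with lifting the pentagon/square diagrams (the four-case analysis in the proof of Proposition \ref{P covers tilt} is exactly the technical heart of this compatibility).
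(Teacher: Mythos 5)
Your proposal follows essentially the same route as the paper: the isomorphism $\tiltGQ{N}/\braidQ \cong \mutation{N-1}{Q}$ is, as you say, a rephrasing of \cite[Theorem~8.6]{king-qiu} once one uses the presentation of $\tiltGQ{N}$ by simple left tilts modulo the squares and pentagons (Remark \ref{generators and relations for tilt}, Lemma \ref{lem:45}) and observes that Definition \ref{cluster mutation category} imposes exactly the images of those relations; and the $K(\braidQ,1)$ statement is deduced, as in the paper, from the freeness of the action (Corollary \ref{braid action free 1}) together with the contractibility of $B\tiltGQ{N}$. Your worry about faithfulness is resolved precisely by the observation you make: the mutation category is \emph{defined} so that its relations are the ones lifting to $\tiltGQ{N}$, so the two presentations match.

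One step needs a different justification. The contractibility of $B\tiltGQ{N}$ does not follow from Corollary \ref{CYN contractibility} or Theorem \ref{stab contractible} in the way you assert: those results concern $\stabGQ{N}$, whose combinatorial model (Corollary \ref{comb model for stab alg}, Corollary \ref{poset-iso}) is the classifying space of the poset of strata, i.e.\ of the poset of intervals $\PI{D}^{\mathrm{op}}$ whose elements are pairs $(\ts{D},I)$ --- this is not the tilting poset, and the paper nowhere identifies their homotopy types. The intended justification is purely order-theoretic: a finite-type component is a lattice (Lemma \ref{lattice}), so every finite subset has an upper bound, and hence its classifying space is contractible by the remark in \S\ref{posets}. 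With that substitution, and noting that the covering-space argument uses that the quotient functor is a covering of categories (which is what Proposition \ref{P covers tilt} and Corollary \ref{braid action free 1} supply, rather than freeness on objects alone), your argument agrees with the paper's.
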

\begin{proof}
The first statement is a rephrasing of \cite[Theorem~8.6]{king-qiu}, using Remark~\ref{generators and relations for tilt} and the definition of $\mutation{N-1}{Q}$. The second statement follows from the first and the fact that $\tiltGQ{N}$ is contractible, and the $\braidQ$-action on it free.
\end{proof}

\subsection{Garside groupoid structures}
\label{garside}

In \cite[\S1]{MR2395163} a Garside groupoid is defined as a group $G$ acting freely on the left of a lattice $L$ in such a way that
\begin{itemize}
\item the orbit set $G \backslash L$ is finite;
\item there is an automorphism $\psi$ of $L$ which commutes with the $G$-action;
\item for any $l\in L$ the interval $[l,l\psi]$ is finite;
\item the relation on $L$ is generated by $l\leq l'$ whenever $l'\in [l,l\psi]$.
\end{itemize}
The action of $\braidQ$ on $\tiltGQ{N}$ provides an example for any $N\geq 3$, in fact a whole family of examples. By Corollary~\ref{braid action free 1} the action is free, and by (\ref{eq:fund}) the orbit set is finite. From \S~\ref{ftt cpts} we know that $\tiltGQ{N}$ is a lattice, and that closed bounded intervals within it are finite. It remains to specify an automorphism $\psi$; we choose $\psi= [-d]$ for any integer $d\geq 1$. It is then clear that the last condition is satisfied since each simple left tilt from $\ts{D}$ is in the interval between $\ts{D}$ and $\ts{D}[-d]$.

In fact the preferred definition of Garside groupoid in \cite{MR2395163} is that given in \S3, not \S 1, of that paper. There a Garside groupoid $\cat{G}$ is defined to be the groupoid associated to a category $\cat{G}^+$ with a special type of presentation ---  called a complemented presentation ---  together with an automorphism $\varphi\colon \cat{G} \to \cat{G}$ (arising from an automorphism of the presentation) and a natural transformation $\Delta \colon 1 \to \varphi$ such that
\begin{itemize}
\item the category $\cat{G}^+$ is atomic, i.e.\ for each morphism $\gamma$ there is some $k\in \N$ such that $\gamma$ cannot be written as a product of more than $k$ non-identity morphisms;
\item the presentation of $\cat{G}$ satisfies the cube condition, see \cite[\S3]{MR2395163} for the definition;
\item for each $g\in \cat{G}^+$ the natural morphism $\Delta_g \colon g \to \varphi(g)$ factorises through each generator with source $g$.
\end{itemize}
The naturality of $\Delta$ is equivalent to the statement that for any generator $\gamma \colon g \to g'$ we have $\Delta_{g'} \circ \gamma = \varphi(\gamma) \circ \Delta_g$. The collection of data of a complemented presentation, an automorphism, and a natural transformation satisfying the above properties is called a \defn{Garside tuple}. See \cite[Theorem~3.2]{MR2395163} for a list of the good properties of a Garside tuple.

Briefly, the translation from the second to the first form of the definition is as follows. Fix an object $g\in \cat{G}^+$. Let $L = \Mor{\cat{G}}{g}{-}$ with the order $\gamma \leq \gamma' \iff \gamma^{-1}\gamma' \in \cat{G}^+$. Let $G = \Mor{\cat{G}}{g}{g}$ acting on $L$ via pre-composition. Let the automorphism $\psi$ be given by taking $\gamma \colon g\to g'$ to $\varphi(\gamma) \circ \Delta_g \colon g \to \varphi(g) \to \varphi(g')$. Note that with these definitions the interval $[\gamma, \gamma\psi]$ in the lattice consists of the initial factors of the morphism $\Delta_{g'}$ in the category $\cat{G}^+$.

Below, we verify that cluster mutation category $\mutation{N-1}{Q}$ forms part of a Garside tuple.
\begin{proposition}
\label{garside tuple}
Let the category $\cat{G}^+$ be $\mutation{N-1}{Q}$, where $N\geq 2$, presented as in Definition~\ref{cluster mutation category}. Let the automorphism $\varphi = [-d]$ for an integer $d\geq 1$. Let the natural transformation $\Delta_{P} \colon P  \to P[-d]$ be given by the image under the isomorphism $\tiltGQ{N}/ \braidQ \cong \mutation{N-1}{Q}$ of the unique morphism in $\tiltGQ{N}$ from an object to its shift by $[-d]$. Then $(\cat{G}^+, \varphi, \Delta)$ is a Garside tuple.
\end{proposition}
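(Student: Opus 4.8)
The plan is to verify the three conditions defining a Garside tuple directly from the structure of $\tiltGQ{N}$ and the results already established, transporting everything across the isomorphism $\tiltGQ{N}/\braidQ \cong \mutation{N-1}{Q}$ of Proposition \ref{mutations from tilts}. The complemented presentation is the one given in Definition \ref{cluster mutation category}, whose generating morphisms are the backward mutations and whose relations are the squares and pentagons \eqref{45v2}; since these lift (via Remark \ref{generators and relations for tilt}) precisely to the squares and pentagons \eqref{45} of Lemma \ref{lem:45}, the presentation is complemented in the sense of \cite[\S3]{MR2395163}. The automorphism $\varphi = [-d]$ descends to $\mutation{N-1}{Q}$ because the shift on $\catGQ{N}$ commutes with the $\braidQ$-action (each spherical twist is a triangulated autoequivalence and hence commutes with $[1]$). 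The natural transformation $\Delta_P \colon P \to P[-d]$ is the image of the unique morphism $\ts{D} \to \ts{D}[-d]$ in $\tiltGQ{N}$, which exists and is unique because $\tiltGQ{N}$ is a poset in which $\ts{D} \leq \ts{D}[-d]$ (this uses $\ts{D}[1] \atileq \ts{D}$, iterated $d$ times). Naturality $\Delta_{P'}\circ\gamma = \varphi(\gamma)\circ\Delta_P$ for a generator $\gamma\colon P \to P'$ follows from uniqueness of morphisms in the poset $\tiltGQ{N}$: both composites are the unique morphism $\ts{D}\to\ts{D}'[-d]$.

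For the first condition, atomicity of $\cat{G}^+ = \mutation{N-1}{Q}$: a morphism of $\mutation{N-1}{Q}$ lifts to a morphism $\ts{D}\leq\ts{E}$ in $\tiltGQ{N}$, and by Remark \ref{tilting poset rmks} such a morphism satisfies $\ts{D}\tsleq\ts{E}\tsleq\ts{D}[-k]$ for some $k\in\N$. Any factorisation of $\ts{D}\leq\ts{E}$ into non-identity morphisms is a chain of simple left tilts strictly increasing in $\tilt{C}$; since each simple left tilt from a $t$--structure $\ts{F}$ lands in the finite interval $\itilt{\ts{F}}{\ts{F}[-1]}$ and these intervals are finite (by the finite-type hypothesis, via Lemma \ref{finite intervals}), the interval $\itilt{\ts{D}}{\ts{D}[-k]}$ is finite (this is the inductive argument already run in the proof of Lemma \ref{finite intervals}), bounding the length of any such factorisation. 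Hence $\cat{G}^+$ is atomic.

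For the second condition, the cube condition: this is a coherence statement about how the squares and pentagons fit together, and it is exactly the assertion that $\tiltGQ{N}$ is a lattice in which any three pairwise-comparable-via-tilts elements have a common upper bound compatible with the local diagrams. By Lemma \ref{lattice} the component $\tiltGQ{N}$ is a lattice, and by Remark \ref{generators and relations for tilt} its category structure is generated by simple left tilts subject to the squares and pentagons of Lemma \ref{lem:45}; one checks the cube condition by passing to a fundamental domain $\tiltGQo{N}$ and using that the relevant sups $\lt{D}{\langle s_i,s_j,s_k\rangle}$ exist (iterating Lemma \ref{local lattice}) and that the three pairwise diagrams assemble to a commuting diagram mapping to the lattice. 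For the third condition, that $\Delta_P \colon P \to P[-d]$ factors through every generator with source $P$: a generator with source $P$ corresponds to a simple left tilt $\ts{D}\to\lt{D}{s}$, and since $\ts{D}\tsleq\lt{D}{s}\tsleq\ts{D}[-1]\tsleq\ts{D}[-d]$ in the lattice $\tiltGQ{N}$, the morphism $\ts{D}\to\ts{D}[-d]$ factors as $\ts{D}\to\lt{D}{s}\to\ts{D}[-d]$ by uniqueness of morphisms in the poset, and this descends.

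The main obstacle will be the cube condition. Atomicity, the descent of $\varphi$ and $\Delta$, naturality, and the factorisation property are all either immediate from the poset structure and uniqueness of morphisms, or were effectively proved earlier (finiteness of intervals, lattice property). The cube condition, by contrast, is a genuinely three-dimensional coherence check on the complemented presentation; while it follows morally from $\tiltGQ{N}$ being a lattice, spelling out that the three local diagrams of Lemma \ref{lem:45} associated to a triple $\{s_i,s_j,s_k\}$ of simples fit together into the cube demanded by \cite[\S3]{MR2395163} requires a careful case analysis (square/square/square, square/square/pentagon, etc.) analogous to, but more elaborate than, the four-case analysis in the proof of Proposition \ref{P covers tilt}. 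I would organise this by working with a single $\ts{D}$ in the fundamental domain, computing $\lt{D}{S}$ for $S$ ranging over subsets of $\{s_i,s_j,s_k\}$ via iterated application of Lemma \ref{local lattice}, and identifying the resulting commuting cube of tilts with the combinatorial cube required by the cube condition, then transporting to $\mutation{N-1}{Q}$.
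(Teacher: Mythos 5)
Your proposal is correct and follows essentially the same route as the paper's proof: atomicity from the finiteness of closed bounded intervals in the cover $\tiltGQ{N}$, the factorisation property from $\ts{D} \leq L_s\ts{D} \leq \ts{D}[-d]$, and the cube condition from the lattice property of $\tiltGQ{N}$ (Lemma \ref{lattice}), transported through the isomorphism of Proposition \ref{mutations from tilts}. The only difference is one of emphasis: the paper simply asserts that the cube condition follows from the lattice property, whereas you flag it as the main obstacle and sketch the extra case analysis over triples of simples --- a more cautious reading of \cite[\S3]{MR2395163}, but not a different argument.
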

\begin{proof}
It is easy to check that the presentation in Definition~\ref{cluster mutation category} is complemented ---  see \cite[\S3]{MR2395163} for the definition. The atomicity of $\mutation{N-1}{Q}$ follows from the fact that closed bounded intervals in the cover $\tiltGQ{N}$ are finite, since this implies that any morphism has only finitely many factorisations into non-identity morphisms. The factorisation property follows from the inequalities
\[
\ts{D} \leq L_s\ts{D} \leq \ts{D}[-d]
\]
for any simple object $s$ of the heart of any t-structure $\ts{D}$. Finally the cube condition follows from the fact that the cover $\tiltGQ{N}$ is a lattice.
\end{proof}
\begin{remark}
In the case $N=3$ and $d=1$ the natural morphism $\Delta_P$ is a maximal green mutation sequence, in the sense of Keller (cf. \cite{MR2931896} and \cite{Q}). For  $N> 3$ and $d=N-2$, the natural transformation $\Delta$ should be thought as the generalised, or higher, green mutation (for Buan--Thomas's coloured quivers, cf. \cite[\S6]{king-qiu}).
\end{remark}

Finally we explain the relationship of the above Garside structure to that on the braid group $\braidQ$ as described in, for example, \cite{MR2854121}. Suppose the automorphism $\varphi$ fixes some object $g\in \cat{G}$. Let $G=\Mor{\cat{G}}{g}{g}$, and define the monoid $G^+$ analogously. Then we claim $G^+$ is a Garside monoid, and $G$ the associated Garside group ---  the properties of a complemented presentation ensure that $G^+$ is finitely generated by those generators of $\cat{G}^+$ with source and target $g$, and also that it is a cancellative monoid; moreover $G^+$ is atomic since $\cat{G}^+$  is;  the cube condition ensures that the partial order relation defined by divisibility in $G^+$ is a lattice; and finally the natural transformation $\Delta$ yields a central element $\Delta_g\in Z(G)$, which plays the r\^{o}le of Garside element.

As a particular example note that the automorphism $\varphi = [ k(2-N)]$, where $k\in \N$, fixes the standard cluster tilting set in $\mutation{N-1}{Q}$. By Proposition~\ref{mutations from tilts} the group of automorphisms is $\braidQ$, and thus we obtain a Garside group structure on $\braidQ$. For a suitable choice of $k$ this agrees with that described in \cite{MR2854121}.


\end{document}